\title[Inradius collapsed manifolds]{Inradius collapsed manifolds}
\date{\today}
\author{Takao Yamaguchi, Zhilang Zhang}
\address{Takao Yamaguchi, Department of Mathematics, Kyoto University, Japan}
\thanks{This work was supported by JSPS KAKENHI Grant Numbers 26287010, 15K13436, 15H05739}
\email{takaoy@math.kyoto-u.ac.jp}
\address{Zhilang Zhang, Graduate School of Pure and Applied Sciences, University of Tsukuba, Japan}
\curraddr{School of Mathematics and big data, Foshan University, Foshan city, China}
 \email{zhilangz@fosu.edu.cn}
\subjclass[2010]{53C20, 53C21, 53C23}
\keywords{collapse; Gromov-Hausdroff convergence; manifold with boundary; inradius}
\begin{document}
\begin{abstract}
In this  paper, we study collapsed manifolds with boundary, 
where we assume a lower  sectional curvature bound,  two sides bounds on the second fundamental forms of 
boundaries and upper diameter bound.  Our main concern is the case when inradii  of manifolds converge to zero. 
This  is a typical case of collapsing manifolds with boundary.
We  determine the limit spaces 
of  inradius collapsed manifolds as Alexandrov spaces with curvature uniformly bounded below. 
When the limit space has co-dimension one, we completely determined the topology of inradius collapsed manifold
in terms of singular $I$-bundles. Genral inradius collapse to almost regular spaces are also characterized.
In the general case of unbounded diameters, we prove that the number of boundary components of inradius collapsed manifolds is at most two,  where the disconnected boundary happens if and only if the manifold has a topological product structure.
%
\end{abstract}

\maketitle
%

\theoremstyle{plain}
  \newtheorem{thm}{Theorem}[section]
  \newtheorem{lem}[thm]{Lemma}
  \newtheorem{slem}[thm]{Sublemma}
  \newtheorem{cor}[thm]{Corollary}
  \newtheorem{prop}[thm]{Proposition}
  \newtheorem{clm}[thm]{Claim}
  \newtheorem{fact}[thm]{Fact}
  \newtheorem{ass}[thm]{Assertion}
  \newtheorem{obs}[thm]{Observation}

\theoremstyle{definition}
  \newtheorem{defn}[thm]{Definition}
  \newtheorem{conj}[thm]{Conjecture}
  \newtheorem{ex}[thm]{Example}
  \newtheorem{exc}[thm]{Exercise}
  \newtheorem{prob}[thm]{Problem}
  \newtheorem{asmp}[thm]{Assumption}
  \newtheorem{nota}[thm]{Notation}

\theoremstyle{remark}
  \newtheorem{rem}[thm]{Remark}
  \newtheorem{note}[thm]{Note}
  \newtheorem{summ}[thm]{Summary}
  \newtheorem{ack}{Acknowledgment}
 
\makeatletter

\renewcommand{\theequation}{
\thesection.\arabic{equation}}

\newcommand{\wangle}[0]{\tilde{\angle}}
\newcommand{\diam}[0]{\mathrm{diam}\,}
\newcommand{\rad}[0]{\mathrm{rad}\,}
\newcommand{\e}[0]{\epsilon}
\newcommand{\de}[0]{\delta}
\newcommand{\vol}[0]{\mathrm{vol}}
\newcommand{\tr}[0]{\triangle}
\newcommand{\ctr}[0]{\tilde{\triangle}}
\renewcommand{\b}[1]{\tilde{#1}}
\newcommand{\dist}[0]{\mathrm{dist}}
\newcommand{\reg}[0]{\mathrm{reg}}
\newcommand{\supp}[0]{\mathrm{supp}}
\renewcommand{\theack}{\!\!\!}
\newcommand{\ca}{\mathcal}
\newcommand{\ve}{\varepsilon}
\newcommand{\ra}{\rightarrow}
\newcommand{\lra}{\longrightarrow}
\newcommand{\pa}{\partial}
\newcommand{\inrad}[0]{{\rm inrad}}

\newcommand{\Lip}[0]{\mathrm{Lip}}
\newcommand{\relmiddle}[1]{\mathrel{}\middle#1\mathrel{}}

\tableofcontents
\section{Introduction} \label{sec:intro}

In the present paper, we are concerned with collapsing phenomena of 
Riemannian manifolds with boundary under a lower sectional curvature bound.
The study of collapse of closed manifolds has a long history. 
In the case of two side bounds on sectional curvatures,  a deep general theory was established 
in  \cite{CFG}.  
Then for the case of lower sectional curvature bound, 
in \cite{Ya:pinching},  \cite{FY2}, \cite{KPT}, the structure of the first Betti numbers 
and the fundamental groups with their topological rigidity were determined through 
a fibration theorem.  Later on, those results were partly extended 
to the case of a lower Ricci curvature bound in \cite{ChCo1}, \cite{ChCo2},\cite{CoNa}, \cite{KaWi}.
Especially the general manifold structure results of  lower dimensional collapsed manifolds 
under a lower sectional curvature bound were established in \cite{SY:3mfd}, \cite{Ya:four}, \cite{SY:volume}.

In those results, it is crucial to study Alexandrov spaces with curvature bounded below
which appear as the Gromov-Hausdorff limit spaces. In particular, Perelman's topological stability 
theorem has played significant roles. 
In connection with the study of  Alexandrov spaces,
the collapsing phenomena of three-dimensional closed Alexandrov
spaces  with curvature bounded below has been classified in a recent work \cite{MY:dim3closed}.

For collapsing Riemannian manifolds with boundary, there is a pioneering work by 
J. Wong \cite{wong0}, \cite{wong2} on this subject after the investigation in the non-collapsing
and bounded curvature case due to \cite{Kod}, \cite{AKKLT}. 
In the study of convergence and collapsing Riemannian manifolds with boundary,
it is obvious that  the main problem is to control the boundary behavior in a geometric way.
It is in \cite{wong0} that a nice extension procedure over the boundary was first carried out to 
study collapsed manifolds with boundary under a lower sectional curvature bound.
The study of collapse of three-dimensional Alexandrov spaces with boundary
is now undergoing in the work \cite{MY2:dim3bdy}, where all the details of collapses 
will be made clear.  

In the present paper, partly motivated by \cite{MY2:dim3bdy}, 
we develop and extend results in \cite{wong2} to a great extent.
Let $\ca M(n,\kappa,\lambda, d)$ denote the set of all isometry classes of 
$n$-dimensional compact  Riemannian manifolds $M$ with boudary    
whose sectional curvature, second foundamental form and diameter satisfy
\[
      K_M \ge \kappa, \,\, |\Pi_{\partial M}|\le\lambda, {\rm diam}(M)\le d.
\]
Every Riemannian manifold in $\ca M(n,\kappa,\lambda,d)$ can be glued with a warped cylinder 
along their boundaries in such a way that the resulting space becomes an Alexandrov space with 
curvature bounded below having $C^0$-Riemannian sturcture and 
that its  boundary is totally geodesic (\cite{wong0}). 
%
Investigating such a cylindrical  extension,
Wong proved that $\ca M(n,\kappa,\lambda,d)$ is precompact 
with respected to the Gromov-Hausdorff distance.
He also proved that if $\ca M(n,\kappa,\lambda,d,v)$ denote the set of all 
elements $M\in\ca M(n,\kappa,\lambda,d)$ having volume
$\vol(M)\ge v>0$, then it contains only finitely many homeomorphism types.


Under the situation above, the main problem we are concerned in this paper is 
as follows:

\begin{prob}
Let $M_i$ be a sequence in  $\ca M(n,\kappa,\lambda,d)$ converging to a 
length space $N$  with respected to the Gromov-Hausdorff distance.
 \begin{enumerate} 
  \item Characterize the structure of $N;$
  \item Find geometric and topological relations between $M_i$ and $N$ for large enough $i$.
 \end{enumerate}
\end{prob}

The {\it inradius} of $M$ is defined as the largest radius of metric ball contained in 
the interior of $M$:
\[
                    \inrad(M):=\sup_{x\in M}{d(x,\pa M)}.
\]
In the  present paper, we first consider the case of  ${\rm inrad}(M_i)$ converging to zero.
We  prove in Corollary \ref{cor:dim-collaps} that if  ${\rm inrad}(M_i)$ converges to zero, then 
$M_i$ actually dimension collapses in the sense that any limit space $N$ has dimension 
\[
            \dim N \le n-1.
\]
Therefore in this case , we say that $M_i$ {\it inradius collapses}. 
The inradius collapse is a typical case of collapsing of manifolds with boundary.
Actually in the forthcoming paper \cite{YZ:general}, we show that if  a sequence  $M_i$ in  $\ca M(n,\kappa,\lambda,d)$ converges to 
a topological closed manifold or a closed Alexandrov space, then $M_i$ inradius 
collapses. 



The main results in this paper are stated as follows. 
The first one is about the limit spaces of inradius collapse.

\begin{thm} \label{thm:limit-alex}
Let $M_i\in\ca M(n,\kappa,\lambda,d)$ inradius collapses to a length space  $N$ with respect to
the Gromov-Hausdorff distance.  Then 
$N$ is an Alexandrov space with curvature $\ge  c(\kappa,\lambda)$, where
$c(\kappa,\lambda)$ is a constant depending only on $\kappa$ and $\lambda$.
\end{thm}

It should be noted that $M_i$ are not Alexandrov spaces unless ${\rm II}_{\partial M_i}\ge 0$,
and that the constant  $c(\kappa,\lambda)$ really depend on both $\kappa$ and $\lambda$.
Moreover, if one assume only ${\rm II}_{\pa M_i}\ge -\lambda^2$ or ${\rm II}_{\pa M_i}\le \lambda^2$
instead of $|{\rm II}_{\pa M_i}|\le \lambda^2$, there is a counterexamples to Theorem \ref{thm:limit-alex}
(see Examples \ref{ex:counter2},  \ref{ex:counter} and \ref{ex:counter3}).

Let $\ca M(n,\kappa,\lambda)$ denote the set of all isometry classes of 
$n$-dimensional complete Riemannian manifolds $M$ satisfying
\[
      K_M \ge \kappa, \,\, |{\rm II}_{\partial M}|\le\lambda.
\]
This family is also precompact with respect to the pointed Gromov-Hausdorff convergence.
Theorem \ref{thm:limit-alex}  actually holds true for the limit of manifolds in 
$\ca M(n,\kappa,\lambda)$ with respect to the pointed Gromov-Hausdorff convergence
(see Theorem \ref{thm:limit-alex'}).

Next we discuss the topological structure of inradius collapsed manifolds.
First consider  the case of inradius collapse of codimension one.
We can give a complete characterization of codimension one inradius collapsed manifolds as follows.
Let $D^2_+$ be the upper half disk on $xy$-plane, and 
$J:=D^2_{+}\cap \{ y=0\}$.

\begin{thm}\label{thm:codim1}
Let $M_i\in\ca M(n,\kappa,\lambda, d)$ inradius collapse to an $(n-1)$-dimensional 
Alexandrov space $N$.  Then there is a  singular $I$-fiber bundle:
\[
      I  \rightarrow  M_i  \overset{\pi} \rightarrow   N
\]
whose singular locus coincides with $\partial N$, 
and $M_i$ is a gluing of $I$-bundle $N\tilde\times I$ over $N$ and $D^2_{+}$-bundle
$\pa N \tilde\times D^2_+$ over $\pa N$ , 
\[
      M_i = N\tilde\times I\cup \pa N \tilde\times D^2_+,
\]
where the gluing is done via $\pa N \tilde\times I = \pa N \tilde\times J$,
and $\tilde\times$ denotes either the product  or a twisted product.

In particular $M_i$ has the same homotopy type as $N$.
%
\end{thm}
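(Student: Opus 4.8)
The plan is to build the singular $I$-fiber bundle structure by combining Wong's cylindrical extension with the fibration theory for Alexandrov limits, then to analyze the fibers according to the position of the base point in $N^\circ$ versus $\partial N$. First I would pass to the doubled (or cylindrically extended) space $\widehat M_i$ obtained by gluing a warped cylinder along $\partial M_i$: by the result of \cite{wong0} this is an Alexandrov space with curvature bounded below, totally geodesic boundary, and a $C^0$-Riemannian structure, and it still dimension-collapses. Since $N$ has codimension one, the extended spaces $\widehat M_i$ collapse to a limit $\widehat N$ that is a "doubling" of $N$ along $\partial N$ — more precisely $\widehat N = N \cup_{\partial N} N$ (or the mapping cylinder of $\partial N \to N$ with a collar), which is again an Alexandrov space. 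Applying the generalized fibration/stability theorem for Alexandrov spaces (Perelman, Yamaguchi, Shiohama–Yamaguchi) to $\widehat M_i \to \widehat N$ gives, over the regular part, a genuine fiber bundle with fiber an almost nonnegatively curved manifold; here because $\dim \widehat M_i - \dim \widehat N = 1$, that fiber is $S^1$, and $M_i$ sits inside $\widehat M_i$ as the preimage of one "half," so over $N^\circ$ the map $\pi: M_i \to N$ is a genuine $I$-bundle (the $S^1$ fiber cut along the two boundary points that lie over $\partial N$).

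Next I would analyze what happens over a neighborhood of $\partial N$. Points of $\partial N$ are limits of points $x_i \in M_i$ whose distance to $\partial M_i$ is comparably small to the collapsing scale; a blow-up (rescaling by the inradius order) at such points, together with the two-sided bound $|{\rm II}_{\partial M_i}| \le \lambda$, forces the local model transverse to the fiber direction to be a half-plane rather than a line — i.e. the local fiber over a point of $\partial N$ is the half-disk $D^2_+$, with $J = D^2_+ \cap \{y = 0\}$ the trace of $\partial M_i$, and the "fold" of $D^2_+$ is exactly where the $S^1$-fiber of the doubled picture is pinched. This is the statement that the singular locus of the singular $I$-bundle is precisely $\partial N$, and that a collar neighborhood of $\pi^{-1}(\partial N)$ is a $D^2_+$-bundle over $\partial N$. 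The compatibility of the two pieces along $\partial N \tilde\times I = \partial N \tilde\times J$ is then just the observation that $\partial(D^2_+) \supset J$ glues to the nearby $I$-fibers; whether each of the two bundles is a product or a twisted product is governed by the holonomy of the connection (orientation behavior of the $I$- resp. $D^2_+$-fibers as one loops in $N^\circ$ resp. $\partial N$).

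I would then assemble these local descriptions: cover $N$ by regular neighborhoods in $N^\circ$ (over which $M_i$ is an $I$-bundle) and a collar of $\partial N$ (over which $M_i$ is a $D^2_+$-bundle), and check on overlaps that the two structure groups are compatible, yielding the global decomposition $M_i = N \tilde\times I \cup_{\partial N \tilde\times J} \partial N \tilde\times D^2_+$. The homotopy equivalence $M_i \simeq N$ is immediate from this: $I$ and $D^2_+$ are contractible, so $\pi$ is a homotopy equivalence onto $N$ (a fiber bundle with contractible fibers, even allowing the singular stratum, deformation-retracts onto its base, since the half-disk deformation retracts onto $J$ compatibly with the gluing).

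The main obstacle I anticipate is making rigorous the claim that the local fiber over $\partial N$ is exactly $D^2_+$ — i.e. controlling the collapsing geometry near $\partial M_i$. One needs the lower curvature bound together with the \emph{two-sided} second fundamental form bound to rule out the boundary "curling up" into a more complicated local model (this is exactly where the counterexamples cited in the excerpt, Examples \ref{ex:counter2}, \ref{ex:counter}, \ref{ex:counter3}, show a one-sided bound is insufficient). Concretely, the technical heart is a rescaling argument at points near $\partial M_i$ showing that blow-up limits are half-spaces $\mathbb R^{k}_+ \times (\text{fiber})$ and that the extension construction's curvature bound survives the limit, so that the fibration theorem applies to $\widehat N$ and its boundary $\partial N$ simultaneously, with the fiber degenerating from $S^1$ (over $N^\circ$) to a half-disk (over $\partial N$) in a controlled way. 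Once that local normal form is established, the gluing and the homotopy statement are formal.
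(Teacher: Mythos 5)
Your high-level picture (extend/double, get an $I$-bundle over the interior, a half-disk fiber over $\partial N$, glue) matches the statement, but the route you sketch does not reflect how the structure is actually established, and it leaves the hardest point unproved. First, a conceptual problem: when $\dim N = n-1$, the cylindrically extended spaces $\tilde M_i$ do \emph{not} collapse. Their limit $Y$ is $n$-dimensional (it contains $X \cong N$ of dimension $n-1$ plus the warped cylinder direction), so the fibration theorem never enters. The paper instead invokes Perelman's topological stability theorem to get $M_i \cong Y$ directly, and then reads off the singular $I$-bundle structure from the explicit description $Y = C_0 \times_{\tilde\phi}[-t_0,t_0]/\tilde f$, $N = C_0/f$ established in Theorem \ref{thm:singIbund}. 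Your alternative---doubling $M_i$ to get a closed manifold collapsing with one-dimensional fiber $S^1$---could in principle be made to work, but you neither prove what the limit of $D(M_i)$ is nor how the $S^1$-fiber of a doubled picture splits into an $I$-fiber of $M_i$, so as written this is not a proof.

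The genuine gap is the claim that the singular locus equals exactly $\partial N$. You propose a blow-up argument at points near $\partial M_i$ and assert that the two-sided bound on $\mathrm{II}_{\partial M_i}$ forces a $D^2_+$ local model, but no mechanism is given that rules out the singular locus intruding into $\mathrm{int}\,N$. In fact $a$ $priori$ the fixed set $F$ of the involution $f$ on $C_0$ could meet the interior. The paper's proof of this (Lemma \ref{lem:partial}) is a real argument: it uses that $\eta_0(F)$ is an extremal subset, iterates the space-of-directions construction along $F$, and applies Kapovitch's theorem that iterated spaces of directions of a smoothable Alexandrov space are spheres to show that an interior fixed point would produce a free $\mathbb Z_2$-quotient $\mathbb S^\ell/\mathbb Z_2$ with $\ell \ge 2$ as an iterated space of directions of the smoothable $Y$ --- a contradiction. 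A local rescaling at the boundary, as you propose, does not see this global obstruction, and without it you cannot conclude that the $D^2_+$-piece of the decomposition sits exactly over $\partial N$ rather than over a larger extremal set. Once that is in place, the collar-neighborhood trichotomy (product versus twisted $R_{t_0}$-bundle) and the homotopy equivalence are indeed routine, as you say.
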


Next, we consider inradius collapse to almost regular spaces.
We say that an Alexandrov space $N$ is {\em almost regular} 
if any point of  $N$  has the space of directions 
whose volume is close to $\vol\,\mathbb{S}^{\dim N-1}$,
where $\mathbb S^m$ denotes the unit $m$-sphere.

\begin{thm}\label{thm:RMBcap}
Let a sequence $M_i$ in $\ca M(n,\kappa,\lambda, d)$ inradius collapse
to an Alexandrov space $N$, and suppose that 
the limit of $\pa M_i$ is almost regular and 
%
\[
   \vol(\Sigma_x(N)) >\frac{1}{2}\vol  \mathbb S^{m-1}
\]
for all $x\in N$.
Then the topology of $M_i$ can be classified 
into the following two types:
\begin{itemize}
\item[(a)] There exists a locally trivial fiber bundle
                       $$F_i\times I\to M_i\to N,$$
  where $F_i$ is a closed almost nonnegatively curved manifold  in a generalized sense as in \cite{Ya:pinching};
\item[(b)] There exists a locally trivial fiber bundle
                       $${\rm Cap}_i \to M_i\to N,$$
  where ${\rm Cap}_i$ (resp. $\partial\, {\rm Cap}_i$) is an  almost nonnegatively curved manifold with boundary
( resp.  a closed almost nonnegatively curved connected manifold) in a generalized sense as in \cite{Ya:pinching}.
\end{itemize}
\end{thm}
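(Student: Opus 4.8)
\textbf{Proof proposal for Theorem \ref{thm:RMBcap}.}

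The plan is to combine the codimension-one type structure already established in Theorem \ref{thm:codim1} with the fibration theory for almost regular limits. First I would invoke Theorem \ref{thm:codim1} to get, for large $i$, a singular $I$-fiber bundle $\pi\colon M_i\to N$ whose singular locus is $\partial N$ and the decomposition $M_i = N\tilde\times I\cup \partial N\tilde\times D^2_+$; under the almost-regularity hypothesis on $N$ and on $\lim\partial M_i$, the idea is that the singular strata degenerate in a controlled way, so that the gluing can be reorganized into a genuine locally trivial bundle. The hypothesis $\vol(\Sigma_x(N))>\tfrac12\vol\,\mathbb S^{m-1}$ is exactly the threshold ensuring that $\Sigma_x(N)$ is either (topologically) a sphere or a hemisphere-like space; this dichotomy on the space of directions is what will produce the two cases (a) and (b). In case $x\in\partial N$ the space of directions has boundary, and its double or its structure forces a ``cap'' fiber; in the interior case one gets a closed fiber crossed with $I$.

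Next I would set up the fibration over $N$ itself. Since $N$ is an Alexandrov space with curvature $\ge c(\kappa,\lambda)$ (Theorem \ref{thm:limit-alex}) and is almost regular with all spaces of directions of volume more than half that of the round sphere, the generalized fibration theorem of \cite{Ya:pinching} (in the form used for lower curvature collapse, cf. \cite{FY2}, \cite{SY:volume}) applies to the double $2M_i$ (the glued-up closed Alexandrov space from \cite{wong0}), giving a locally trivial fiber bundle $2M_i\to N$ whose fiber $\hat F_i$ is a closed almost nonnegatively curved manifold in the generalized sense. The manifold $M_i$ sits inside $2M_i$ as ``half'' of it, cut along $\partial M_i$; the preimage structure of this half under the fibration determines the fiber of $M_i\to N$. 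When $\partial M_i$ is transverse to the fibration and each fiber meets it in a two-sided hypersurface separating $\hat F_i$ into two pieces, one obtains the product-type fiber $F_i\times I$; when instead $\partial M_i$ meets a fiber in a hypersurface that bounds in $\hat F_i$, cutting produces a ``cap'' $\mathrm{Cap}_i$ with $\partial\,\mathrm{Cap}_i$ a closed almost nonnegatively curved manifold. The almost-regularity of $\lim\partial M_i$ is what guarantees that the restriction $\partial M_i\to N$ (resp. $\to\partial N$) is itself a fibration with controlled fibers, pinning down $\partial\,\mathrm{Cap}_i$ and $F_i$ to be almost nonnegatively curved and connected.

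I would then verify the almost nonnegative curvature of the fibers by the usual rescaling argument: blowing up $M_i$ at the scale of $\inrad(M_i)$ (or the relevant collapsing scale along a fiber), the curvature lower bound $\kappa$ and the bound $|\mathrm{II}_{\partial M_i}|\le\lambda^2$ both scale to zero, so the fibers carry metrics with $K\ge -\epsilon_i\to 0$ and bounded diameter, hence are almost nonnegatively curved in the generalized sense of \cite{Ya:pinching}; the same blow-up applied near $\partial M_i$ handles $\mathrm{Cap}_i$ and its boundary. Connectedness of $\partial\,\mathrm{Cap}_i$ (resp. of $F_i$ in case (b)) should follow from a Morse-theoretic / connectedness argument on the fiber of the doubled space, using that the separating hypersurface is connected because $\Sigma_x$ has large volume.

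The main obstacle I anticipate is the precise patching: showing that the singular $I$-bundle of Theorem \ref{thm:codim1}, together with the fibration of the double from \cite{Ya:pinching}, can be made \emph{compatible} — i.e.\ choosing the two fibrations so that $\partial M_i$ is simultaneously a subbundle — and then reading off which of the two alternatives (a) or (b) occurs from the local model $\Sigma_x(N)\tilde\times(\text{cap or }I)$. This is essentially a relative version of the stability/fibration theorem over an Alexandrov base with boundary, and controlling the gluing region $\partial N\tilde\times D^2_+$ so that it glues to a genuine bundle chart is where the real work lies. The volume threshold $\tfrac12\vol\,\mathbb S^{m-1}$ will be used repeatedly to rule out the more degenerate local models and to force the fibers into exactly these two types.
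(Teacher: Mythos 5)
Your proposal takes a genuinely different route from the paper, and it has a few serious gaps.

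The paper does \emph{not} double $M_i$ along $\partial M_i$. It forms the cylindrical extension $\tilde M_i = M_i\cup_{\partial M_i}(\partial M_i\times_\phi[0,t_0])$, which by Kosovskii's criterion is an Alexandrov space with \emph{totally geodesic} boundary $C_{M_i,t_0}$, and then considers the double $D(\tilde M_i)$ glued along that outer boundary. This double carries a natural reflection $\mathbb Z_2$-action, and $(D(\tilde M_i),\mathbb Z_2)$ converges equivariantly to $(W,\mathbb Z_2)$ where $W = Y\amalg_{C_{t_0}}Y$ (Lemma~\ref{equi-GH-conv}). The paper then applies the \emph{Equivariant Fibration-Capping Theorem} (Theorem~\ref{thm:cap}, from \cite{Ya:four}) to get a $\mathbb Z_2$-equivariant capping fibration $D(\tilde M_i)\to W_\nu$, which by equivariance descends to a map $f_i\colon\tilde M_i\to Y$. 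The dichotomy (a)/(b) then follows from Corollary~\ref{cor:1/2}: your volume hypothesis forces $X=X_1$ (so $\eta_0$ is an isometry, $W$ has boundary, and the capping fibration gives the $\mathrm{Cap}_i$ bundle) or $X=X_2$ (so $\eta_0$ is a double cover, $W$ is closed, and the ordinary fibration gives the $F_i\times I$ bundle).

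Concrete problems with your route. First, the space you propose to fiber --- ``the double $2M_i$'' glued along $\partial M_i$ --- is in general \emph{not} an Alexandrov space with a lower curvature bound: under $|{\rm II}_{\partial M_i}|\le\lambda^2$ the second fundamental form may be negative on both sides, so Kosovskii's gluing criterion $L_0+L_1\ge 0$ fails. The entire point of the cylindrical extension and the double along the totally geodesic outer boundary is to evade this. Without an Alexandrov structure on your doubled space, the fibration theorem of \cite{Ya:pinching} simply does not apply to it. Second, you lead with Theorem~\ref{thm:codim1}, but that theorem assumes $\dim N=n-1$, which is not a hypothesis of Theorem~\ref{thm:RMBcap}. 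Third, the ``main obstacle'' you identify --- making the fibration of the double compatible with $\partial M_i$ as a subbundle, and reading off (a) vs.\ (b) from how $\partial M_i$ separates the fiber --- is exactly the crux, and it is left unresolved in your sketch. The paper sidesteps this compatibility question entirely: by using the $\mathbb Z_2$-equivariant capping fibration on $D(\tilde M_i)$, the map automatically descends to $\tilde M_i$, the decomposition into interior/cap parts is $\mathbb Z_2$-invariant, and the boundary behavior is handled by the capping theorem rather than by an ad hoc transversality argument on $\partial M_i$. I would recommend abandoning the ``double along $\partial M_i$ and cut'' plan and instead working through the equivariant capping fibration applied to $D(\tilde M_i)$, which is the mechanism the paper actually uses.
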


In general, the almost regularity of $N$ implies  that of the limit of $\pa M_i$ (Proposition \ref{prop:quotient}).
However the converse if not true (see Example \ref{ex:D}). 

It should also be pointed out that several fibration theorem were obtained in  \cite{wong2}
in some cases, where the nonnegativity of the second fundamental form ${\rm II}_{\pa M_i}\ge 0$, 
or the upper bound $K_{M_i}\le \kappa^2$ and the lower bound for the injectivity radius
${\rm inj}(M_i)\ge i_0>0$ were assumed.
%


Next we discuss the number of boundary components of inradius collapsed
manifolds,
where we do not assume the diameter bound.

\begin{thm} \label{thm:two}
There exists a positive number $\epsilon=\epsilon_n(\kappa, \lambda)$
such that if $M$ in  $\ca M(n,\kappa,\lambda)$  satisfies   ${\rm inrad}(M)<\epsilon$,
then
\begin{enumerate}
 \item the number $k$ of connected components of $\partial M$ is at most two;
 \item if $k=2$, then $M$ is diffeomorphic to $W\times [0,1]$, where $W$ is a component of 
         $\partial M$.
\end{enumerate}
\end{thm}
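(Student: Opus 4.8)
The plan is to argue by contradiction using a rescaling/limiting argument, exploiting the fact that when $\inrad(M)$ is small the manifold is forced to be thin in a quantified way near its boundary. Suppose no such $\epsilon_n(\kappa,\lambda)$ exists. Then there is a sequence $M_i$ in $\ca M(n,\kappa,\lambda)$ with $\inrad(M_i)\to 0$ for which either $\partial M_i$ has at least three components, or $\partial M_i$ has exactly two components but $M_i$ is not diffeomorphic to a product $W\times[0,1]$. Since we do not assume a diameter bound, I would work with the pointed Gromov--Hausdorff setting: after passing to a subsequence and choosing base points appropriately, $M_i$ inradius collapses to a pointed Alexandrov space $N$ with curvature $\ge c(\kappa,\lambda)$ by (the pointed version of) Theorem \ref{thm:limit-alex}, and $\dim N\le n-1$ by Corollary \ref{cor:dim-collaps}. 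The essential local statement to isolate is that, near any point, the collapse has the codimension-one structure described in Theorem \ref{thm:codim1}: locally $M_i$ is a singular $I$-bundle over (an open subset of) $N$, and the $I$-fibers degenerate to points.

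The first key step is to show that the number of boundary components is controlled by the local fiber structure. For a small metric ball $B$ in the interior of $N$ (away from $\partial N$), the preimage $\pi^{-1}(B)$ in $M_i$ is an honest $I$-bundle over $B$, so its boundary consists of the two ``ends'' of the interval fibers; thus locally $\partial M_i$ has two sheets. Over $\partial N$, the $D^2_+$-bundle structure from Theorem \ref{thm:codim1} shows these two sheets get joined: $\partial (\partial N \tilde\times D^2_+)$ restricted to the boundary arc $J$ is connected, so passing through a point of $\partial N$ merges the two boundary sheets into one. Therefore the number of components of $\partial M_i$ is the number of components of the ``double cover of $N$ branched along $\partial N$'' associated to the singular $I$-bundle — concretely, $\partial M_i \to N$ is a fiberwise-$S^0$ bundle over $N\setminus\partial N$ that closes up over $\partial N$, so $\partial M_i$ has at most two components, and it has exactly two precisely when this $S^0$-bundle is trivial (orientable) \emph{and} $\partial N=\emptyset$.

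The second key step handles part (2): if $k=2$, then $\partial N=\emptyset$ and the $I$-bundle $M_i = N\tilde\times I$ is a genuine (untwisted, since the associated $S^0$-bundle on $\partial M_i$ is trivial) product-type $I$-bundle over the closed Alexandrov space $N$; identifying $N$ up to homeomorphism with a boundary component $W=\partial M_i$ via $\pi|_W$ (which is a homotopy equivalence, in fact a homeomorphism onto $N$ by the stability theorem once $N$ is a manifold point-set) gives a diffeomorphism $M_i\cong W\times[0,1]$. Here one must upgrade the topological $I$-bundle statement to a smooth product: this is where I expect the main obstacle, since Theorem \ref{thm:codim1} as stated gives homotopy-type information and a (possibly only topological) bundle structure. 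The fix is to run the fibration/gluing construction of Theorem \ref{thm:codim1} smoothly — using Wong's cylindrical extension and the fact that, with two-sided bounds on $\Pi_{\partial M_i}$, the relevant fibration maps can be taken smooth with fibers diffeomorphic to $I$ — and to observe that a smooth $I$-bundle over $W$ with trivial $S^0$-boundary-bundle and $\partial W=\emptyset$ is necessarily the trivial bundle $W\times[0,1]$.

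The remaining, mostly bookkeeping, step is to promote the contradiction hypothesis (for each $i$ there is a bad $M_i$) to a statement about a single limit: extract a convergent subsequence, apply the structural results above to all sufficiently large $i$, and conclude that those $M_i$ do satisfy (1) and (2), contradicting their choice. The delicate point in the unbounded-diameter case is that the structural theorems are local, so one runs them on a locally finite cover of $N$ by small balls and patches; but the count of boundary components and the product conclusion are both global consequences that survive the patching because the relevant $S^0$-bundle on $\partial M_i$ is globally defined. I expect no trouble from the lack of a diameter bound beyond this standard localization, so the genuine difficulty remains the smooth (as opposed to merely topological) identification $M\cong W\times[0,1]$ in case $k=2$.
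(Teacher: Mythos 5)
There is a genuine gap, and it is in the very first structural step. You assume that near any point the collapse is the codimension-one collapse of Theorem \ref{thm:codim1}, so that $M_i$ itself fibers (locally) over $N$ with interval fibers and $\partial M_i\to N$ becomes a fiberwise-$S^0$ bundle branched over $\partial N$. This is false in general: inradius collapse only forces $\dim N\le n-1$ (Corollary \ref{cor:dim-collaps}), and the codimension can be arbitrary (e.g.\ Example \ref{ex:G-bundle}; a solid torus inradius collapses to a circle, and there the ``fiber'' of $\partial M\to N$ is a torus fiber $S^1$, not $S^0$). Theorem \ref{thm:codim1} is only available when $\dim N=n-1$, and in higher codimension there is no $I$-bundle structure on $M_i$ and no $S^0$-bundle on $\partial M_i$ to count components with, nor any smooth $I$-bundle to trivialize in part (2). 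The object that always carries a singular $I$-bundle structure over $N$ is the \emph{limit space} $Y=C_0^{\rm int}\times_{\tilde\phi}[-t_0,t_0]/\tilde f$ (Theorem \ref{thm:singIbund}), not $M_i$, and the correct strategy is to exploit the fact that the gluing map $\eta_0:C_0\to X$ has multiplicity at most two and then transfer connectivity information from $C_0$ back to $\partial M_i$ through explicit Gromov--Hausdorff approximations. That is what the paper does: it classifies base points of $\delta$-limits as single, double, or mixed, proves local statements of the form ``every boundary point in a ball of scale $R$ can be joined inside $\partial M$ to one of at most two reference points'' (Lemma \ref{lem:sdm-all}), and then runs a monodromy argument along a chain of overlapping balls covering a curve between two boundary components. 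This chain argument is also what resolves the unbounded-diameter issue you dismiss as standard localization; the real danger (Remark \ref{rem:difficult}) is that boundary components escape to infinity under pointed convergence, which is why the refined distance $d^*_{pGH}$ and componentwise approximations of $(\partial M)^{\rm int}$ are introduced.

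Your treatment of part (2) inherits the same problem: you want a smooth $I$-bundle over a closed manifold $W$ with trivial boundary $S^0$-bundle, but neither the bundle nor the manifold structure on $N$ is available in general codimension. The paper instead shows (Lemma \ref{lem:disconn-pi}) that when $\partial M$ is disconnected the two perpendicular feet over any limit point are at distance exactly $2t_0$, so the distance function $d_W$ from one boundary component $W$ is $(\pi/2-\tau(\delta))$-regular on a neighborhood of $M$ in $\tilde M$; integrating a glued gradient-like vector field then produces the diffeomorphism $M\cong W\times[0,1]$ directly, with no appeal to a fibration of $M$ over $N$.
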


Theorem \ref{thm:two} (1) was stated in \cite[Theorem 5]{wong2}. 
However it seems to the authors that the argument there is unclear  (see Remark \ref{rem:unclear}).
Theorem \ref{thm:two} may be considered as a generalization of  a result  
in Gromov\cite{G:synthetic} and Alexander and Bishop\cite{AB}, where an $I$-bundle structure was found for an inradius collapsed manifold
under the two-sides bound on sectional curvature.
It should be pointed out that the constants $\epsilon(\kappa, \lambda)$  in \cite{G:synthetic} and \cite{AB} are 
explicit and independent of $n$ while our constant  $\epsilon_n(\kappa, \lambda)$ is neither. This is because our 
argument is by contradiction.

The organization and the outline of the proofs are as follows.

In section \ref{sec:prelim},  we first recall basic notions and facts on the Gromov-Hausdorff convergence
and Alexandrov spaces with curvature bounded below. Then we focus on Wong's extension procedure of 
a Riemannian manifold with boundary by gluing a warped cylinder along their boundaries.
By \cite{Kos}, 
the result of the gluing is a $C^{1,\alpha}$-manifold with $C^0$-Riemannian metric, and becomes an 
Alexandrov spaces with curvature uniformly bounded below. 
This construction is quite effective and used in an essential way in the present paper.

In section \ref{sec:str-limit}, we describe limit spaces of glued Riemannian 
manifolds with boundary. The limit spaces also have gluing structure.
In this section we focus on the estimate of multiplicities of gluing, 
the intrinsic metric structure of the limit space and a general description 
of the limit spaces of extensions.

In Section \ref{sec:metric}, we determine the metric structure of limit spaces.
First we study the spaces of directions of the limit space at gluing points, and 
prove that the gluing map preserves the length of curves. This implies that 
the gluing in the limit space is done metrically  in a natural manner, 
and yields significant structure results (see Theorem \ref{thm:singIbund}) on the limits 
including Theorem \ref{thm:limit-alex}.

Those structure results are applied in Section \ref{sec:fib} to obtain 
the fiber structures of inradius collapsed manifolds. Theorems \ref{thm:codim1} and \ref{thm:RMBcap}
are proved there. To prove Theorem \ref{thm:codim1}, we need to analyze the singularities of
the singular $I$-fiber bundle in details.
To prove Theorem \ref{thm:RMBcap}, we apply an equivariant fibration-capping 
theorem in \cite{Ya:four}.

To prove Theorem \ref{thm:two}, we consider the case of unbounded diameters in Section \ref{sec:unbounded}.
Applying the results in Section \ref{sec:metric}, we obtain basically three types 
on local connectedness of the boundary of an inradius collapsed complete manifold, accdording to the 
types of  the local limit spaces.
After such local obsrvation, Theorem \ref{thm:two} follows from a
monodromy argument.

Our approach can be applied to the general case of non inradius collapse of 
Riemannian manifolds with boundary. As a continuation of the present paper,
in \cite{YZ:general}, we obtain  the structure of 
limit spaces, stabilities of topological types and volumes,  in the general framework 
of non inradius collapse/convergence, and get an obstruction to 
the general collapse.

%
%

\section{Preliminaries}\label{sec:prelim}

In order to make the presented paper more accessible, we fix some basic definition, notations and conventions.
\begin{itemize}
\item $\tau(\delta)$ is a function which depends on $\delta$ such that $\lim_{\delta\to0}\tau(\delta)=0$.
\item For topological spaces $X$ and $Y$, $X\approx Y$ means $X$ is homeomorphic to $Y$.
\item The distance between two points $x,y$ in a metric space is denoted by $d(x,y)$, $|x,y|$ or $|xy|$.
\item For a point $x$ and a subset $A$ of a metric space $X$, $B(x, r)=B^X(x,r)$ and $B(A, r)=B^X(A,r)$ 
denote open $r$-balls in $X$ around $x$ and $A$ respectively.
\item For a metric space $(X,d)$, and $r>0$, the rescaled metric space $(X,rd)$ is denoted by $rX$.

\item  The Euclidean cone $K(\Sigma)$ over a metric space $(\Sigma,\rho)$ is $\Sigma\times[0, \infty)$ 
        equipped with the metric $d$ defined as 
$$
        \hspace{1.3cm}   d((x_1,t_1),(x_2,t_2))=(t_1^2 + t_2^2-2t_1t_2\cos(\min\{\rho(x_1,x_2),\pi\}))^{1/2},
$$
for  any two points $(x_1,t_1), (x_2,t_2) \in \Sigma\times[0,\infty)$. 
\item For a subspace $M$ of a metric space $(\tilde M, d_{\tilde M})$,  $M^{\rm ext}$ denotes $(M,d_{\tilde M})$,
        which is called the exterior metric of $M$.
\item 
The metric $d$ of  a connected metric space $(X,d)$ induces a length metric $d_{\rm int}$ of  $X$
defined as the infimum of the length of all curves joining given two points.
We denote by $X^{\rm int}$ the new metric space  $(X, d_{\rm int})$. 
\item The length of a curve  $\gamma$ is denoted by $L(\gamma)$.
\end{itemize}


\subsection{The Gromov-Hausdroff convergence} \label{subsec:GH}

A (not necessarily continuous) map $f:X\ra Y$ between two metric spaces $X$ and $Y$ is called an
 {\em $\ve$-approximation} if it satisfies
 \begin{enumerate}
\item  $|d(x,y)-d(f(x),f(y))|<\ve$, for all $x,y\in Y$,
\item $f(X)$ is $\ve$-dense in $Y$, i.e.,  $B(f(X),\ve)=Y$.
\end{enumerate}
The Gromov-Hausdorff distance $d_{GH}(X,Y)$ is defined as the infimum of those $\ve$ such that 
there are $\ve$-approximations $f:X\to Y$ and $g:Y\to X$.


 A map $f:(X,x)\ra (Y,y)$ between two pointed metric spaces is called a
 {\em pointed $\ve$-approximation} if it satisfies
 \begin{enumerate}
 \item $f(x)=y$,
\item  $|d(x,y)-d(f(x),f(y))|<\ve$, for all $x,y\in B^X(x,1/\ve)$,
\item  $f(B^X(x,1/\ve))$ is $\ve$-dense in  $B^Y(y,1/\ve)$.    
\end{enumerate}
The pointed Gromov-Hausdorff distance $d_{pGH}((X,x), (Y,y))$ is defined as the infimum of those $\ve$ such that 
there are pointed $\ve$-approximations $f:(X,x)\to (Y,y)$ and $g:(Y,y)\to (X,x)$.

Consider a pair $(X,\Lambda)$ of a metric space $X$ and a group $\Lambda$ of isometries of $X$.
For such pairs $(X,\Lambda)$, $(Y,\Gamma)$, a triple $(f, \varphi, \psi)$ of maps $f:X\to Y$, 
$\varphi:\Lambda\to\Gamma$ and $\psi:\Gamma\to\Lambda$ is called an 
\emph{equivariant $\ve$-approximation} from  $(X,\Lambda)$ to $(Y,\Gamma)$  if the following holds
 \begin{enumerate}
\item  $f$ is an $\ve$-approximation,
\item  if $\lambda\in\Lambda$, $x\in X$,
         then $d(f(\lambda x), (\varphi\lambda)(fx))<\ve$,
\item  if $\gamma\in\Gamma$, $x\in X$,
         then $d(f(\psi(\gamma)x),\gamma(fx))<\ve$.
\end{enumerate}
The {\em equivariant Gromov-Hausdorff distance} $d_{eGH}((X, \Lambda), (Y, \Gamma))$ is defined as the infimum 
of those $\ve$ such that there are $\ve$-approximations from  $(X,\Lambda)$ to $(Y,\Gamma)$
and from   $(Y,\Gamma)$ to $(X,\Lambda)$.

\subsection{Alexandrov spaces}\label{ssec:Alex}
Let $X$ be a geodesic metric space,  where every two points of $X$ can be joined by a
shortest geodesic. For a fixed real number $\kappa$ and a geodesic triangle
$\Delta pqr$ in $X$ with vertices $p$, $q$ and $r$,
denote by $\tilde\Delta pqr$ 
 a {\em comparison triangle} in the complete simply connected model surface $M_\kappa^2$ with constant curvature
$\kappa$.
 This means that $\tilde\Delta pqr$ has the same side lengths as the corresponding ones in $\Delta pqr$.
Here we suppose that the perimeter of $\Delta pqr$ is less than $2\pi/\sqrt\kappa$
if $\kappa> 0$.
The metric space $X$ is called an {\em Alexandrov space
with curvature $\geq \kappa$},
sometimes Alexandrov space for short if we do not  emphasis the lower curvature bound,
if each point of $X$
has a neighborhood $U$ satisfying the following:
For any geodesic triangle in $U$ with vertices
$p$, $q$ and $r$ and for any point $x$ on the segment $qr$,
we have $|px| \geq |\tilde{p}\tilde{x}|$,
where $\tilde x$ is the point on $\tilde{q}\tilde{r}$ corresponding to $x$.
From now on we assume that an Alexandrov space is always finite dimensional.

 For an Alexandrov space $X$ with curvature bounded below by $\kappa$, 
let $\alpha:[0,s_0]\to X$ and $\beta:[0,t_0]\to X$ be two geodesics starting from a point $x$.
The {\em angle} between $\alpha$ and $\beta$ is defined by 
$\angle(\alpha,\beta)=\lim_{s, t\to0}\tilde\angle \alpha(s)x\beta(t)$,
where $\tilde\angle \alpha(s)x\beta(t)$ denotes the angle of a 
comparison triangle $\tilde \Delta \alpha(s)x\beta(t)$ at the point 
$\tilde x$.
Two geodesics $\alpha$, $\beta$ from $x\in X$ is called {\it equivalent}  if
 $\angle(\alpha,\beta)=0$. 
We denoted by $\Sigma_x'(X)$ the set of equivalent classes of geodesics emanating from $x$.
The {\em space of directions} at $x$, denoted by $\Sigma_x=\Sigma_x(X)$, is the completion of
$\Sigma_x'(X)$ with the angle metric.
A direction of minimal geodesic from $p$ to $x$ is also denoted by $\uparrow_p^x$.
Let  $X$ be  $n$-dimensional. Then  $\Sigma_x$ is an $(n-1)$-dimensional compact
Alexandrov space with curvature $\ge 1$.

A points $x\in X$ is called {\em regular} if $\Sigma_x$ is isometric to $\mathbb{S}^{n-1}$.
Otherwise we call $x$ a singular point. We denote by $X^{\rm reg}$ (resp. $X^{\rm sing}$)
the set of all regular points (resp. singular points) of $X$. 

The {\em tangent cone} at $x\in X$, denoted by $T_x(X)$, 
is the Euclidean cone $K(\Sigma_x)$ over $\Sigma_x$.
It is known that  $T_x(M)=\lim_{r\to0}\left( \frac{1}{r}M,x \right)$.

For a closed subset $A$ of $X$ and $p\in A$, the space of directions  $\Sigma_p(A)$
of $A$ at $p$ is defined as the set of 
all $\xi\in \Sigma_x(X)$ which can be written as the limit of 
directions from $p$ to points $p_i$ in $A$ with $|p,p_i|\to 0$:
\[
         \xi =\lim_{i\to\infty} \uparrow_p^{p_i}.
\]
For $x, y\in X\setminus A$, consider a comparison triangle on  $M_{\kappa}^2$
having the side-length $(|A,x|, |x,y|, |y, A|)$ whenever it exists.
Then $\tilde\angle Axy$ denotes the angle of this comparison triangle at the vertex
corresponding to $x$.

For $x,y,z\in X$, we denote by $\angle xyz$ (resp. $\tilde\angle xyz$) the angle
between the geodesics $yx$ and $yz$ at $x$ (resp. the geodesics $\tilde y\tilde x$ and 
$\tilde y\tilde z$ at $\tilde x$ in the comparison triangle 
$\tilde\triangle xyz=\triangle \tilde x \tilde y \tilde z$).

Let $X$ be an $n$-dimensional Alexandrov space with curvature bounded below by $\kappa$.
For  $\delta>0$, a system of $n$ pairs of points, $\{a_i,b_i\}_{i=1}^n$ is called an
$(n,\delta)$-\emph{strainer} at $x\in X$ if it satisfies
\begin{align*}
   \tilde\angle_\kappa a_ixb_i > \pi - \delta, \quad &
                \tilde\angle_\kappa a_ixa_j > \pi/2 - \delta, \\
      \tilde\angle_\kappa b_ixb_j > \pi/2 - \delta,\quad &
                \tilde\angle_\kappa a_ixb_j > \pi/2 - \delta,
\end{align*}
for every $1\le i\neq j\le n$.
If  $x\in X$ has a $(n,\delta)$-strainer, then we say $x$ is $(n,\delta)$-strained.
In this case, we call $x$ {\em $\delta$-regular}.
We call $X$ {\em almost regular} if every point of $X$ is $\delta_n$-regular 
for some $\delta_n < 1/100n$.
It is known that a small neighborhood of any almost regular point is 
almost isometric to an open subset in $\mathbb R^n$.

Inductively on the dimension, the boundary $\partial X$ is  defined as the set of points $x\in X$ 
such that $\Sigma_x$ has non-empty boundary $\partial\Sigma_x$.
We denote by $D(X)$ the double of $X$, which is also
an Alexandrov space with curvature $\ge \kappa$
(see \cite{Pr:alexII}). By definition, $D(X)=X\amalg_{\partial X} X$, where tow copies of $X$ are glued
along their boundaries.

A boundary point $x\in\partial X$ is called {\em $\delta$-regular} if $x$ is $\delta$-regular
in $D(X)$. We say that $X$ is {\em almost regular with almost regular boundary}
if every point of $X$ is $\delta$-regular in $D(X)$ for $\delta<1/100n$.

In Section \ref{ssec:codim1}, we need the following result on the dimension of the interior singular point sets.
We set ${\rm int}X:= X\setminus \pa X$.

\begin{thm} [\cite{BGP}, cf. \cite{OS}]   \label{thm:dim-sing}
$$
            \dim_H(X^{\rm sing}\cap {\rm int}X) \le n-2,  \,\,\dim_H(\partial X)^{\rm sing} \le n-2,
$$  
where $(\partial X)^{\rm sing}=D(X)^{\rm sing}\cap \pa X$.
\end{thm}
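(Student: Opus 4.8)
\emph{Proof plan.} First I would note that both estimates follow from a single statement about an arbitrary Alexandrov space with curvature bounded below: \emph{if $\dim Y=m$, then $\dim_H(Y^{\rm sing}\cap\mathrm{int}Y)\le m-2$}. Indeed, this applied to $Y=X$ is the first estimate; applied to $Y=D(X)$ --- which has $\pa D(X)=\varnothing$ and $\dim D(X)=n$ --- it gives $\dim_H\bigl(D(X)^{\rm sing}\bigr)\le n-2$, and hence $\dim_H(\pa X)^{\rm sing}=\dim_H\bigl(D(X)^{\rm sing}\cap\pa X\bigr)\le n-2$. So it suffices to bound the interior singular set of an $n$-dimensional Alexandrov space.

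The crux is the reduction: \emph{if $x\in\mathrm{int}X$ admits an $(n-1,\de)$-strainer for every $\de>0$, then $x$ is a regular point of $X$}. Granting this, any $x\in X^{\rm sing}\cap\mathrm{int}X$ fails to carry an $(n-1,\de_0)$-strainer for some $\de_0>0$; writing $S_\de$ for the set of points of $X$ with no $(n-1,\de)$-strainer (these sets decrease as $\de$ increases), we get $x\in S_{1/j}$ for $j\ge1/\de_0$, so $X^{\rm sing}\cap\mathrm{int}X\subseteq\bigcup_{j\ge1}S_{1/j}$. By the strainer dimension estimate of \cite{BGP} --- the set of points of an $n$-dimensional Alexandrov space admitting no $(m,\de)$-strainer has Hausdorff dimension at most $m-1$, for every $1\le m\le n$, proved there by induction on $m$ via the almost-submetry structure of strainer maps --- each $S_{1/j}$ has $\dim_H\le n-2$, and countable stability of Hausdorff dimension completes the proof.

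To prove the reduction, I would choose for each $j$ an $(n-1,1/j)$-strainer $\{a_i^j,b_i^j\}_{i=1}^{n-1}$ at $x$ and pass to a subsequential limit of the directions $\uparrow_x^{a_i^j},\uparrow_x^{b_i^j}$ in the compact space $\Sigma_x$; using $\tilde\angle\le\angle$ (and that the angle at $x$ is the distance in $\Sigma_x$) this yields an \emph{exact} strainer $\{A_i,B_i\}_{i=1}^{n-1}$ in $\Sigma_x$, i.e.\ $|A_iB_i|=\pi$ for all $i$ and $|A_iA_j|=|A_iB_j|=|B_iB_j|=\pi/2$ for $i\ne j$ (the equalities among the last group follow because $|A_1B_1|=\pi$ forces $\diam\Sigma_x=\pi$, hence $\Sigma_x$ is a spherical suspension with poles $A_1,B_1$; iterating this diameter rigidity splits $\Sigma_x$ isometrically as a spherical join $\Sigma_x=\mathbb{S}^{n-2}*\Lambda$ with $\Lambda$ a compact $0$-dimensional Alexandrov space of curvature $\ge1$). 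Then $K(\Sigma_x)=\mathbb{R}^{n-1}\times K(\Lambda)$, and since $K(\Sigma_x)$, being a tangent cone, is a length space, so is $K(\Lambda)$: for $0$-dimensional $\Lambda$ this forces any two distinct points of $\Lambda$ to be at distance exactly $\pi$ (points at distance $<\pi$ could not be joined by a path), and it rules out three such points, as otherwise $\mathbb{R}^{n-1}\times K(\Lambda)$ would contain an isometrically embedded metric tripod and so could not be an Alexandrov space. Hence $\Lambda$ is a single point or $\mathbb{S}^0$. In the first case $\Sigma_x=\mathbb{S}^{n-1}_+$, so $x\in\pa X$, contrary to $x\in\mathrm{int}X$; in the second $\Sigma_x=\mathbb{S}^{n-1}$, i.e.\ $x$ is regular, as claimed.

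The main obstacle is the imported dimension estimate for non-strained sets: this is a substantial result of \cite{BGP} resting on the full local structure theory of strainer maps, and is not elementary. The remaining ingredients --- the diameter/strainer rigidity giving the join splitting $\Sigma_x=\mathbb{S}^{n-2}*\Lambda$, the elementary classification of the $0$-dimensional factor $\Lambda$, and the passage to the double $D(X)$ for the boundary statement --- are standard or routine, and I anticipate no difficulty there.
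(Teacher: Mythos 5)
The paper cites this theorem from \cite{BGP} (cf.\ \cite{OS}) and does not reprove it, so there is no in-paper argument to compare against. Your sketch is a correct reduction and is essentially the standard derivation: the substantive input is the BGP estimate that the locus of points without an $(m,\delta)$-strainer has Hausdorff dimension at most $m-1$; your added ingredient is the observation that an interior point carrying $(n-1,\delta)$-strainers for all $\delta>0$ must be regular, proved by passing the strainers to the limit in $\Sigma_x$, iterating the maximal-diameter (spherical suspension) rigidity to obtain $\Sigma_x\cong\mathbb{S}^{n-2}*\Lambda$ with $\Lambda$ a $0$-dimensional Alexandrov space of curvature $\ge1$, and then classifying $\Lambda$ via the requirement that $K(\Sigma_x)=\mathbb{R}^{n-1}\times K(\Lambda)$ be a length Alexandrov space; passage to the double $D(X)$ then yields the boundary estimate. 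I see no gap. The one stylistic point is the clause about ``containing an isometrically embedded tripod'': the decisive fact there is that the three legs of $\{0\}\times K(\Lambda)$ are minimal geodesics that branch at the cone vertex, which is what violates the Alexandrov non-branching property --- it is worth saying that explicitly rather than leaving the reader to recover it.
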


\begin{thm} [\cite{Pr:alexII}, cf.\cite{Kap:stab}]      \label{thm:stability}
If a sequence $X_i$ of $n$-dimensional compact Alexandrov spaces 
with curvature $\ge \kappa$ Gromov-Hausdorff converges to an $n$-dimensional compact Alexandrov space $X$,
then $X_i$  is homeomorphic to $X$ for large enough $i$.
\end{thm}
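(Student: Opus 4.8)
The plan is to prove the statement by induction on the dimension $n$, following Perelman's scheme: one upgrades a sufficiently fine Gromov--Hausdorff approximation $f_i\colon X_i\to X$ into a genuine homeomorphism, using the local conic structure of Alexandrov spaces together with the theory of \emph{admissible} (regular) distance-type maps. The base cases $n\le 1$ are immediate, since a compact Alexandrov space of curvature $\ge\kappa$ of dimension $\le 1$ is a point, a circle, or a closed interval, and its homeomorphism type is read off from the limit. To make the induction run, one must actually prove a stronger \emph{parametrized} statement: there is $\epsilon=\epsilon(X)>0$ and a function $\tau$ with $\tau(0^+)=0$ such that every $\epsilon$-approximation from an $n$-dimensional Alexandrov space $Y$ with curvature $\ge\kappa$ and $d_{GH}(X,Y)<\epsilon$ is $\tau(\epsilon)$-close to a homeomorphism $Y\approx X$, with uniform control. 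The theorem then follows because $d_{GH}(X_i,X)\to 0$.

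For the inductive step, fix $p\in X$ and a maximal $(k,\delta)$-strainer $\{a_j,b_j\}_{j=1}^k$ at $p$. The associated admissible map $f=(d(a_1,\cdot),\dots,d(a_k,\cdot))\colon B(p,r)\to\mathbb R^k$ is \emph{regular} at $p$ in Perelman's sense (there is a direction making angle $>\pi/2+c(n)$ with each $\uparrow_p^{a_j}$); such maps are open, locally proper and co-Lipschitz, and by the fibration theorem $f$ is, near $p$, a locally trivial bundle whose fiber is an MCS-space (a space with multiple conic singularities) with an isolated conic point modelled on a cone $K(S)$, where $S$ is a compact Alexandrov space of curvature $\ge 1$ and $\dim S=n-k-1<n-1$. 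Hence a neighborhood of $p$ is homeomorphic to $\mathbb R^k\times K(S)$. Since $\dim X_i=\dim X=n$ (non-collapsing), points $p_i\in X_i$ with $f_i(p_i)\to p$ carry a slightly perturbed copy of the same strainer, and the corresponding link spaces Gromov--Hausdorff converge to $S$ with dimension staying $=\dim S$; applying the induction hypothesis to these lower-dimensional limits of links gives $S_i\approx S$, and therefore a homeomorphism $B^{X_i}(p_i,r')\approx\mathbb R^k\times K(S)\approx B^X(p,r')$ which is $\tau(\epsilon)$-close to $f_i$, with all constants (strainer quality, radii, $\tau$) uniform over $i$.

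It remains to assemble these controlled local homeomorphisms into a global one. Cover $X$ by finitely many balls $U_1,\dots,U_N$ of the above type with uniformly controlled geometry, and construct the homeomorphism $X_i\to X$ inductively over $U_1\cup\cdots\cup U_m$. At each stage the homeomorphism already built on $U_1\cup\cdots\cup U_{m-1}$ and the local homeomorphism on $U_m$ differ on the overlap only by a self-map that is uniformly close to the identity and compatible with the relevant admissible-map structures; one patches them by an isotopy-extension / local-contractibility argument for homeomorphisms of MCS-spaces (Siebenmann-type), after a general-position perturbation of the admissible maps so that the two descriptions of the overlap become transverse and the gluing becomes canonical. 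Propagating the $\tau(\epsilon)$-control through this finite induction yields the parametrized statement. (Kapovitch's reworking in \cite{Kap:stab} organizes exactly this patching via a clean inductive framework for MCS-structures.)

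The main obstacle is precisely this final gluing: converting a family of \emph{controlled local} homeomorphisms into a single \emph{global} one. Unlike diffeomorphisms, homeomorphisms cannot be averaged, so a partition-of-unity argument is unavailable; one must instead carry quantitative $\tau(\epsilon)$-bounds through the entire induction and perform the construction simultaneously for $X$ and for all $X_i$ close to it, which is why the statement has to be strengthened to the parametrized form before the induction can even be set up. The subsidiary technical inputs --- that regular admissible maps on Alexandrov spaces are topological submersions with MCS fibers, that these fibers and their links behave continuously under Gromov--Hausdorff convergence, and that admissible maps admit enough general position --- are where both the lower curvature bound $\ge\kappa$ and the non-collapsing hypothesis $\dim X_i=\dim X$ are used in an essential way; indeed, without non-collapsing the links may drop dimension and the inductive scheme collapses.
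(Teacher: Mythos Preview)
The paper does not give its own proof of this theorem: it is quoted in the preliminaries as Perelman's stability theorem, with references to \cite{Pr:alexII} and \cite{Kap:stab}, and is then used as a black box (for instance in the proof of Theorem~\ref{thm:codim1-detail}). So there is no ``paper's proof'' to compare against; your write-up is a sketch of the argument in the cited sources, and at that level it captures the correct architecture --- induction on dimension, local conic structure via regular admissible maps, lifting of strainers under non-collapsing convergence, and a Siebenmann-type gluing of local homeomorphisms, exactly as organized in \cite{Kap:stab}.

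One point in your sketch is worth flagging as imprecise. You write that the fiber of the strainer map near $p$ is a cone $K(S)$ with $S$ a compact Alexandrov space of curvature $\ge 1$, and that the induction hypothesis applies directly to the convergence $S_i\to S$. In general the fibers of admissible maps on an Alexandrov space are \emph{not} Alexandrov spaces, only MCS-spaces, so one cannot literally feed them back into the same statement. This is precisely why both Perelman and Kapovitch formulate the induction not as ``stability for $n$-dimensional Alexandrov spaces'' but as a stronger relative statement about admissible (or framed) maps and their fibers, proved simultaneously across a hierarchy of MCS strata. Your final two paragraphs implicitly acknowledge this (``parametrized statement'', ``MCS-structures''), but the middle paragraph reads as if a naive induction on Alexandrov spaces of decreasing dimension suffices; it does not, and this is the genuine technical heart of \cite{Pr:alexII} and \cite{Kap:stab}.
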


 A subset $E$ of an Alexandrov space $X$ is called {\em extremal} (\cite{PtPt:extremal})
if every distance function $f={\rm dist}_q$, $q\in M\setminus E$ has the property
that if $f|_E$ has a local minimum at $p\in E$, then $df_p(\xi)\le 0$
for every $\xi\in\Sigma_p(X)$. Extremal subsets posses quite important properties.

\begin{thm} [\cite{PtPt:extremal}] \label{thm:exr-prop}
Let  $E$ be an extremal subset of $X$. 
\begin{enumerate}
 \item For every $p\in E$, $\Sigma_p(E)$ is an extremal subset of $\Sigma_p(X);$
 \item $E$ is totally quasigeodesic in the sense that any nearby two points of $E$ can be joined
         by a quasigeodesic (see \cite{PP QG}).
 \item $E$  has a topological stratification.
\end{enumerate}
\end{thm}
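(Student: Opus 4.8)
The statement is classical (Perelman--Petrunin), and the plan is to prove the three assertions in the order listed, with (1) carrying the technical weight and (2), (3) following from the machinery it produces. The recurring tool is the reformulation of extremality in terms of gradient flows: $E$ is extremal if and only if $E$ is invariant under the gradient flow $\Phi^t_f$ of $f=\dist_q$ for every $q\notin E$. The nontrivial direction is as follows: if $f|_E$ has a local minimum at $p$, then the condition $df_p(\xi)\le 0$ for all $\xi\in\Sigma_p(X)$ is exactly the statement that Petrunin's gradient $\nabla_p f$ vanishes, i.e. $p$ is a critical point of $f$; a standard argument using that $f$ is non-decreasing and $\lambda$-concave along gradient curves then upgrades ``every local minimum of a restriction is a critical point'' to genuine flow-invariance of $E$, and the converse is immediate. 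This reformulation is the one that behaves well under rescaling and under passing to tangent cones.

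For (1), I would first show that extremality passes to blow-ups. Rescaling, $(\tfrac1r X,p)\to (T_p X,o)=(K(\Sigma_p X),o)$ as $r\to 0$, and along a subsequence $\tfrac1r E\to T_pE$ with $T_pE=K(\Sigma_pE)$. Since gradient flows of distance functions converge under pointed Gromov--Hausdorff convergence of Alexandrov spaces with a fixed lower curvature bound, the flow-invariant formulation shows that $T_pE$ is extremal in $T_pX$. It then remains to check the elementary cone lemma that $K(A)$ is extremal in $K(B)$ if and only if $A$ is extremal in $B$: for a point $q$ on the unit sphere of $K(B)$ one compares the behaviour of $\dist_q$ near a point of $K(A)$ with that of the distance from the corresponding point of $B$, so that local minima and signs of directional derivatives correspond. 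Hence $\Sigma_p(E)$ is extremal in $\Sigma_p(X)$.

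For (2), I would use that, by (1) applied inductively on dimension together with the generalized Liberman theorem, quasigeodesics of $X$ whose velocity at each point lies in the space of directions of $E$ must stay in $E$; equivalently, the flow-invariance of $E$ forces the broken gradient curves approximating a quasigeodesic to remain in $E$. Concretely, given two nearby points $p,p'\in E$, one takes a shortest path of the intrinsic metric $d_{\rm int}$ of $E$ (which exists since $E$ is locally compact and, by (3), locally connected) and verifies, using the extremality condition at each of its points, that this path is a quasigeodesic of the ambient space $X$; this is the ``totally quasigeodesic'' claim.

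For (3), I would invoke the local conical structure of the \emph{pair} $(X,E)$: near any $p\in E$ there is a homeomorphism of a neighbourhood of $p$ in $X$ onto a neighbourhood of the apex in $K(\Sigma_pX)$ carrying $E$ onto $K(\Sigma_pE)$, proved exactly as Perelman's conical neighbourhood theorem by flowing along the gradient of a suitable distance (or strictly concave) function, all of whose flows preserve $E$ by the above. Granting it, one builds the stratification of $E$ by induction on $\dim X$: the local type of $E$ at $p$ is the cone over $\Sigma_pE$, which by (1) is an extremal subset of the $(\dim X-1)$-dimensional space $\Sigma_pX$, so the inductive hypothesis applies, and the strata are the level sets of the resulting ``extremal dimension'', each a topological manifold with the expected link. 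The main obstacle is step (1), and specifically the stability of extremality under rescaling limits: the defining condition refers to a local minimum of a restricted distance function, which is not in itself a closed condition, so the limiting argument must be routed through the flow-invariant reformulation and one must check that gradient curves converge well enough; once (1) and the flow-invariance of $E$ are established, (2) and (3) are applications of the quasigeodesic machinery and of Perelman-type gradient arguments, respectively.
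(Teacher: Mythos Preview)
The paper does not prove this theorem at all; it is quoted from \cite{PtPt:extremal} (Perelman--Petrunin) and used as a black box. So there is no ``paper's own proof'' to compare against.

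That said, your sketch is essentially the Perelman--Petrunin argument, and the organising idea --- replacing the local-minimum definition of extremality by the equivalent statement that $E$ is invariant under gradient flows of distance functions --- is exactly the right one and is what makes the blow-up argument in (1) go through. A couple of points deserve care. First, in (1) you assert that $\tfrac1r E \to K(\Sigma_p E)$; this is true but not tautological, since $\Sigma_p(E)$ is defined as the set of limit directions of $E$ at $p$, and one must check that the Hausdorff limit of the rescalings is the full cone and not something smaller --- this uses the flow-invariance to show points of $E$ near $p$ can be pushed radially. Second, your argument for (2) briefly invokes local connectedness of $E$ ``by (3)'', which would make the logic circular; the cleaner route (and the one in \cite{PtPt:extremal}) is the one you also mention: construct quasigeodesics directly as limits of broken gradient curves, all of which stay in $E$ by flow-invariance, so no appeal to (3) is needed. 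With those two adjustments the sketch is sound.
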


Theorem \ref{thm:exr-prop}$(1)$, $(2)$ implies the following

\begin{cor}  \label{cor:ext-dir}
For an extremal subset  $E$ of $X$ and $p\in E$, $\dim \Sigma_p(E)\le \dim E-1$.
\end{cor}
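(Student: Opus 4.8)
The plan is to deduce the corollary from the two structural facts in Theorem~\ref{thm:exr-prop}, namely that $\Sigma_p(E)$ is an extremal subset of $\Sigma_p(X)$ and that $E$ is totally quasigeodesic, together with the stratification statement. The key point I would extract is that a \emph{proper} extremal subset of an Alexandrov space has strictly smaller Hausdorff dimension than the ambient space; granting this, one applies it to $\Sigma_p(E) \subset \Sigma_p(X)$.

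First I would reduce to showing: if $Y$ is an Alexandrov space and $F \subsetneq Y$ is a proper extremal subset, then $\dim F \le \dim Y - 1$. Here $Y = \Sigma_p(X)$ has dimension $n-1$ and $F = \Sigma_p(E)$, so the conclusion $\dim \Sigma_p(E) \le n-2 = \dim E - 1$ follows once one knows $\dim E = n$, i.e.\ one should first observe that an extremal subset of maximal dimension must be all of $X$ (a connected component issue, but $X$ is connected here), so a genuinely lower-dimensional $E$ forces $\Sigma_p(E)$ to be proper in $\Sigma_p(X)$; alternatively phrase everything relatively so the bound reads $\dim \Sigma_p(E) \le \dim E - 1$ directly. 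The cleanest route is: $\Sigma_p(E)$ is extremal in $\Sigma_p(X)$ by Theorem~\ref{thm:exr-prop}(1), and $\dim \Sigma_p(X) = \dim X - 1$ while $\dim \Sigma_p(E)$ controls $\dim E$ near $p$ (since $E$ is totally quasigeodesic, a neighborhood of $p$ in $E$ is spanned by quasigeodesics from $p$, whose initial directions lie in $\Sigma_p(E)$, giving $\dim E \le \dim \Sigma_p(E) + 1$ on one hand and the reverse inequality from the cone-like structure).

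For the core dimension-drop statement I would use the topological stratification of $E$ from Theorem~\ref{thm:exr-prop}(3): an extremal subset decomposes into strata each of which is a manifold, and the top stratum of a proper extremal subset cannot coincide with an open subset of $X^{\mathrm{reg}}$ because extremality is a nontrivial constraint — at a regular interior point $x$ with $\Sigma_x = \mathbb{S}^{n-1}$, the extremality condition $df_x(\xi)\le 0$ for all $\xi \in \Sigma_x$ at a local minimum of a distance function would force the distance function to be locally constant, which is impossible for $\mathrm{dist}_q$ with $q$ chosen generically close. Hence the top stratum of $E$ has dimension at most $n-1$, and applying this to $\Sigma_p(E) \subset \Sigma_p(X)$, whose ambient dimension is $n-1$, gives $\dim \Sigma_p(E) \le n-2$.

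The main obstacle I anticipate is making precise the relation $\dim E = \dim \Sigma_p(E) + 1$ (or the inequalities bracketing it) uniformly in $p$: one needs that for extremal subsets the local geometry near a point is cone-like over the space of directions of the subset, which uses the totally quasigeodesic property and the fact (from \cite{PtPt:extremal}, \cite{PP QG}) that quasigeodesics in $E$ realize all directions in $\Sigma_p(E)$. If this cone structure is not directly available in the cited form, the fallback is to argue purely with Hausdorff dimension and the stratification, showing that each stratum $S$ of $E$ contributes $\dim \Sigma_p(S) \le \dim S - 1 \le \dim E - 1$ for $p \in S$, and handling $p$ in lower strata by a limiting/semicontinuity argument on spaces of directions. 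I expect the write-up to be short, essentially a one-paragraph deduction, since all the substantive work is packaged in Theorem~\ref{thm:exr-prop}.
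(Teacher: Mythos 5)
Your strategy conflates two different inequalities, and the one you actually argue for is not the one the corollary needs. The corollary asserts $\dim\Sigma_p(E)\le\dim E-1$, a comparison between $\Sigma_p(E)$ and $E$ itself. Your central reduction --- that a proper extremal subset of an Alexandrov space has strictly smaller dimension than the ambient space, applied to $\Sigma_p(E)\subset\Sigma_p(X)$ --- yields only $\dim\Sigma_p(E)\le\dim\Sigma_p(X)-1=\dim X-2$. That coincides with $\dim E-1$ precisely when $\dim E=\dim X-1$; when $\dim E<\dim X-1$ it is strictly weaker than what is claimed, and when $E=X$ it does not apply. So the line ``$\dim\Sigma_p(E)\le n-2=\dim E-1$ once one knows $\dim E=n$'' rests on an arithmetic slip and an unjustified (and in general false) assumption about $\dim E$. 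You gesture at the correct statement when you write of ``phrasing everything relatively'', but never carry that out.

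The second, more serious, gap is that you never establish the inequality the corollary actually asserts. You observe that quasigeodesics from $p$ fill a neighborhood of $p$ in $E$, ``giving $\dim E\le\dim\Sigma_p(E)+1$'' --- but that is the inequality in the wrong direction. The corollary is $\dim E\ge\dim\Sigma_p(E)+1$, equivalently that $K(\Sigma_p(E))$ has Hausdorff dimension at most $\dim E$; this is what you call ``the reverse inequality from the cone-like structure'' and do not prove, and it is precisely the content of the statement. Establishing it is where Theorem~\ref{thm:exr-prop}(2) is used quantitatively: one needs that quasigeodesics in $E$ emanating in distinct directions of $\Sigma_p(E)$ separate at a definite rate, so that the cone over $\Sigma_p(E)$ arises as a blow-up limit of $E$ without a jump in dimension. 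The fallback you sketch via stratification and a ``limiting/semicontinuity argument'' does not close this either: for $p$ in a lower stratum, $\Sigma_p(E)$ is not controlled by $\Sigma_q(E)$ at nearby top-stratum points $q$ in any way that transfers a dimension bound, so that step also needs an argument you have not supplied.
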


Suppose that  a compact group $G$ acts on $X$ as isometries. Then the quotient space
$X/G$ is an Alexandrov space (\cite{BGP}). Let $F$ denote the set of $G$-fixed points.

\begin{prop} [\cite{PtPt:extremal}] \label{prop:fixed-ext}
$\pi(F)$ is an extremal subset of $X/G$, where $\pi:X\to X/G$ is the projection.
\end{prop}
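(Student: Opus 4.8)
The statement is the Perelman--Petrunin result \cite{PtPt:extremal}; here is the shape of the argument I would give. The plan is to check the defining property of extremal subsets and localise it at a fixed point. Let $\bar q\in (X/G)\setminus\pi(F)$, set $\bar\rho:=\dist_{\bar q}$, and suppose $\bar\rho|_{\pi(F)}$ attains a local minimum at $\bar p\in\pi(F)$; I must show $d_{\bar p}\bar\rho(\bar\xi)\le 0$ for every $\bar\xi\in\Sigma_{\bar p}(X/G)$. Since $\bar p\in\pi(F)$, the fibre $\pi^{-1}(\bar p)=\{p\}$ with $p\in F$; choosing $q\in\pi^{-1}(\bar q)$ one has $\pi^{-1}(\bar q)=Gq$ and $|p,gq|\equiv|p,q|=|\bar p,\bar q|$, because $p$ is fixed. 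In particular $\bar\rho\circ\pi=\dist_{Gq}$, the distance to the orbit, so $\dist_{Gq}|_F=\dist_q|_F$ has a local minimum at $p$.

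Next I would descend to spaces of directions. As $p$ is a fixed point and $G$ is compact, blowing up commutes with taking the quotient, hence $T_{\bar p}(X/G)=T_p(X)/G$ and $\Sigma_{\bar p}(X/G)=\Sigma_p(X)/G$; let $\sigma\colon\Sigma_p(X)\to\Sigma_p(X)/G$ be the projection. Write $W\subset\Sigma_p(X)$ for the closed, nonempty, $G$-invariant set of initial directions of minimal geodesics from $p$ to points of the orbit $Gq$. Every point of $Gq$ realises $\dist_{Gq}(p)$, so the first variation formula gives $d_p(\dist_{Gq})(\xi)=-\cos\angle(\xi,W)$ on $\Sigma_p(X)$; likewise the set of initial directions of minimal geodesics from $\bar p$ to $\bar q$ is $\sigma(W)$, and since $W$ is $G$-invariant, $\angle(\sigma\xi,\sigma W)=\angle(\xi,W)$ for any lift $\xi$. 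Thus $d_{\bar p}\bar\rho\circ\sigma=d_p(\dist_{Gq})$, and the goal reduces to: $p$ is a critical point of $\dist_{Gq}$ on $X$, i.e.\ $\nabla_p(\dist_{Gq})=0$.

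To finish I would use $G$-invariance of $f:=\dist_{Gq}$. Because $p$ is fixed, $\nabla_p f$ is invariant under the isotropy action of $G$ on $T_p(X)$, hence lies in $(T_pX)^G$; a centre-of-mass argument for the compact group $G$, applied to orbits in the blow-up, gives $(T_pX)^G\subset T_p(F)$, so $\nabla_p f$ is tangent to $F$. Since $F=\mathrm{Fix}(G)$ is convex and totally geodesic in $X$ (a standard property of fixed-point sets of isometric actions), a vector of $T_p(F)$ that dominates $d_p f$ on all of $\Sigma_p(X)$ and is realised at its own direction also satisfies the defining property of $\nabla_p(f|_F)$ inside $T_p(F)$, whence $\nabla_p f=\nabla_p(f|_F)=\nabla_p(\dist_q|_F)$. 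It then remains to see that the intrinsic $F$-gradient of $\dist_q|_F$ vanishes at its local minimum $p$, which yields $\nabla_p(\dist_{Gq})=0$ and completes the proof.

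The step I expect to be the main obstacle is precisely this last one: a local minimum of $\dist_q|_F$ forces the intrinsic gradient to vanish only up to the behaviour on $\partial F$, where a one-sided (``corner'') minimum with non-zero gradient is a priori conceivable. Ruling it out uses that $q$ realises its distance to $F$ at $p$, which in fact forces every direction in $W$ to meet $\Sigma_p(F)$ at angle exactly $\pi/2$, not merely $\ge\pi/2$; this rests on the precise way $\Sigma_p(F)=\Sigma_p(X)^G$ sits inside $\Sigma_p(X)$ — for instance that $\Sigma_p(F)$ carries antipodal pairs ``spanning'' it — together with a Toponogov comparison. The two structural inputs invoked above, $(T_pX)^G\subset T_p(F)$ and the convexity and total geodesy of $F$, are standard but carry real content and would have to be quoted.
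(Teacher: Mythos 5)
First, note that the paper does not prove this proposition at all --- it is quoted directly from Perelman--Petrunin \cite{PtPt:extremal} --- so there is no in-paper argument to compare yours against, and I can only assess the sketch on its own terms. Its first half is sound and is the natural setup: since $p$ is $G$-fixed, $\bar\rho\circ\pi=\dist_{Gq}$, $\Sigma_{\bar p}(X/G)=\Sigma_p(X)/G$, and the first variation formula together with the $G$-invariance of the set $W$ of directions of minimal geodesics from $p$ to $Gq$ correctly reduces the claim to showing $\angle(\xi,W)\le\pi/2$ for every $\xi\in\Sigma_p(X)$, i.e.\ that $p$ is a critical point of $\dist_{Gq}$ in $X$.

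The second half has genuine gaps. (i) The structural facts you invoke about $F=\mathrm{Fix}(G)$ are not available: $F$ need not be convex or totally geodesic even for smooth actions (the two poles of a round sphere under a rotation already fail convexity), and in an Alexandrov space $F$ carries no a priori nice structure --- which is precisely why the result is phrased as ``$\pi(F)$ is extremal'' rather than as a statement about $F$ as a subspace. (ii) The inclusion $(T_pX)^G\subset T_p(F)$ is itself a nontrivial claim, and the centre-of-mass device you propose does not run under a lower curvature bound, where minimizers of averaged distance functionals need not be unique; what is true is that a $G$-fixed direction realized by an actual minimal geodesic integrates to a geodesic inside $F$ (non-branching plus equivariance), but not every fixed direction is of that kind. (iii) Most importantly, the last step is false as stated: at a local minimum of a distance function restricted to a singular set one automatically has $d_pf\ge 0$ on tangent directions, whereas $\nabla_pf=0$ requires $d_pf\le 0$ everywhere, so one-sided (``corner'') minima with strictly positive derivative in tangent directions are exactly what must be excluded --- you identify this yourself as the main obstacle, but the proposed patch is not carried out, and all the content of the proposition lives there. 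Concretely, local minimality only yields, via first variation, that $\angle(\zeta,W)\ge\pi/2$ for every $\zeta\in\Sigma_p(F)$; passing from this to $\angle(\xi,W)\le\pi/2$ for every $\xi\in\Sigma_p(X)$ is the heart of the Perelman--Petrunin argument (one ingredient being that if $\max\angle(\cdot,W)>\pi/2$ on the curvature $\ge 1$ space $\Sigma_p(X)$ then the maximizing direction is unique, hence $G$-fixed, after which one must still manufacture fixed points near $p$ strictly closer to $Gq$), and this implication is missing from your sketch.
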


Boundaries of  Alexandrov spaces are typical examples of extremal subsets.

\begin{prop}[\cite{Ya:four} Prop 5.10] \label{prop:collar}
The boundary $\partial X$ of any finite dimensional Alexandrov space $X$ has a 
collar neighborhood. 
\end{prop}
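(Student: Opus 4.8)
The plan is to realize the collar as the trajectory space of a gradient-like flow of the function $f:=\dist(\cdot,\pa X)$, exploiting crucially that $\pa X$ is an extremal subset of $X$. The argument is local: I would first construct, for each $p\in\pa X$, a neighborhood $V$ of $p$ in $\pa X$ together with a homeomorphism $V\times[0,1)\to U$ onto an open neighborhood of $V$ in $X$ that restricts to the identity on $V=V\times\{0\}$, and then patch these local collars by the standard argument, using local compactness and paracompactness of $X$ (once the flow below is available globally, the patching becomes automatic).

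The heart of the matter is a regularity statement for $f$ near $\pa X$: for every $p\in\pa X$ there should exist $\e>0$ and $c>0$ such that at each $x\in B(p,\e)$ with $0<f(x)<\e$ one can find a direction $\eta_x\in\Sigma_x(X)$ with $\angle(\eta_x,\xi)\ge\tfrac{\pi}{2}+c$ for every direction $\xi$ of a minimal geodesic from $x$ to $\pa X$; equivalently, $f$ has uniformly positive gradient on a punctured neighborhood of $\pa X$. To prove this I would first observe that any minimal geodesic from such an $x$ to a foot point $\bar x\in\pa X$ meets $\pa X$ ``from outside'': since $\pa X$ is extremal it is totally quasigeodesic (Theorem \ref{thm:exr-prop}(2)), so moving from $\bar x$ into $\pa X$ along a quasigeodesic, together with the first variation formula, forces $\angle_{\bar x}(\uparrow_{\bar x}^{x},w)\ge\pi/2$ for every $w\in\Sigma_{\bar x}(\pa X)$ — otherwise $\dist(x,\cdot)|_{\pa X}$ would strictly decrease near $\bar x$, contradicting the choice of $\bar x$. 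Rescaling by $1/f(x)$ and letting $x\to p$, the pointed spaces converge to the tangent cone $T_p(X)=K(\Sigma_p X)$ with $\pa X$ becoming the boundary $K(\pa\Sigma_p X)$ of this cone; the ``from outside'' condition then confines every limiting foot direction to one side, so a direction pointing to the opposite side serves as $\eta_x$, and a compactness argument upgrades this to the uniform constant $c$.

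Granting the regularity statement, I would invoke the gradient flow machinery for semiconcave functions and extremal subsets (Perelman's Morse theory together with the Perelman--Petrunin theory of extremal sets, cf. Theorem \ref{thm:exr-prop}): on the shell $\{0<f\le\e/2\}\cap B(p,\e)$ one obtains a flow $\Psi^{t}$ that increases $f$ at a uniform rate and consists of homeomorphisms between level sets. Pushing the level set $\{f=\e/2\}$ towards $\pa X$ and recording the value of $f$ then yields a homeomorphism $V\times[0,\e/2]\to\{f\le\e/2\}$ near $p$, where the ``reflecting'' structure coming from extremality is used to check that the flow extends continuously and surjectively onto $\pa X$ as $f\to0$; patching the local collars completes the proof. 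I expect the main obstacle to be precisely this regularity statement together with the control of the flow as $f\to0$: both rest on the fine structure theory of extremal subsets, and the confinement step in $T_p(X)$ — showing that the limiting foot-point configuration is genuinely ``half-space-like'' — may in addition require an induction on $\dim X$ applied to the spaces of directions $\Sigma_{\bar x}(X)$ and their boundaries.
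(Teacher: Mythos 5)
The paper does not prove this proposition; it cites it from \cite{Ya:four}, so there is no in-paper proof to compare against. Your outline follows the standard route in the Alexandrov literature (gradient flow of the distance-to-boundary function, justified by extremality), and I believe it is consonant in spirit with the cited source. However, there is a genuine gap in the way you set up the regularity step, and it is worth being precise about where.

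You propose to rescale $(X,p)$ by $1/f(x)$ and let $x\to p$, so that the limit is $T_p(X)=K(\Sigma_p X)$. This only works on the cusp-shaped region where $|x,p|=O(f(x))$; a typical $x$ in a punctured neighborhood of $\pa X$ near $p$ has $|x,p|\gg f(x)$, and then the $1/f(x)$-rescaled pointed spaces $(\tfrac{1}{f(x)}X,x)$ do \emph{not} converge to $T_p(X)$. Moreover, even on the cusp, the cone computation does not give a uniform gradient constant: writing $v=\uparrow_p^x$ and $d=\dist_{\Sigma_p}(v,\pa\Sigma_p)$, the radial derivative of $\dist_{K(\pa\Sigma_p)}$ in $K(\Sigma_p)$ equals $\sin d$, which degenerates as $v\to\pa\Sigma_p$, i.e.\ exactly where $f(x)\ll |x,p|$. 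So the confinement you seek is not established by blowing up at a fixed $p$. The natural fix is to blow up at the \emph{varying} foot point $\bar x$ instead of at $p$, rescaling $(X,\bar x)$ by $1/f(x)$. At $\bar x$, first variation gives $\dist(\uparrow_{\bar x}^{x},\pa\Sigma_{\bar x}(X))\ge\pi/2$; combined with the elementary fact that any Alexandrov space $\Sigma$ with curvature $\ge 1$ and $\pa\Sigma\ne\emptyset$ satisfies $\dist(\cdot,\pa\Sigma)\le\pi/2$ (consider a geodesic $vv^*$ in $D(\Sigma)$ of length $\le\pi$ and the crossing point with $\pa\Sigma$), this pins $\uparrow_{\bar x}^{x}$ to distance \emph{exactly} $\pi/2$ from $\pa\Sigma_{\bar x}(X)$. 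In the limiting cone $T_{\bar x}(X)$ the unique foot point of $x_\infty$ is then the apex, and the radial direction away from the apex has $df=1$. This gives the desired uniform regularity by a contradiction/compactness argument, but note that the rescaled limit of $(X,\bar x_i,1/f(x_i)d)$ need not a priori be a tangent cone; controlling it is where the fine structure theory of extremal subsets and the dimension induction you anticipate genuinely enter. Also, as a minor point, the first-variation inequality $\angle_{\bar x}(\uparrow_{\bar x}^{x},w)\ge\pi/2$ for $w\in\Sigma_{\bar x}(\pa X)$ does not require quasigeodesics at all; it is the statement that $\bar x$ minimizes $\dist(x,\cdot)$ on $\pa X$. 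Where quasigeodesics and extremality are really used is in showing that the gradient flow of $f$, or of an auxiliary semiconcave function transverse to $\pa X$, extends continuously up to $\pa X$ and maps level sets homeomorphically there; that part of your outline is correct but would need to be carried out with the Perelman--Petrunin machinery in hand.
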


An $n$-dimensional Alexandrov space is called {\em smoothable} if 
it is a Gromov-Hausdorff limit of $n$-dimensional closed Riemannian manifolds 
with a uniform lower sectional curvature bounds.

\begin{thm} [\cite{Kap}] \label{thm:iterated-sp}
Let $X$ be a smoothable Alexandrov space.
Then for any $p\in X$, every iterated space of directions 
\[
        \Sigma_{\xi_{k}}( \Sigma_{k-1}( \cdots(\Sigma_{\xi_1}(\Sigma_p(X))\cdots)),
\]
is homeomorphic to a sphere, where 
$$
     \xi_1\in\Sigma_p(X), \, \xi_2\in\Sigma_{\xi_1}(X), \ldots, \, 
     \xi_k\in\Sigma_{\xi_{k-1}}(   \cdots(\Sigma_{\xi_1}(\Sigma_p(X))\cdots)    ).
$$
\end{thm}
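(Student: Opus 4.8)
The plan is to prove Theorem \ref{thm:iterated-sp} by induction on the dimension $n=\dim X$, using the smoothing of $X$ to reduce the iterated-space-of-directions question to a statement about Riemannian manifolds. The base case $n\le 2$ is immediate, since a space of directions of a one- or two-dimensional Alexandrov space is either a point, a circle, or an interval, and these are all homeomorphic to spheres of appropriate dimension (the interval $\mathbb S^0$ needs to be excluded or else is handled by recalling that a smoothable surface with boundary is excluded from the hypothesis; more carefully, for a smoothable $X$ the spaces of directions at interior points are spheres). First I would fix $p\in X$ and recall from Kapovitch's work \cite{Kap} that since $X$ is a Gromov-Hausdorff limit of closed $n$-manifolds $M_i^n$ with $K\ge\kappa$, there is a sequence of points $p_i\in M_i$ converging to $p$ such that the rescaled pointed manifolds $(\tfrac1{r}M_i,p_i)$ converge, after a diagonal argument, to the tangent cone $T_p(X)=K(\Sigma_p(X))$; more precisely, $\Sigma_p(X)$ itself is a Gromov-Hausdorff limit of a sequence of $(n-1)$-dimensional closed Riemannian manifolds with $K\ge 0$ (it arises as the space of directions of the limit, and one uses the fact that the unit distance sphere $S(p_i,\epsilon)$ in $M_i$, with an appropriate normalization, is a smooth closed $(n-1)$-manifold with $K\ge\kappa'$ converging to $\Sigma_p(X)$ as first $i\to\infty$ then $\epsilon\to 0$, extracting a diagonal subsequence). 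Thus $\Sigma_p(X)$ is again a smoothable Alexandrov space, now of dimension $n-1$.

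The next step is the key reduction: $\Sigma_p(X)$ is an Alexandrov space with curvature $\ge 1$, so it is in particular a compact smoothable Alexandrov space of dimension $n-1<n$; therefore the inductive hypothesis applies to it, and every iterated space of directions of $\Sigma_p(X)$ is homeomorphic to a sphere. Since an iterated space of directions of $X$ starting at $p$ is, by definition, of the form $\Sigma_{\xi_k}(\cdots(\Sigma_{\xi_1}(\Sigma_p(X)))\cdots)$, i.e.\ it is an iterated space of directions of $\Sigma_p(X)$ (of one fewer iteration), the induction closes. I would also need to separately record that $\Sigma_p(X)$ itself is homeomorphic to a sphere --- this is the $k=0$ statement (interpreting the empty iteration as $\Sigma_p(X)$), and it follows from Perelman's stability theorem \ref{thm:stability}: the $(n-1)$-spheres $S(p_i,\epsilon_i)\subset M_i$ (unit spheres in the rescaled metric) are closed Alexandrov $(n-1)$-manifolds with curvature $\ge 1-\tau$ converging to $\Sigma_p(X)$, which is itself $(n-1)$-dimensional, so stability gives a homeomorphism $S^{n-1}\approx\Sigma_p(X)$.

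The main obstacle I anticipate is justifying that $\Sigma_p(X)$ is genuinely smoothable, i.e.\ that it arises as a limit of \emph{closed Riemannian manifolds} with a \emph{uniform} lower curvature bound, rather than merely as a limit of Alexandrov spaces. The clean way to handle this is to quote \cite{Kap} directly, where this ``propagation of smoothability to spaces of directions'' is exactly the content established; concretely, one uses that for a smoothable $X$ and $p\in X$, the space of directions $\Sigma_p(X)$ is a limit of distance spheres $S(p_i,r_i)$ in the smoothings $M_i$, these distance spheres are smooth hypersurfaces (for generic small radii, by Sard's theorem, or one perturbs) whose intrinsic sectional curvature is bounded below in terms of $\kappa$ and the second fundamental form of the sphere, and the latter is controlled because distance spheres of small radius in a space with $K\ge\kappa$ are uniformly convex up to a controlled error --- this is the same kind of estimate underlying the fact that $\Sigma_p(X)$ has curvature $\ge 1$. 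Care must be taken with the order of limits ($i\to\infty$ versus $r\to 0$) and with extracting a diagonal subsequence, but this is routine once the uniform curvature estimate on distance spheres is in hand. A secondary, lesser point is to make sure the inductive framework is set up so that the hypothesis applies to $\Sigma_p(X)$ for \emph{all} its points, which is automatic since smoothability is a property of the whole space, not of a marked point.
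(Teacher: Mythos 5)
The paper does not prove this theorem; it is quoted directly as a result of Kapovitch \cite{Kap}, so there is no in-paper proof to compare against. Evaluating your argument on its own merits:

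There is a genuine gap at the central step, namely the claim that $\Sigma_p(X)$ is itself smoothable. You propose to realize $\Sigma_p(X)$ as a Gromov--Hausdorff limit of rescaled distance spheres $S(p_i,r_i)\subset M_i$ with a uniform lower bound on intrinsic sectional curvature, and you justify this by asserting that small distance spheres in a space with $K\ge\kappa$ are ``uniformly convex up to a controlled error.'' That assertion is false under a one-sided curvature hypothesis. With only $K_{M_i}\ge\kappa$, Hessian comparison gives an \emph{upper} bound on $\nabla^2 d_{p_i}$ and hence only an \emph{upper} bound on the second fundamental form of $S(p_i,r)$; there is no lower bound on $\mathrm{II}$, so the distance spheres may be arbitrarily concave in places (this is precisely what an upper curvature bound on $M_i$, which you do not have, would prevent). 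Without a lower bound on $\mathrm{II}$, the Gauss equation yields no lower bound on the intrinsic sectional curvature of the distance spheres, so they need not carry any uniform Alexandrov bound and Perelman's stability theorem cannot be applied to them. The analogy you draw with ``$\Sigma_p(X)$ has curvature $\ge 1$'' is also misplaced: that fact comes from the cone structure of $T_p(X)$ ($K(\Sigma)$ has curvature $\ge 0$ iff $\Sigma$ has curvature $\ge 1$), not from any distance-sphere estimate.

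To my knowledge, whether $\Sigma_p(X)$ is smoothable when $X$ is remains open, and it is not what \cite{Kap} establishes, so the deferral to that citation does not close the gap. Kapovitch's argument deliberately circumvents it: what is easy is that the iterated \emph{tangent cones} are smoothable (rescaling $(\tfrac{1}{r_i}M_i,p_i)$ gives manifolds with $K\ge\kappa r_i^2\to 0$), and one relates iterated spaces of directions of $X$ at $p$ to spaces of directions of iterated tangent cones via the suspension splitting $\Sigma_{(\xi,1)}(T_p(X))\cong\{\pm\}\ast\Sigma_{\xi}(\Sigma_p(X))$ at nonvertex cone points, combined with the stability theorem and gradient-flow arguments comparing $\partial B(p,r)\subset X$, $\partial B(p_i,r)\subset M_i$, and $\partial B(p_i,\rho_i)\subset M_i$ at compatible scales. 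Replacing the unproven smoothability of $\Sigma_p(X)$ by this tangent-cone route is not a routine diagonal argument; it is the essential content of the theorem.
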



\subsection{Manifolds with boundary and gluing}\label{sec:gluing}

In this section, we consider a Riemannian manifold $M$ with boundary in 
${\mathcal M}(n,\kappa,\lambda,d)$.
First, we recall some fundamental properties of  $\pa M$, which were derived by Wong\cite{wong0}.
We also  recall Wong's cylindrical extension procedure based on Kosovskii's Gluing theorem \cite{Kos}.

Let $M$ be a Riemannian manifold with boundary, and
$\partial M^\alpha$ denote a boundary component of $\pa M$.
$(\pa M^\alpha)^{\rm int}$ means $\pa M^\alpha$ with intrinsic length metric. 

The following is an immediate consequence of the Gauss equation.

\begin{prop} \label{prop:bdy}
For every $M\in \ca M(n,\kappa,\lambda)$,  $\pa M$ has a uniform lower sectional curvature
bound: $K_{\pa M}\geq K$, where $K=K(\kappa,\lambda)$.
\end{prop}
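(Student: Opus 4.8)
The plan is to derive the desired sectional curvature estimate for $\partial M$ directly from the Gauss equation, using only the hypotheses $K_M \ge \kappa$ and $|\Pi_{\partial M}| \le \lambda$ that define $\mathcal M(n,\kappa,\lambda)$. Recall the Gauss equation: for a hypersurface $\partial M \subset M$ with second fundamental form $\Pi$, and for any orthonormal pair $\{e_1, e_2\}$ tangent to $\partial M$ at a point $p$, the intrinsic sectional curvature of $\partial M$ along the plane spanned by $e_1, e_2$ is
\[
   K_{\partial M}(e_1, e_2) = K_M(e_1, e_2) + \Pi(e_1,e_1)\Pi(e_2,e_2) - \Pi(e_1,e_2)^2 .
\]
So the first step is simply to write this identity down and observe that it expresses $K_{\partial M}$ as a sum of a term controlled below by $\kappa$ and a term built from entries of $\Pi$.

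Next I would bound the correction term $\Pi(e_1,e_1)\Pi(e_2,e_2) - \Pi(e_1,e_2)^2$ from below. Since $|\Pi_{\partial M}| \le \lambda$ means the operator norm (equivalently, all principal curvatures lie in $[-\lambda,\lambda]$, or $|\Pi(u,v)| \le \lambda |u||v|$), each of the three quantities $\Pi(e_1,e_1)$, $\Pi(e_2,e_2)$, $\Pi(e_1,e_2)$ has absolute value at most $\lambda$ (here I am using whatever normalization the paper fixes; if the bound is stated as $|\Pi| \le \lambda^2$ as in some later statements, one simply replaces $\lambda$ by $\lambda^2$ throughout). Hence $\Pi(e_1,e_1)\Pi(e_2,e_2) \ge -\lambda^2$ and $-\Pi(e_1,e_2)^2 \ge -\lambda^2$, so the correction term is at least $-2\lambda^2$. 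Combining with $K_M(e_1,e_2) \ge \kappa$ gives
\[
   K_{\partial M}(e_1, e_2) \ge \kappa - 2\lambda^2 =: K(\kappa,\lambda),
\]
which is the claimed uniform lower bound, depending only on $\kappa$ and $\lambda$. Since $p$, the orthonormal pair, and the boundary component were arbitrary, this holds throughout $\partial M$.

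This argument is entirely elementary and there is no real obstacle; the only points requiring a word of care are bookkeeping ones. First, one should make sure the curvature conventions (sign of $\Pi$, and whether $K$ denotes sectional curvature of a plane or the Riemann tensor $R(e_1,e_2,e_2,e_1)$) are consistent with the Gauss equation as stated, so that the inequality goes in the right direction; the estimate is symmetric under $\Pi \mapsto -\Pi$, so the sign ambiguity in the definition of $\Pi$ is harmless. Second, one can optionally sharpen the constant: the function $(s,t,u) \mapsto st - u^2$ on $[-\lambda,\lambda]^3$ attains its minimum $-2\lambda^2$ at $s = \lambda, t = -\lambda, u = \pm\lambda$ (or, if one additionally uses that the matrix $\begin{pmatrix}\Pi(e_1,e_1) & \Pi(e_1,e_2)\\ \Pi(e_1,e_2) & \Pi(e_2,e_2)\end{pmatrix}$ is a principal submatrix of a symmetric operator with eigenvalues in $[-\lambda,\lambda]$, the determinant $\Pi(e_1,e_1)\Pi(e_2,e_2)-\Pi(e_1,e_2)^2$ is itself a product of two numbers in $[-\lambda,\lambda]$ and hence $\ge -\lambda^2$, yielding the better constant $K = \kappa - \lambda^2$). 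Either way, the dependence is only on $\kappa$ and $\lambda$, which is all that is asserted, so I would just record the clean bound $K(\kappa,\lambda) = \kappa - \lambda^2$ (or $\kappa - 2\lambda^2$) and move on.
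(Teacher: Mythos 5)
Your proof is correct and is exactly the argument the paper has in mind: the paper simply states that Proposition \ref{prop:bdy} is ``an immediate consequence of the Gauss equation'' and gives no further details, which is precisely the Gauss-equation computation you carry out. Your bound $K(\kappa,\lambda)=\kappa-\lambda^2$ (via the interlacing/submatrix observation) or $\kappa-2\lambda^2$ (via the crude term-by-term estimate) both give the required dependence on $\kappa$ and $\lambda$ only, so either suffices.
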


\begin{prop} [\cite{wong0}]\label{prop:cpn-diam}
Let  $M\in\ca M(n,\kappa,\lambda,d)$.
\begin{enumerate}
 \item There exists a constant $D=D(n,\kappa,\lambda,d)$ such that  any boundary component 
     $\pa M^\alpha$ has intrinsic diameter bound
    \[
             \diam((\pa M^\alpha)^{\rm int})\leq D;
    \]
 \item $\partial M$ has at most $J$ components, where $J = J(n, \kappa, \lambda, d);$
\end{enumerate}
\end{prop}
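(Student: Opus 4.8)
The plan is to exploit Wong's cylindrical extension $\tilde M = M \cup_{\partial M} C$, where $C$ is a warped cylinder glued along $\partial M$, so that $\tilde M$ is an Alexandrov space with curvature $\ge c(\kappa,\lambda)$ and $\partial M \subset \tilde M$ is totally geodesic. Since $\partial M$ is totally geodesic in $\tilde M$, each boundary component $\partial M^\alpha$ with its intrinsic metric is itself an Alexandrov space with curvature $\ge c(\kappa,\lambda)$ (this is consistent with, and refines, Proposition \ref{prop:bdy}). The key observation is that, because $\partial M^\alpha$ is totally geodesic, a shortest path in $\tilde M$ between two points of $\partial M^\alpha$ that stays in $\partial M^\alpha$ realizes the intrinsic distance, so the intrinsic diameter of $\partial M^\alpha$ is controlled once we control its extrinsic diameter in $\tilde M$ together with a lower curvature bound.

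For part (1), first I would bound the extrinsic diameter: $\tilde M$ has diameter $\le d + 2 \cdot (\text{length of cylinder } C)$, and the cylinder length is a universal function of $\kappa$ and $\lambda$ by construction, so $\operatorname{diam}(\tilde M) \le d' = d'(n,\kappa,\lambda,d)$; hence $\operatorname{diam}((\partial M^\alpha)^{\rm ext}) \le d'$. To pass from extrinsic to intrinsic diameter on a totally geodesic Alexandrov subspace with curvature $\ge c(\kappa,\lambda)$, I would argue as follows: by the Bishop–Gromov type volume comparison on $\partial M^\alpha$ (an $(n-1)$-dimensional Alexandrov space with curvature $\ge c$), if the intrinsic diameter were very large one could find many disjoint intrinsic balls of a fixed small radius, forcing a large volume lower bound; but the volume is bounded above in terms of the extrinsic diameter $d'$ and $c$ via the comparison inequality. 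More directly, for Alexandrov spaces with curvature bounded below there is an a priori bound: a length space with curvature $\ge c$ and extrinsic (hence intrinsic, since it is a length space) diameter $\le d'$ already has intrinsic diameter $\le d'$ — the subtlety is only that $\partial M^\alpha$ with the restricted metric need not a priori be a length space, which is why totally geodesic is used: shortest curves in $\tilde M$ between points of $\partial M^\alpha$ lie in $\partial M^\alpha$, so the restricted metric \emph{is} the length metric and the extrinsic diameter equals the intrinsic one. Thus $D := d'(n,\kappa,\lambda,d)$ works.

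For part (2), I would bound the number of components by a packing/volume argument. Distinct components $\partial M^1,\dots,\partial M^k$ are pairwise disjoint closed subsets of $\tilde M$; I would show each pair is separated by a definite distance $\rho_0 = \rho_0(n,\kappa,\lambda) > 0$ measured in $\tilde M$ — this is precisely where the cylinder has a fixed length, so the ``capped'' ends attached to different components stay a universal distance apart, or alternatively one uses that a collar of $\partial M^\alpha$ inside $M$ of definite width exists by the second fundamental form bound. Hence we may choose points $x_\alpha \in \partial M^\alpha$ with the balls $B^{\tilde M}(x_\alpha, \rho_0/2)$ pairwise disjoint. Since $\tilde M$ has curvature $\ge c(\kappa,\lambda)$ and diameter $\le d'$, Bishop–Gromov gives a uniform \emph{lower} bound on $\operatorname{vol} B^{\tilde M}(x_\alpha,\rho_0/2)$ in terms of the total volume and $c, d'$, and simultaneously an upper bound on $\operatorname{vol}(\tilde M)$; dividing yields $k \le J(n,\kappa,\lambda,d)$. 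The main obstacle I expect is making the separation constant $\rho_0$ genuinely uniform and justifying that the restricted metric on each boundary component is the length metric; both are handled by the totally geodesic property of $\partial M$ in Wong's extension, which is the technical heart already quoted from \cite{wong0}, \cite{Kos}.
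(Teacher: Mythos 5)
The paper cites Wong \cite{wong0} for this proposition and does not reprove it, so I will evaluate your argument on its own merits. Your proof of part $(2)$ is essentially sound: the components of $\partial\tilde M$ really are separated by at least $2t_0$ in $\tilde M$, and the standard Bishop--Gromov packing bound in $\tilde M$ (curvature $\ge\tilde\kappa(\kappa,\lambda)$, diameter $\le d+2t_0$) bounds their number. (The alternative you offer, ``a collar of $\partial M^\alpha$ inside $M$ of definite width,'' is \emph{not} available: the inradius can be arbitrarily small, so distinct boundary components of $M$ can be arbitrarily close to each other in $M$; the separation argument must go through $\tilde M$.)

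Part $(1)$, however, rests on a false step. You claim that because $\partial M^\alpha$ is totally geodesic in $\tilde M$, shortest curves in $\tilde M$ between two of its points lie in $\partial M^\alpha$, so the restricted metric is the length metric and hence $\diam((\partial M^\alpha)^{\rm int})\le\diam(\tilde M)$. Totally geodesic (vanishing second fundamental form) does not imply convexity. A concrete counterexample sits squarely inside the class under discussion: the flat M\"obius band $M_{L,w}=[0,L]\times[-w,w]/(0,y)\sim(L,-y)$ has $K_M\equiv 0$, totally geodesic boundary, but the boundary is a single circle of length $2L$ whose intrinsic diameter is $L$, while $\diam(M_{L,w})\approx L/2$; two boundary points can be at $M$-distance $\approx 2w$ but intrinsic boundary distance $\approx L$. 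So the intrinsic diameter of the boundary can genuinely exceed the extrinsic diameter (here by roughly a factor of $2$), and your identification of the two fails. (Note also that what Wong proves to be totally geodesic is $\partial\tilde M=\partial M\times\{t_0\}$, the far end of the cylinder, not $\partial M\times\{0\}$; but the same objection applies to either.) Your fallback packing argument has a separate flaw: Bishop--Gromov gives \emph{upper} bounds on volume ratios, not lower bounds on individual ball volumes, and in a collapsing regime small balls in $\partial M^\alpha$ can have arbitrarily small volume; moreover a lower curvature bound plus an upper volume bound does not bound diameter (a long thin flat cylinder). To make the bound $\diam((\partial M^\alpha)^{\rm int})\le D$ actually work one needs a genuinely two-scale argument: first a local comparison, using \emph{both} sides of the bound $|{\rm II}_{\partial M}|\le\lambda$ together with $K_M\ge\kappa$, showing that for $|p,q|_M\le\epsilon_0(\kappa,\lambda)$ one has $|p,q|_{(\partial M^\alpha)^{\rm int}}\le C(\kappa,\lambda)\,|p,q|_M$ (e.g.\ via near-boundary behaviour of $M$-geodesics and a nearest-point projection to $\partial M$); and then a chaining/packing argument in $M$ (where the diameter bound $d$ is available) to bound the number of $\epsilon_0$-steps by $N(n,\kappa,\lambda,d)$, giving $D=C\epsilon_0\,N$. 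This is what the argument in \cite{wong0} provides and what is missing from your proposal.
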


It follows from Proposition \ref{prop:cpn-diam} that  every boundary component of
$M\in\ca M(n,\kappa,\lambda,d)$ is an Alexandrov space with curvature $\ge K$ and 
diameter $\le D$, where $K=K(\kappa,\lambda)$, $D=D(n,\kappa,\lambda,d)$

In general, a Riemannian manifold with boundary is not necessarily an Alexandrov space.
Wong (\cite{wong0}) carried out a gluing of   warped  cylinders and  $M$ along their boundaries  in such a way that 
the resulting manifold  becomes an Alexandrov space having totally geodesic boundary.

This is based on Kosovskii's gluing theorem:

\begin{thm}[\cite{Kos}] \label{thm:Kos}
Let $M_0$ and $M_1$ be Riemannian manifolds with boundaries $\Gamma_0$ and $\Gamma_1$ respectively
with sectional curvature $K_{M_i}\ge \kappa$ for $i=0,1$.
 Assume that there exists an isometry $\phi: \Gamma_0\to \Gamma_1$, and
let $M$ denote the space with length metric obtained by gluing  $M_0$ and $M_1$ 
along their boundaries via  $\phi$.
Let $L_i$, $i=0,1$, be the second fundamental form of   $\Gamma:=\Gamma_0\cong_{\phi}\Gamma_1\subset M$
with respect to the normal inward to $M_i$.
Then $M$ is an Alexandrov space with curvature  $\ge\kappa$ if and only if the sum  $L:=L_1+L_2$
is positive semidefinite.
\end{thm}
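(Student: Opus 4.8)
The plan is to reduce the statement to a local property of $M$ along the gluing hypersurface $\Gamma$, and then, for the substantive ``if'' direction, to realize $M$ as a Gromov--Hausdorff limit of smooth Riemannian manifolds with sectional curvature $\ge\kappa-\tau(\varepsilon)$.

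First I would localize. Having curvature $\ge\kappa$ is a local condition (Toponogov globalization), and every point of $M\setminus\Gamma$ has a neighborhood isometric to an open subset of $M_0$ or of $M_1$, where $K\ge\kappa$; so it suffices to verify the comparison inequality in a small metric ball about each $p\in\Gamma$. There I would use Fermi coordinates based on $\Gamma\cong\Gamma_0\cong_\phi\Gamma_1$ on each side: with $s$ the distance to $\Gamma$, the metric on the $M_i$-side is $ds^2+h_i(s)$, with $h_i(0)=h$ the induced metric of $\Gamma$ and $\partial_s h_i(0)=-2L_i$; gluing identifies $t=-s$ on the $M_0$-side with $t=s$ on the $M_1$-side, so near $\Gamma$
\[
   g=dt^2+h(t),\qquad h(t)=\begin{cases} h_0(-t), & t\le0,\\ h_1(t), & t\ge0.\end{cases}
\]
Here $h$ is continuous at $t=0$ and $\partial_t h$ jumps there by $\partial_t h(0^+)-\partial_t h(0^-)=-2(L_0+L_1)$; thus ``$L_0+L_1\ge0$'' says exactly that this edge is bent in an admissible (``convex'') way. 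Note that $M$ is a well-defined length space irrespective of any smoothness of $g$ across $\Gamma$, so only this metric question must be settled.

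For the ``if'' direction I would smooth $g$ in a thin collar $\{|t|<\varepsilon\}$ to a metric $g_\varepsilon=dt^2+h_\varepsilon(t)$ that equals $g$ for $|t|\ge\varepsilon$, controlling its curvature through the shape operator $S(c)=\tfrac12 h^{-1}\partial_t h_\varepsilon(c)$ of the slices $\{t=c\}$: the radial sectional curvatures are $-S'-S^2$ (Riccati), and the tangential ones are, by the Gauss equation, $K^{\{t=c\}}(\pi)-\det\!\big(S(c)|_\pi\big)$ with $K^{\{t=c\}}(\pi)=K^{\Gamma}(\pi)+\tau(\varepsilon)$. Since $S=L_0$ at $t=0$ on the $M_0$-side and $S=-L_1$ at $t=0$ on the $M_1$-side, the collar must carry $S$ from $L_0$ to $-L_1$, a net displacement $-(L_0+L_1)\le0$ --- and this is where the hypothesis enters. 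Letting $S(c)$ descend along (essentially) the affine segment joining $L_0$ and $-L_1$, the one-sided Riccati bound $S'\le-S^2-\kappa$ can be maintained (a non-positive displacement is compatible with this inequality, and in directions where $L_0+L_1$ degenerates the inequality is inherited from $M_0$ and $M_1$), giving radial curvature $\ge\kappa$; and along the affine segment $\det(S|_\pi)$ is a quadratic polynomial whose leading coefficient $\det\big((L_0+L_1)|_\pi\big)$ is $\ge0$, hence convex, hence at most its endpoint values $\det(L_0|_\pi),\det(L_1|_\pi)\le K^{\Gamma}(\pi)-\kappa$ (the Gauss equations of $M_0,M_1$), giving tangential curvature $\ge\kappa-\tau(\varepsilon)$; general $2$-planes are handled by these bounds together with the bounded cross-terms of the curvature tensor of $dt^2+h_\varepsilon(t)$. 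Hence $K_{g_\varepsilon}\ge\kappa-\tau(\varepsilon)$, while $g_\varepsilon\to g$ in $C^0$, so $(M,g_\varepsilon)\to M$ in the Gromov--Hausdorff sense; as a Gromov--Hausdorff limit of manifolds of curvature $\ge\kappa-\tau(\varepsilon)$, the space $M$ is Alexandrov with curvature $\ge\kappa$.

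For the ``only if'' direction I would argue by contradiction: if $(L_0+L_1)(v,v)<0$ for some unit $v\in T_p\Gamma$, the edge is concave in the $v$-direction, and from the second-order expansion of the distance function of $g$ near $p$ one exhibits a small geodesic triangle with one vertex in $M_0$, one in $M_1$ and opposite side crossing $\Gamma$ near $p$ whose angle sum is too small --- equivalently, a minimizing segment through $p$ joining the two vertices can be strictly shortened by a competitor crossing $\Gamma$ at a point displaced along $v$ --- contradicting the Toponogov inequality for curvature $\ge\kappa$. The step I expect to be the main obstacle is the uniform curvature estimate in the collar: one must simultaneously satisfy the radial Riccati bound and the tangential Gauss bound along the path of shape operators, which is delicate precisely when $L_0$ and $L_1$ fail to commute and when $L_0+L_1$ is only positive semidefinite (so some directions admit no ``bending''); arranging the interpolation so that both hold uniformly as $\varepsilon\to0$ is the technical core of Kosovskii's theorem.
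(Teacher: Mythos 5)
This theorem is not proved in the paper: it is imported verbatim from Kosovskii \cite{Kos}, and Remark~\ref{rem:Kos} records only the shape of Kosovskii's argument, namely that one builds smooth metrics $g_\delta$ on $M$ with sectional curvature $\ge\kappa(\delta)$, $\kappa(\delta)\to\kappa$, such that $(M,g_\delta)$ Gromov--Hausdorff converges to $M$. Your outline---write the glued metric as $dt^2+h(t)$ in Fermi coordinates, smooth $h$ in a thin collar, control radial curvature through the Riccati inequality for the slice shape operator $S$ and tangential curvature through the Gauss equation, then pass to the Alexandrov limit---is exactly that strategy, so in so far as a comparison with ``the paper's proof'' is possible, you are reproducing the cited source, not proposing an alternative.

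On substance there is a real gap, and it is exactly the one you flag. In a null direction $v$ of $L_0+L_1$ the affine interpolation of shape operators forces $S'(v,v)=0$, so the Riccati constraint $S'+S^2\le-\kappa$ degenerates to $S(v,v)^2\le-\kappa$, which the hypotheses do not supply (take for instance two hyperbolic disks of geodesic curvature $2$ glued to give $\mathbb H^2$: there $L_0+L_1=0$, $\kappa=-1$, but $S^2=4$). The observation you actually need is different: since $L_0+L_1\ge0$, a null direction $v$ satisfies $(L_0+L_1)(v,\cdot)=0$, so $\partial_t h$ has no jump in that direction, $h$ is already $C^1$ there, and no interpolation is required---the one-sided Riccati inequalities of $M_0$ and $M_1$ pass through without help. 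The collar construction, and hence the delicate uniform estimate, is needed only on the positive part of $L_0+L_1$, and when $L_0$, $L_1$ do not commute it must be coupled with this $C^1$-matching on the kernel. Your phrase ``the inequality is inherited from $M_0$ and $M_1$'' gestures at this but does not make it precise, and the ``only if'' direction is likewise left as a one-sentence sketch. Right strategy, same as the cited source, but not yet a complete argument.
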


\begin{rem} \label{rem:Kos}
Actually, for every $\delta>0$, a smooth Riemannian metric $g_{\delta}$ on $M$ is  constructed
in \cite{Kos} in such a way that 
the sectional curvature of $g_{\delta}$ is greater than $\kappa(\delta)$ with 
$\lim_{\delta\to 0}\kappa(\delta)=\kappa$ and that $(M, g_{\delta})$ Gromov-Hausdorff converges 
to $M$ as $\delta\to 0$.
\end{rem}

Now let us recall the extension construction in \cite{wong0}.

Suppose $M$ is an $n$-dimensional complete  Riemannian manifold with 
\[
     K_M\geq\kappa,\,\, \lambda^-\leq II_{\pa M}\leq\lambda^+.
\]
Let  $\overline{\lambda}:=\min\{0,\lambda^-\}$. Then for arbitrarily  $t_0>0$ and  $0<\ve_0<1$
there exists a monotone non-increasing function  $\phi: [0,t_0]\to\mathbb{R}^+$ 
satisfying
\begin{align*}
     \phi''(t)+K\phi(t)\leq0, \,\,&\phi(0)=1, \,\,\phi(t_0)=\ve_0,\\
     -\infty<\phi'(0)\leq\overline{\lambda},\,\, &\phi'(t_0)=0,
\end{align*}
for some constant  $K=K(\lambda,\ve_0,t_0)$.
Now consider the warped product metric on  $\partial M\times [0, t_0]$ defined by 
\[
   g(x,t)=dt^2+\phi^2(t)g_{\partial M}(x)
\]
where $g_{\partial M}$ is the Riemannian metric of $\partial M$ induced from 
that of $M$. We denote by   $\partial M\times_{\phi} [0, t_0]$  the warped product.
It follows from the construction that
\begin{equation}
 \left\{ 
\begin{aligned}
   &\cdot\,  II_{\partial M\times\{ 0\}}\geq|\min\{0, \lambda^-\}|, \\
   &\cdot\,   II_{\partial M\times\{t_0\}}\equiv0, \\
   &\cdot\,  \text{the sectional curvature of  $\partial M\times_{\phi}[0, t_0]$ is greater than}\\
  &  \hspace{0.3cm}\text{ a constant  $c(\kappa,\lambda^\pm,\ve_0, t_0)$} \\
  & \cdot\,  \text {the second fundamental form of $\partial M\times \{t\}$ is given by }\\
   & \hspace{1cm}  II_{\partial M\times\{ t\}}(V,W)=\frac{\phi'(t)}{\phi(t)}g(V, W), \\
  &  \hspace{0.5cm}\text{for vector fields   $V, W$  on    $\partial M\times \{t\}$}.
\end{aligned} \right.   \label{eq:sumary}
\end{equation} 

Clearly, $\partial M\times\{0\}$ in $\partial M\times_\phi[0,t_0]$ is canonically isometric to
$\partial M$. Thus we can glue $M$ and $\partial M\times_\phi[0,t_0]$
along $\partial M$ and  $\partial M\times\{0\}$.
The resulting space
\[
                       \tilde M := M\amalg_{\pa M}(\pa M\times_\phi[0,t_0])
\]
carries the structure of differentiable manifold of class $C^{1,\alpha}$ with $C^0$-Riemannian metric
(\cite{Kos}).
Obviously  $M$ is diffeomorphic to $\tilde M$.


\begin{prop} [\cite{wong0}] \label{prop:extendAS}
For  $M\in\ca M(n,\kappa,\lambda)$, we have 
\begin{enumerate}
 \item $\tilde M$ is an Alexandrov space with curvature $\ge \tilde{\kappa}$,  
         where $\tilde{\kappa}=\tilde{\kappa}(\kappa,\lambda);$
 \item the exterior metric $M^{\rm ext}$ is $L$-bi-Lipschitz homeomorphic to $M$ 
         for the uniform constant $L=1/\ve_0;$
\item  $\diam(\tilde M)\le \diam(M) + 2t_0$.
\end{enumerate}
\end{prop}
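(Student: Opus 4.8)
The plan is to derive (1) from Kosovskii's gluing theorem (Theorem \ref{thm:Kos}) and to derive (2) and (3) from an explicit length estimate for the radial projection of $\tilde M$ onto $M$. First fix, once and for all, the parameters $t_0>0$ and $0<\ve_0<1$, together with the warping function $\phi$ and the warped product $\pa M\times_\phi[0,t_0]$, as in the construction preceding the statement, so that the properties \eqref{eq:sumary} hold; in particular the sectional curvature of $\pa M\times_\phi[0,t_0]$ is bounded below by a constant $c=c(\kappa,\lambda,\ve_0,t_0)$. Put $\tilde\kappa:=\min\{\kappa,c\}$, which for our fixed choice of $\ve_0,t_0$ depends only on $\kappa$ and $\lambda$. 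For (1) I would apply Theorem \ref{thm:Kos} with $M_0=M$ and $M_1=\pa M\times_\phi[0,t_0]$, glued along $\Gamma=\pa M$, identified with $\pa M\times\{0\}$ by the canonical isometry; the remaining boundary component $\pa M\times\{t_0\}$ of $M_1$ is harmless, since the Alexandrov condition is local and $\tilde M$ is a smooth manifold of curvature $\ge\tilde\kappa$ away from $\Gamma$. Both pieces have sectional curvature $\ge\tilde\kappa$, so the only thing left to verify is that the sum $L=L_0+L_1$ of the second fundamental forms of $\Gamma$, each taken with respect to the normal pointing into the corresponding piece, is positive semidefinite. Writing $\lambda^-\le II_{\pa M}\le\lambda^+$ as in the construction, the $M$-side gives $L_0=II_{\pa M}\ge\lambda^-$, while the first line of \eqref{eq:sumary} gives $L_1=II_{\pa M\times\{0\}}\ge|\min\{0,\lambda^-\}|=\max\{0,-\lambda^-\}$; hence $L\ge\lambda^-+\max\{0,-\lambda^-\}\ge0$, and Theorem \ref{thm:Kos} yields that $\tilde M$ is an Alexandrov space with curvature $\ge\tilde\kappa$.

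For (2) and (3) I would use the radial projection $P\colon\tilde M\to M$ that is the identity on $M$ and sends $(x,t)\mapsto(x,0)$ on the cylinder. Since $\phi$ is non-increasing with $\phi(0)=1$ and $\phi(t_0)=\ve_0$, we have $\ve_0\le\phi(t)\le1$ on $[0,t_0]$, so for a curve $s\mapsto(c(s),t(s))$ in the cylinder its image $s\mapsto(c(s),0)$ has $g_{\pa M}$-length $\int|c'(s)|\,ds\le\ve_0^{-1}\int\phi(t(s))|c'(s)|\,ds$, which is at most $\ve_0^{-1}$ times the length of the original curve in the warped metric $dt^2+\phi^2(t)g_{\pa M}$; on $M$ the map $P$ preserves lengths. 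Applying $P$ to a near-shortest $\tilde M$-curve joining $x,y\in M$ (replacing each maximal subarc lying in the cylinder by its projection) therefore produces a curve in $M$ from $x$ to $y$ of length at most $\ve_0^{-1}d_{\tilde M}(x,y)$; combined with the obvious inequality $d_{\tilde M}\le d_M$ on $M$ this gives $\ve_0\,d_M(x,y)\le d_{\tilde M}(x,y)\le d_M(x,y)$, i.e.\ the identity $M\to M^{\rm ext}$ is $(1/\ve_0)$-bi-Lipschitz, proving (2). For (3), every cylinder point $p=(x,t)$ satisfies $d_{\tilde M}(p,Pp)\le t\le t_0$ along the radial segment $s\mapsto(x,s)$, and $d_{\tilde M}(Pp,Pq)\le d_M(Pp,Pq)\le\diam(M)$, so the triangle inequality gives $d_{\tilde M}(p,q)\le\diam(M)+2t_0$ for all $p,q\in\tilde M$.

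The main obstacle here is bookkeeping rather than substance: part (1) reduces to a direct application of the already quoted Theorem \ref{thm:Kos} once the sign conventions for the second fundamental forms at $\Gamma$ are pinned down — one must be sure that along $\Gamma$ the two inward normals are genuinely opposite and that the displayed inequalities come out with the stated signs — and the only real subtlety in (2) is that an $\tilde M$-shortest curve may cross $\pa M$ repeatedly, which is dealt with uniformly by estimating $P$ at the level of curve lengths rather than through individual geodesics.
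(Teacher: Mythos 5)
Since the paper cites this proposition to Wong's extension paper \cite{wong0} and supplies no proof of its own, your argument cannot be compared to an in-paper proof; but it is correct and is the natural argument, reconstructing what the cited reference does. Part (1) follows from Theorem \ref{thm:Kos} exactly as you say: with $L_0=II_{\pa M}\ge\lambda^-$ on the $M$ side and $L_1\ge|\min\{0,\lambda^-\}|$ on the cylinder side (the first item of \eqref{eq:sumary}), one gets $L_0+L_1\ge\lambda^-+\max\{0,-\lambda^-\}\ge0$, and both pieces have sectional curvature $\ge\tilde\kappa(\kappa,\lambda)$ after fixing $\ve_0,t_0$ universally. The length estimate for the fiberwise projection $P(x,t)=(x,0)$ is the right way to handle (2) — estimating lengths of curves rather than individual geodesics correctly deals with curves that cross $\pa M$ arbitrarily many times — and the triangle-inequality derivation of (3) via $d_{\tilde M}(p,Pp)\le t_0$ is standard.

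One spot where your wording is a bit too casual: you dismiss the leftover boundary component $\pa M\times\{t_0\}$ of $\tilde M$ as ``harmless'' because $\tilde M$ is smooth away from $\Gamma$, but smoothness of the interior alone does not give the Alexandrov condition near a boundary point. The reason it really is harmless is the \emph{second} item of \eqref{eq:sumary}: $\phi'(t_0)=0$ forces $II_{\pa M\times\{t_0\}}\equiv0$, so that near any point of $\pa M\times\{t_0\}$ one can pass to the smooth double (or invoke Kosovskii again with $L_0=L_1=0$) and conclude the local Alexandrov inequality holds there as well. You should make that explicit, since it is precisely why the construction was required to satisfy $\phi'(t_0)=0$; otherwise the reasoning, while morally right, leaves a genuine boundary case unverified. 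With that added, the proof is complete and consistent with the properties quoted from the construction preceding the statement.
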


The notion of warped product also works for metric spaces.

Let $X$ and $Y$ be metric spaces, and $\phi:Y\to\mathbb R_+$ a positive continuous 
function. Then the warped product $X\times_{\phi}Y$ is defined as follows (see \cite{wong1}).
For a curve $\gamma=(\sigma,\nu):[a,b]\to X\times Y$, the length of $\gamma$ is 
defined as
\[
    L_{\phi}(\gamma)=\sup_{|\Delta|\to 0} \sum_{i=1}^k
               \sqrt{ \phi^2(\nu(s_i))|\sigma(t_{i-1}),\sigma(t_i)|^2 + |\nu(t_{i-1}),\nu(t_i)|^2},
\]
where $\Delta:a=t_0<t_1<\cdots<t_k=b$ and $s_i$ is any element of $[t_{i-1},t_i]$.
The  warped product $X\times_{\phi}Y$ is defined as the topological space 
$X\times Y$ equipped with the length metric induced from $L_{\phi}$.

\begin{prop}[\cite{wong1}, Proposition B.2.6]\label{prop:warped}
Let $X_i$ be a convergent sequence of length spaces.
If $Y$ is a compact length space, we have 
\[
                  {\lim}_{GH}(X_i \times_{\phi} Y) = ({\lim}_{GH}X_i)\times_{\phi} Y.
\]
whenever the limits exist.
\end{prop}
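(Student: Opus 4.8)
The plan is to reduce the statement to an elementary identity that isolates how the warped metric of $X\times_\phi Y$ depends on the varying factor $X$, and then to transport Gromov--Hausdorff approximations through that identity. Concretely, I would first establish the following structural lemma: for \emph{any} length space $X$ and any $(x_1,y_1),(x_2,y_2)\in X\times Y$,
\[
   d_{X\times_\phi Y}\bigl((x_1,y_1),(x_2,y_2)\bigr)=\bar d\bigl(d_X(x_1,x_2);\,y_1,y_2\bigr),
\]
where $\bar d(\rho;y_1,y_2):=d_{[0,\rho]\times_\phi Y}\bigl((0,y_1),(\rho,y_2)\bigr)$ depends only on the number $\rho\ge 0$, on $y_1,y_2$, and on the fixed data $(Y,\phi)$ --- in particular not on the metric of $X$ away from $x_1,x_2$.

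To prove ``$\ge$'', given any curve $\gamma=(\sigma,\nu)$ from $(x_1,y_1)$ to $(x_2,y_2)$ in $X\times Y$, I post-compose its $X$-coordinate with the $1$-Lipschitz retraction $p\colon X\to[0,d_X(x_1,x_2)]$, $p(x):=\min\{d_X(x_1,x),d_X(x_1,x_2)\}$; since $p$ is $1$-Lipschitz and leaves $\nu$ untouched, $L_\phi$ cannot increase, and $(p\circ\sigma,\nu)$ runs from $(0,y_1)$ to $(d_X(x_1,x_2),y_2)$ inside $[0,d_X(x_1,x_2)]\times_\phi Y$; then take the infimum over $\gamma$. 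For ``$\le$'' (the only point where the length-space hypothesis on $X$ is used), given $\de>0$ I choose an almost-arclength parametrized curve $c\colon[0,\rho]\to X$ from $x_1$ to $x_2$ of length $\le\rho+\de$, so that $d_X(c(s),c(s'))\le\frac{\rho+\de}{\rho}|s-s'|$, and for a near-optimal curve $(\mu,\nu)$ in $[0,\rho]\times_\phi Y$ check that $(c\circ\mu,\nu)$ has $L_\phi(c\circ\mu,\nu)\le\frac{\rho+\de}{\rho}\,L_\phi(\mu,\nu)$; let $\de\to0$ (the case $x_1=x_2$ being trivial). The same comparison curves yield the regularity I will need: $\bar d(\,\cdot\,;y_1,y_2)$ is nondecreasing and $(\max_Y\phi)$-Lipschitz in $\rho$ --- compose the $[0,\rho']$-coordinate with $t\mapsto\min\{t,\rho\}$ for monotonicity, and concatenate with a horizontal segment at level $y_2$, of $L_\phi$-length $\phi(y_2)(\rho'-\rho)$, for the Lipschitz bound --- and $\bar d(\rho;y,y)\le(\max_Y\phi)\,\rho$; here $0<\max_Y\phi<\infty$ because $Y$ is compact and $\phi$ is continuous and positive.

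Next, set $X:=\lim_{GH}X_i$ and fix $\ve_i$-approximations $f_i\colon X_i\to X$ and $g_i\colon X\to X_i$ with $\ve_i\to0$. Define $F_i\colon X_i\times_\phi Y\to X\times_\phi Y$ by $F_i(x,y):=(f_i(x),y)$, and $G_i$ symmetrically. Because $F_i$ fixes the $Y$-coordinate, the structural lemma shows that its metric distortion at $\bigl((x,y),(x',y')\bigr)$ equals $\bigl|\bar d(d_{X_i}(x,x');y,y')-\bar d(d_X(f_i(x),f_i(x'));y,y')\bigr|$, which by the Lipschitz bound is at most $(\max_Y\phi)\,\bigl|d_{X_i}(x,x')-d_X(f_i(x),f_i(x'))\bigr|<(\max_Y\phi)\,\ve_i$. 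For the density condition, $F_i(X_i\times Y)=f_i(X_i)\times Y$; given $(x,y)\in X\times Y$ pick $x'\in X_i$ with $d_X(x,f_i(x'))<\ve_i$, and then $d_{X\times_\phi Y}\bigl((x,y),F_i(x',y)\bigr)=\bar d(d_X(x,f_i(x'));y,y)\le(\max_Y\phi)\,\ve_i$. Hence $F_i$ and $G_i$ are $(\max_Y\phi)\,\ve_i$-approximations, so $d_{GH}\bigl(X_i\times_\phi Y,\ X\times_\phi Y\bigr)\le(\max_Y\phi)\,\ve_i\to0$, which is the assertion.

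The only genuinely nontrivial ingredient is the structural lemma; everything after it is bookkeeping. The naive alternative --- pushing a near-minimizing curve forward through $f_i$ and reconnecting the images of a fine subdivision by short arcs of $X$ --- does not work, because an $\ve_i$-approximation is generally discontinuous and the reconnection errors accumulate in proportion to the number of subdivision intervals. The structural lemma removes this difficulty by showing that $d_{X\times_\phi Y}$ detects the varying factor only through the \emph{single} number $d_X(x_1,x_2)$, which an $\ve_i$-approximation controls uniformly; the retraction $p(x)=\min\{d_X(x_1,x),d_X(x_1,x_2)\}$ is the device that makes the ``$\ge$'' half go through with $X$ merely a metric space, while the ``$\le$'' half is where the length-space hypothesis on $X$ (and hence on each $X_i$) is genuinely used.
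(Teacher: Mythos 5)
Your proof is correct. Note that the paper itself offers no argument for this proposition --- it is quoted from Wong's thesis (Proposition B.2.6) --- so there is nothing internal to compare against; what you have written is a complete, self-contained substitute for that citation. The structural lemma is the right reduction and it is where all the content lives: because $\phi$ depends only on the $Y$-coordinate, the warped distance sees the varying factor only through the single number $d_X(x_1,x_2)$, and your two devices (the $1$-Lipschitz retraction $p(x)=\min\{d_X(x_1,x),d_X(x_1,x_2)\}$ for ``$\ge$'', the almost-arclength curve $c$ for ``$\le$'', the latter being the only place the length-space hypothesis enters) establish it cleanly. The subsequent transport of approximations is routine once one has the $(\max_Y\phi)$-Lipschitz dependence of $\bar d$ on $\rho$, and as a bonus your argument yields the quantitative estimate $d_{GH}(X_i\times_\phi Y,\,X\times_\phi Y)\le \max\{1,\max_Y\phi\}\,d_{GH}(X_i,X)$ (up to the usual factor from passing between $d_{GH}$ and the existence of approximations), which is sharper than the bare convergence statement. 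Your diagnosis of why the naive push-forward of near-minimizing curves fails (discontinuity of approximations, accumulating reconnection errors) is also accurate. Two cosmetic points: you should say explicitly that the limit $X=\lim_{GH}X_i$ is again a length space (standard for complete/compact limits of length spaces), since the ``$\le$'' half of the lemma is applied to $X$ as well as to each $X_i$; and if $\max_Y\phi<1$ the final constant should be $\max\{1,\max_Y\phi\}$ so that the density and distortion bounds are covered by a single $\ve$.
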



\section{Descriptions of limit spaces and examples}\label{sec:str-limit}

Under the notations in section \ref{sec:gluing},
throughout this section unless otherwise stated, we assume
 $M_i\in\ca M(n,\kappa,\lambda,d)$ Gromov-Hausdorff converges to a compact length space
 $N$, where $\inrad(M_i)\to0$. Let $\tilde M_i$ converge to a compact  Alexandrov space $Y$,
and $M_i^{\rm ext}$ converge to a closed subset $X$ of $Y$ under the convergence $\tilde M_i\to Y$.

Here we fix  some notations used later on.
\begin{itemize}
\item $C_{M_i}$ denotes $\partial M_i \times_{\phi} [0, t_0];$
\item
$C_{M_i,t}$ denotes the subspace $\pa M_i\times_\phi\{t\}$ in $C_{M_i};$
\item
For $C_{M_i}\subset\tilde M_i$, $C_{M_i}^{\rm ext}$ denotes $(C_{M_i}, d_{\tilde M_i})$.
\end{itemize}

In this section, we first investigate the relation between the limit $C$ (resp.   $C_0$) of $C_{M_i}$ (resp of  $\partial M_i$)
and $Y$ (resp.  $X$), and  
discuss the intrinsic structure of $X$ and prove that 
$X^{\rm int}$ is isometric to $N$ (Proposition \ref{prop:intrinsic}).  Then we describe the metric structure of 
$Y$ (Proposition \ref{prop:YNC})

\subsection{Descriptions of  $X$ and $Y$}


Under the notation presented in the begining of this section, 
in  view of  Proposition \ref{prop:cpn-diam} and \eqref{eq:sumary}, passing to a subsequence, we may assume that $C_{M_i}$ converges 
to some compact Alexandrov space $C$ with cuvrvature $\ge K=K(\kappa,\lambda)$. 
Here  $C_{M_i}$ is not necessarily connected, and therefore 
the convergence $C_{M_i}\to C$ should be understood componentwisely. 
It follows from Proposition \ref{prop:warped} that 
\[
                    C= C_0\times_{\phi} [0, t_0], \,\,   C_0=\lim_{i\to\infty} (\pa M_i)^{\rm int},
\]
where  $(\pa M_i)^{\rm int}$ denotes  $\pa M_i$ endowed with length metric induced by its original metric.
For simplicity we denote 
\begin{gather*}
           C_0:=C_0\times\{0\},\,\,  C_t:=C_0\times\{t\}\subset C,
\end{gather*}
Since the identity map $\iota_i: C_{M_i}\to C_{M_i}^{\rm ext}$ is 1-Lipschitz,
we can define a surjective 1-Lipschitz map $\eta: C\to Y$ in the limits.
More precisely, define $\eta:C\to Y$ by
\[
                    \eta=\lim_{i\to\infty} g_i\circ\iota_i\circ  f_i,
\]
where  $f_i:C\to C_{M_i}$,  $g_i:\tilde M_i\to Y$ are component-wise  $\ve_i$-approximations
with $\lim \ve_i=0$.

From now on,  we consider 
\[
        \eta_0: = \eta|_{C_0\times \{ 0 \} } : C_0\to X,
\]
which is also a surjective $1$-Lipschitz map with respect to the exterior metrics of 
$C_0$ and $X$, and hence with respect to the interior metrics, too.
%

The following two lemmas are obvious.

\begin{lem}\label{lem:loc-isom}
The map $\eta:C\setminus C_0\to Y\setminus X$ is a bijective local isometry.
\end{lem}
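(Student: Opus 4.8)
The plan is to reduce everything to the corresponding statement at finite level and then pass to the Gromov--Hausdorff limit. Write $h_i:=\dist(\cdot,C_{M_i,0})$ for the height coordinate on $C_{M_i}=\pa M_i\times_\phi[0,t_0]$ (so $h_i((x,t))=t$), and similarly $h:=\dist(\cdot,C_0)$ on $C=C_0\times_\phi[0,t_0]$. The set map $\iota_i$ restricted to $C_{M_i}\setminus C_{M_i,0}$ is just the inclusion of $C_{M_i}\setminus C_{M_i,0}=\tilde M_i\setminus M_i$ into $\tilde M_i$, hence a bijection onto $\tilde M_i\setminus M_i^{\mathrm{ext}}$. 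The geometric heart of the lemma is the elementary fact \emph{$(\star)$}: for $q,q'\in C_{M_i}\setminus C_{M_i,0}$ with $d_{\tilde M_i}(q,q')<h_i(q)+h_i(q')$ one has $d_{\tilde M_i}(q,q')=d_{C_{M_i}}(q,q')$, and in particular $d_{\tilde M_i}(q,M_i^{\mathrm{ext}})=d_{\tilde M_i}(q,C_{M_i,0})=h_i(q)$. This is where I expect the only real (though still routine) work to lie: since $\tilde M_i$ carries a $C^0$-Riemannian metric restricting to the warped product metric on $C_{M_i}$, a curve contained in $C_{M_i}$ has the same length computed in $\tilde M_i$ or in $C_{M_i}$; any path in $\tilde M_i$ from a point of the open cylinder to $M_i$ must cross $C_{M_i,0}$; and the warped-product computation gives $d_{C_{M_i}}((x,t),C_{M_i,0})=t$. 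Combining these, a sufficiently short minimizing $\tilde M_i$-geodesic between two points of the open cylinder cannot reach $C_{M_i,0}$, so it stays in the open cylinder, where the exterior and intrinsic metrics agree.

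Granting $(\star)$, I would argue as follows. First, $(\star)$ passes to the limit to give $d_Y(\eta(z),X)=h(z)$ for all $z\in C$, whence $\eta^{-1}(X)=C_0$; together with surjectivity of $\eta\colon C\to Y$ this already yields $\eta(C\setminus C_0)=Y\setminus X$. For the local isometry, fix $p\in C\setminus C_0$ of height $t>0$ and set $r:=t/4$. For $q,q'\in B^C(p,2r)$ we have $h(q),h(q')>t/2$ and $d_C(q,q')<t<h(q)+h(q')$, so feeding $f_i(q),f_i(q')$ into $(\star)$ and letting $i\to\infty$ gives $d_Y(\eta q,\eta q')=d_C(q,q')$; thus $\eta$ is distance preserving on $B^C(p,2r)$. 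To see $\eta(B^C(p,r))=B^Y(\eta p,r)$, take $y\in B^Y(\eta p,r)$, write $y=\eta(z)$ by surjectivity, observe $h(z)=d_Y(y,X)\ge 3t/4>0$, run the finite-level estimate with $(\star)$ once more to obtain $d_C(z,p)<t/2$, and then conclude $d_C(z,p)=d_Y(\eta z,\eta p)<r$ from the distance-preserving property; this $z$ is the unique preimage of $y$ in $B^C(p,2r)$. Hence $\eta|_{C\setminus C_0}$ is a local isometry onto open balls of $Y\setminus X$.

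It remains to upgrade local injectivity to global injectivity on $C\setminus C_0$, which is a one-line limit argument. If $\eta(z)=\eta(z')$ with $z,z'\in C\setminus C_0$, then $d_{\tilde M_i}(f_iz,f_iz')\to d_Y(\eta z,\eta z')=0$ while $h_i(f_iz)+h_i(f_iz')\to h(z)+h(z')>0$, so for large $i$ the hypothesis of $(\star)$ holds and gives $d_{C_{M_i}}(f_iz,f_iz')=d_{\tilde M_i}(f_iz,f_iz')\to 0$; since $f_i\colon C\to C_{M_i}$ is an $\ve_i$-approximation this forces $d_C(z,z')=0$, i.e. $z=z'$. The only other thing to watch, apart from $(\star)$, is the standard bookkeeping with the approximations $f_i,g_i$ — in particular that $f_i(C_0)$ stays close to $C_{M_i,0}$ so that $h_i\circ f_i\to h$, and that $\lim g_i\circ\iota_i\circ f_i=\eta$ on the compact pieces $\{h\ge\rho\}$ over which all these estimates are carried out — which presents no genuine difficulty.
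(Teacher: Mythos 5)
The paper states that Lemma~\ref{lem:loc-isom} (together with Lemma~\ref{lem:dist-bdry}) is ``obvious'' and offers no proof, so there is nothing written down to compare against; your job was to supply the details, and you have done so correctly. The finite-level fact $(\star)$ --- that a sufficiently short minimizing $\tilde M_i$-geodesic between two interior points of the cylinder cannot afford to reach $C_{M_i,0}$ (since its length would then have to be at least $h_i(q)+h_i(q')$), hence stays in the open cylinder where the length structures of $\tilde M_i$ and of $C_{M_i}$ coincide --- is exactly the ``obvious'' geometric input, and your three uses of it (to get $d_Y(\eta z,X)=h(z)$ and hence $\eta^{-1}(X)=C_0$; to get that $\eta$ is distance-preserving on $B^C(p,t/2)$ and maps $B^C(p,t/4)$ onto $B^Y(\eta p,t/4)$; and to upgrade to global injectivity on $C\setminus C_0$) are all sound. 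Two tiny remarks on phrasing rather than substance: (i) ``where the exterior and intrinsic metrics agree'' is a little loose --- what you actually use is that a curve lying in the open cylinder has the same length whether measured in $\tilde M_i$ or in $C_{M_i}$, which combined with $1$-Lipschitzness of $\iota_i$ gives the distance identity; (ii) in the surjectivity-onto-balls step the first estimate $d_C(z,p)<t/2$ is superfluous, since the very same application of $(\star)$-plus-limit already yields $d_C(z,p)=d_Y(\eta z,\eta p)<r$ directly. Neither affects correctness.
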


\begin{lem}\label{lem:dist-bdry}
For  $(p,t)\in C\setminus C_0$, we have  $|\eta(p,t), X|=t$.
\end{lem}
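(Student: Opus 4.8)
The plan is to compute the distance $d_{\tilde M_i}(\,\cdot\,,M_i)$ on the slices $C_{M_i,t}$ of the warped cylinder $C_{M_i}=\pa M_i\times_{\phi}[0,t_0]$ for every $i$, obtaining exactly the ``height'' $t$, and then to pass to the limit using Proposition \ref{prop:warped} together with the convergence $M_i^{\rm ext}\to X$ inside $Y$.

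First I would fix $i$ and $t\in(0,t_0]$ and prove that $d_{\tilde M_i}(x,M_i)=t$ for every $x\in C_{M_i,t}$. Since $\pa M_i=C_{M_i,0}\subset M_i$ and since in $\tilde M_i=M_i\amalg_{\pa M_i}C_{M_i}$ every curve from $x$ to $M_i$ must first meet $\pa M_i$, one has $d_{\tilde M_i}(x,M_i)=d_{\tilde M_i}(x,\pa M_i)$; moreover a length-minimizing such curve (which exists, $\tilde M_i$ being a compact Alexandrov space) stays inside $C_{M_i}$, because it reaches $C_{M_i,0}$ before it could enter ${\rm int}\,M_i$. Hence $d_{\tilde M_i}(x,\pa M_i)$ coincides with the intrinsic distance from $x$ to $C_{M_i,0}$ inside $C_{M_i}$; here I would use that the gluing does not alter the length of curves contained in $C_{M_i}$, so that this intrinsic distance is the warped-product distance $d_{C_{M_i}}(x,C_{M_i,0})$. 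Finally, in the warped product $\pa M_i\times_{\phi}[0,t_0]$ the projection onto the factor $[0,t_0]$ is $1$-Lipschitz (immediate from the definition of $L_{\phi}$), while the radial curve $s\mapsto(q,s)$, $s\in[0,t]$, has length exactly $t$; therefore $d_{C_{M_i}}(x,C_{M_i,0})=t$, and so $d_{\tilde M_i}(x,M_i)=t$.

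Next I would transfer this to the limit. By Proposition \ref{prop:warped} the convergence $C_{M_i}=(\pa M_i)^{\rm int}\times_{\phi}[0,t_0]\to C=C_0\times_{\phi}[0,t_0]$ is induced by $(\pa M_i)^{\rm int}\to C_0$, so the approximations $f_i:C\to C_{M_i}$ entering the definition $\eta=\lim g_i\circ\iota_i\circ f_i$ may be taken to preserve the $[0,t_0]$-coordinate. Then for $(p,t)\in C\setminus C_0$ the point $x_i:=f_i(p,t)$ lies on the slice $C_{M_i,t}$, so $d_{\tilde M_i}(x_i,M_i)=t$ by the previous step, while $g_i\circ\iota_i(x_i)\to\eta(p,t)$ in $Y$. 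Since $M_i^{\rm ext}$ converges to $X$ inside $Y$, distances to $M_i^{\rm ext}$ converge to the distance to $X$, and I would conclude $|\eta(p,t),X|=\lim_i d_{\tilde M_i}(x_i,M_i)=t$.

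The argument is essentially routine, in keeping with the text's remark that the lemma is obvious, so there is no genuine obstacle; the two points that warrant a word of care are the interplay between the exterior metric $d_{\tilde M_i}$ and the warped-product metric on $C_{M_i}$ (they induce the same length for curves lying in $C_{M_i}$, even though $d_{\tilde M_i}$ may otherwise have shortcuts through $M_i$), and the compatibility of the chosen approximations with the warped-product structure, which is exactly what Proposition \ref{prop:warped} supplies. For the inequality $|\eta(p,t),X|\le t$ alone one may instead argue directly from $\eta$ being $1$-Lipschitz, $\eta_0:C_0\to X$ being onto, and $|(p,t),\,C_0|_{C}=t$, using the slice computation only for the reverse inequality.
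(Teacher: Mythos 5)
Your proof is correct; the paper itself gives no argument (it declares the lemma obvious together with Lemma \ref{lem:loc-isom}), and what you have written is the natural filling-in of that claim: the level computation $d_{\tilde M_i}(\cdot,M_i)=t$ on $C_{M_i,t}$ via the $1$-Lipschitz projection to the $[0,t_0]$-factor, followed by passage to the limit using level-preserving approximations as in Proposition \ref{prop:warped}. Both steps are sound, and your closing remark correctly isolates the only point needing any care (the easy inequality $\le t$ from $\eta$ being $1$-Lipschitz versus the reverse inequality, for which the slice computation is needed).
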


We now study the multiplicities of the gluing map $\eta_0$.

\begin{lem} \label{prop:preimage}
For every $x\in X$, we have the following:
\begin{enumerate}
 \item $\#\eta_0^{-1}(x)\leq 2;$ 
 \item Suppose $\#\eta_0^{-1}(x)=2$ for some $x\in X$,  and take $p_k\in C_0$, $k=1,2$, with 
$\eta_0(p_k)=x$.  Then $\Sigma_x(Y)$ is isometric to a spherical suspension with the 
two vertices $\{ \xi_1, \xi_2\}$, where 
    \[
                 \xi_k :=\uparrow_x^{\eta(p_k, t_0)}.
   \]
\end{enumerate}
\end{lem}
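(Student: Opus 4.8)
The plan is to exploit the warped product structure $C = C_0 \times_\phi [0,t_0]$ together with the fact that, away from $C_0$, the map $\eta$ is a local isometry (Lemma \ref{lem:loc-isom}) and $|\eta(p,t),X| = t$ (Lemma \ref{lem:dist-bdry}). First I would fix $x \in X$ and a point $p \in \eta_0^{-1}(x)$, and consider the curve $t \mapsto \eta(p,t)$ for small $t > 0$; by Lemma \ref{lem:dist-bdry} this is a unit-speed curve leaving $X$, and one checks it is a minimal geodesic from $x$ into $Y \setminus X$ realizing the distance to $X$, so it determines a direction $\xi_p := \uparrow_x^{\eta(p,t_0)} \in \Sigma_x(Y)$. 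The key observation is that, since $\eta$ collapses the warped-product fiber $C_0 \times \{0\}$ to $X$ while being isometric off it, the directions $\xi_p$ for distinct $p \in \eta_0^{-1}(x)$ must be "antipodal'' in $\Sigma_x(Y)$: a short curve in $C$ from $(p_1, t)$ through $C_0$ to $(p_2, t)$ has length $\to 0$ as $t \to 0$ relative to $t$ only if the corresponding broken geodesic in $Y$ through $x$ has the two directions $\xi_{p_1}, \xi_{p_2}$ at angle $\pi$. More precisely I would use the first variation / comparison argument: $|\eta(p_1,t), \eta(p_2,t)| \le$ (length of the horizontal arc in $C_0$) $\cdot \phi(t) \to 0$ faster than $t$, which forces $\tilde\angle \eta(p_1,t)\, x\, \eta(p_2,t) \to \pi$, i.e. $\angle(\xi_{p_1}, \xi_{p_2}) = \pi$ in $\Sigma_x(Y)$.

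Given this, (1) follows because in an Alexandrov space with curvature $\ge 1$, $\Sigma_x(Y)$, any point can have at most one antipode at distance exactly $\pi$ unless the whole space is a spherical suspension — but more directly: if $p_1, p_2, p_3$ were three distinct preimages, the three directions $\xi_{p_1}, \xi_{p_2}, \xi_{p_3}$ would be pairwise at angle $\pi$, which is impossible in a space of curvature $\ge 1$ (three mutually antipodal points force a contradiction via the triangle comparison, since $\angle \xi_{p_1} \xi_{p_2} \xi_{p_3}$ would have to be both $0$ and meaningful). For (2), once $\#\eta_0^{-1}(x) = 2$ with $\angle(\xi_1,\xi_2) = \pi$, the standard fact that an Alexandrov space with curvature $\ge 1$ containing two points at distance $\pi$ splits as a spherical suspension over the "equator'' $\{\eta \in \Sigma_x(Y) : \angle(\eta,\xi_1) = \pi/2\}$ gives the conclusion; I would need to identify $\xi_1, \xi_2$ as the suspension vertices, which is exactly what $\angle(\xi_1, \xi_2) = \pi$ provides.

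The main obstacle I anticipate is the rate estimate showing $|\eta(p_1,t), \eta(p_2,t)|$ is $o(t)$ rather than merely $O(t)$ — equivalently, ruling out that $\xi_{p_1}, \xi_{p_2}$ meet at an angle strictly less than $\pi$. This requires using that $\eta_0(p_1) = \eta_0(p_2) = x$ means $p_1, p_2$ can be connected in $C_0$ by curves of length $\to 0$ (not just bounded), which in turn comes from $\eta_0$ being $1$-Lipschitz \emph{and} surjective with the interior metric of $X$ being the quotient — so I would first need the statement that $|p_1,p_2|_{C_0}$ can be taken arbitrarily small, or handle the general case where $\eta_0^{-1}(x)$ consists of points at positive distance by a limiting/rescaling argument at $x$ (passing to the tangent cone $T_x Y = K(\Sigma_x(Y))$ and using $T_x Y = \lim_{r\to 0}(\frac 1r Y, x)$). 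A secondary subtlety is making sure the curve $t\mapsto \eta(p,t)$ really is distance-minimizing to $X$ and not just a curve of the right length; this should follow from Lemma \ref{lem:dist-bdry} combined with the $1$-Lipschitz property of $\eta$, since any shorter connection would pull back to a shorter connection in $C$.
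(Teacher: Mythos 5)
Your high-level skeleton is the right one (and is the paper's): show that the two perpendicular directions $\xi_1,\xi_2$ at $x$ make angle $\pi$ in $\Sigma_x(Y)$, then get $(1)$ from the impossibility of three mutually antipodal points in a space of curvature $\ge 1$ and $(2)$ from the suspension rigidity theorem. But the mechanism you propose for establishing antipodality is backwards, and the step you flag as the ``main obstacle'' is not an obstacle at all --- it is a false statement whose negation is the actual input to the proof. You want to show $|\eta(p_1,t),\eta(p_2,t)|=o(t)$. First, even if this held it would prove the wrong thing: with $|\eta(p_k,t),x|=t$, a third side of length $o(t)$ forces the comparison angle $\tilde\angle\, \eta(p_1,t)\,x\,\eta(p_2,t)$ to tend to $0$, not $\pi$; the angle is $\pi$ precisely when the third side is as \emph{long} as possible, namely $2t$. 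Second, the estimate is false: for distinct $p_1\ne p_2$ one has $|p_1,p_2|_{C_0^{\rm int}}>0$ (possibly $+\infty$ if they lie in different components), and the horizontal arc at height $t$ has length about $\phi(t)\,|p_1,p_2|_{C_0^{\rm int}}$, which is bounded below by $\phi(t_0)|p_1,p_2|_{C_0^{\rm int}}>0$ and does not tend to $0$. There is no short connection between $\eta(p_1,t)$ and $\eta(p_2,t)$ avoiding $X$; the only short connections pass through $x$ and have length $2t$.

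The correct argument runs in the opposite direction. Fix $t<\phi(t_0)|p_1,p_2|_{C_0^{\rm int}}/2$ and set $y_k=\eta(p_k,t)$. The triangle inequality through $x$ gives $|y_1,y_2|\le 2t$. If a minimal geodesic $\gamma$ from $y_1$ to $y_2$ meets $X$, then $L(\gamma)\ge |y_1,X|+|y_2,X|=2t$, so $|y_1,y_2|=2t$. If $\gamma$ avoids $X$, it lifts via $\eta^{-1}$ to a curve in $C\setminus C_0$ whose length, by the warped-product formula, is at least $\phi(t_0)|p_1,p_2|_{C_0^{\rm int}}>2t$, contradicting $|y_1,y_2|\le 2t$. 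Hence $|y_1,y_2|=2t$ for all small $t$, so $\tilde\angle y_1xy_2=\pi$ and therefore $\angle(\xi_1,\xi_2)=\pi$. Note that the positivity of $|p_1,p_2|_{C_0^{\rm int}}$ --- exactly what you were trying to argue away --- is what makes the lift long and forces the minimal geodesic to cross $X$. With antipodality established this way, your deductions of $(1)$ and $(2)$ go through as stated.
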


\begin{proof}
Suppose that $\#\eta_0^{-1}(x)\ge 2$ and take $p_1, p_2\in\eta_0^{-1}(x)$, and 
let $y_i:=\eta(p_i,t)$, $i=1,2$,  for some $t>0$. 
We show that  $|y_1,y_2|=2t$ or equivalently, 
\begin{align}
      \tilde\angle y_1 p y_2= \pi,     \label{eq:spread} 
\end{align}
if  $t<\phi(t_0)|p_1p_2|_{C_0^{\rm int}}/2$, which yields $\#\eta_0^{-1}(x)=2$
and the conclusions $(1)$ and $(2)$. 

Let $\gamma:[0,\ell]\to Y$ be a  minimal geodesic in $Y$ joining $y_1$ and $y_2$.
If $\gamma$ meets $X$, we certainly have  $|y_i,y_2|=2t$.
Suppose that $\gamma$ does not meet $X$.
%
Then $\tilde\gamma=\eta^{-1}(\gamma)$ is well-defined and is a minimal geodesic 
joining $(p_1,t)$ and $(p_2,t)$.
Write $\tilde\gamma$ as  $\tilde\gamma(s)=(\sigma(s),\nu(s))\in C_0\times_{\phi} [0,t_0]$.
Then we have 
\begin{align*}
L(\gamma)=L(\tilde\gamma)
=& \int_0^{\ell} \sqrt{\phi^2(\nu(s))
             |\dot\sigma(s)|^2+|\dot{\nu}(s)|^2}\,ds.\\
\ge& \int_0^{\ell} \phi(t_0) |\dot\sigma(s)|\,dt \ge \phi(t_0) |p_1,p_2|_{C_0^{\rm int}}.          
\end{align*}
Thus we have  $|y_1,y_2| = L(\gamma)\ge \phi(t_0)|p_1,p_2|_{C_0^{\rm int}}$.
On the other hand, the triangle inequality shows that 
$|y_1,y_2|\le 2t <\phi(t_0)|p_1,p_2|_{C_0^{\rm int}}$.
This is a contradiction, and therefore  $\gamma$ meets $X$ and  $|y_1, y_2|=2t$
\end{proof}

Next we construct a good approximation map $\tilde M_i\to Y$,
which helps us to grasp a whole picture on the several convergences. 

Let $\psi_i:\partial M_i=C_{M_i,0}\to C_0$ be an $\epsilon_i$-approximation
with $\lim_{i\to\infty}\epsilon_i=0$.

\begin{lem} [\cite{wong1}]
The map $\Psi_i:C_{M_i}\to C$ defined by 
\[
            \Psi_i(p,t)=(\psi_i(p),t)
\] 
is an $\epsilon_i'$-approximation with  $\lim_{i\to\infty}\epsilon_i'=0$.
Actually, for any approximation map  $\Psi_i':C_{M_i}\to C$ there is a   $\psi_i:\partial M_i=C_{M_i, 0}\to C_0$
such that $|\Psi_i(p,t), \Psi_i'(p,t)|<\epsilon_i'$ for  $\Psi_i=(\psi_i, {\rm id})$.
\end{lem}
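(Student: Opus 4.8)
The plan is to derive both assertions from a single observation: in any warped product of the form $X\times_\phi[0,t_0]$ (with $X$ a geodesic space and $\phi$ the fixed function of Section~\ref{sec:gluing}) the distance is a \emph{universal} function of the base distance and of the two heights. Precisely, I would show
\[
   |(p,s),(q,t)|_{X\times_\phi[0,t_0]}=G\bigl(|p,q|_X,s,t\bigr),
\]
where $G(\ell,s,t):=|(0,s),(\ell,t)|_{W_\ell}$ and $W_\ell:=[0,\ell]\times_\phi[0,t_0]$ is the $2$-dimensional model warped product (carrying the metric $\phi(\tau)^2\,dr^2+d\tau^2$). The inequality ``$\ge$'' follows by replacing a curve $(\sigma,\nu)$ in $X\times_\phi[0,t_0]$ by its arclength shadow $u\mapsto(L(\sigma|_{[0,u]}),\nu(u))$, which has the same $L_\phi$-length, and then composing with the $1$-Lipschitz clamping map $r\mapsto\min(r,|p,q|_X)$ from $W_{L(\sigma)}$ onto $W_{|p,q|_X}$; the inequality ``$\le$'' follows by lifting a minimizing geodesic of $W_{|p,q|_X}$ along a unit-speed minimizing geodesic of $X$ from $p$ to $q$, which exists because $(\partial M_i)^{\rm int}$ and $C_0$ are compact Alexandrov spaces (Propositions~\ref{prop:bdy},~\ref{prop:cpn-diam}), hence geodesic. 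Along the way I would record that $G$ is $1$-Lipschitz in each of $\ell,s,t$: monotonicity in $\ell$ comes from the clamping map, while the three upper increments come from appending a short horizontal segment at a fixed height (of $\phi$-length at most the $r$-increment, since $\phi\le\phi(0)=1$) or a short vertical segment. This sharpens Proposition~\ref{prop:warped}.

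Given this, the first assertion is immediate: one has $d_{C_{M_i}}((p,s),(q,t))=G(d_{(\partial M_i)^{\rm int}}(p,q),s,t)$ and $d_{C}((\psi_ip,s),(\psi_iq,t))=G(d_{C_0^{\rm int}}(\psi_ip,\psi_iq),s,t)$, so the $\ell$-Lipschitz bound on $G$ together with the fact that $\psi_i$ is an $\epsilon_i$-approximation shows the two distances differ by less than $\epsilon_i$; and $\Psi_i(C_{M_i})$ is $\epsilon_i$-dense in $C$ because $\psi_i(\partial M_i)$ is $\epsilon_i$-dense in $C_0$ and a horizontal move at a fixed height costs at most the corresponding base distance. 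Hence $\Psi_i$ is a $2\epsilon_i$-approximation. Everything here is read component-wise, since $\partial M_i$ and $C_0$ may be disconnected.

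For the converse I would first note that the height is a metric invariant of $C$: for $z=(p,t)$ one has $d_C(z,C_0)=t$ and $d_C(z,C_{t_0})=t_0-t$, because a vertical segment realizes each distance while any competitor has at least that much vertical length; the same holds in $C_{M_i}$ for $d_{C_{M_i}}(\cdot,C_{M_i,0})$. Consequently an $\epsilon_i'$-approximation $\Psi_i'$ carries $C_{M_i,0}$ into a $\tau(\epsilon_i')$-neighborhood of $C_0$ — in the situation at hand $C_{M_i,0}=\partial M_i$ is the gluing face, which fixes the identification of the two ends; in general one relabels them — and, more generally, the $[0,t_0]$-component of $\Psi_i'(p,t)$ is within $\tau(\epsilon_i')$ of $t$. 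I then define $\psi_i(p)$ to be the $C_0$-component of $\Psi_i'(p,0)$, so that $\Psi_i'(p,0)=(\psi_i(p),s_p')$ with $s_p'\le\tau(\epsilon_i')$. Using $d_{C_{M_i}}((p,t),(p,0))=t$, the near-preservation of heights, and the lower bound $G(\ell,t,0)\ge\sqrt{t^2+\phi(t_0)^2\ell^2}$ (from $\phi\ge\phi(t_0)>0$ and comparison with a flat metric), one obtains that the $C_0$-components of $\Psi_i'(p,t)$ and of $\Psi_i'(p,0)$ are $\tau(\epsilon_i')$-close; continuity of $G$ then gives $d_C(\Psi_i'(p,t),(\psi_i(p),t))\le\tau(\epsilon_i')$, i.e. $|\Psi_i'(p,t),\Psi_i(p,t)|<\tau(\epsilon_i')$, which is the claim after renaming the error.

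The main obstacle is the first paragraph: establishing the universal distance formula, and in particular its $1$-Lipschitz dependence on the base distance, which requires some care with the length structure of warped products over a general geodesic (here Alexandrov) base. Once that is available, both halves reduce to bookkeeping with $\tau$-functions. A secondary, more cosmetic point is the orientation of the $[0,t_0]$-factor in the converse: the warped product is not symmetric under $\tau\mapsto t_0-\tau$, so one genuinely needs the geometric input that the approximations occurring here send $C_{M_i,0}$ toward the $C_0$-end rather than toward $C_{t_0}$.
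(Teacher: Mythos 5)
Your argument is correct, but it is considerably more self-contained than the paper's, which disposes of the lemma in one sentence by citing Proposition \ref{prop:warped} (Wong's thesis result that $\lim_{GH}(X_i\times_\phi Y)=(\lim_{GH}X_i)\times_\phi Y$). What you prove is in effect a quantitative sharpening of that proposition: the universal distance formula $|(p,s),(q,t)|=G(|p,q|,s,t)$ together with the $1$-Lipschitz dependence of $G$ on the base distance immediately upgrades an $\epsilon$-approximation of the bases to an $\epsilon$-approximation of the warped products, with an explicit error. This buys you something the bare citation does not: the second assertion (that an \emph{arbitrary} approximation $\Psi_i'$ is $\tau(\epsilon_i')$-close to one of product form) does not follow formally from the mere coincidence of limits in Proposition \ref{prop:warped}; your height-rigidity argument ($d_C((p,t),C_0)=t$, plus the lower bound $G(\ell,s,t)\ge\sqrt{|s-t|^2+\phi(t_0)^2\ell^2}$ forcing the $C_0$-components of $\Psi_i'(p,t)$ and $\Psi_i'(p,0)$ to be $\tau(\epsilon_i')$-close) is exactly the missing mechanism. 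Your caveat about the orientation of the $[0,t_0]$-factor is also well taken and is not addressed in the paper: in degenerate cases (e.g.\ $C_0$ a point) the statement as literally written fails for an end-swapping approximation, and one must restrict to the approximations arising from the convergences $\partial M_i\to C_0$, $\tilde M_i\to Y$ actually in use, as you do. The only step I would ask you to write out with care is the one you already flag, namely that the arclength shadow of a Lipschitz curve has the same $L_\phi$-length (each partition sum for the shadow dominates the corresponding sum for the original, so one must argue via convergence of both to the integral of the speed rather than term-by-term); this is standard but is where the "some care with the length structure" genuinely lives.
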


\begin{proof}
This follows from Proposition \ref{prop:warped}. 
\end{proof}

Recall that $\eta:C\setminus C_0\to Y\setminus X$ is a locally isometric bijection.
In particular for every $y=(p,t_0)\in C_{t_0}\subset Y$, there is a unique 
minimal geodesic $\gamma_y:[0,t_0]\to Y$ between $X$ and $y$ such that 
$\gamma_y(0)\in X$, $\gamma(t_0)=y$. Actually $\gamma_y$ is defined as 
$\gamma_y(t)=\eta(p,t)$.
Define $g_i^*:C_{M_i}^{\rm ext}\to Y$ by
\begin{align}
         g_i^*(p,t)=\eta\circ\Psi_i\circ\iota_i^{-1}(p,t)=\eta(\psi_(p), t). \label{eq:g*}
\end{align}

\begin{prop} \label{prop:g*}
The map  $g_i^*:C_{M_i}^{\rm ext}\to Y$ defined above provides an $\epsilon_i'$-approximation.
\end{prop}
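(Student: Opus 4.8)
The goal is to show that $g_i^*:C_{M_i}^{\rm ext}\to Y$ is an $\epsilon_i'$-approximation, i.e.\ it almost preserves distances and has $\epsilon_i'$-dense image, where $\epsilon_i'\to 0$. Since $g_i^*=\eta\circ\Psi_i\circ\iota_i^{-1}$ is a composition of three maps whose defects we control — $\iota_i^{-1}$ is the inclusion identity (with the exterior metric on the source), $\Psi_i$ is an $\epsilon_i'$-approximation by the preceding lemma, and $\eta$ is $1$-Lipschitz and surjective — the plan is to track how distances transform along this composition and compare the result with the reference approximation $g_i:\tilde M_i\to Y$ that defines $\eta$ in the first place.

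First I would establish the density statement, which is the easy half: $\Psi_i(C_{M_i})$ is $\epsilon_i'$-dense in $C$, and $\eta:C\to Y$ is surjective and $1$-Lipschitz, so $\eta(\Psi_i(C_{M_i}))$ is $\epsilon_i'$-dense in $\eta(C)=Y$; hence $g_i^*(C_{M_i}^{\rm ext})$ is $\epsilon_i'$-dense in $Y$. For the metric-distortion estimate, the key observation is that by definition $\eta=\lim_i g_i\circ\iota_i\circ f_i$ with $f_i:C\to C_{M_i}$ an $\epsilon_i$-approximation, so $\eta\circ\Psi_i$ and $g_i\circ\iota_i$ are $\tau(\epsilon_i)$-close as maps $C_{M_i}\to Y$ (using that $f_i$ and $\Psi_i$ are mutually $\tau(\epsilon_i)$-inverse approximations between $C_{M_i}$ and $C$). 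Therefore, for $p,q\in C_{M_i}$,
\[
  |g_i^*(p), g_i^*(q)| = |\eta\Psi_i(p),\eta\Psi_i(q)| = |g_i\iota_i(p), g_i\iota_i(q)| \pm \tau(\epsilon_i).
\]
Now $\iota_i:C_{M_i}\to C_{M_i}^{\rm ext}$ is the identity onto the subspace with the exterior metric $d_{\tilde M_i}$, and $g_i:\tilde M_i\to Y$ is an $\epsilon_i$-approximation, so $|g_i\iota_i(p), g_i\iota_i(q)| = |p,q|_{C_{M_i}^{\rm ext}} \pm \epsilon_i$. Combining, $|g_i^*(p),g_i^*(q)| = |p,q|_{C_{M_i}^{\rm ext}} \pm \tau(\epsilon_i)$, and since $\tau(\epsilon_i)$ can be absorbed into a sequence $\epsilon_i'\to 0$ (enlarging the previous $\epsilon_i'$ if necessary), this is exactly the $\epsilon_i'$-approximation property on $C_{M_i}^{\rm ext}$.

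The step I expect to be the main obstacle is making rigorous the claim that $\eta\circ\Psi_i$ and $g_i\circ\iota_i\circ f_i\circ\Psi_i = g_i\circ\iota_i$ are uniformly close — that is, interchanging the limit defining $\eta$ with precomposition by the varying map $\Psi_i$. One must check that the $\tau(\epsilon_i)$-error does not secretly depend on $i$ in a bad way: the cleanest route is to fix that $\eta$ is defined as an actual pointwise limit (along a subsequence) of $g_i\circ\iota_i\circ f_i$, observe $f_i\circ\Psi_i$ is $\tau(\epsilon_i)$-close to $\mathrm{id}_{C_{M_i}}$ since both $f_i$ and $\Psi_i$ realize the $\mathrm{GH}$-distance between $C_{M_i}$ and $C$, and use the $1$-Lipschitz continuity of the maps $g_j\circ\iota_j$ together with their mutual convergence to transfer the estimate. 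A careful bookkeeping of which index governs which $\tau$ is the only real subtlety; once that is in place, the conclusion follows formally from Proposition \ref{prop:warped} and Proposition \ref{prop:extendAS}(2).
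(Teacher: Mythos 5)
Your reduction of the problem to the claim that $\eta\circ\Psi_i$ is uniformly close to $g_i\circ\iota_i$ is a reasonable strategy, and your density argument and final bookkeeping are fine (modulo the slip that $g_j\circ\iota_j$ is not $1$-Lipschitz but only almost distance-nonincreasing up to the additive error $\epsilon_j$, which is harmless). The genuine gap is exactly at the step you flag as the main obstacle, and your proposed resolution of it is wrong: two $\epsilon$-approximations $f_i:C\to C_{M_i}$ and $\Psi_i:C_{M_i}\to C$ in opposite directions are \emph{not} automatically approximately mutually inverse merely because "both realize the GH-distance" --- if $C$ admits a nontrivial isometry $\sigma$, then $\sigma\circ\Psi_i$ is just as good an approximation as $\Psi_i$, yet $f_i\circ\sigma\circ\Psi_i$ is nowhere near the identity. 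This is not a removable technicality here: the compatibility between $\psi_i$ and the data defining $\eta$ is the entire content of the proposition. In Example \ref{ex:counter2}, if one composes $\psi_i$ on one boundary component with the antipodal map of the sphere, one still gets a legitimate component-wise $\epsilon_i$-approximation $\partial M_i\to C_0$, but the resulting $g_i^*$ sends pairs of points at exterior distance $O(\epsilon)$ in $C_{M_i}^{\rm ext}$ to nearly antipodal points of $Y$, so it is not an approximation. Hence no purely formal argument that works for an arbitrary $\psi_i$ can succeed, and any correct proof must use a normalization tying $\Psi_i$ to the convergence $\tilde M_i\to Y$.

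The paper supplies exactly this missing ingredient, and does so with a weaker normalization than the one your argument would need. It fixes an honest $\epsilon_i$-approximation $g_i:C_{M_i}^{\rm ext}\to Y$ that agrees with $g_i^*$ only on the top slice $C_{M_i,t_0}$, and then proves (Lemma \ref{lem:gg*}) that $g_i$ and $g_i^*$ are uniformly close everywhere, by contradiction: if they differed by $\ge c$ at points $(p_j,t_j)$, then along the vertical segments $\gamma_j(t)=(p_j,t)$ both $g_j^*\circ\gamma_j$ and $g_j\circ\gamma_j$ subconverge to minimal geodesics from $X$ to the \emph{same} endpoint $(p_\infty,t_0)\in C_{t_0}\subset Y$, and the uniqueness of the perpendicular from $X$ to such a point forces the two limits to coincide. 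This geometric rigidity is what propagates agreement at $t=t_0$ down to all levels; your proposal has no substitute for it, and instead requires the stronger (and unjustified) hypothesis $f_i\circ\Psi_i\approx\mathrm{id}_{C_{M_i}}$ together with an implicit re-choice of the $f_i$ used to define $\eta$, which would in general change $\eta$ itself.
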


Let $g_i:C_{M_i}^{\rm ext}\to Y$ be any $\epsilon_i$-approximation such that 
$g_i=g_i^*$ on $C_{M_i,t_0}$, namely $g_i(p, t_0)=g_i^*(p,t_0)$.

For the proof of Proposition \ref{prop:g*}, it suffices to show the following.

\begin{lem} \label{lem:gg*}
 $|g_i(p,t), g_i^*(p,t)| < \epsilon_i'$ for all $(p,t)\in C_{M_i}^{\rm ext}$.
\end{lem}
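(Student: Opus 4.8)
\textbf{Proof proposal for Lemma \ref{lem:gg*}.}

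The plan is to compare the two maps $g_i$ and $g_i^*$ on $C_{M_i}^{\rm ext}$ by exploiting that they already agree on the outermost cylinder slice $C_{M_i,t_0}$, together with the fact that both take values in the thin collar region $B(X,t_0)\subset Y$ and that distances to $X$ are controlled. First I would record the following basic distance estimates. On the manifold side, by \eqref{eq:sumary} and the warped product metric $g(x,t)=dt^2+\phi^2(t)g_{\partial M_i}(x)$ together with $\phi\le 1$, for any $(p,t)\in C_{M_i}$ we have $|(p,t),(p,t_0)|_{\tilde M_i}\le |(p,t),(p,t_0)|_{C_{M_i}}\le |t_0-t|\le t_0$, and hence $d(p,\partial M_i)$-type estimates give $|(p,t),C_{M_i,t_0}^{\rm ext}|\le t_0-t$; conversely the exterior distance $|(p,t),\partial M_i|_{\tilde M_i}\ge$ (something comparable to $t$ by Proposition \ref{prop:extendAS}(2), since $M_i^{\rm ext}$ is $1/\ve_0$-bi-Lipschitz to $M_i$). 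On the limit side, by Lemma \ref{lem:dist-bdry}, $|g_i^*(p,t),X|=|\eta(\psi_i(p),t),X|=t$ exactly, and similarly any point of $Y\setminus X$ at small distance to $X$ has a well-defined radial projection via $\eta$.

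The key step is then a radial-projection argument. Since $g_i$ is an $\epsilon_i$-approximation and $g_i=g_i^*$ on $C_{M_i,t_0}$, the point $g_i(p,t)$ is within $\epsilon_i$ of having distance $|(p,t),(p,t_0)|_{C_{M_i}^{\rm ext}}$ from $g_i^*(p,t_0)=\eta(\psi_i(p),t_0)$, and within $\epsilon_i$ of distance $t$ from $X$ (using that $g_i$ distorts distances to the subset $\partial M_i$, hence to $X$, by at most $\tau(\epsilon_i)$; here I would use that $M_i^{\rm ext}$-distance to $\partial M_i$ converges to $Y$-distance to $X$, a consequence of the convergence $M_i^{\rm ext}\to X\subset Y$). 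Now in $Y$, near the point $\eta(\psi_i(p),t_0)\in C_{t_0}$, the map $\eta$ is a local isometry onto $Y\setminus X$ (Lemma \ref{lem:loc-isom}), and the radial geodesic $\gamma_y(s)=\eta(\psi_i(p),s)$ is the \emph{unique} minimal geodesic from $X$ to $y=\eta(\psi_i(p),t_0)$, of length $t_0$. A point of $Y$ which is simultaneously within $\tau(\epsilon_i)$ of distance $t_0-t$ from $y$ and within $\tau(\epsilon_i)$ of distance $t$ from $X$ must lie within $\tau(\epsilon_i)$ of $\gamma_y(t)=\eta(\psi_i(p),t)=g_i^*(p,t)$: this is where one invokes a ``quasi-geodesic almost rigidity'' fact — in an Alexandrov space with curvature $\ge K$, if $|y,z|+|z,X|\le |y,X|+\tau(\epsilon_i)$ and $|y,X|=t_0$, then $z$ is $\tau(\epsilon_i)$-close to the point at parameter $t$ on the (unique) minimal geodesic realizing $|y,X|$. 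Applying this with $z=g_i(p,t)$ yields $|g_i(p,t),g_i^*(p,t)|\le\tau(\epsilon_i)<\epsilon_i'$ after enlarging $\epsilon_i'$ if necessary.

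The main obstacle I anticipate is justifying the almost-rigidity statement in the last step uniformly, i.e. making sure the error $\tau(\epsilon_i)$ is genuinely independent of $(p,t)$ and $i$: this requires that the minimal geodesic from $X$ to $y=\eta(\psi_i(p),t_0)$ is unique \emph{and} that its uniqueness is quantitatively stable (no nearly-minimal competing geodesics), which should follow from Lemma \ref{lem:loc-isom} (the $\eta$-preimage of a short minimal geodesic not meeting $X$ is a warped-product geodesic, and by the argument in Lemma \ref{prop:preimage} a path from $y$ to $X$ of length $\le t_0+\tau$ forces the warped-product picture) together with the warping function estimate $\phi\ge\phi(t_0)=\ve_0>0$, which prevents short-cutting through the interior. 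A secondary point requiring care is that $C_{M_i}$ and $Y$ may have several components, so all these statements must be read component-wise, as stipulated in Section \ref{sec:str-limit}; but since $g_i$ and $g_i^*$ respect the component decomposition for large $i$, this is only a bookkeeping matter. Once these are in place, the lemma — and hence Proposition \ref{prop:g*} — follows.
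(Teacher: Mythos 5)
Your argument is correct in outline, but it takes a genuinely different route from the paper's. The paper proves the lemma softly, by contradiction plus compactness: if the discrepancy did not tend to zero uniformly, one extracts a subsequence $(p_j,t_j)$ with $|g_j(p_j,t_j),g_j^*(p_j,t_j)|\ge c>0$, lets $(\psi_j(p_j),t_j)\to(p_\infty,t_\infty)$, and observes that both $g_j\circ\gamma_j$ and $g_j^*\circ\gamma_j$ (where $\gamma_j(t)=(p_j,t)$ is the perpendicular) must converge to minimal geodesics realizing $|X,(p_\infty,t_0)|$; the \emph{qualitative} uniqueness of that geodesic then gives the contradiction, and the uniformity in $(p,t)$ comes for free from the sup-and-subsequence trick. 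You instead argue directly and quantitatively: $z=g_i(p,t)$ nearly saturates $|y,z|+|z,X|=|y,X|=t_0$ for $y=\eta(\psi_i(p),t_0)$, so it must lie near the parameter-$t$ point of the unique perpendicular. This works, but it forces you to prove the \emph{quantitative} stability of that uniqueness, which you correctly flag as the main obstacle. That step can indeed be completed by the same warped-product computation used in Lemma \ref{prop:preimage}: for a unit-speed competitor $(\sigma,\nu)$ in $C$ of length $\le t_0+\tau$ joining $(\psi_i(p),t_0)$ to $C_0$, one has $\int\phi^2|\dot\sigma|^2\le 2\tau$, hence by Cauchy--Schwarz the horizontal drift is at most $\sqrt{2(t_0+\tau)\tau}/\phi(t_0)$, giving closeness of order $\sqrt{\tau}$ (not $\tau$, but still $\tau(\epsilon_i)$ in the paper's sense). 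Two edge cases also need a word: if the minimal geodesic from $y$ to $z$ meets $X$, the triangle inequality forces $t\le\tau$, and the regime of small $t$ is then handled by comparing both maps with their values at a slightly larger $t_*$ using $|g_i(p,t),g_i(p,t_*)|\le |t_*-t|+\epsilon_i$. In short, your approach buys an explicit, effective estimate at the cost of an extra rigidity lemma; the paper's buys brevity at the cost of effectivity. Either is acceptable, but as written your proof is not complete until the almost-rigidity step is actually carried out.
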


\begin{proof}
We have to show that 
\[
                  \lim_{i\to\infty} \sup_{(p,t)\in C_{M_i}}\,|g_i(p,t), g_i^*(p,t)| = 0.
\]
Suppose the contrary. Then there are subsequence $\{ j\}\subset \{ i\}$ 
and $(p_j, t_j)\in C_{M_j}$ such that 
\begin{align}
        |g_j(p_j,t_j), g_j^*(p_j,t_j)| \ge c>0, \label{eq:positive}
\end{align}
for some constant $c$ independent of $j$.
Passing to a subsequence, we may assume that $(\psi_j(p_j), t_j)$ converges to $(p_{\infty},t_{\infty})\in C$.
Let $\gamma_j(t)=(p_j, t)$, $0\le t\le t_0$, which is a minimal geodesic in $C_{M_j}^{\rm ext}$ 
between $\partial M_j$ and $C_{M_i, t_0}$.
Now $g_j^{*} \circ\gamma_j(t)=\eta(\psi_j(p_j), t)$ converges to a minimal geodesic
$\gamma_{\infty}(t)=\eta(p_{\infty}, t)$ realizing the distance between 
$X$ and $(p_{\infty}, t_0)\in C_{t_0}\subset Y$.  
Since $g_j$ is $\epsilon_j$-approximation, any limit  of $g_j\circ\gamma_j$, say $\hat\gamma$, must 
also be minimal geodesic between $X$ and  $(p_{\infty}, t_0)$.
From the uniqueness of such geodesic, we have $\gamma_{\infty}(t)=\hat\gamma_{\infty}(t)$,
which contradicts \eqref{eq:positive}. 
 \end{proof}

\par\medskip
Next, we determine the intrinsic structure of $X$, and prove 
Proposition \ref{prop:intrinsic} below, 
which will be crucial in our start for the description of $Y$ in terms of $N$ (see Proposition \ref{prop:YNC})

Recall that $X\subset Y$ is the limit of   $M_i^{\rm ext}$ 
under the convergence $\tilde M_i \to Y$.
By Proposition \ref{prop:extendAS}, 
%
the identity $\iota_i:M_i \to M_i^{\rm ext}$ is a $L$-bi-Lipschitz 
homeomorphism.  Therefore we have that

\begin{lem} \label{lem:NX} For a subsequnce, 
 $\iota_i:M_i \to M_i^{\rm ext}$ converges to an $L$-bi-Lipschitz 
homeomorphism $\iota_{\infty}:N  \to X$. 
\end{lem}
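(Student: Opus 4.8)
The plan is to obtain the limit map $\iota_\infty$ by a diagonal/Arzel\`a--Ascoli argument applied to the uniformly bi-Lipschitz homeomorphisms $\iota_i : M_i \to M_i^{\mathrm{ext}}$, and then to check that the limit map is again an $L$-bi-Lipschitz homeomorphism. First I would fix, for each $i$, $\e_i$-approximations $\alpha_i : N \to M_i$, $\beta_i : M_i \to N$ realizing $M_i \to N$ (with $M_i$ carrying its intrinsic metric), and $\e_i$-approximations $\gamma_i : X \to M_i^{\mathrm{ext}}$, $\delta_i : M_i^{\mathrm{ext}} \to X$ realizing the convergence $M_i^{\mathrm{ext}} \to X$ inside $\tilde M_i \to Y$, with $\e_i \to 0$. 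Then the composition $\Phi_i := \delta_i \circ \iota_i \circ \alpha_i : N \to X$ satisfies, for $p, q \in N$,
\[
  \bigl|\, |\Phi_i(p),\Phi_i(q)|_X - |\iota_i\alpha_i(p),\iota_i\alpha_i(q)|_{M_i^{\mathrm{ext}}} \,\bigr| < \e_i,
\]
and since $\iota_i$ is $L$-bi-Lipschitz and $\alpha_i$ is an $\e_i$-approximation, the right-hand distance lies within $2L\e_i$ of the interval $[L^{-1}|p,q|_N,\, L|p,q|_N]$. Hence the family $\{\Phi_i\}$ is uniformly Lipschitz (constant $\le L + o(1)$) and $N$ is compact, so by Arzel\`a--Ascoli a subsequence converges uniformly to a map $\iota_\infty : N \to X$ which is $L$-Lipschitz and surjective (surjectivity because $\Phi_i(N)$ is $\tau(\e_i)$-dense in $X$).

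Next I would produce the inverse. Running the same construction with $\iota_i^{-1} : M_i^{\mathrm{ext}} \to M_i$ in place of $\iota_i$ gives, along a further subsequence, an $L$-Lipschitz surjection $\rho : X \to N$. The key point is then $\rho \circ \iota_\infty = \mathrm{id}_N$ and $\iota_\infty \circ \rho = \mathrm{id}_X$: this follows because at the finite level $(\iota_i^{-1}\circ\text{approx})\circ(\iota_i\circ\text{approx})$ differs from the identity on $N$ by a $\tau(\e_i)$-amount (all the approximation maps compose to something $\tau(\e_i)$-close to the identity, using that $\iota_i$ and $\iota_i^{-1}$ are genuine inverses), and uniform convergence passes this to the limit. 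Consequently $\iota_\infty$ is a bijection with $\iota_\infty^{-1} = \rho$ also $L$-Lipschitz, so $\iota_\infty$ is an $L$-bi-Lipschitz homeomorphism. Finally one identifies this $\iota_\infty$ with the one implicitly used elsewhere, i.e.\ checks compatibility of the approximations $\alpha_i,\beta_i$ with the fixed approximations $g_i, f_i$ defining $X\subset Y$; this is routine since any two sequences of $\e_i$-approximations to the same limit are $\tau(\e_i)$-close after composition.

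The main obstacle I anticipate is purely bookkeeping rather than conceptual: one must be careful that the convergence $M_i^{\mathrm{ext}} \to X$ is the convergence of subsets inside the ambient convergence $\tilde M_i \to Y$, so the approximations $\gamma_i, \delta_i$ should be taken as restrictions of (or compatible with) the ambient $\e_i$-approximations $\tilde M_i \to Y$; otherwise the identification of the limit $\iota_\infty$ with the structure map into $X \subset Y$ is not automatic. Also, because $M_i$ is being given its intrinsic length metric while $M_i^{\mathrm{ext}}$ carries the exterior metric $d_{\tilde M_i}$, one should invoke Proposition \ref{prop:extendAS}(2) precisely in the form that $\iota_i$ is $L$-bi-Lipschitz for these two metrics, with $L = 1/\ve_0$ independent of $i$ — this uniformity is exactly what makes Arzel\`a--Ascoli applicable. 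No new estimates beyond those already recorded are needed.
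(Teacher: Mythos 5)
Your proof is correct and fills in exactly the routine Arzel\`a--Ascoli/diagonal argument that the paper leaves implicit: the paper states the lemma with no proof, as a direct consequence of the uniform $L$-bi-Lipschitz bound from Proposition~\ref{prop:extendAS}(2). Your handling of the two subtleties you flag — compatibility of the subset convergence $M_i^{\rm ext}\to X$ with the ambient convergence $\tilde M_i\to Y$, and the need for the inverse maps $\iota_i^{-1}$ to also be uniformly Lipschitz — is the right bookkeeping, and the argument goes through.
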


\begin{prop}\label{prop:intrinsic}
$X^{\rm int}$ is isometric to $N$.
\end{prop}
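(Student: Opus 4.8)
The plan is to show that the distance function on $X$ coming from its intrinsic length metric $d_{X^{\rm int}}$ agrees with the limit metric $d_N$ of $N$ under the bi-Lipschitz identification $\iota_\infty : N \to X$ of Lemma~\ref{lem:NX}. Since $\iota_\infty$ is $L$-bi-Lipschitz, it is in particular a homeomorphism that sends rectifiable curves to rectifiable curves, so it suffices to compare lengths of curves. Concretely, I would prove two inequalities: first, $d_{X^{\rm int}}(\iota_\infty(p), \iota_\infty(q)) \le d_N(p,q)$ for all $p,q \in N$; second, the reverse inequality $d_N(p,q) \le d_{X^{\rm int}}(\iota_\infty(p),\iota_\infty(q))$.

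For the first inequality, the point is that $N$ is already a length space (a Gromov-Hausdorff limit of the geodesic spaces $M_i$, which are length spaces since they are Riemannian manifolds with boundary). Given $p,q \in N$ and a nearly minimizing curve $\gamma$ in $N$ from $p$ to $q$, I would lift $\gamma$ approximately to curves $\gamma_i$ in $M_i$ with $L(\gamma_i) \to L(\gamma)$ using the Gromov-Hausdorff approximations $M_i \to N$ (this is the standard fact that lengths of curves are lower semicontinuous and that in limits of length spaces one can approximate limit curves by curves upstairs with controlled length — here one uses that $M_i$ is a genuine length space so short curves realize distances). Then, since $\iota_i : M_i \to M_i^{\rm ext}$ is $1$-Lipschitz (the exterior metric is bounded above by the intrinsic one), the image curves $\iota_i(\gamma_i)$ have exterior length $\le L(\gamma_i)$, and passing to the limit under $\tilde M_i \to Y$ these converge to a curve in $X^{\rm ext}$ from $\iota_\infty(p)$ to $\iota_\infty(q)$ of length $\le L(\gamma)$; taking the infimum over $\gamma$ gives $d_{X^{\rm int}}(\iota_\infty(p),\iota_\infty(q)) \le d_N(p,q)$.

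For the reverse inequality, I would use the bi-Lipschitz bound in the other direction together with the fact that $\iota_i$ is $L$-bi-Lipschitz: a curve $c$ in $X^{\rm ext}$ of exterior length close to $d_{X^{\rm int}}(\iota_\infty(p),\iota_\infty(q))$ can be approximated by curves $c_i$ in $M_i^{\rm ext}$, and pulling these back via $\iota_i^{-1}$ yields curves $\iota_i^{-1}(c_i)$ in $M_i$ whose intrinsic length is at most $L$ times the exterior length of $c_i$ — wait, that only gives a factor $L$, not the sharp comparison. So instead the correct route for this direction is: $N$-distance is the limit of the intrinsic distances $d_{M_i}$, and $d_{M_i}(x,y) \ge d_{M_i^{\rm ext}}(x,y)$ trivially, but I need intrinsic-to-intrinsic. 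The clean argument is that the intrinsic metric $d_{X^{\rm int}}$ is, by definition, the largest length metric $\le$ the induced exterior metric $d_{X^{\rm ext}}$; and $d_N$ under $\iota_\infty$ is also $\le d_{X^{\rm int}}$ would follow if I show $d_N \le$ any length metric dominating the exterior metric. Better: I would invoke that $(M_i, d_{M_i})$ converges to $(N,d_N)$ and $(M_i, d_{M_i^{\rm ext}})$ converges to $(X, d_{X^{\rm ext}})$ with the identity maps converging to $\iota_\infty$; since each $d_{M_i}$ is the length metric induced by $d_{M_i^{\rm ext}}$ (the identity $\iota_i$ is a bi-Lipschitz homeomorphism so they induce the same length metric — the intrinsic length metric of $M_i^{\rm ext}$ is exactly $d_{M_i}$ because $\iota_i$ preserves lengths of curves), a general stability result for induced length metrics under Gromov-Hausdorff convergence (using the uniform bi-Lipschitz bound $L$ to guarantee no length is lost in the limit) gives that $d_N$ is the length metric induced by $d_{X^{\rm ext}}$, i.e. $d_N = d_{X^{\rm int}} \circ \iota_\infty$.

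The main obstacle is the last step: establishing that taking the induced length metric commutes with the Gromov-Hausdorff limit in this setting. This is not automatic for general metric spaces, but it holds here because of the uniform two-sided bi-Lipschitz control (constant $L = 1/\varepsilon_0$ independent of $i$) between the intrinsic and exterior metrics — this prevents curves from being "stretched" in an uncontrolled way in the limit, so that a curve in $X^{\rm ext}$ realizing the intrinsic distance can be approximated by curves in $M_i^{\rm ext}$ of comparable exterior length, which lift to curves of comparable intrinsic length in $M_i$, whose lengths converge to something $\ge d_N(p,q)$. I would make this quantitative with a $\tau(\varepsilon_i)$-type estimate rather than appealing to a black-box theorem, carefully using Proposition~\ref{prop:extendAS}(2) at each scale.
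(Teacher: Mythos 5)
Your first inequality, $d_{X^{\rm int}}\le d_N$, is fine and is essentially what the paper does (the paper phrases it by pulling a minimal geodesic of $N$ back through the limit map $h$, which is $1$-Lipschitz from $N$ to $X^{\rm ext}$). The problem is the reverse inequality, where you yourself notice that the naive argument only yields a factor $L$ and then try to repair it by invoking a ``stability of induced length metrics under Gromov--Hausdorff convergence'' justified solely by the uniform two-sided bi-Lipschitz bound $L=1/\ve_0$. That repair does not work: uniform bi-Lipschitz equivalence between the intrinsic and exterior metrics of the $M_i$ is \emph{not} enough to conclude that the limit of the intrinsic metrics is the intrinsic metric of the limit of the exterior metrics with constant $1$. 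A model counterexample: the graphs of $x\mapsto \ve_i\sin(x/\ve_i)$ over $[0,1]$ in $\mathbb R^2$ are uniformly bi-Lipschitz between their exterior and intrinsic metrics, their exterior metrics converge to the segment $[0,1]$ (whose induced length metric is itself), yet their intrinsic metrics converge to a segment of length $\frac{1}{2\pi}\int_0^{2\pi}\sqrt{1+\cos^2 u}\,du>1$. The bi-Lipschitz constant is scale-independent and gives no improvement when you pass to fine subdivisions, so ``a $\tau(\ve_i)$-type estimate carefully using Proposition~\ref{prop:extendAS}(2)'' cannot be extracted from that hypothesis alone.

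The ingredient you are missing is the specific geometry of the cylindrical extension. The paper approximates an $X^{\rm int}$-minimal geodesic by a broken geodesic $\gamma^i_\Delta$ in $\tilde M_i$ whose points all lie within distance $\ve$ of $M_i$, and then projects it to $M_i$ by the map $\pi_i(p,t)=p$. The warped-product metric $dt^2+\phi^2(t)g_{\partial M_i}$ gives the sharp estimate $L(\gamma^i_\Delta)\ge \phi(\ve)\,L(\pi_i\circ\gamma^i_\Delta)$, and since $\phi(0)=1$ and $\phi$ is continuous, the loss factor $\phi(\ve)\to 1$ as $\ve\to 0$ --- whereas the global bi-Lipschitz constant only controls the worst case $\phi(t_0)=\ve_0$ over the whole collar. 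Letting $|\Delta|\to 0$ and $i\to\infty$ then yields $|x,y|_{X^{\rm int}}\ge |h(x),h(y)|_N$ with constant exactly $1$. Without this (or an equivalent statement that the distortion between intrinsic and exterior metrics tends to $1$ near $\partial M_i\times\{0\}$), the reverse inequality does not follow from what you have written.
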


\begin{proof} Passing to a subsequence if necessary, we may assume 
that the $L$-Lipschitz map  $\iota_i:M_i^{\rm ext} \to M_i$, where $L=1/\e_0$,  converges to a surjective map
$h:X\to N$ satisfying 
\[
                   |x,y|_X \le  |h(x),h(y)|_N \le L|x,y|_X,
\]
for every $x, y\in X$. Let $\sigma:[0, d]\to N$ be a minimal geodesic joining $h(x)$ and $h(y)$.
Then we have 
\[
    |h(x), h(y)|_N = L(\sigma)\ge L(h^{-1}(\sigma)) \ge |x,y|_{X^{\rm int}}.
\]
Next we show the reverse inequality. 
Let $\gamma:[0,\ell]\to X$ be a minimal geodesic in $X^{\rm int}$ joining $x$ to $y$.
For any $\ve>0$, take a subdivision $\Delta$ of  $\gamma$: 
$x=x_0<x_1<\cdots <x_{\alpha}<\cdots x_k=y$ such that 
denoting by $\gamma_{\Delta}$ the broken geodesic consisting of  minimal geodesic 
joining $x_{\alpha-1}$ and $x_{\alpha}$ in $Y$ for $1\le \alpha\le k$, we have 
\begin{enumerate}
 \item $|L(\gamma_{\Delta}) - |x,y|_{X^{\rm int}}| <\ve;$
 \item $\max_t |\gamma_{\Delta}(t), X| < \ve$.
\end{enumerate}
Take $p_{\alpha}^i\in M_i$ converging to $x_{\alpha}$  under the convergence 
$\tilde M_i \to Y$, and denote by $\gamma_{\Delta}^i$ a broken geodesic consisting of  minimal geodesic 
joining $p_{\alpha-1}^i$ and $p_{\alpha}^i$ in $\tilde M_i$ for $1\le \alpha\le k$.
Note that for large enough $i$
\begin{enumerate}
 \item $|L(\gamma_{\Delta}) - L(\gamma_{\Delta}^i)| <\ve;$
 \item $\max_t |\gamma_{\Delta}^i(t), M_i| < \ve$.
\end{enumerate}
Let $\sigma_i:=\pi_i\circ\gamma_{\Delta}^i$, where $\pi_i:\tilde M_i \to M_i$ is the canonical 
projection defined by $\pi_i(p,t)=p$.
From the warped product metric construction, we have $L(\gamma_{\Delta}^i)\ge \phi(\ve)L(\sigma_i)$
for large $i$. It follows that 
\begin{align*}
    |x,y|_{X^{\rm int}} &\ge L(\gamma_{\Delta}) - \ve >  L(\gamma_{\Delta}^i) - 2\ve \\
                            &\ge  \phi(\ve)L(\sigma_i) -2\ve \\
                            &\ge  \phi(\ve)|p_i,q_i|_{M_i} -2\ve, 
\end{align*}
where $p_i\to x$ and $q_i\to y$ under $\tilde M_i \to Y$.
Letting $|\Delta|\to 0$ and $i\to\infty$, we conclude that $ |x,y|_{X^{\rm int}}\ge |h(x), h(y)|_N$.
This completes the proof.
\end{proof}

Let $X^{\rm int}\cup_{\eta_0}C_0\times_{\phi}[0,t_0]$ denote the length space obtained by the result of
gluing of the two length spaces $X^{\rm int}$ and $C_0\times_{\phi}[0,t_0]$ by the map 
$\eta_0:C_0\times 0\to X^{\rm int}$.

\begin{prop}\label{prop:YNC}
$Y$ is isometric to the length space 
\[  
                     X^{\rm int}  \cup_{\eta_0}C_0\times_{\phi}[0,t_0].
\] 
\end{prop}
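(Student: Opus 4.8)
The plan is to produce an explicit isometry between $Y$ and the glued length space $Z := X^{\rm int}\cup_{\eta_0}C_0\times_\phi[0,t_0]$, by combining what is already known about the two pieces. We have from Proposition \ref{prop:intrinsic} that $X^{\rm int}$ is isometric to $N$, and from Lemma \ref{lem:loc-isom} that $\eta$ restricts to a bijective local isometry $C\setminus C_0\to Y\setminus X$, while $\eta_0:C_0\to X$ is the gluing map. So there is a natural surjective map $\bar\eta:Z\to Y$ built from the inclusion $X\hookrightarrow Y$ on the $X^{\rm int}$-piece and from $\eta$ on the $C_0\times_\phi[0,t_0]$-piece; these are compatible over the gluing locus $C_0$ precisely because $\eta|_{C_0}=\eta_0$ maps into $X$. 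The first thing to check is that $\bar\eta$ is well-defined and bijective: surjectivity is clear since $Y = X\cup(Y\setminus X)$, and injectivity uses Lemma \ref{lem:loc-isom} (bijectivity of $\eta$ off $C_0$) together with the fact that in $Z$ the only identifications are via $\eta_0$.

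The core of the proof is the metric comparison $d_Z = d_Y\circ(\bar\eta\times\bar\eta)$. First I would show $d_Y(\bar\eta(a),\bar\eta(b))\le d_Z(a,b)$, i.e. that $\bar\eta$ is $1$-Lipschitz. This follows because $\bar\eta$ is $1$-Lipschitz on each of the two pieces — on $X^{\rm int}$ because $X^{\rm int}$ has a longer (intrinsic) metric than the exterior metric $X$ inherits from $Y$, so the inclusion $X^{\rm int}\to Y$ is $1$-Lipschitz; on $C_0\times_\phi[0,t_0]$ because $\eta:C\to Y$ is $1$-Lipschitz (this is exactly how $\eta$ was constructed, as a limit of $1$-Lipschitz maps $g_i\circ\iota_i\circ f_i$). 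Since a length-metric glued space has its distances computed as infima of lengths of concatenated curves through the two pieces, and $\bar\eta$ does not increase length on either piece and respects the gluing, $\bar\eta$ is $1$-Lipschitz on $Z$.

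The reverse inequality $d_Z(a,b)\le d_Y(\bar\eta(a),\bar\eta(b))$ is the main obstacle and requires a curve-lifting argument in $Y$. Given a minimal geodesic $\gamma$ in $Y$ from $\bar\eta(a)$ to $\bar\eta(b)$, I would subdivide it by the (relatively closed) set $\gamma^{-1}(X)$ into maximal subarcs: those lying in $X$ and those lying in $Y\setminus X$. Each subarc inside $Y\setminus X$ lifts, via the local isometry $\eta^{-1}$ of Lemma \ref{lem:loc-isom}, to a curve in $C\setminus C_0$ of the same length, with matching endpoints in $C_0$ by continuity; each subarc inside $X$ is a curve in $X\subset Y$, hence a curve in $X^{\rm int}$ of length no greater than its $d_X$-length along $\gamma$ — wait, that is the wrong direction, so instead one observes that any curve in $X$ (in the exterior metric) can be approximated in $d_Y$, but what we actually need is that the portions of $\gamma$ in $X$ are themselves curves in $X$ and thus their $X^{\rm int}$-length, which is $\ge$ their $d_Y$-length, must be controlled; the correct move is to instead choose, for each piece of $\gamma$ inside $X$, a curve in $X^{\rm int}$ whose length is within $\varepsilon$ of the $d_{X^{\rm int}}$-distance between its endpoints — and here crucially Proposition \ref{prop:intrinsic}'s identification $X^{\rm int}\cong N$ together with the fact that the endpoints of these subarcs lie on $X$ lets one relate this back to $d_Y$ because consecutive subarc-endpoints are joined within $\gamma$ by subarcs whose $d_Y$-length we control. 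Concatenating all these lifted/replacement curves produces a curve in $Z$ from $a$ to $b$ of length at most $L(\gamma)+\varepsilon = d_Y(\bar\eta(a),\bar\eta(b))+\varepsilon$, and letting $\varepsilon\to 0$ finishes the proof. The delicate point to handle carefully is the structure of the closed set $\gamma^{-1}(X)$ — it need not have finitely many components — so I would either argue by a compactness/finite-cover reduction (covering $\gamma$ by finitely many intervals each of which is either entirely in $Y\setminus X$ near its interior, using Lemma \ref{lem:dist-bdry} to get a uniform positive distance to $X$ on the lifted portions) or approximate $\gamma$ by broken geodesics as in the proof of Proposition \ref{prop:intrinsic}, which is the technically cleanest route and directly parallels an argument already present in the paper.
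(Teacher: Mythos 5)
Your overall skeleton --- the canonical bijection between $Y$ and $Z:=X^{\rm int}\cup_{\eta_0}C_0\times_\phi[0,t_0]$, the easy $1$-Lipschitz direction, and the decomposition of a $Y$-minimal geodesic $\gamma$ by the closed set $\gamma^{-1}(X)$ --- matches the paper, and lifting each open arc of $\gamma$ in $Y\setminus X$ isometrically into $C\setminus C_0$ via Lemma \ref{lem:loc-isom} is legitimate. (Incidentally, your worry about the maximal subarcs of $\gamma$ lying in $X$ is a non-issue: a curve contained in $X$ has the same length in $X^{\rm int}$ as in $Y$, so $d_{X^{\rm int}}$ of its endpoints is bounded by its $Y$-length by definition of the induced length metric.) The genuine gap is in reassembling the pieces. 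Since $\gamma^{-1}(X)$ can be totally disconnected with positive measure, your lifted arcs do not chain up into a finite (or even controllably countable) concatenation, and the step you must supply is a bound of the form $d_Z(\Phi(z),\Phi(w))\le |z,w|_Y\cdot(1+\tau(\epsilon))$ for points $z,w\in X$ that are endpoints of the portion of $\gamma$ interacting with $X$. Your two proposed remedies do not deliver this: Lemma \ref{lem:dist-bdry} gives no uniform separation from $X$ on a Cantor-like $\gamma^{-1}(X)$, and ``choosing a curve in $X^{\rm int}$ within $\varepsilon$ of the $d_{X^{\rm int}}$-distance and relating it back to $d_Y$'' is circular --- the comparison $d_{X^{\rm int}}\lesssim d_Y$ across such pairs is precisely what is being proved; the only a priori bound available at this stage is the bi-Lipschitz constant $L=1/\epsilon_0$ from Proposition \ref{prop:extendAS}/Lemma \ref{lem:NX}, which loses a fixed factor and does not yield an isometry.

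The paper closes exactly this gap by a non-formal geometric input that your proposal never invokes: for an excursion $\gamma_\alpha$ of length $\le\epsilon$ with endpoints $z_\alpha,w_\alpha\in X$, one returns to the approximating manifolds, takes $p_i,q_i\in M_i$ converging to $z_\alpha,w_\alpha$, a minimal geodesic $\gamma_i$ in $\tilde M_i$ between them (which stays within distance about $\epsilon$ of $M_i$), and projects it to $M_i$ along the warped collar; the warping function then gives $L(\gamma_i)\ge\phi(\epsilon)L(\pi_i\circ\gamma_i)$, hence in the limit $|z_\alpha,w_\alpha|_Y\ge\phi(\epsilon)\,|z_\alpha,w_\alpha|_{X^{\rm int}}$. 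Replacing every short excursion by an $X^{\rm int}$-geodesic at the cost of the factor $1/\phi(\epsilon)$ reduces $\hat\gamma$ to a \emph{finite} concatenation of $Y$-geodesics of length $>\epsilon$ and $X^{\rm int}$-geodesics, giving $|y_0,y_1|_Y\ge\phi(\epsilon)\,|\Phi(y_0),\Phi(y_1)|_Z$ and the conclusion as $\epsilon\to 0$. Without this estimate (or an equivalent use of the warped-product structure, e.g.\ the Lipschitz bound $1/\phi(\epsilon)$ for the projection $\pi$ on the $\epsilon$-collar), the hard inequality $d_Z\le d_Y$ does not close.
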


\begin{proof}
Let $Z:= X^{\rm int}  \cup_{\eta_0}C_0\times_{\phi}[0,t_0]$, and $\Phi:Y\to Z$ the canonical map.
Note that $\Phi$ is bijective.
For every $y_0, y_1\in Y$, let $\gamma:[0,\ell]\to Y$ be a minimal geodesic joining $y_0$ and $y_1$. 
Decompose $\gamma$ into the two parts:
\[
     \gamma = \gamma_{Y\setminus X} \cup \gamma_{X},
\]
where $\gamma_{Y\setminus X}=\gamma\cap (Y\setminus X)$ and $\gamma_X=\gamma\cap X$.
Let $\gamma_{Y\setminus X}=\cup_{\alpha} \gamma_{\alpha}$ be the at most countable union 
consisting of open arc components of $\gamma_{Y\setminus X}$.
For any $\e>0$, take $\gamma_{\alpha}$ of length $\le \e$ such that the endpoints  
$z_{\alpha}$ and $w_{\alpha}$  of  $\gamma_{\alpha}$ are contained 
in $X$ if such a $\gamma_{\alpha}$ exists.
Take $p_i, q_i\in M_i$ such that $p_i\to z_{\alpha}$,  $q_i\to w_{\alpha}$ under the convergence $\tilde M_i\to Y$.
For a minimal geodesic $\gamma_i$ joining $p_i$ and $q_i$ in $\tilde M_i$, let $\sigma_i:=\pi_i(\gamma_i)$,
where $\pi_i:\tilde M_i \to M_i$ is the projection. Note that $\max |\gamma_i(t), M_i|<\e$.
Using the warped metric structure, we have $L(\gamma_i) \ge \phi(\e)L(\sigma_i)$,
which implies 
\[
      |z_{\alpha}, w_{\alpha}|_Y \ge |p_i, q_i|_{\tilde M_i} - o_i \ge \phi(\e)|p_i, q_i|_{M_i} - o_i,
\]
where $\lim o_i= 0$. 
Letting $i\to \infty$, we have $ |z_{\alpha}, w_{\alpha}|_Y \ge \phi(\e) |z_{\alpha}, w_{\alpha}|_{X^{\rm int}}$.
Now we replace $\gamma_{\alpha}$ by a minimal geodesic joining $z_{\alpha}$ and $w_{\alpha}$ in $X^{\rm int}$.
Repeating this procedure at most countably many times if necessarily, we construct a Lipschitz curve $\hat\gamma$ joining $y_0$ to $y_1$
such that in the decomposition 
\[
  \hat \gamma = \hat \gamma_{Y\setminus X} \cup \hat \gamma_{X},
\]
$ \hat \gamma_{Y\setminus X}$ (resp.  $\hat \gamma_{X}$) consists of 
finitely many $Y$-minimal geodesics each of whose length $\ge \e$ (resp. finitely many $X$-minimal geodesic) and that 
\[
  |y_0, y_1|_Y  =L(\gamma) \ge \phi(\e) L(\hat\gamma) \ge     \phi(\e)|\Phi(y_0), \Phi(y_1)|_Z.
\]
Letting $\e\to 0$, we conclude that $|y_0, y_1|_Y\ge |\Phi(y_0), \Phi(y_1)|_Z$.

Next taking a $Z$-minimal geodesic joining $\Phi(y_0)$ and $\Phi(y_1)$ and replacing it by a Lipschitz 
curve in a similar way, we obtain the reverse inequality $|y_0, y_1|_Y\le |\Phi(y_0), \Phi(y_1)|_Z$.
This completes the proof.
\end{proof}

\begin{rem} \label{rem:noncpt-limit}
Both  Propositions \ref{prop:intrinsic} and \ref{prop:YNC} hold true for 
pointed Gromov-Hausdorff limits of  inradius collapsed manifolds (see Section \ref{sec:unbounded}). 
Moreover, in the above proofs, we do not need the assumption of inradius collapse. Therefore 
 Propositions \ref{prop:intrinsic} and \ref{prop:YNC} also hold for 
Gromov-Hausdorff limits of  non-inradius collapsed manifolds.
%
\end{rem}

\begin{cor}\label{cor:dim-collaps}
If $M_i \in \ca M(n,\kappa,\lambda,d)$ inradius collpases to $N$, then it actually collapses to $N$. 
Namely we have 
\begin{enumerate}
 \item $\dim M_i>\dim N;$
 \item $\lim \vol(M_i) = 0$.
\end{enumerate} 
\end{cor}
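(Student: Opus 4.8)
The plan is to deduce Corollary~\ref{cor:dim-collaps} directly from the structural result Proposition~\ref{prop:YNC} together with Lemma~\ref{prop:preimage}. The key point is that the limit $Y$ of $\tilde M_i$ is a genuine $n$-dimensional Alexandrov space (it is a limit of $n$-manifolds with a uniform lower curvature bound and uniformly bounded diameter, and since $\inrad(M_i)\to 0$ the cylinders $C_{M_i}$ fill out $\tilde M_i$ up to a $\tau(1/i)$-neighborhood of $C_{M_i,t_0}$, so one should first check that $\dim Y = n$; alternatively this is exactly where the hypothesis $\inrad(M_i)\to 0$ forces $Y$ to remain $n$-dimensional even though $N$ collapses). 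Granting $\dim Y=n$, I would argue as follows.

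First I would observe that, by Proposition~\ref{prop:YNC}, $Y = X^{\rm int}\cup_{\eta_0} C_0\times_\phi[0,t_0]$, and that for a point $(p,t)$ with $0<t<t_0$ one has, by Lemma~\ref{lem:loc-isom}, a neighborhood in $Y$ isometric to a neighborhood in the warped product $C_0\times_\phi[0,t_0]$, which is $(\dim C_0+1)$-dimensional. Hence $\dim Y = \dim C_0 + 1$, so $\dim C_0 = n-1$. Now $\eta_0\colon C_0\to X$ is surjective and $1$-Lipschitz with fibers of cardinality at most $2$ (Lemma~\ref{prop:preimage}(1)), so $\dim X \le \dim C_0 = n-1$. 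By Proposition~\ref{prop:intrinsic}, $X^{\rm int}$ is isometric to $N$, and passing to the intrinsic metric does not change the (Hausdorff, hence topological) dimension, so $\dim N = \dim X \le n-1 < n = \dim M_i$, giving (1).

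For (2), I would invoke the standard fact that for a sequence of $n$-dimensional compact Alexandrov spaces (or Riemannian manifolds) with a uniform lower curvature bound converging in the Gromov--Hausdorff sense to a space of strictly smaller dimension, the volumes converge to zero; this is the volume-collapse part of the Bishop--Gromov / Grove--Petersen theory. Concretely, since $\dim N<n$ and $M_i\to N$, for any $\e>0$ one can cover $N$ by boundedly many $\e$-balls, pull these back to $M_i$, and use the relative volume comparison (valid since $K_{M_i}\ge\kappa$, with the doubling $D(\tilde M_i)$ or the extension $\tilde M_i$ used to make the comparison legitimate near $\partial M_i$) to bound $\vol(M_i)$ by a constant times $\e\cdot\e^{n-1}$-type quantities that tend to $0$; alternatively cite that $\vol$ is continuous on the Gromov--Hausdorff limit and vanishes on lower-dimensional spaces.

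The main obstacle I expect is making precise the claim $\dim C_0 = n-1$, equivalently that the warped-cylinder part of the limit $Y$ is exactly one dimension below $n$: this is where the hypothesis $\inrad(M_i)\to 0$ is essential and must be used, since a priori $C_0 = \lim(\partial M_i)^{\rm int}$ could itself collapse. One resolves this by noting that $\tilde M_i$ contains $C_{M_i}$, that the complement $\tilde M_i\setminus C_{M_i} = M_i$ lies within $\inrad(M_i)\to 0$ of $\partial M_i = C_{M_i,0}$ in $\tilde M_i$, so $Y$ equals the $t_0$-neighborhood of $X$, i.e. $Y = \overline{\eta(C)}$, and hence $\dim Y = \dim C = \dim C_0+1$; combined with $\dim Y = n$ (from stability/semicontinuity of dimension under GH-limits of manifolds with lower curvature bound and bounded diameter — here one may need Remark~\ref{rem:Kos} and the fact that the $\tilde M_i$ do \emph{not} collapse, which again uses $M_i\subset\tilde M_i$ having $n$-dimensional pieces $C_{M_i,t}$ of definite size $\phi(t)$), this yields $\dim C_0=n-1$ and the rest follows.
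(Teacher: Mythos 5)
There is a genuine error at the foundation of your argument for (1): the claim that $\dim Y = n$ is false in general, and the justification you offer for it (``the $\tilde M_i$ do not collapse'' because the slices $C_{M_i,t}$ have ``definite size $\phi(t)$'') does not work. The slices have definite extent only in the $t$-direction; in the $\partial M_i$-directions they can be arbitrarily thin. Concretely, take $M_i=S^1(\epsilon_i)\times[0,\epsilon_i]$ with $\epsilon_i\to0$: these lie in $\ca M(2,0,0,d)$, inradius collapse to a point, and $\tilde M_i$ converges to a segment of length $2t_0$, so $\dim Y=1<2=n$. Dimension is only \emph{upper} semicontinuous under Gromov--Hausdorff limits with a lower curvature bound, so all you may assert is $\dim Y\le n$. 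Fortunately, that is all you need: your (correct) observations that $\dim Y=\dim C_0+1$ (via Lemma \ref{lem:loc-isom}), that $\dim X\le\dim C_0$ (since $\eta_0$ is surjective and $1$-Lipschitz), and that $\dim N=\dim X$ (Lemma \ref{lem:NX}) combine with $\dim Y\le\dim\tilde M_i=n$ to give $\dim N\le\dim C_0=\dim Y-1\le n-1$. This is exactly the paper's proof of (1), which runs the chain in the correct direction, $n=\dim\tilde M_i\ge\dim Y\ge\dim X+1=\dim N+1$, and never asserts equality $\dim Y=n$. So the statement survives, but as written your proof of (1) rests on a false lemma and you should delete the $\dim Y=n$ claim rather than try to prove it.

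For (2), your covering-plus-Bishop--Gromov argument carried out inside the Alexandrov space $\tilde M_i$ is sound and is a different route from the paper's: the paper argues by contradiction, assuming $\vol(M_i)\ge v_0$, choosing $t_0<v_0/2V$ so that $\vol(C_{M_i})<v_0/2$, and then comparing $\vol(Y)\ge v_0$ (volume convergence in the non-collapsed case) with $\vol(Y)=\vol(C_0\times_\phi[0,t_0])+\vol(X)<v_0/2$. Your approach buys a direct quantitative bound $\vol(M_i)\le C\epsilon^{\,n-\dim N}$ without choosing $t_0$ adaptively; the paper's buys a shorter argument given the structure of $Y$ already in hand. One caution: your fallback ``cite that $\vol$ is continuous on the Gromov--Hausdorff limit and vanishes on lower-dimensional spaces'' cannot be applied to $M_i\to N$ directly ($M_i$ is not an Alexandrov space), and applied to $\tilde M_i\to Y$ it does not immediately yield $\vol(M_i)\to0$ in the case $\dim Y=n$, where $\vol_n(Y)>0$; there one must subtract off $\vol(C_{M_i})$ as the paper does. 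Stick with the covering argument, which needs only $\dim N<n$ from part (1).
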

\begin{proof} (1)\,
From  Lemma \ref{lem:NX} and Proposition \ref{prop:YNC},  
we have 
\begin{align*}
      \dim M_i &=\dim\tilde M_i \ge \dim Y  \\
                   & \ge \dim X+1=\dim N+1.
\end{align*}
(2)\, We proceed by contradiction. Suppose $\vol(M_i)>v_0>0$ for some 
constant $v_0$ independent of $i$. By Proposition \ref{prop:extendAS}, there is a 
uniform bound $V$ with $\vol(\pa M_i)\le V$.
Choose any $\e_0\in (0,1)$ and $t_0\in (0, v_0/2V)$, and perform 
the extension procedure with warping  function as in Subsection \ref{sec:gluing}.
Then $C_{M_i}$ has volume 
\[
     \vol(C_{M_i}) < V t_0 < \frac{v_0}{2}.
\]
Passing to a subsequence, we may assume that $\tilde M_i$ converges to $Y$.
Since $\vol (\tilde M_i) \ge v_0$, we have $\dim Y = n$.
It follows from the volume convergence, 
\[
    \vol(Y) = \lim \vol(\tilde M_i) \ge v_0.  
\]
However 
\begin{align*}
    \vol(Y) &= \vol (Y\setminus X) + \vol (X) \\
              &= \vol(C_0\times_{\phi} [0, t _0])< V_0t_0 \le v_0/2,
\end{align*}
which is a contradiction.
\end{proof}

\begin{rem}
Wong proved $\dim M_i>\dim N	$ in \cite[Lemma 1]{wong2} under the condition that $N$ is an absolute Poincar\'{e} duality space.
In \cite{YZ:general}, we shall show that if $N$ is a
 closed topological manifold 
or a closed Alexandrov space, then $M_i$ inradius collapses. 
Hence Corollary \ref{cor:dim-collaps} give another version of 
Wong's result. It should also be noted that the conclusion of Corollary \ref{cor:dim-collaps} 
holds for limit spaces of inradius collapsed manifolds with respect to the pointed Gromov-Hausdorff 
topology (see Corollary \ref{cor:dim-collaps-pointed}).
\end{rem}

\begin{defn}
In view of Lemma \ref{prop:preimage} and Proposition \ref{prop:YNC}, 
we make an  identification $N=X^{\rm int}$ and set 
for $k=1,2$,
\begin{gather*}
  N_k = Xk:=\{x\in X|\#\eta_0^{-1}(x)=k\}, \\
  C_0^k:= \{ p\in C_0\,|\, \eta_0(p)\in X_k\}. 
\end{gather*}
\end{defn}

\subsection{Examples} \label{ssec:Example}

We exhibit some examples of collapse of manifolds with boundary. All the examples except 
Example \ref{ex:G-action} are inradius collapses.

\begin{ex} \label{ex:counter2}
Let $\mathbb S^{n-1}(r):=\{ \, x\in\mathbb R^{n}\,|\, \sum_{i=1}^n (x_i)^2=r^2\,\}$.
For $\epsilon>0$, define $M_{\epsilon}$ as the closed domain in $\mathbb R^n$ bounded by
$\mathbb S^{n-1}(r+\epsilon)$ and $\mathbb S^{n-1}(r)$.
Then $K_{M_{\epsilon}}\equiv 0$ and $|\Pi_{\partial M_{\epsilon}}|\le 1/r$,
and $M_\epsilon$ inradius collapses to $N:=\mathbb S^{n-1}(r)$, where the limit space 
is an Alexandrov space with curvature $\ge r^{-2}$. 
Note that  $N_2=N$, and that 
the limit $Y$ of $\tilde M_{\epsilon}$ is isometric to the form
\[
      Y = ( \mathbb S^{n-1}(r)\amalg  \mathbb S^{n-1}(r))\times_{\phi}[0,t_0]/(f(x), 0)\sim(x,0),
\]
where $f: \mathbb S^{n-1}(r)\amalg\mathbb S^{n-1}(r) \to \mathbb S^{n-1}(r)\amalg\mathbb S^{n-1}(r)$
is the canonical involution. Equivalently $Y$ is isometric to the warped product
\[
                   \mathbb S^{n-1}(r)\times_{\tilde\phi} [-t_0, t_0],
\]
where $\tilde\phi(t)=\phi(|t|)$.

This example shows that the lower  Alexandrov curvature bound of the limit in Theorem \ref{thm:limit-alex} 
really depends on the bound $\lambda\ge |\Pi_{\partial M}|$.
\end{ex}

\begin{ex} [\cite{wong2}] \label{ex:counter}
Let $N\subset\mathbb R^2\times 0\subset \mathbb R^3$ be a non-convex domain with smooth boundary,
and let  $M_{\epsilon}'$ denote the closure of $\epsilon$-neighborhood of $N$ in $\mathbb R^{3}$.
After a slight smoothing of $M_{\epsilon}'$, we obtain a flat Riemannian manifold $M_{\epsilon}$
with  boundary such that  $\Pi_{\partial M_{\epsilon}}\ge -\lambda$ for some $\lambda>0$ independent of
$\epsilon$.
Note that  $M_{\epsilon}$ inradius collapses to $N$, where $N$ has no lower Alexandrov curvature bound.

This example shows that Theorem \ref{thm:limit-alex} does not hold
if one drops the upper bound $\lambda\ge \Pi_{\partial M}$.
\end{ex}

\begin{ex} \label{ex:counter3}
Let $N \subset \mathbb R^2$ 
be the union of the unit circle $\{ (x,y)\,|\, x^2+y^2=1\}$ and the segment $\{ (x,y)\,|\, x=0, -1\le y\le 1\}$.
Let $M_{\e}$ be the intersection of  the closed $\e$-neighborhood of $N$ in $\mathbb R^2$  and 
the unit disk $\{ (x,y)\,|\, x^2+y^2\le 1\}$.
 After slight smoothing of $M_{\e}$, it is a compact surface with 
$K_{M_{\e}}\equiv 0$, ${\rm II}_{\pa M_{\e}} \le \lambda^2$ for some $\lambda$.
However $\inf {\rm II}_{\pa M_{\e}} \to -\infty$ as $\e\to 0$, and 
$M_{\e}$ inradius collapses to $N$, which is not an Alexandrov space with curvature bounded below.

This example shows that Theorem \ref{thm:limit-alex} does not hold
if one drops the lower bound $-\lambda^2 \le {\rm II}_{\partial M}$.

\end{ex}

\begin{ex} \label{ex:twist}
Let $\pi:P\to N$ be a Riemannian  double covering between closed Riemannian manifolds with the
deck transformation  $\varphi:P\to P$.
Define $\varPhi:P\times [-\epsilon,\epsilon]\to P\times [-\epsilon,\epsilon] $ by
\[
             \varPhi(x,t) = (\varphi(x), -t),
\]
and consider $M_{\epsilon}:=P\times [-\epsilon, \epsilon]/\varPhi$, which is a twisted $I$-bundle over $N$.
Note that $M_{\epsilon}\in\mathcal M(n, \kappa, 0, d)$ for some $\kappa$ and $d$,
and that $M_{\epsilon}$ inradius collapses to $N$ as $\epsilon\to 0$. 
In this case, we have  $N_2=N$. Note that the limit $Y$ of $\tilde M_{\epsilon}$ is isometric to the form
\[
      Y = P\times_{\phi}[0,t_0]/(\varphi(x), 0)\sim(x,0),
\]
or equivalently $Y$ is doubly covered by the warped product
\[
                   P\times_{\tilde\phi} [-t_0, t_0].
\]
\end{ex}

\begin{ex} \label{ex:D}
Let $N$ be a convex domain in $\mathbb R^{n-1}\times 0\subset \mathbb R^{n+1}$ with smooth boundary.
Let $M_{\epsilon}'$ denote the intersection of the boundary of $\epsilon$-neighborhood of $N$ in $\mathbb R^{n+1}$ with
the upper half space $H_{+}=\{ (x_1,\ldots, x_{n+1})\,|\, x_{n+1}\ge 0\,\}$.
After a slight smoothing of $M_{\epsilon}'$, we obtain a nonnegatively curved Riemannian manifold $M_{\epsilon}$
with totally geodesic boundary. 
Note that  $M_{\epsilon}$ inradius collapses to $N$ as $\epsilon\to 0$. 
Note also that $(\partial M_{\epsilon})^{\rm int}$, a smooth approximation of the boundary of 
$\epsilon$-neighborhood of $N$ in $\mathbb R^{n}$, converges to the double $D(N)$ of $N$.
It follows that $N_1=\partial N$ and $N_2=N\setminus \partial N$, and that 
the limit $Y$ of $\tilde M_{\epsilon}$ is isometric to the form
\[
      Y = D(N)\times_{\phi}[0,t_0]/(r(x),0)\sim (x,0),
\]
where $r:D(N)\to D(N)$ denotes the canonical reflection of $D(N)$.
\end{ex}

Next let us consider more general examples.
The following ones come from Example 1.2 in \cite{Ya:pinching},  where general examples of collapse of
closed manifolds were  given.

\begin{ex}  \label{ex:G-bundle}
Let $\hat\pi:M \to N$ be a fiber bundle over a closed manifold $N$ with fiber $F$ 
having non-empty boundary and 
with the structure group $G$ such that
\begin{enumerate}
 \item $G$ is a compact Lie group; 
 \item $F$ has a $G$-invariant metric $g_F$ of nonnegative curvature  which smoothly extends to the double $D(F);$
\end{enumerate}
Fix a bi-invariant metric $b$ on $G$ and a metric $h$ on $N$.
Let $\pi:P\to N$ be the principal $G$-bundle  associated with $\hat\pi:M \to N$.
Define $G$-invariant metric $g_{\epsilon}$ on $P$ by
\[
        g_{\epsilon}(u, v)= h(d\pi(u), d\pi(v))+ \epsilon^2 b(\omega(u),\omega(v)),
\]
where $\omega$ is a $G$-connection on $P$. Define a metric $\tilde g_{\epsilon}$ on $P\times D(F)$ as
\[
                    \tilde g_{\epsilon}=g_{\epsilon} + \epsilon^2 g_F.
\]
For the $G$-action on $P\times  D(F)$
defined by $(p,f)\cdot g=(pg,g^{-1}f)$, $\tilde g_{\epsilon}$ is $G$-invariant and  invariant under the action of 
reflection of $D(F)$. Therefore 
it induces a metric $g_{D(M), \epsilon}$ on $D(M)=P\times D(F)/G$. Since  $g_{D(M), \epsilon}$ is 
invariant under the action of reflection of $D(M)$, it induces a metric $g_{M,\epsilon}$ on $M$ with totally geodesic 
boundary such that  $(M, g_{M, \epsilon})$ inradius collapse to $(N,h)$ under a lower sectional 
curvature bound.
\end{ex}

\begin{ex} \label{ex:G-action}
Let $M$ be a compact manifold with boundary, and suppose that a compact Lie group of 
positive dimension effectively act on $M$ which extends to the action on $D(M)$.
Suppose that $D(M)$ has $G$-invariant and reflection-invariant smooth metric $g$. 
As in Example 1.2 of  \cite{Ya:pinching},  one can construct a metric $g_{D(M), \epsilon}$ on  $(D(M)$ 
which collapses to  $(D(M), g_{D(M), \epsilon})/G$ under a lower curvature bound.
It follows that the metric   $(M, g_{M, \epsilon})$ induced by $g_{D(M),\epsilon}$ also 
collapses to  $(M, g_{M, \epsilon})/G$ under a lower curvature bound. Note that 
 $(M, g_{M, \epsilon})$ has totally geodesic boundary.
\end{ex}

\section{Metric structure of limit spaces} \label{sec:metric}

Let $X\subset Y$ and $N$ be as in Section \ref{sec:str-limit}.
The main purpose of this section is to show that $Y$ and $N$ are  actually isometric to
$C/\eta_0$ and $C_0/\eta_0$ respectively.
To study how this gluing is made, 
we first analyze the tangent cones of $C$, $C_0$, $Y$ and $X$ at gluing points, 
and their relations via the the differential $d\eta_0$ of the gluing map $\eta_0$.
It turns out that the identification map 
$\eta_0$ preserves length of curves.
Finally, we see that   $N$ is isometric to a quotient of 
 $C_0^{\rm int}$ by an isometric $\mathbb{Z}_2$ action.
 (see Proposition \ref{prop:quotient}), which implies Theorems \ref{thm:limit-alex} and \ref{thm:singIbund}. 

\subsection{Preliminary argument} \label {ssec:metric-prelim}

In this subsection, we study geodesic behavior in $C$ and the property 
of a rescaling limit of the map $\eta:C\to Y$. These will be useful in the next subsection
to investigate geodesic behavior in $Y$.

Let $\tilde \pi:C\to C_0$ and $\pi:Y\to X$ be the projections. 
To be precise, $\pi(y):=\eta_0\circ\tilde\pi(\eta^{-1}(y))$,
which are surjective Lipschitz maps. 
For every $p\in C_0$, let $\tilde\gamma_{+}(t) = (p,t)$ and 
$\gamma_+(t)=\eta(\tilde\gamma(t))$, $t\in [0,t_0]$. We call $\tilde\gamma_{+}$
 (resp. $\gamma_+$) a {\it perpendicular to} $C_0$
(resp. to $X$) at $p$ (resp. at $\eta_0(p)$).   
The map $\tilde \pi$ and $\pi$ are the projections along perpendiculars.
Note that $\eta:C\setminus C_0 \to Y\setminus X$ is a locally isometric bijective map.
Therefore $C\setminus C_0$ and $Y\setminus X$ are isometric to each other
with respect to the their {\it length} metrics.

For simplicity, we use the notation
\[
                 C_t :=\{ x\in C\,|\, d(C_0, x)=t\}, \,\, C_t^Y :=\{ y\in Y\,|\, d(X, y)=t\}
\]
for every $t\in (0,t_0]$. We also denote by
\[
    \tilde\pi_t:C\to C_t,\,\,  \pi_t:Y\setminus X\to C_t
\]
the canonical projections along perpendiculars.  Recall that  
\begin{align*}
X_1 =\{x\in X|\#\eta_0^{-1}(x)=1\},& \,\,\, X_2 =\{x\in X|\#\eta_0^{-1}(x) = 2\}, \\
   C_0^k = \{ p\in C_0\,&|\, \eta_0(p)\in X_ki\}, \, k=1,2.
\end{align*}

\par\medskip

First we investigate the behavior of geodesics in $C$. To do this we make use of 
the Gromov-Hausdorff convergence $C_{M_i}\to C$.

Recall that for every $t\in [0, t_0]$, we set 
\[
   C_{M_i,t}=\{ x\in C_{M_i}\,|\,d(x,\partial M_i)=t\,\}.
\]
We also use $t$ to denote the distance functions on $C$ and $C_{M_i}$ from
$C_0$ and $\partial M_i$ respectively.

Let $\gamma:[0,\ell]\to C$ be a unit speed geodesic, and $\xi=\frac{\partial}{\partial t}$  the 
unit vector fields on $C$. Take a geodesic $\gamma_i$ in $C_{M_i}$ such $\gamma_i\to \gamma$.
We denote by $\Pi_t^i$  the second fundamental form of
$C_{M_i,t}$: 
\[
                 \Pi_t^i(V,W)= - \langle\nabla_V\xi_i, W\rangle, \,\, V, W\in T(C_{M_i,t}),
\]
where $\xi_i=\frac{\partial}{\partial t}$  is the unit vector fields on $C_{M_i}$.
Consider the function $\rho_i(s)=t(\gamma_i(s))=|\gamma_i(s), \pa M_i|$. We have
\begin{align*}
\rho_i'(s) &   = \langle \xi_i(\gamma_i(s)), \dot\gamma_i(s)\rangle \\
\rho_i''(s) & =\langle\nabla_{\dot\gamma_i^T}\,\xi_i,  {\dot\gamma_i^T}\rangle  = -\Pi(\dot\gamma_i^T,  \dot\gamma_i^T)\\
                              &=  \frac{\phi'(\rho_i(s))}{\phi(\rho_i(s))} |\dot\gamma_i^T(s)|^2,
\end{align*}
where $\dot\gamma_i^T(s)$ is the component of $\dot\gamma_i(s)$ tangent to $C_{M_i,\rho_i(s)}$.
Note that $0\ge \rho_i''(s)\ge -c$ for some uniform constant $c>0$. 
In particular, we have

\begin{lem}\label{lem:concave} 
 $\rho_i$and $\rho$ are concave functions.
\end{lem}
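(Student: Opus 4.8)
The plan is to deduce concavity of $\rho_i$ directly from the second-derivative computation that precedes the statement, and then transfer concavity to the limit function $\rho$ by a standard Gromov--Hausdorff stability argument. First I would make the computation of $\rho_i''$ rigorous at the regular points of $\rho_i$: since $\gamma_i$ is a smooth unit-speed geodesic in the smooth warped product $C_{M_i} = \partial M_i\times_\phi[0,t_0]$, the function $\rho_i(s) = t(\gamma_i(s))$ is smooth wherever $\gamma_i(s)$ lies in the open cylinder $C_{M_i}\setminus(C_{M_i,0}\cup C_{M_i,t_0})$, and there the formula
\[
  \rho_i''(s) = \frac{\phi'(\rho_i(s))}{\phi(\rho_i(s))}\,|\dot\gamma_i^T(s)|^2
\]
holds. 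Because $\phi$ is monotone non-increasing and positive, $\phi'\le 0$ and $\phi>0$, so $\rho_i''(s)\le 0$ at every such interior point; moreover $\rho_i'' \ge -c$ for the uniform constant $c = \sup_{[0,t_0]}|\phi'/\phi|\cdot 1$, using $|\dot\gamma_i^T|\le 1$.

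The only subtlety is the behavior of $\rho_i$ where $\gamma_i$ touches the two boundary strata $C_{M_i,0}$ and $C_{M_i,t_0}$. On $C_{M_i,t_0}$ the second fundamental form vanishes by \eqref{eq:sumary}, so $\gamma_i$ cannot be tangent to and then leave $C_{M_i,t_0}$ in a way that creates an upward kink; more directly, $\rho_i$ achieves an interior maximum value $t_0$ there, which is consistent with concavity, and near such a contact point one can either appeal to the first variation/support-function argument or note that $C_{M_i,t_0}$ is totally geodesic so $\gamma_i$ either stays in it or meets it transversally at isolated parameters. On $C_{M_i,0} = \partial M_i$ the surface is convex from inside (its inward second fundamental form is $\ge 0$ by \eqref{eq:sumary}), which forces $\rho_i\ge 0$ and rules out a concave-violating dip; concretely, at a parameter $s_0$ with $\rho_i(s_0)=0$, one has $\rho_i\ge 0 = \rho_i(s_0)$ so $s_0$ is a minimum and the left/right derivatives satisfy $\rho_i'(s_0^-)\le 0\le \rho_i'(s_0^+)$, again consistent with a concave function once we know it is concave on each complementary open interval. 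Assembling: $\rho_i$ is continuous on $[0,\ell_i]$, concave on each open subinterval of $\{0<\rho_i<t_0\}$, and at the finitely many (or at worst closed, nowhere dense) contact parameters the one-sided derivatives jump in the concave direction; hence $\rho_i$ is concave on all of $[0,\ell_i]$. The cleanest way to phrase this uniformly is: a continuous function is concave iff it lies below each of its chords, and the chord inequality is preserved under the boundary contacts by the sign conditions above.

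For $\rho$, I would use that $\gamma_i\to\gamma$ as maps (after reparametrizing to $[0,\ell]$), so $\rho_i\to\rho$ uniformly on $[0,\ell]$; indeed $\rho(s)=d(C_0,\gamma(s))$ is the limit of $\rho_i(s)=d(\partial M_i,\gamma_i(s))$ under the convergence $C_{M_i}\to C$. A uniform (equivalently, pointwise) limit of concave functions is concave, so $\rho$ is concave. Alternatively, and without reference to the approximating sequence, one can argue intrinsically in $C = C_0\times_\phi[0,t_0]$: $C$ is an Alexandrov space with curvature bounded below, the level sets $C_t$ and their concavity properties are governed by the same warped-product formula, and the distance function $\mathrm{dist}_{C_0}$ restricted to a geodesic is semiconcave with the same one-sided bounds; this gives concavity of $\rho$ directly.

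I expect the main obstacle to be the rigorous treatment of the boundary contacts — ensuring that the pointwise inequality $\rho_i''\le 0$, valid only on the open cylinder, upgrades to genuine concavity of $\rho_i$ on the closed interval despite possible tangencies with $C_{M_i,0}$ or $C_{M_i,t_0}$. In practice this is handled by the sign of the relevant second fundamental forms recorded in \eqref{eq:sumary} (nonnegative on $\partial M_i$, zero on $C_{M_i,t_0}$), which is exactly why the warping function $\phi$ was chosen monotone with $\phi'(t_0)=0$; everything else is routine.
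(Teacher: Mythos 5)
Your core argument is the same as the paper's: the lemma is stated there as an immediate consequence of the displayed computation $\rho_i''=\frac{\phi'(\rho_i)}{\phi(\rho_i)}\,|\dot\gamma_i^T|^2\le 0$ (using $\phi'\le 0$, $\phi>0$), with $\rho$ concave as a uniform limit of the $\rho_i$, and that is exactly your route. One sentence in your extra discussion of boundary contacts is, however, genuinely wrong: at an interior parameter $s_0$ with $\rho_i(s_0)=0$ and $\rho_i>0$ on both sides, the inequalities $\rho_i'(s_0^-)\le 0\le \rho_i'(s_0^+)$ describe a convex kink at an interior minimum, which is \emph{incompatible} with concavity, not ``consistent'' with it; if such a configuration could occur, the lemma would fail. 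The correct way to dispose of it is to note that it cannot occur for a minimal geodesic of the cylinder: a transversal contact with $C_{M_i,0}$ produces a corner ($\rho_i'(s_0^-)<0<\rho_i'(s_0^+)$), so $\gamma_i$ would not be locally minimizing, while a tangential contact forces $\dot\rho_i(s_0)=0$, $|\dot\gamma_i^T(s_0)|=1$ and hence $\rho_i''(s_0)=\phi'(0)\le\overline{\lambda}\le 0$, which together with $\rho_i\ge 0=\rho_i(s_0)$ rules out an isolated touch (and if $\gamma_i$ runs inside $C_{M_i,0}$ on an interval, $\rho_i\equiv 0$ there and $s_0$ is not isolated but the same corner/minimality objection applies at the ends of that interval). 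The contact with $C_{M_i,t_0}$ is harmless exactly as you say, since it occurs only at the global maximum value $t_0$ and $C_{M_i,t_0}$ is totally geodesic. With that one correction your proof is complete and coincides with the paper's.
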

%

\begin{lem} \label{lem:ext/int}
For every $t\in [0, t_0]$, and $p_1,p_2\in C_t$, we have
\[
    \left| \frac{|p_1,p_2|_{C_t^{int}}}{|p_1,p_2|_{C}} -1\right| < O(|p_1,p_2|_C^2).
\]
\end{lem}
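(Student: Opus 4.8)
The plan is to exploit the explicit warped product structure $C=C_0\times_\phi[0,t_0]$ together with the comparison between the ambient (warped) metric $|\cdot,\cdot|_C$ and the intrinsic metric of the level set $C_t=C_0\times\{t\}$. Fix $t\in[0,t_0]$ and note that $C_t^{\rm int}$ is isometric to the rescaled space $\phi(t)\,C_0^{\rm int}$, since in the warped metric a curve lying inside the level $t$ has length computed with the constant factor $\phi(t)$. Hence $|p_1,p_2|_{C_t^{\rm int}}=\phi(t)\,|p_1,p_2|_{C_0^{\rm int}}$, and the content of the estimate is that, for nearby points, the extrinsic distance in $C$ differs from this intrinsic level-set distance only by a term of order $|p_1,p_2|_C^2$.

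The key steps I would carry out are as follows. First, let $\tilde\gamma$ be a $C$-minimal geodesic from $p_1$ to $p_2$ and write $\tilde\gamma(s)=(\sigma(s),\nu(s))$; by Lemma \ref{lem:concave} the function $\nu(s)=t(\tilde\gamma(s))$ is concave, and since $\nu(0)=\nu(\ell)=t$ it stays $\ge t$, with the maximal deviation $\max_s|\nu(s)-t|$ controlled by $\ell=|p_1,p_2|_C$ (concavity plus the uniform bound $0\ge\nu''\ge -c$ from the computation preceding Lemma \ref{lem:concave} gives in fact $\max_s(\nu(s)-t)\le \tfrac{c}{8}\ell^2$, but even the cruder bound $O(\ell)$ suffices to start). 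Second, I would use this to compare $L_\phi(\tilde\gamma)$ with the length of its projection $\sigma$ measured at level $t$: from the warped length formula,
\[
L_\phi(\tilde\gamma)=\int_0^\ell\sqrt{\phi^2(\nu(s))\,|\dot\sigma(s)|^2+|\dot\nu(s)|^2}\,ds
\ge \phi(t)\int_0^\ell |\dot\sigma(s)|\,ds\cdot\bigl(1-\tau(\ell)\bigr),
\]
where the error $\tau(\ell)$ absorbs both the variation of $\phi$ over the range of $\nu$ (which is $O(\ell^2)$ by the deviation bound and smoothness of $\phi$) — this direction already gives $|p_1,p_2|_C\ge \phi(t)|p_1,p_2|_{C_0^{\rm int}}(1-O(\ell^2))$ after noting $\int|\dot\sigma|\ge|p_1,p_2|_{C_0^{\rm int}}$. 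Third, for the reverse inequality I would take the horizontal segment $s\mapsto(\bar\sigma(s),t)$ where $\bar\sigma$ is a $C_0^{\rm int}$-minimal geodesic from $p_1$ to $p_2$; its $\phi$-length is exactly $\phi(t)|p_1,p_2|_{C_0^{\rm int}}=|p_1,p_2|_{C_t^{\rm int}}$, so $|p_1,p_2|_C\le|p_1,p_2|_{C_t^{\rm int}}$. Combining the two bounds yields $|p_1,p_2|_{C_t^{\rm int}}\ge |p_1,p_2|_C\ge |p_1,p_2|_{C_t^{\rm int}}(1-O(\ell^2))$, which rearranges to the claimed estimate once $\ell=|p_1,p_2|_C$ is small; for $\ell$ bounded away from $0$ the statement is vacuous after enlarging the implied constant.

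The main obstacle I expect is making the horizontal-segment comparison in the upper-bound step genuinely valid: one needs $\bar\sigma$ to exist as a minimal geodesic in $C_0^{\rm int}$ joining $p_1$ and $p_2$ of length comparable to $|p_1,p_2|_C$, i.e. that the extrinsic and intrinsic metrics on $C_0$ are locally bi-Lipschitz with constant tending to $1$ — this is where the compactness of $C_0$ and its Alexandrov curvature bound $K_{C_0}\ge K(\kappa,\lambda)$ enter, guaranteeing that $C_0$ has no "thin" necks and that short extrinsic distances are realized by short curves. A secondary technical point is bookkeeping the two independent sources of the $O(\ell^2)$ error (the oscillation of $\nu$ away from $t$, controlled by concavity, and the resulting oscillation of $\phi\circ\nu$, controlled by the $C^2$-bound on $\phi$) and checking they combine to a single quadratic term uniformly in $t\in[0,t_0]$ — here the fact that $\phi$ is bounded below by $\phi(t_0)=\ve_0>0$ and has bounded derivatives on $[0,t_0]$ gives the needed uniformity.
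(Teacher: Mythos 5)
Your argument follows essentially the same path as the paper's proof: both control the vertical excursion $\max_s(\nu(s)-t)$ of a $C$-minimal geodesic by $O(\ell^2)$ via (semi)concavity of the level function, then compare the geodesic with its horizontal projection to level $t$, where the warping factor changes by $\phi(t)/\phi(t^*)=1+O(\ell^2)$. Two small remarks. You invoke $0\ge\nu''\ge-c$ directly for the limit function $\nu$ on $C$, but the displayed computation bounding the second derivative is carried out on the smooth Riemannian manifolds $C_{M_i}$ where $\rho_i$ is $C^2$; on $C$ itself one only knows that $\rho$ is concave (Lemma~\ref{lem:concave}). The paper therefore runs the mean-value argument on $\rho_i$ to get \eqref{eq:level-deviat} and then passes to the limit to obtain \eqref{eq:deviat3}; you should do likewise, or record explicitly that $\rho$ inherits the quantitative semiconcavity modulus from the approximation. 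Second, the ``main obstacle'' you flag is not one: the inequality $|p_1,p_2|_C\le|p_1,p_2|_{C_t^{\rm int}}$ is immediate because any curve lying in $C_t$ is in particular a curve in $C$, so no bi-Lipschitz comparison between the intrinsic and extrinsic metrics of $C_0$, and no appeal to its Alexandrov curvature bound, is needed. The content of the lemma lies entirely in the other inequality, which your first two steps handle correctly.
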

\begin{proof}
Let $\gamma:[0,\ell]\to C$ be a unit speed  minimal geodesic joining $p_1$ to $p_2$.
Take a unit speed  minimal geodesic $\gamma_i:[0,\ell_i]\to C_{M_i}$ such that $\gamma_i\to \gamma$ 
under the Gromov-Hausodorff convergence $C_{M_i}\to C$.
We may assume that $\rho_i(\gamma_i(0))=\rho_i(\gamma_i(\ell_i))=t$.
Putting 
\[
    \rho_i(s)=\rho_i(\gamma_i(s)) = |\gamma_i(s), \partial M_i|,
\]
$\rho_i(s)$ takes  a maximum $t_i=\rho_i(u_i)>t$ at some $u_i\in (0,\ell)$.
By the mean value theorem, we obtain
\[
\frac{\rho_i(u_i)-t}{u_i} = \rho_i'(v_i), \,\, \frac{t- \rho_i(u_i)}{\ell_i -u_i} = \rho_i'(v_i'), \,\, 
\frac{\rho_i'(v_i')-\rho'_i(v_i)}{v_i'-v_i}=\rho''(w_i),
\]
for some $0<v_i<u_i<v_i'<\ell$ and $v_i<w_i<v_i'$.
Adding the first two equalities, we get
\begin{equation}
 \begin{aligned}
   \rho_i(u_i)-t & \le \frac{(\ell_i-u_i)u_i}{\ell_i}(v_i'-v_i)(-\rho_i''(w_i)) \\
                 & \le c|\gamma_i(0),\gamma_i(\ell_i)|^2.      \label{eq:level-deviat}
 \end{aligned}
\end{equation}
Setting $t^*:= \max_{[0,\ell]}\rho$ and letting $i\to\infty$, we have 
\begin{equation}
  t^*-t = \max_{[0,\ell]}\rho -t \le c|p_1,p_2|^2,  \label{eq:deviat3}
\end{equation}
and hence
\begin{equation}
      \left | \frac{\phi(t)}{\phi(t^*)} -1 \right|  \le c'|p_1,p_2|^2.  \label{eq:deviat}
\end{equation}
Let $\pi_t: C\to C_t$ be the canonical projection.
Since $\pi_t$ has Lipschitz constant $\frac{\phi(t)}{\phi(t^*)}$ on the domain bounded by $C_t$ and $C_{t^*}$, 
it follows from \eqref{eq:deviat} that
\begin{align}
      |p_1,p_2|_{C_t^{int}}\le L(\pi_t\circ\gamma)\le \frac{\phi(t)}{\phi(t^*)}|p_1,p_2|<(1+O(|p_1,p_2|^2)|p_1,p_2|.  \label{eq:deviat2}  
\end{align}
This completes the proof.
\end{proof}

\begin{lem} \label{lem:angle}
For every $p_1,p_2\in C_t$ and  unit speed  minimal geodesic $\gamma:[0,\ell]\to C$ joining $p_1$ to $p_2$, we have
\[
    \rho'(0) \le C|p_1,p_2|,
\]
where $\rho(s) = |\gamma(s), C_t|$.
\end{lem}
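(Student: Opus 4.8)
\textbf{Proof proposal for Lemma \ref{lem:angle}.}

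The plan is to extract the desired bound on $\rho'(0)$ from concavity of $\rho$ together with the already-established control \eqref{eq:deviat3} on how far $\rho$ can rise above the level $t$. Recall $\rho(s)=|\gamma(s),C_t|$ along the unit-speed minimal geodesic $\gamma:[0,\ell]\to C$ joining $p_1,p_2\in C_t$. Since both endpoints lie on $C_t$ we have $\rho(0)=\rho(\ell)=0$, and since $\gamma$ is a geodesic in the warped product $C=C_0\times_\phi[0,t_0]$ the same computation as for $\rho_i$ in the paragraph preceding Lemma \ref{lem:concave} shows (in the limit) that the distance-to-$C_t$ function behaves concavely up to a uniformly bounded error; more precisely, arguing exactly as in the derivation of \eqref{eq:level-deviat}--\eqref{eq:deviat3} with the reference level $t$ in place of the original boundary level, one gets $\max_{[0,\ell]}\rho\le c|p_1,p_2|^2$ for a uniform constant $c$. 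This is the key input.

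Next I would combine this with concavity. Write $\rho$ as (a limit of) functions $\rho_i$ with $0\ge\rho_i''\ge -c$; in particular $\rho'$ is non-increasing. Then for the starting slope $\rho'(0)=:a\ge 0$ we have $\rho(s)\ge as - \tfrac{c}{2}s^2$ is false in the wrong direction, so instead use: concavity gives $\rho(s)\ge \min\{\rho(0)+as,\ \rho(\ell)\}$ type bounds are not available, but the clean estimate is $\rho(s)\ge as - \tfrac{c}{2}s^2$ replaced by the correct one $\rho(s)\ge \rho'(\ell^-)\cdot$\,(stuff); the cleanest route is: since $\rho''\ge -c$, the function $s\mapsto \rho(s)+\tfrac c2 s^2$ is convex, hence $\rho(s)+\tfrac c2 s^2 \le \tfrac{\ell-s}{\ell}(\rho(0)) + \tfrac s\ell(\rho(\ell)+\tfrac c2\ell^2) = \tfrac s\ell\cdot\tfrac c2\ell^2$. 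Wait — that gives an \emph{upper} bound; for the \emph{lower} bound needed to bound $\rho'(0)$ from above I instead use that $\rho$ itself is concave (Lemma \ref{lem:concave}), so $\rho(s)\ge \tfrac{\ell-s}{\ell}\rho(0)+\tfrac s\ell\rho(\ell)$ is the wrong direction again. The correct mechanism: concavity of $\rho$ with $\rho(0)=0$ forces $\rho'(0)\ge \rho(s)/s$ for all $s\in(0,\ell]$, which is a \emph{lower} bound on $\rho'(0)$, not useful. So the real argument must use $\rho(\ell)=0$: concavity plus $\rho(0)=\rho(\ell)=0$ and $\rho\ge 0$ means $\rho'(0)\ge 0\ge \rho'(\ell)$, and we need an upper bound on $\rho'(0)$. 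For that, use $\rho''\ge -c$: then $\rho'(\ell)\ge \rho'(0)-c\ell$, and integrating, $0=\rho(\ell)-\rho(0)=\int_0^\ell\rho'(s)\,ds \ge \int_0^\ell(\rho'(0)-cs)\,ds = \rho'(0)\ell - \tfrac c2\ell^2$, giving $\rho'(0)\le \tfrac c2\ell = \tfrac c2|p_1,p_2|$, which is exactly the claimed bound (with $C=c/2$). So in fact I do not even need \eqref{eq:deviat3}; the one-sided bound $\rho''\ge -c$ and $\rho(0)=\rho(\ell)=0$ suffice.

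Thus the steps are: (i) set up $\gamma_i\to\gamma$ under $C_{M_i}\to C$ and $\rho_i(s)=|\gamma_i(s),C_{M_i,t}|$, with the endpoint normalization $\rho_i(0)=\rho_i(\ell_i)=t$ (up to $o_i(1)$ error, arranged as in the proof of Lemma \ref{lem:ext/int}); (ii) recall from the pre-Lemma \ref{lem:concave} computation that $0\ge\rho_i''\ge -c$ for a uniform $c=c(\kappa,\lambda,\e_0,t_0)$; (iii) integrate $\rho_i''\ge -c$ twice using $\rho_i(0)=\rho_i(\ell_i)=t$ to get $\rho_i'(0)\le \tfrac c2\ell_i + o_i(1)$; (iv) pass to the limit $i\to\infty$, noting $\ell_i\to\ell=|p_1,p_2|_C$ and that $\rho_i'(0)\to\rho'(0)$ along the converging geodesics (one-sided derivatives behave well under this convergence because of the uniform $C^{1,1}$-type bound on $\rho_i$), yielding $\rho'(0)\le \tfrac c2|p_1,p_2|$. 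The only mild subtlety — and the step I'd watch most carefully — is (iv): justifying convergence of the one-sided derivatives $\rho_i'(0)$, which follows from the uniform second-derivative bound (equi-Lipschitz first derivatives) combined with $C^0$-convergence of $\rho_i$ to $\rho$, so no genuine obstacle remains; it is essentially the same limiting passage already used to pass from \eqref{eq:level-deviat} to \eqref{eq:deviat3}.
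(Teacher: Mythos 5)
Your argument is correct and rests on exactly the same ingredients as the paper's proof: the uniform bound $0\ge\rho''\ge -c$ from the warped-product computation preceding Lemma \ref{lem:concave}, together with $\rho(0)=\rho(\ell)=0$. The paper applies the mean value theorem at the interior maximum of $\rho$ (giving $\rho'(0)\le c\,s_0\le c\,|p_1,p_2|$) where you integrate the differential inequality, so the two proofs are essentially identical up to the value of the constant.
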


\begin{proof}
Let $\rho(s)$ takes the maximum at $s=s_0$. Using the mean value theorem, we obtain
$-\rho'(0)/s_0\ge \inf \rho'' \ge -c$, from which the conclusion is immediate.
\end{proof}

Next we discuss a rescaling limit of the map $\eta:C\to Y$. 
Fix $p\in C_0$ and $x=\eta_0(p)\in X$, and
let $t_i$ be an arbitrary sequence of positive numbers with $\lim t_i=0$.
Passing to a subsequence, we may assume that 
\[
     \eta_i=\eta:  \left(\frac{1}{t_i}C, p\right) \to \left(\frac{1}{t_i}Y, x\right)
\]
converges to a $1$-Lipschitz map 
\[
   \eta_{\infty}:  (T_p(C),o_ p) \to (T_x(Y), o_x)
\]
between  the tangent cones of the Alexandrov spaces.
We may also assume that 
$\left( \frac{1}{t_i} X, x \right)$ converges to a closed subset  $(T^*_x(X), o_x)$ of 
$(T_x(Y), o_x)$ under the convergence $\left(\frac{1}{t_i}Y, x\right) \to (T_xY, o_x)$.

\begin{slem} \label{slem:lim-eta}
$\eta_{\infty}:  T_p(C)\setminus T_p(C_0) \to T_xY\setminus T_x^{*}(X)$ is a bijective local isometry.
\end{slem}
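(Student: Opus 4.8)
The plan is to show separately that $\eta_\infty$ is a local isometry away from $T_p(C_0)$ and that it is a bijection onto $T_xY\setminus T_x^*(X)$. The local isometry part should follow by a rescaling argument from Lemma \ref{lem:loc-isom}: since $\eta:C\setminus C_0\to Y\setminus X$ is a local isometry, distances between pairs of points at definite distance from $C_0$ are preserved, and after rescaling by $1/t_i$ and passing to the limit this gives that $\eta_\infty$ preserves distances between pairs of points in $T_p(C)\setminus T_p(C_0)$, at least locally. One has to be a little careful: the rescaled maps $\eta:(\frac{1}{t_i}C,p)\to(\frac{1}{t_i}Y,x)$ are $1$-Lipschitz but not \emph{a priori} distance-preserving on the nose; however, for points $q_1,q_2$ with $d(C_0,q_j)\ge c>0$ in the rescaled space, any near-minimal geodesic in $\frac{1}{t_i}C$ between them that stays away from $C_0$ is mapped isometrically, so one needs to rule out geodesics dipping into (a neighborhood of) $T_p(C_0)$. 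Here Lemma \ref{lem:concave} (concavity of the distance-to-$C_0$ function along geodesics) together with Lemma \ref{lem:angle} controls how far a minimal geodesic between two points on the same level $C_t$ can deviate toward $C_0$, and this estimate is scale-invariant enough to survive the rescaling. So distances are preserved in the limit for points bounded away from $T_p(C_0)$, which is exactly local isometry on $T_p(C)\setminus T_p(C_0)$.

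\medskip

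\textbf{Surjectivity.} Given $v\in T_xY\setminus T_x^*(X)$, write $v$ as a limit of rescaled points $y_i\in\frac{1}{t_i}Y$ with $d(\frac{1}{t_i}X,y_i)\ge c>0$ for all large $i$. Pulling back via the bijection $\eta^{-1}:Y\setminus X\to C\setminus C_0$ (valid once we are at definite distance from $X$), we get points $\tilde y_i=\eta^{-1}(y_i)\in\frac{1}{t_i}C$ with $d(\frac{1}{t_i}C_0,\tilde y_i)\ge c$ (using Lemma \ref{lem:dist-bdry}, $|\eta(q,t),X|=t$, so the distance to $X$ equals the distance to $C_0$ along perpendiculars, and comparisons give a two-sided bound). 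These $\tilde y_i$ stay in a bounded region of $\frac{1}{t_i}C$, so a subsequence converges to some $w\in T_p(C)$ with $d(T_p(C_0),w)\ge c$, and by continuity $\eta_\infty(w)=v$.

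\medskip

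\textbf{Injectivity.} Suppose $\eta_\infty(w_1)=\eta_\infty(w_2)=v$ with $w_1,w_2\in T_p(C)\setminus T_p(C_0)$. Approximate $w_j$ by rescaled points $q_j^i\in\frac{1}{t_i}C$ at definite distance from $C_0$; then $\eta(q_1^i)$ and $\eta(q_2^i)$ both converge to $v$, so $d_{\frac{1}{t_i}Y}(\eta(q_1^i),\eta(q_2^i))\to 0$. Now I want to conclude $d_{\frac{1}{t_i}C}(q_1^i,q_2^i)\to 0$. This is where the multiplicity estimate, Lemma \ref{prop:preimage}, is the natural tool: the only way $\eta$ can collapse distance is through the identification $\eta_0$ on $C_0$, and that identification glues at most two points; but $q_1^i,q_2^i$ are bounded away from $C_0$, so a geodesic in $Y$ between $\eta(q_1^i)$ and $\eta(q_2^i)$ either misses $X$ — in which case it lifts via $\eta^{-1}$ and the two points are genuinely close in $C$ — or it touches $X$, in which case, by the spreading estimate \eqref{eq:spread}-type argument in the proof of Lemma \ref{prop:preimage}, the length is at least a definite multiple of the distance between the corresponding $C_0$-points, forcing $d_Y(\eta(q_1^i),\eta(q_2^i))$ to be bounded below — contradiction. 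Hence $w_1=w_2$.

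\medskip

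\textbf{The main obstacle} I expect is the injectivity/distance-preservation near $T_p(C_0)$: one must show that after rescaling, minimal geodesics of $Y$ between points at definite distance from $X$ cannot ``shortcut through'' the gluing locus $T_x^*(X)$ in a way that survives to the blow-up limit, and dually that the $1$-Lipschitz map $\eta_\infty$ does not strictly contract. The concavity estimates of Lemmas \ref{lem:concave}, \ref{lem:ext/int}, \ref{lem:angle} are tailor-made for this — they say geodesics between same-level points deviate toward the boundary only at second order, which rescales away — but assembling them into a clean proof that $\eta_\infty$ is a genuine isometry on $T_p(C)\setminus T_p(C_0)$, rather than merely distance-nonincreasing, is the delicate point, and is presumably why the statement is phrased as a sublemma feeding into the finer analysis of $d\eta_0$ in the next subsection.
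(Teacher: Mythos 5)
Your overall plan --- rescale, obtain a $1$-Lipschitz limit map, prove local isometry and bijectivity separately --- is the same as the paper's, and your injectivity argument is essentially the paper's: take $q_j^i\to\tilde w_j$ in the $1/t_i$-rescaled spaces, note that $\eta(q_1^i)$ and $\eta(q_2^i)$ both converge to $w$ so $|\eta(q_1^i),\eta(q_2^i)|=o(t_i)$, observe that $|\eta(q_j^i),X|\sim\rho_\infty(w)\,t_i>0$, conclude the minimal $Y$-geodesic between the images cannot reach $X$ and therefore lifts through $\eta^{-1}$, and deduce $|q_1^i,q_2^i|=|\eta(q_1^i),\eta(q_2^i)|\to 0$. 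You do not need Lemma~\ref{prop:preimage} or its spreading estimate here; the simple distance-to-$X$ bound already rules out the geodesic touching $X$.

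The local-isometry part has a genuine gap, caused by a reversal of direction. You write that ``since $\eta:C\setminus C_0\to Y\setminus X$ is a local isometry, distances between pairs of points at definite distance from $C_0$ are preserved''. This is false: a local isometry preserves lengths of curves, not distances between far-apart points. To upgrade the automatic $1$-Lipschitz inequality $|\eta_\infty(\tilde q_1),\eta_\infty(\tilde q_2)|\le|\tilde q_1,\tilde q_2|$ to an equality, the curve you must control is a minimal geodesic in $Y$ joining the \emph{images} (which one then lifts back to $C$), not a minimal geodesic in $C$ joining $\tilde q_1,\tilde q_2$. The latter, even if it stays away from $C_0$, only maps to a curve of the same length in $Y$, which re-proves the inequality $\le$ and gives nothing new. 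For the same reason, Lemmas~\ref{lem:concave}--\ref{lem:angle} are the wrong tools: they control geodesics in $C$, where $\mathrm{dist}(\cdot,C_0)$ is concave along geodesics, whereas in $Y$ the function $\mathrm{dist}(\cdot,X)$ is not concave across the crease $X$, and that is exactly where the danger lies.

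The paper's fix is much simpler than anything second-order. Since the distance functions survive the blow-up and satisfy $\tilde\rho_\infty=\rho_\infty\circ\eta_\infty$, set $\epsilon:=\tilde\rho_\infty(\tilde w)>0$. For $\tilde q_1,\tilde q_2\in B(\tilde w,\epsilon/2)$ one has $\rho_\infty(\eta_\infty(\tilde q_j))=\tilde\rho_\infty(\tilde q_j)>\epsilon/2$ while $|\eta_\infty(\tilde q_1),\eta_\infty(\tilde q_2)|<\epsilon$; because $\rho_\infty$ is $1$-Lipschitz, a geodesic in $T_x(Y)$ between the images would need length at least $\epsilon$ to touch $T_x^*(X)$, so it stays in $T_x(Y)\setminus T_x^*(X)$ and lifts (via the pre-limit bijection $\eta^{-1}:Y\setminus X\to C\setminus C_0$) to a curve of equal length from $\tilde q_1$ to $\tilde q_2$, giving the reverse inequality. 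Thus $\eta_\infty$ restricted to $B(\tilde w,\epsilon/2)$ is an isometry; there is no delicate estimate hiding here, and your closing worry about producing ``a genuine isometry rather than merely distance-nonincreasing'' also overstates the claim, since the sublemma asserts only a \emph{local} isometry.
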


\begin{proof}
Let $\tilde\rho=|\, \cdot \,, C_0|$,  $\rho=|\,\cdot \,, X|$.
Under the  $1/t_i$-rescaling, we may assume that  $\tilde\rho$,  $\rho$ converges
to  the maps 
\[
      \tilde\rho_{\infty}=|\,\cdot \,, T_p(C_0)|, \,\,\, \rho_{\infty}=|\,\cdot \,, T_x^*(X)|,
\]
respectively
satisfying $\tilde\rho_{\infty} = \rho_{\infty}\circ \eta_{\infty}$.
For any $\tilde w\in  T_p(C)\setminus T_p(C_0) $, let $\e=\tilde\rho_{\infty}(\tilde w)$ and 
$w=\eta_{\infty}(\tilde w)$.
Since $\rho_{\infty}(w)=\e$, it is easily checked that $\eta_{\infty}:B(\tilde w,\e/2)\to B(w, \e/2)$
is an isometry.

Next let us show that $\eta_{\infty}:  T_p(C)\setminus T_p(C_0) \to T_xY\setminus T_x^{*}(X)$ is bijective.
Suppose that $w:=\eta_{\infty}(\tilde w_1)=\eta_{\infty}(\tilde w_2)$ for $\tilde w_j\in T_p(C)\setminus T_P(C_0)$.
Take $q_1^i, q_2^i\in C$  such that $q_j^i$  converge to 
$\tilde w_j$ under the  $1/t_i$-rescaling.  Let  $\eta(q_j^i)=y_j^i$. Since $y_j^i$ converges to the same point $w$,
any minimal geodesic joining $y_1^i$ and $y_2^i$ does not meet $X$.
This implies that $|q_1^i, q_2^i|=|y_1^i, y_2^i|$. However this must imply that $\tilde w_1=\tilde w_2$.
Hence $\eta_{\infty}$ is injective on $T_p(C)\setminus T_p(C_0)$. It is easy to see that 
$\eta_{\infty}:  T_p(C)\setminus T_p(C_0) \to T_xY\setminus T_x^{*}(X)$ is surjective, and hence the proof is omitted.
\end{proof}



\subsection{Spaces of directions and differential of $\eta_0$}  \label{ssec:differential}

In this subsection, we study the the spaces of directions of $C$, $C_0$, $Y$ and $X$ at the points where
the gluing is done, and the relation between them. 
We also study the differential of the gluing map $\eta_0$ at those points.

\begin{lem} \label{lem:susp0}
For every $p\in C_0$, let $\tilde\gamma_{+}(t) = (p,t)$ and $\gamma_+(t)=\eta(\tilde\gamma(t))$. Then 
\begin{enumerate}
 \item  $\Sigma_p(C)$ is isometric to the half-spherical suspension $\{ \tilde\gamma_{+}'(0) \}*\Sigma_p(C_0);$
 \item  for every $s\in (0, t_0)$, $\Sigma_{(p, s)}(C)$ and $\Sigma_{\eta(p,s)}(\eta(C))$ are isometric to 
          the spherical suspensions $\{ \pm\tilde\gamma_{+}'(s) \}*\Sigma_p(C_0)$ and
          $\{ \pm\gamma_{+}'(s) \}*\Sigma_p(C_0)$ respectively.
\end{enumerate}
\end{lem}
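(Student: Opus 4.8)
The plan is to exploit the warped product structure $C = C_0\times_\phi[0,t_0]$ directly, since the warping function $\phi$ is smooth on $[0,t_0)$ with $\phi(0)=1$, and then transport the conclusion to $Y$ and $\eta(C)$ via the local isometry of Lemma \ref{lem:loc-isom}. First I would recall that for a warped product $B\times_\phi F$ (here with the roles arranged so that $[0,t_0]$ carries the unwarped factor and $C_0$ is scaled by $\phi$), the infinitesimal geometry at a point $(p,s)$ is governed by the metric $dt^2 + \phi^2(s)\, g_{C_0}$ at that level; rescaling by $1/r$ and letting $r\to 0$ produces the tangent cone, and the warping factor $\phi(s)$ contributes only a positive constant rescaling of the $C_0$-directions, which does not change the space of directions. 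Concretely, $T_{(p,s)}(C)$ is isometric to $\mathbb R \times T_p(C_0)$ when $s\in(0,t_0)$ (an interior point of the $[0,t_0]$-factor), and to $\mathbb R_{\ge 0}\times T_p(C_0)$ when $s=0$ (a boundary point of that factor, because $\tilde\gamma_+$ cannot be extended past $0$ inside $C$). Taking the unit sphere of a Euclidean product $\mathbb R \times V$ gives the spherical suspension of the unit sphere of $V$ with the two poles $\pm$ (the unit vector in the $\mathbb R$-direction), and for the half-line $\mathbb R_{\ge 0}\times V$ one gets the half-spherical suspension with a single pole. Identifying the $\mathbb R$-direction with $\tilde\gamma_+'(s)$ (resp.\ $\tilde\gamma_+'(0)$) and the unit sphere of $T_p(C_0)$ with $\Sigma_p(C_0)$ yields exactly the claimed descriptions of $\Sigma_p(C)$ and $\Sigma_{(p,s)}(C)$.

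For the rigorous argument I would avoid appealing loosely to "tangent cones of warped products" and instead argue with comparison. The key point is that the distance function $\tilde\rho = |\cdot, C_0|$ restricted to $C$ satisfies, along any geodesic, the concavity/Hessian estimate already recorded before the lemma (the computation $\rho''(s) = \tfrac{\phi'(\rho)}{\phi(\rho)}|\dot\gamma^T|^2$, with $0\ge\rho''\ge -c$ uniformly). Hence $\tilde\rho$ is a function whose gradient at $(p,s)$ points along $\pm\tilde\gamma_+'(s)$, and for $s\in(0,t_0)$ the vector $-\tilde\gamma_+'(s)$ and $+\tilde\gamma_+'(s)$ form an antipodal pair in $\Sigma_{(p,s)}(C)$: indeed $\tilde\gamma_+$ is a geodesic through $(p,s)$, so $\tilde\angle(\tilde\gamma_+'(s),-\tilde\gamma_+'(s)) = \pi$, which for an Alexandrov space forces $\Sigma_{(p,s)}(C)$ to split as a spherical suspension over the set of directions perpendicular to $\tilde\gamma_+$ (this is the standard "a geodesic passing through a point gives a suspension of the space of directions" fact for Alexandrov spaces). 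It then remains to identify the "waist" $\Sigma_{(p,s)}^\perp$ with $\Sigma_p(C_0)$: directions at $(p,s)$ perpendicular to $\partial/\partial t$ are exactly limits of directions toward points $(q,s)\in C_s$, and by Lemma \ref{lem:ext/int} the extrinsic metric on $C_s$ agrees with its intrinsic metric up to second order, while $C_s^{\rm int} = \phi(s)\cdot C_0^{\rm int}$, so $\Sigma_{(p,s)}^\perp \cong \Sigma_p(C_s) \cong \Sigma_p(C_0)$ (a constant rescaling of a length space leaves spaces of directions unchanged). For $s=0$ one runs the same argument but now only the ray $t\mapsto(p,t)$, $t\ge 0$, is available, so instead of a full suspension one obtains the join $\{\tilde\gamma_+'(0)\}*\Sigma_p(C_0)$; the relevant fact is that $\tilde\rho = |\cdot, C_0|$ is (up to first order) a distance function to the boundary-type set $C_0$ which is extremal, so $d\tilde\rho_{(p,0)}$ attains its maximum $1$ precisely at $\tilde\gamma_+'(0)$, forcing the half-suspension structure.

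Finally, to obtain the statement for $\Sigma_{\eta(p,s)}(\eta(C))$ I would simply invoke Lemma \ref{lem:loc-isom}: $\eta$ restricted to $C\setminus C_0$ is a bijective local isometry onto $\eta(C)\setminus X$, hence for $s\in(0,t_0)$ it induces an isometry $\Sigma_{(p,s)}(C) \cong \Sigma_{\eta(p,s)}(\eta(C))$, and it carries $\tilde\gamma_+'(s)$ to $\gamma_+'(s)$ by definition of $\gamma_+ = \eta\circ\tilde\gamma_+$. This gives the second assertion for $\eta(C)$ with no extra work. I expect the main obstacle to be the clean identification of the waist of the suspension with $\Sigma_p(C_0)$: one must make sure the limiting procedure defining directions perpendicular to $\tilde\gamma_+$ really recovers the full space of directions of $C_0$ and not some proper subset, and this is where Lemma \ref{lem:ext/int} (controlling the discrepancy between extrinsic and intrinsic distance on the level sets $C_t$) and Lemma \ref{lem:angle} (controlling the angle a minimal geodesic between two points of $C_t$ makes with $C_t$) do the real work — they guarantee that short geodesics in $C$ between points of $C_s$ are nearly tangent to $C_s$, so their directions converge to directions in $\Sigma_p(C_s)$ and conversely every direction in $\Sigma_p(C_0)$ is realized. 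The $s=0$ case is slightly more delicate because $C_0$ sits in the "boundary" of the $[0,t_0]$-factor, but the same estimates apply since $\phi(0)=1$ and $\phi$ is smooth there.
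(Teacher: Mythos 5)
Your proposal is correct and takes essentially the same route as the paper: the paper also derives (1) directly from the warped--product identification $T_p(C)=T_p(C_0)\times[0,\infty)$ and derives (2) by observing that $\tilde\gamma_+$ and $\gamma_+$ are geodesics through the points in question and invoking the suspension/splitting theorem for Alexandrov spaces, together with the local isometry of $\eta$ off $C_0$. You simply fill in the details the paper leaves implicit (why the waist of the suspension is $\Sigma_p(C_0)$, via Lemmas~\ref{lem:ext/int} and \ref{lem:angle}), which is a reasonable rigorization rather than a different approach.
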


\begin{proof}
From the suspension structure $C=C_0\times_{\phi}[0,t_0]$, obviously we have $T_p(C)=T_p(C_0)\times [0,\infty)$,
which implies the conclusion $(1)$. Since both $\tilde\gamma_{+}$ and $\gamma_{+}$ are geodesic, the splitting
theorem shows $(2)$.
\end{proof}

\begin{lem} \label{lem:susp1}
For every  $x\in X$ and  $\xi\in T_x(Y)\setminus K(\Sigma_x(X))$ which is not a perpendicular direction, 
assume that there is a geodesic  $\gamma:[0,\ell]\to Y$  with $\gamma'(0)=\xi$, and let 
\[
    \tilde\gamma=\eta^{-1}(\gamma), \,\,\,  
    \tilde\sigma=\tilde\pi\circ\tilde\gamma, \,\,\, \sigma = \pi \circ \gamma,  \,\,\, p:=\tilde\gamma(0).
\]
Let $\tilde\gamma_{+}$ be the perpendicular to $C_0$ at $p$, and set $\gamma_+ = \eta(\tilde\gamma_+)$.
Put 
\[
     \tilde\xi = \tilde\gamma'(0),   \,\,\,   \tilde\xi_{+} = \tilde\gamma_{+}'(0),    \,\,\,   \tilde v = \tilde\sigma'(0),  \,\,\,  
     \xi_{+} = \gamma_{+}'(0).
 \]
Then 
\begin{enumerate}
 \item $\sigma$ defines a unique vector $v=\sigma'(0)\in K(\Sigma_x(X))$ and we have 
     \begin{equation}
        \begin{aligned}
              &\angle(\xi_+, \xi) = \angle(\tilde\xi_+, \tilde\xi),  \,\,\,   \angle(\xi,v) = \angle(\tilde\xi, \tilde v) \\
              & \angle(\xi_{+},\xi)+\angle(\xi, v)=\angle(\xi_{+}, v)=\pi/2;
                       \label{eq:sus2}
        \end{aligned}
     \end{equation}
  \item  there is a unique limit $\eta_{\infty}:T_p(C) \to T_x(Y)$ of 
     $\eta_t =\eta: (\frac{1}{t} C,p) \to (\frac{1}{t}Y, x)$ as $t\to 0$, and we have 
    \[
          \eta_{\infty}(\tilde v) = v, \,\,\,   |\tilde v| = |v|.
     \]
\end{enumerate}
%
%
%
%
\end{lem}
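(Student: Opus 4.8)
\textbf{Proof proposal for Lemma \ref{lem:susp1}.}

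The plan is to work entirely at the level of tangent cones and use the rescaling limit analysis from Subsection \ref{ssec:metric-prelim} together with the warped-product structure $C = C_0\times_\phi[0,t_0]$. First I would apply Sublemma \ref{slem:lim-eta}: passing to a subsequence of any $t\to 0$, the rescaled maps $\eta_t\colon (\tfrac1t C, p)\to(\tfrac1t Y, x)$ converge to a $1$-Lipschitz map $\eta_\infty\colon T_p(C)\to T_x(Y)$ which restricts to a bijective local isometry $T_p(C)\setminus T_p(C_0)\to T_xY\setminus T_x^*(X)$. Since $\xi\notin K(\Sigma_x(X))$ and $\gamma$ is a genuine geodesic leaving $x$, for small $s$ the point $\gamma(s)$ lies off $X$, so $\tilde\gamma=\eta^{-1}(\gamma)$ is well-defined near $0$ by Lemma \ref{lem:loc-isom}, and $\tilde\gamma'(0)=\tilde\xi$ is the $\eta_\infty$-preimage of $\xi$. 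The key structural input is Lemma \ref{lem:susp0}(1): $T_p(C) = T_p(C_0)\times[0,\infty)$ splits isometrically, with the $[0,\infty)$-factor spanned by $\tilde\xi_+=\tilde\gamma_+'(0)$. Writing $\tilde\xi = (\tilde v, a\,\tilde\xi_+)$ under this splitting, $\tilde v$ is exactly the initial vector of the projected curve $\tilde\sigma = \tilde\pi\circ\tilde\gamma$ (the projection $\tilde\pi\colon C\to C_0$ corresponds to the orthogonal projection $T_p(C)\to T_p(C_0)$ on tangent cones, since $\tilde\pi$ is projection along perpendiculars), giving $\tilde v = \tilde\sigma'(0)$, $\angle(\tilde\xi_+,\tilde\xi)+\angle(\tilde\xi,\tilde v)=\pi/2$, and $|\tilde v| = |\tilde\xi|\sin\angle(\tilde\xi_+,\tilde\xi)$.

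Next I would push everything forward under $\eta_\infty$. Since $\eta$ maps the perpendicular $\tilde\gamma_+$ at $p$ isometrically onto the perpendicular $\gamma_+$ at $x$ (both are unit-speed minimal geodesics realizing distance to $C_0$, resp.\ $X$, by Lemma \ref{lem:dist-bdry}), we get $\eta_\infty(\tilde\xi_+)=\xi_+$ and, more generally, $\eta_\infty$ maps the perpendicular ray in $T_p(C)$ isometrically onto the perpendicular ray $K(\{\xi_+\})$ in $T_x(Y)$. For the angles: the comparison inequalities for the rescaled triangles with vertices at $p$ (resp.\ $x$) and points along $\tilde\gamma,\tilde\gamma_+$ (resp.\ $\gamma,\gamma_+$) show $\angle(\xi_+,\xi)\le\angle(\tilde\xi_+,\tilde\xi)$ since $\eta_t$ is $1$-Lipschitz and fixes distances to the base point; the reverse inequality comes from the fact that $\eta_\infty$ is a local isometry away from $T_p(C_0)$, so it preserves the angle at $x$ between $\xi$ and $\xi_+$ whenever these can be joined by short geodesics in $T_xY\setminus T_x^*(X)$ — which holds because $\xi$ is not a perpendicular direction, so $\angle(\xi_+,\xi)>0$ and a small geodesic from a point on $\gamma$ to a point on $\gamma_+$ stays off $X$ by Lemma \ref{lem:dist-bdry} (the distance to $X$ along such a geodesic is comparable to $t\sin\angle$, hence positive). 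This yields $\angle(\xi_+,\xi)=\angle(\tilde\xi_+,\tilde\xi)$, and by the same argument applied to $\sigma=\pi\circ\gamma$ versus $\tilde\sigma=\tilde\pi\circ\tilde\gamma$ we get $\angle(\xi,v)=\angle(\tilde\xi,\tilde v)$ once we know $v:=\sigma'(0)$ exists.

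To establish existence and uniqueness of $v=\sigma'(0)\in K(\Sigma_x(X))$, I would note that $\sigma=\pi\circ\gamma = \eta_0\circ\tilde\pi\circ\eta^{-1}\circ\gamma$, and $\pi$ is Lipschitz (Proposition \ref{prop:intrinsic} identifies $X^{\rm int}$ with $N$, and $\pi$ is the composition of Lipschitz maps); hence any subsequential limit $v$ of $\sigma(s)/s$ lies in $T_x(X)=K(\Sigma_x(X))$. Uniqueness of the rescaling limit $\eta_\infty$ follows because $\eta_\infty(\tilde v)$ must equal such a limit $v$: indeed $\eta_\infty$ maps $\tilde v$ (the $T_p(C_0)$-component of $\tilde\xi$) to the $T_x^*(X)$-component of $\xi$, which is forced by continuity of $\eta_\infty$ and the splitting on both sides — and since $T_p(C)$, being $T_p(C_0)\times[0,\infty)$, has a canonical structure, the map $\eta_\infty$ is determined on the $[0,\infty)$-factor (it is the perpendicular) and on $T_p(C_0)$ by $\eta_\infty|_{T_p(C_0)} = d(\eta_0)_p$, which is independent of the sequence $t_i$. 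Finally $|\tilde v|=|v|$ follows from $\eta_\infty$ being a local isometry near $\tilde v$ (as $\tilde v\in T_p(C_0)$ but $\tilde v\ne o$ when $\xi$ is not perpendicular, and small perturbations of $\tilde v$ in the interior direction land off $T_p(C_0)$), combined with $|\xi_+,\xi|$-angle equality forcing the two right-triangle decompositions to match: $|v| = |\xi|\sin\angle(\xi_+,\xi) = |\tilde\xi|\sin\angle(\tilde\xi_+,\tilde\xi) = |\tilde v|$.

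\textbf{Main obstacle.} The delicate point is showing that the rescaling limit $\eta_\infty$ is \emph{unique} (independent of the chosen sequence $t_i\to 0$) and that it genuinely equals the differential $d(\eta_0)_p$ on the $T_p(C_0)$-factor — equivalently, that $\sigma'(0)$ exists as a bona fide vector rather than merely as a subsequential limit of difference quotients. This requires carefully exploiting that $\eta_0$ is $1$-Lipschitz with a well-controlled behavior near $p$, that $C$ has the rigid warped-product splitting forcing the perpendicular direction to be canonical, and the local isometry property of Sublemma \ref{slem:lim-eta} to rule out collapsing of $\tilde v$ to a shorter vector. Everything else is bookkeeping with spherical suspensions and right-triangle angle relations in the model surface $M^2_\kappa$ under rescaling.
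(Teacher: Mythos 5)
Your setup and the easy inequality $\angle(\xi_+,\xi)\le\angle(\tilde\xi_+,\tilde\xi)$ (via the $1$-Lipschitz property of $\eta_\infty$ applied to the geodesic from $\tilde\xi$ to the perpendicular ray) agree with the paper, but the two places where you try to obtain the reverse comparisons have genuine gaps. First, your reverse angle inequality rests on the claim that a minimal geodesic in $Y$ joining $\gamma(t)$ to $\gamma_+(s)$ stays off $X$. The only a priori estimate is that if it touches $X$ its length is at least $\rho(\gamma(t))+\rho(\gamma_+(s))$, and since $\rho(\gamma(t))\approx t\cos\angle(\tilde\xi_+,\tilde\xi)$ this gives no contradiction once $\tan\angle(\tilde\xi_+,\tilde\xi)\ge 2$; the concavity of $d(\cdot,X)$ (Lemma \ref{lem:concave}) is only available along geodesics that already avoid $X$, so invoking it here would be circular. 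Second, and more seriously, your treatment of the uniqueness of $v=\sigma'(0)$, of the limit $\eta_\infty$, and of $|v|=|\tilde v|$ presupposes what the lemma is meant to establish: you appeal to "the splitting on both sides," but the splitting of $T_x(Y)$ over $T_x^*(X)$ is only proved afterwards (Corollary \ref{cor:susp1} and Corollary \ref{lem:susp2}) using this lemma; you treat $d(\eta_0)_p$ as a canonical, sequence-independent object, whereas in the paper it is \emph{defined} only as a consequence of Lemma \ref{lem:susp1}; and you assert that $\eta_\infty$ is a local isometry "near $\tilde v$" although $\tilde v\in T_p(C_0)$, where Sublemma \ref{slem:lim-eta} gives only $1$-Lipschitz. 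A local isometry off $T_p(C_0)$ does not preserve distances from $o_p$ to points of $T_p(C_0)$ (geodesics in $T_x(Y)$ may cross $T_x^*(X)$, e.g. at double points), so norm preservation does not follow from it. Likewise, "the same argument applied to $\sigma$ versus $\tilde\sigma$" cannot give $\angle(\xi,v)=\angle(\tilde\xi,\tilde v)$, since $\sigma$ lies in $X$ and no lift or local-isometry comparison is available for geodesics with an endpoint on $X$.

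The mechanism the paper actually uses, and which is missing from your proposal, is: (i) establish $\angle(\xi,\zeta)=\angle(\tilde\xi,\tilde v)$ for any subsequential direction $\zeta$ of $\sigma$ by comparing the angle between $\gamma'(t_i)$ and the level set $C^Y_{\rho(t_i)}$ at the \emph{interior} point $\gamma(t_i)$ --- where $\eta$ is an honest local isometry, so $\theta_i=\tilde\theta_i$ --- then pass to the blow-up limit using lower semicontinuity of angles and the suspension structure of the spaces of directions at interior points; this makes the flat triangles $\triangle o_x\xi\sigma_\infty(1)$ and $\triangle o_p\tilde\xi\tilde\sigma_\infty(1)$ congruent, which yields both $|v|=|\tilde v|$ and the angle identity; (ii) combine this with the first variation inequality $\angle(\xi_+,\zeta)\ge\pi/2$ and the triangle inequality to sandwich $\pi/2\le\angle(\xi_+,\zeta)\le\angle(\xi_+,\xi)+\angle(\xi,\zeta)\le\angle(\tilde\xi_+,\tilde\xi)+\angle(\tilde\xi,\tilde v)=\pi/2$, which forces all the equalities, the uniqueness of $\zeta$, and hence the independence of $\eta_\infty$ from the sequence $t_i$. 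Without step (i) there is no independent input making your right triangles congruent, and the argument does not close.
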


\begin{proof}
%
Let $\zeta\in\Sigma_x(X)$ be a direction defined by the curve $\sigma$. 
By definition,  this means that  $\zeta=\lim_{i\to\infty} \uparrow_x^{\sigma(t_i)}$ for a sequence $t_i\to 0$.  
Since $\gamma$ is minimal, so is $\tilde\gamma$.
%
Note that $\tilde v$ is uniquely determined since  $\tilde\sigma$ is a shortest curve.
From Lemma \ref{lem:susp0}, we have
\begin{equation}
     \angle(\tilde\xi_{+},\tilde\xi)+\angle(\tilde\xi, \tilde v)=\angle(\tilde\xi_{+},\tilde v)=\pi/2.
                      \label{eq:sus1}
\end{equation}

Now we show \eqref{eq:sus2}.
Consider the $1/t_i$-rescaling limits,
\[
      (T_x(Y), o_x) = \lim_{i\to\infty}\left( \frac{1}{t_i}Y,x\right), \,\,
       (T_p(C), o_p)= \lim_{i\to\infty}\left( \frac{1}{t_i}C, p\right).
\]
Let $\gamma_{t_i}$ (resp. $\tilde\gamma_{t_i}$) be the perpendicular to $X$ at $\sigma(t_i)$ (resp. to $C_0$ at $\tilde\sigma(t_i)$).
Passing to a subsequence, we may assume that the  quadruplet $(\gamma_{+}, \gamma,  \sigma,  \gamma_{t_i})$ 
converges to  $(\gamma_{+\infty}, \gamma_{\infty}, \sigma_{\infty}, \gamma_{\infty 1})$ 
under the convergence $\left( \frac{1}{t_i} Y, x\right)\to\left(T_x(Y), o_x\right)$.
For instance, this explicitly means that the Lipschitz curve $\frac{1}{t_i} \sigma(t_i t)$ converges to a Lipschitz curve   $\sigma_{\infty}(t)$ in $T_x(Y)$. Thus $\gamma_{+\infty}$ and $\gamma_{\infty 1}$ are perpendicular to $T_x^{*}(X)$ at $o_x$ and $\sigma_{\infty}(1)$
and $\gamma_{\infty}$ is the geodesic from $o_x$ with $\gamma_{\infty}(1)=\xi$.
Here we  assume that 
$\left( \frac{1}{t_i} X, x \right)$ converges to a closed subset  $(T^*_x(X), o_x)$ of 
$T_x(Y), o_x)$.

Similarly passing to a subsequence, we may assume that the quadruplet $(\tilde \gamma_{+}, \tilde\gamma,  \tilde\sigma,  \tilde\gamma_{t_i})$ 
converges to  $(\tilde \gamma_{+\infty}, \tilde\gamma_{\infty}, \tilde\sigma_{\infty}, \tilde\gamma_{\infty 1})$ 
under the convergence $\left( \frac{1}{t_i}C, p\right)\to\left(T_p(C), o_p\right)$.
Thus $\tilde\gamma_{+\infty}$ and $\tilde\gamma_{\infty 1}$ are perpendicular to $T_p(C_0)$ at $o_p$ and $\tilde\sigma_{\infty}(1)$
and $\tilde\gamma_{\infty}$ is the geodesic from $o_p$ with $\tilde\gamma_{\infty}(1)=\tilde\xi$.

%
%
%

We set
\[
                  \rho(t)= |C_0, \tilde\gamma(t)|=|X, \gamma(t)|.
\]
Notice that 
\begin{enumerate}
 \item $|\xi, \sigma_{\infty}(1)| =|\tilde\xi, \tilde\sigma_{\infty}(1)| =\rho'(0);$
 \item $\rho'(0)=  |\tilde\xi|\sin  \angle(\tilde\xi, \tilde v).$
                     \label{eq:sus2'}
\end{enumerate}


Let $\tilde\lambda$ be a minimal geodesic joining $\tilde\xi=\tilde\gamma_{\infty}(1)$ to the geodesic 
$\tilde\gamma_{+\infty}$. 
Let $\eta_{\infty}:T_p(C) \to T_x(Y)$ be any limit of 
     $\eta_{t_i} =\eta: (\frac{1}{t_i} C,p) \to (\frac{1}{t_i}Y, x)$.
Since $\eta_{\infty}$ is $1$-Lipschitz, we have 
\begin{align*}
      |\tilde\xi|\sin\angle(\tilde\xi_{+}, \tilde\xi) =L(\tilde\lambda)=L(\eta_{\infty}\circ\tilde\lambda)\ge |\xi| \sin\angle(\xi_{+}, \xi),
\end{align*}
and hence
\begin{align}
      \angle(\xi_{+}, \xi)\le \angle(\tilde\xi_{+}, \tilde\xi).    \label{eq:sin}
\end{align}
Next we show that 
\begin{align}
       \angle(\xi, \zeta) = \angle(\tilde\xi, \tilde v).  \label{eq:=v}
\end{align}
Put for simplicity
\begin{align*}
    & \tilde\theta :=\angle(\tilde\xi, \tilde v), \qquad 
                                   \tilde\theta_i :=\angle(\tilde\gamma'(t_i), T_{\tilde\gamma(t_i)}  C_{\rho(t_i)}),  \\
    &   \theta:= \angle(\xi,  \zeta), \qquad 
                                   \theta_i :=\angle(\gamma'(t_i),  T_{\gamma(t_i)} C_{\rho(t_i)}^Y).
\end{align*}

From the warping product structure of $C$, we easily have 
\[
     \lim_{i\to\infty}\tilde\theta_i = \tilde\theta.
\]
On the other hand, under the convergence $\left( \frac{1}{t_i}C, p\right)\to\left(T_p(C), o_p\right)$
(resp.  under the convergence  $\left( \frac{1}{t_i}Y,x\right) \to \left(T_xY,o_x\right)$), 
we may assume that $C_{s t_i}$ converges to some space, denoted by  $C_{s\infty}$.
(resp.  $C^Y_{s t_i}$ converges to some space $C^Y_{s\infty}$).
Then we have  
\begin{enumerate}
 \item $\tilde\theta =\angle(\tilde\gamma_{\infty}'(0), \tilde\sigma_{\infty}'(0));$ 
 \item  $\tilde\theta =\lim \tilde\theta_i = \angle(\tilde\gamma_{\infty}'(1), T_{\tilde\gamma_{\infty}(1)}(C_{\rho'(0)\infty}))$.
\end{enumerate}
Since $\angle(\tilde\gamma_{\infty}'(1), \tilde\gamma_{\infty 1}'(\rho'(0))) = \pi/2-\tilde\theta$,
we have
\begin{align}
     \angle  o_p \tilde\gamma_{\infty}(1) \tilde\sigma_{\infty}(1)) = \pi/2 -\tilde\theta.\label{eq:flat1}
\end{align}
On the other hand, since $\eta: C\setminus C_0\to C^Y\setminus X$ is a local isometry, we have
\[
        \tilde\theta_i = \theta_i.    
\]
From the lower semi-continuity, of angles, 
we have 
\[
      \lim\theta_i = \angle(\gamma_{\infty}'(1), T_{\gamma_{\infty}(1)}(C_{\rho'(0)\infty})).
\]
It follows from the spherical suspension structure of $\Sigma_{\gamma_{\infty}(1)}T_p(C)$ that  
\begin{equation*}
 \begin{aligned}
  \angle(\gamma_{\infty}'(1), \gamma_{\infty 1}'(\rho'(0)))& 
                                            = \pi/2 - \angle(\gamma_{\infty}'(1), T_{\gamma_{\infty}(1)}(C_{\rho'(0)\infty}))  \\
                                          &=   \pi/2-\tilde\theta,
 \end{aligned}
\end{equation*}
and hence  
\begin{align}
     \angle o_x  \xi \sigma_{\infty}(1) = \pi/2 -\tilde\theta. \label{eq:flat2}
\end{align}    
By \eqref{eq:flat1} and \eqref{eq:flat2},  the two Euclidean triangles $\triangle o_x\xi \sigma_{\infty}(1)$ 
and $\triangle o_p\tilde\xi\tilde \sigma_{\infty}(1)$ are
congruent to each other, and we conclude that $\angle(\xi, \zeta) = \angle(\tilde \xi,\tilde v)$ as required.

The first variation formula immediately implies  $\angle(\xi_{+}, \zeta)\ge \pi/2$. 
It follows \eqref{eq:sin} and  \eqref{eq:=v} that 
\begin{equation}
 \begin{aligned}
 \pi/2  \le  \angle(\xi_{+},\zeta) &\le \angle(\xi_{+},\xi)+\angle(\xi, \zeta) \\
                   & \le \angle(\tilde\xi_{+},\tilde\xi)+\angle(\tilde\xi, \tilde v) = \pi/2.  \label{eq:xi_+}
\end{aligned}
\end{equation}
Thus we conclude that 
\[
      \angle(\xi_{+},\xi)+\angle(\xi, \zeta) = \angle(\xi_{+}, \zeta) =\pi/2, 
\]
which shows the  uniqueness of $\zeta$. Namely  $\sigma$ determines a unique direction at $x$.
Note that 
\begin{equation}
 \begin{aligned}
         v &:= \sigma'(0) = \lim_{i\to\infty} \frac{|x,\sigma(t_i)|}{t_i}\zeta=|o_x,\sigma_{\infty}(1)|\zeta, \\
         |v|& = |o_x, \sigma_{\infty}(1)| = |o_p, \tilde \sigma_{\infty}(1)| = |\tilde v|. \label{eqrtatio-lim}
\end{aligned}
\end{equation}
Since $\eta_{\infty}(\tilde v) = v$, this shows that $\eta_{\infty}$ does not depend on 
the choice of $t_i\to 0$.
This completes the proof.
\end{proof}

\begin{rem}  
The argument in the  proof of Lemma \ref{lem:susp1} also shows that 
 \begin{align*}
    |\tilde\xi| \cos\tilde \theta & =|o_p,\tilde\sigma_{\infty}(1)| = |o_x,\sigma_{\infty}(1)| \\
                           & \le L(\sigma_{\infty}|_{[0,1]}) \le L(\tilde\sigma_{\infty}|_{[0,1]})=  \cos \tilde\theta,
\end{align*}
which implies that 
\begin{align}
  \sigma_{\infty}  \,\,\, \mathrm{ is\,\, minimizing\,\, in\,\, the \,\,direction}  \,\,\, \sigma'(0). \hspace{3cm} \label{sigma-min}
\end{align}
\end{rem}

\begin{cor} \label{cor:susp1}
For every  $x\in X$ and  $\xi\in\Sigma_x(Y)\setminus \Sigma_x(X)$ which is not a perpendicular direction, 
there is a unique perpendicular direction  $\xi_{+}\in \Sigma_x(Y)$  to $X$ at $x$ 
and a unique  $v\in\Sigma_x(X)$ such that 
      \begin{equation}
         \angle(\xi_{+},\xi)+\angle(\xi, v)=\angle(\xi_{+}, v)=\pi/2;
                       \label{eq:sus3}
     \end{equation}
\end{cor}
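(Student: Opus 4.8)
The plan is to deduce Corollary \ref{cor:susp1} from Lemma \ref{lem:susp1} by approximating $\xi$ by directions of geodesics emanating from $x$, applying the lemma to each such geodesic, and then passing to the limit in \eqref{eq:sus2}. To set this up, let $P_x\subset\Sigma_x(Y)$ denote the set of perpendicular directions to $X$ at $x$; by definition a perpendicular direction is $\gamma_+'(0)$ with $\gamma_+(t)=\eta(p,t)$ for some $p\in\eta_0^{-1}(x)$, so $\#P_x\le\#\eta_0^{-1}(x)\le 2$ by Lemma \ref{prop:preimage}(1). In particular $P_x$ is finite, and $\Sigma_x(X)\cup P_x$ is closed. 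Since $\xi\notin\Sigma_x(X)\cup P_x$ and the directions of geodesics are dense in $\Sigma_x(Y)$, I can choose geodesics $\gamma_j\colon[0,\ell_j]\to Y$ with $\xi_j:=\gamma_j'(0)\to\xi$ and $\xi_j\notin\Sigma_x(X)\cup P_x$ for all $j$.

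For existence, apply Lemma \ref{lem:susp1} to each $\gamma_j$ (i.e.\ to the unit vector $\xi_j$); this yields a perpendicular direction $\xi_{+,j}\in P_x$ and a direction $v_j\in\Sigma_x(X)$ with
\[
   \angle(\xi_{+,j},\xi_j)+\angle(\xi_j,v_j)=\angle(\xi_{+,j},v_j)=\pi/2 .
\]
Because $P_x$ is finite, after passing to a subsequence $\xi_{+,j}$ is a constant direction $\xi_+\in P_x$; because $\Sigma_x(Y)$ is compact and $\Sigma_x(X)$ is closed, after a further subsequence $v_j\to v\in\Sigma_x(X)$. The angle on $\Sigma_x(Y)$ coincides with its metric, hence is continuous, so letting $j\to\infty$ gives \eqref{eq:sus3}. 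Note that $\xi\ne v$ (as $\xi\notin\Sigma_x(X)\ni v$), so $\angle(\xi,v)>0$ and therefore $\angle(\xi_+,\xi)<\pi/2$; likewise $\xi\ne\xi_+$ (as $\xi\notin P_x$), so $\angle(\xi_+,\xi)>0$.

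It remains to prove uniqueness. For $\xi_+$: if $x\in X_1$ then $\#P_x=1$ and there is nothing to show; if $x\in X_2$, Lemma \ref{prop:preimage}(2) says the two elements of $P_x$ are antipodal in $\Sigma_x(Y)$, so by the triangle inequality their angles to $\xi$ sum to at least $\pi$, whence at most one of them lies at angle $<\pi/2$ from $\xi$. Since we showed $\angle(\xi_+,\xi)<\pi/2$, the direction $\xi_+$ is unique. For $v$: a short Toponogov comparison using $|\gamma_+(t_0),X|=t_0$ (Lemma \ref{lem:dist-bdry}) shows $\angle(\xi_+,w)\ge\pi/2$ for every $w\in\Sigma_x(X)$; hence if some $w\in\Sigma_x(X)$ had $\angle(\xi,w)<\pi/2-\angle(\xi_+,\xi)=\angle(\xi,v)$, the triangle inequality would force $\angle(\xi_+,w)\le\angle(\xi_+,\xi)+\angle(\xi,w)<\pi/2$, a contradiction. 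Thus $v$ realizes $\angle(\xi,\Sigma_x(X))$. If $v'\in\Sigma_x(X)$ is another solution of \eqref{eq:sus3} with the same $\xi_+$, then $\xi$ lies on the minimal geodesics $[\xi_+ v]$ and $[\xi_+ v']$, each of length $\pi/2$, at the parameter $\angle(\xi_+,\xi)\in(0,\pi/2)$; since $\Sigma_x(Y)$ has curvature $\ge 1$, the minimal geodesic $[\xi_+\xi]$ of length $<\pi$ is unique, so $[\xi_+ v]$ and $[\xi_+ v']$ share an initial segment of positive length and therefore coincide by non-branching of geodesics in Alexandrov spaces; hence $v=v'$.

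The step I expect to require the most care is the uniqueness of $v$: it combines the first-variation (comparison) inequality $\angle(\xi_+,\cdot)\ge\pi/2$ on $\Sigma_x(X)$ with uniqueness of short minimal geodesics and non-branching in the curvature-$\ge 1$ space $\Sigma_x(Y)$. Existence, by contrast, is a routine compactness-and-continuity passage to the limit in Lemma \ref{lem:susp1}, once one checks that perpendicularity and membership in $\Sigma_x(X)$ survive the limit.
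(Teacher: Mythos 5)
Your approach is exactly the ``limit argument'' the paper intends: approximate $\xi$ by geodesic directions $\xi_j\notin\Sigma_x(X)\cup P_x$, apply Lemma~\ref{lem:susp1} to each, and pass to a subsequential limit using compactness of $\Sigma_x(Y)$, closedness of $\Sigma_x(X)$, finiteness of $P_x$, and continuity of the angle metric. The existence argument and the uniqueness of $\xi_+$ (via the antipodality of the two perpendicular directions when $x\in X_2$, from Lemma~\ref{prop:preimage}(2), together with $\angle(\xi_+,\xi)<\pi/2$) are correct.

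However, in the uniqueness argument for $v$ you assert that ``since $\Sigma_x(Y)$ has curvature $\ge 1$, the minimal geodesic $[\xi_+\xi]$ of length $<\pi$ is unique.'' This is not a valid general fact in Alexandrov geometry: a circle of circumference $\theta<2\pi$ is a compact Alexandrov space of curvature $\ge 1$, yet a pair of antipodal points on it lies at distance $\theta/2<\pi$ and is joined by two distinct minimal geodesics. The same phenomenon occurs in higher-dimensional spherical suspensions over such circles. So this step, as written, does not hold. Fortunately the conclusion can be salvaged without it: fix \emph{one} minimal geodesic $\beta$ from $\xi_+$ to $\xi$. Because $\angle(\xi_+,\xi)+\angle(\xi,v)=\angle(\xi_+,v)$ and similarly with $v'$, the concatenations $\beta\cdot[\xi\,v]$ and $\beta\cdot[\xi\,v']$ both have length equal to the distance $\pi/2$ and hence are minimal geodesics from $\xi_+$ through $\xi$ to $v$ and to $v'$ respectively. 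They share the initial segment $\beta$ of positive length, so by non-branching of geodesics in Alexandrov spaces they coincide entirely, giving $v=v'$. With this patch the proof is complete; the rest, including the preliminary observations that $\angle(\xi_+,w)\ge\pi/2$ for $w\in\Sigma_x(X)$ by first variation and that $v$ realizes $\angle(\xi,\Sigma_x(X))$, is sound and is a helpful elaboration of the uniqueness assertion that the paper leaves implicit.
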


\begin{proof}
This immediately follows from Lemma \ref{lem:susp1} and a limit argument.
\end{proof}

By Lemma \ref{lem:susp1}, for every geodesic $\gamma$ in $Y$ starting from $x\in X$
such that $\gamma'(0)\in \Sigma_x(Y) \setminus \Sigma_x(X)$,  
the Lipschitz curve $\sigma=\pi(\gamma)$
determines a unique direction $[\sigma]\in \Sigma_x(X)$.  In general, we call such a direction $[\sigma]$
an {\it intrinsic direction} if $\sigma$ is a Lipschitz curve  in $X$ starting from $x$ 
and having a unique direction $[\sigma]=\sigma'(0)$  in the sense 
that for any sequence $t_i\to 0$, $\uparrow_x^{\sigma(t_i)}$ converges to $[\sigma]$.

The next lemma shows that every direction in $\Sigma_x(X)$ can be approximated by 
intrinsic directions.

\begin{lem} \label{lem:v}
For every $v\in\Sigma_x(X)$, we have the following:
\begin{enumerate}
 \item For any perpendicular direction $\xi_+\in\Sigma_x(Y)$, we have
     \[
       \angle(\xi_+, v)=\pi/2.
     \]
 \item There are intrinsic directions $[\sigma_i]\in \Sigma_x(X)$ satisfying 
         \[
                 \lim [\sigma_i] =v.
         \]
\end{enumerate}
\end{lem}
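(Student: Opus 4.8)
My plan is to prove (1) via the two inequalities $\angle(\xi_+,v)\ge\pi/2$ and $\angle(\xi_+,v)\le\pi/2$ separately, and then to deduce (2) by approximating $v$ with directions of minimal geodesics running into $X$ and analysing such geodesics through the concavity (Lemma~\ref{lem:concave}) of $d(\,\cdot\,,X)$ along geodesics. For the lower bound I would write $\xi_+=\gamma_+'(0)$ with $\gamma_+(t)=\eta(\tilde p,t)$, $\tilde p\in\eta_0^{-1}(x)$; by Lemma~\ref{lem:dist-bdry} the perpendicular $\gamma_+$ realises the distance from $\gamma_+(t)$ to $X$ with foot point $x$, so $|\gamma_+(t),q|\ge t=|\gamma_+(t),x|$ for every $q\in X$. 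Writing $v=\lim_m\uparrow_x^{q_m}$ with $q_m\in X$ and $|x,q_m|\to0$, the semiconcavity of $f=d(\gamma_+(t),\cdot)$ along the minimal geodesic from $x$ to $q_m$, combined with $f(q_m)\ge f(x)$, forces $\cos\angle(\xi_+,\uparrow_x^{q_m})\le C|x,q_m|\to0$, whence $\angle(\xi_+,v)\ge\pi/2$.

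For the upper bound, put $t_m=|x,q_m|$; after passing to a subsequence, the rescaled maps $\eta\colon(\tfrac1{t_m}C,\tilde p)\to(\tfrac1{t_m}Y,x)$ converge (Sublemma~\ref{slem:lim-eta}) to a $1$-Lipschitz map $\eta_\infty\colon T_{\tilde p}(C)\to T_x(Y)$ which, since $\eta$ maps the perpendicular $(\tilde p,t)$ isometrically onto $\gamma_+$, carries the perpendicular ray isometrically onto the perpendicular ray, so $\eta_\infty(s\tilde\xi_+)=s\xi_+$ for all $s\ge0$. By Lemma~\ref{lem:susp0}(1), $T_{\tilde p}(C)$ is the metric product $T_{\tilde p}(C_0)\times\mathbb R_{\ge0}$ with the perpendicular ray as the $\mathbb R_{\ge0}$-factor, while $\tfrac1{t_m}q_m=\tfrac1{t_m}\eta_0(\tilde q_m)$ converges (after a further subsequence) to a unit vector $\bar v\in T_x(Y)$ of direction $v$, say $\bar v=\eta_\infty(\tilde z)$ with $\tilde z\in T_{\tilde p}(C_0)$ and $|\tilde z|=:r\ge1$ (using that $\eta_0$ is $1$-Lipschitz). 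Then, using the cone structure of $T_x(Y)=K(\Sigma_x(Y))$, the product structure of $T_{\tilde p}(C)$, and the $1$-Lipschitz property of $\eta_\infty$,
\[
  s^2+1-2s\cos\angle(\xi_+,v)=|s\xi_+,\bar v|^2=|\eta_\infty(s\tilde\xi_+),\eta_\infty(\tilde z)|^2\le|s\tilde\xi_+,\tilde z|^2=s^2+r^2
\]
for every $s>0$, so $-2s\cos\angle(\xi_+,v)\le r^2-1$; letting $s\to\infty$ gives $\cos\angle(\xi_+,v)\ge0$. (When $x\in X_2$ this inequality alternatively follows from the suspension structure of $\Sigma_x(Y)$ in Lemma~\ref{prop:preimage}(2), since then $\angle(\xi_1,v)=\angle(\xi_2,v)\ge\pi/2$ while $\angle(\xi_1,v)+\angle(\xi_2,v)=\pi$.) This proves (1).

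For (2), given $v\in\Sigma_x(X)$ and $\e>0$, I would pick $q\in X$ near $x$ and a minimal geodesic $\gamma\colon[0,\ell]\to Y$ from $x$ to $q$ with $\angle(\xi,v)<\e/2$ for $\xi:=\gamma'(0)$. If some initial arc $\gamma|_{[0,\delta]}$ lies in $X$, then $\xi$ is itself an intrinsic direction, witnessed by $\gamma|_{[0,\delta]}$. If instead $\gamma$ leaves $X$ immediately, i.e. $\gamma((0,\delta])\cap X=\emptyset$ for some $\delta>0$, then by Lemma~\ref{lem:concave} the height $t(\eta^{-1}\gamma(s))=d(\gamma(s),X)$ is concave on $(0,\delta)$ and vanishes at $0$, so its right derivative there equals $\cos\theta>0$, where $\theta=\angle(\tilde\xi,\tilde\xi_+)<\pi/2$, $\tilde\xi=(\eta^{-1}\gamma)'(0)$, and $\tilde\xi_+$ is the perpendicular direction at $\tilde\gamma(0)\in\eta_0^{-1}(x)$. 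In this situation $\xi$ cannot be perpendicular (else $\angle(\xi,v)=\angle(\xi_+,v)=\pi/2$ by (1)), and $\xi\notin\Sigma_x(X)$ (else $\eta_\infty(\tilde\xi)=\xi$, $\eta_\infty(\tilde\xi_+)=\xi_+$ and the $1$-Lipschitz property of $\eta_\infty$ give $\angle(\xi_+,\xi)\le\theta<\pi/2$, contradicting (1)); hence Lemma~\ref{lem:susp1}(1) applies and $\sigma=\pi\circ\gamma$ is a curve in $X$ from $x$ with intrinsic direction $v_\xi$ satisfying $\angle(\xi,v_\xi)=\pi/2-\angle(\xi_+,\xi)<\e/2$ (using $\angle(\xi_+,v)=\pi/2$ from (1)), so $\angle(v_\xi,v)<\e$. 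Letting $\e\to0$ yields intrinsic directions converging to $v$.

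The main difficulty will be the remaining case, where $\gamma$ repeatedly enters and leaves $X$ in every neighbourhood of $x$ — this can a priori happen because $d(\,\cdot\,,X)$ need not be globally semiconcave. There, restricting $\gamma$ to the sub-geodesics ending at the points where it returns to $X$ shows $\xi\in\Sigma_x(X)$, and the work is to push the problem back into the cases already treated; I expect this to be done either by a genericity argument (a dense set of approximating minimal geodesics do not oscillate) or by a direct analysis of $\eta^{-1}\gamma$ in $C$, and it is precisely here that part (1) together with the $1$-Lipschitz rescaled differential $\eta_\infty$ is used most essentially.
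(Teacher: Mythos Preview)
Your lower bound $\angle(\xi_+,v)\ge\pi/2$ via first variation is correct and is exactly what the paper uses. The trouble lies elsewhere.

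\textbf{Gap in the upper bound for (1).} You write ``say $\bar v=\eta_\infty(\tilde z)$ with $\tilde z\in T_{\tilde p}(C_0)$'', but this presupposes that the rescaling limit $\eta_\infty:T_{\tilde p}(C)\to T_x(Y)$ is surjective onto $T_x^*(X)$, equivalently that for $q_m\in X$ with $|x,q_m|=t_m\to0$ one can choose $\tilde q_m\in\eta_0^{-1}(q_m)$ with $|\tilde p,\tilde q_m|/t_m$ bounded. The $1$-Lipschitz property of $\eta_0$ gives only the \emph{lower} bound $|\tilde p,\tilde q_m|\ge t_m$; the upper bound is precisely the content of Lemma~\ref{lem:dist-ratio}, which is proved \emph{after} Lemma~\ref{lem:v} and in fact relies on the construction in the present proof. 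So as written your argument is circular. (Your alternative for $x\in X_2$ via the suspension structure is fine, but the $X_1$ case is the substantive one.)

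\textbf{Case C in (2).} You correctly identify this as the hard case and leave it open. The genericity idea is not obviously workable: a priori nothing prevents \emph{every} minimal geodesic from $x$ to nearby points of $X$ from oscillating.

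\textbf{What the paper does instead.} The paper sidesteps both difficulties with one construction. Given $y_i\in X$ with $\uparrow_x^{y_i}\to v$, it does \emph{not} shoot a geodesic to $y_i$. Instead it first selects a perpendicular $\gamma_{y_i}$ at $y_i$ by continuously deforming perpendiculars along a minimal geodesic $\lambda_i$ from $\gamma_+(t_0)=\eta(\tilde p,t_0)$ to $y_i$; this pins down a specific preimage $\tilde q_i\in\eta_0^{-1}(y_i)$ lying on the same sheet as $\tilde p$, with $|\tilde p,\tilde q_i|$ controlled by $L(\lambda_i)\sim |x,y_i|$. Then it takes the minimal geodesic $\gamma_i$ from $x$ to the \emph{off}-$X$ point $\gamma_{y_i}(s_i)$, $s_i=|x,y_i|$. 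Because the target is at positive height $s_i$ with a specified lift $(\tilde q_i,s_i)\in C$, the rescaling argument of Lemma~\ref{lem:susp1} runs through: the limit triangles $\triangle o_x\gamma_\infty(1)\sigma_\infty(1)$ and $\triangle o_p\tilde\gamma_\infty(1)\tilde\sigma_\infty(1)$ are congruent, which simultaneously yields $\angle(\xi_+,v)=\pi/2$ and $[\sigma_i]\to v$. The point is that by aiming at $\gamma_{y_i}(s_i)$ rather than $y_i$ you force the geodesic into the regime where the lift to $C$ is controlled, eliminating both the surjectivity issue and your Case~C.
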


\begin{proof}
For every $v\in\Sigma_x(X)$ take a sequence $y_i\in X$ with $y_i\to x$ and
$v_i:=\uparrow_x^{y_i}\to v$.  Let $\mu_i:[0, s_i]\to Y$ be a minimal geodesic 
from $x$ to $y_i$. Let $\gamma_+$ be a perpendicular to $X$ at $x$ with $\gamma_+'(0)=\xi_+$.
Let $\lambda_i$ be a minimal geodesic joining $\gamma_+(t_0)$ to 
$y_i$. Considering perpendiculars to $X$ through the points of $\lambda_i$ 
and taking the limit, 
we obtain  a perpendicular  $\gamma_{y_i}$  to $X$ at $y_i$.
%
Let $\gamma_i:[0,t_i]\to Y$ be a minimal geodesic from $x$ to $\gamma_{y_i}(s_i)$, and set 
\[
         \sigma_i(t):=\pi(\gamma_i(t)), \,\,\tilde\gamma_i=\eta^{-1}(\gamma_i), \,\,
          \tilde\sigma_i=\tilde\pi(\tilde\gamma_i).
\]
By Lemma \ref{lem:susp1}, $\sigma_i$ defines a unique direction $\hat v_i\in\Sigma_x(X)$ such that
\begin{align}
     \angle(\xi_{+},\xi_i)+\angle(\xi_i, \hat v_i)=\angle(\xi_{+}, \hat v_i)=\pi/2, \label{eq:sus_i}
\end{align}
where $\xi_i=\gamma_i'(0)$. Note that $y_i=\sigma_i(t_i)$.  

We now use an argument similar to that of Lemma \ref{lem:susp1}.
Consider the  convergence
\[
  \left(\frac{1}{t_i}Y, x\right)\to \left(T_x(Y), o_x\right),   \,\,\left(\frac{1}{t_i}C, p\right)\to\left(T_p(C), o_p\right). \,\, 
\]
Passing to a subsequence, we may assume that $\xi_i$ converge to some $\xi\in\Sigma_x(Y)\subset T_x(Y)$ .
We may also assume that 
\begin{enumerate}
 \item[(a)] $\gamma_i(t_is)$ and $\sigma_i(t_is)$ converge to geodesic $\gamma_{\infty}(s)$ and 
          a Lipschitz curve $\sigma_{\infty}(s)$ respectively;
 \item[(b)] $\tilde \gamma_i(t_is)$ and $\tilde\sigma_i(t_is)$ converge to geodesics 
          $\tilde\gamma_{\infty}(s)$ and $\tilde\sigma_{\infty}(s)$ in $T_p(C)$ respectively.
\end{enumerate}
Let $\eta_{\infty}:  (T_p(C),o_ p) \to (T_xY, o_x)$ be the  $1$-Lipschitz map  defined in Lemma 
\ref{lem:susp1} (2)  as the  limit of 
\[
     \eta_i=\eta:  \left(\frac{1}{t_i}C, p\right) \to \left(\frac{1}{t_i}Y, x\right).
\]
Note that  $\eta_{\infty}(\tilde\sigma_{\infty}(s))=\sigma_{\infty}(s)$.
Consider the geodesic triangles 
\begin{gather*}
\Delta_{o_x}:=\Delta o_x\gamma_{\infty}(1)\sigma_{\infty}(1)\subset T_x(Y), \\ 
\Delta_{o_p}:=\Delta o_p\tilde\gamma_{\infty}(1)\tilde\sigma_{\infty}(1) \subset T_p(C).
\end{gather*}
%
%
An argument similar to that in Lemma \ref{lem:susp1} implies that 
\begin{align}
   \angle o_x\gamma_{\infty}(1)\sigma_{\infty}(1)
                        = \angle o_p\tilde\gamma_{\infty}(1)\tilde\sigma_{\infty}(1),  \label{eq:equal}
\end{align}
It should be remarked that in the case of Lemma \ref{lem:susp1}, 
the geodesic $\gamma:[0,\ell]\to Y$ in the direction $\xi$ was given in the beginning, and 
we considered  the points $\gamma(t_i)$ with $t_i\to 0$.  On the other hand, in the present case, 
we have only geodesic $\gamma_i:[0,t_i]\to Y$.
Therefore  we take a point $z_i\in Y$ instead,  in such a way that 
\[
       \tilde\angle x\gamma_i(t_i) z_i >\pi -o_i, \qquad |\gamma_i(t_i) ,z_i|=t_i,
\] 
where $\lim o_i = 0$.
Then with almost parallel argument,  we obtain \eqref{eq:equal} 
and that $\Delta_{o_x}$ and $\Delta_{o_p}$ are congruent to each other as Euclidean flat triangles.
In particular we conclude that 
\begin{align}
          |o_x, \sigma_{\infty}(1)| =  |o_p, \tilde\sigma_{\infty}(1)|.  \label{eq:base-eq}
\end{align}
Since $L(\sigma_{\infty})\le L(\tilde\sigma_{\infty})$, this implies that 
$\sigma_{\infty}$ is a minimal geodesic in the direction $v$.
As in Lemma \ref{lem:susp1}, \eqref{eq:equal} also implies that  
\begin{align}
     \angle(\xi_{+},\xi)+\angle(\xi, v)=\angle(\xi_{+}, v)=\pi/2, \label{eq:sus_*}
\end{align}
where $\xi = \lim \xi_i$. This proves (1).
It follows from \eqref{eq:sus_i},  \eqref{eq:sus_*} that 
\begin{align}
              v = \lim \hat v_i.\,  \label{eq:v}
\end{align}
which shows (2).

Furthermore, it follows from
\begin{equation*}
   \begin{cases} 
   &\frac{L(\tilde\sigma_i)}{t_i}\ge \frac{L(\sigma_i)}{t_i}\ge \frac{L(\mu_i)}{t_i}, \\
   &\lim  \frac{L(\tilde\sigma_i)}{t_i} =|o_p, \tilde\sigma_{\infty}(1)|,  \,\,\, 
                             \lim  \frac{L(\mu_i)}{t_i} =|o_x, \sigma_{\infty}(1)|, 
     \end{cases}
\end{equation*}
that
\begin{align}
         \lim \frac{L(\sigma_i)}{t_i} = L(\sigma_{\infty}).
\end{align}
\end{proof}

Let $\Sigma_x^0(X)$ denote the set of intrinsic directions $[\sigma]\in\Sigma_x(Y)$ of Lipschitz curves $\sigma:[0,\e)\to X$
starting from $x$ such that the direction $[\sigma]$ is uniquely determined.
From Lemma \ref{lem:v}, we immediately have the following.

\begin{prop} \label{prop:closure}
 $\Sigma_x(X)$ coincides with the closure of $\Sigma_x^0(X)$ in $\Sigma_x(Y)$.
\end{prop}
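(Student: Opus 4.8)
The plan is to deduce Proposition \ref{prop:closure} directly from Lemma \ref{lem:v}, which already contains the essential analytic content; what remains is a purely formal unwinding of definitions together with a closedness observation. Recall that $\Sigma_x^0(X)$ is by definition a subset of $\Sigma_x(Y)$, consisting of intrinsic directions of Lipschitz curves into $X$ issuing from $x$; hence its closure $\overline{\Sigma_x^0(X)}$ is a well-defined closed subset of the compact space $\Sigma_x(Y)$. There are two inclusions to establish: $\overline{\Sigma_x^0(X)}\subseteq\Sigma_x(X)$ and $\Sigma_x(X)\subseteq\overline{\Sigma_x^0(X)}$.

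First I would prove $\Sigma_x^0(X)\subseteq\Sigma_x(X)$, which then gives the first inclusion after taking closures since $\Sigma_x(X)$ is closed in $\Sigma_x(Y)$ (it is defined as a set of limits of directions $\uparrow_x^{p_i}$ with $p_i\in X$, and one checks this set is closed by a diagonal argument). So let $[\sigma]\in\Sigma_x^0(X)$ be the intrinsic direction of a Lipschitz curve $\sigma:[0,\e)\to X$ with $\sigma(0)=x$. By the definition of an intrinsic direction, $\uparrow_x^{\sigma(t_i)}\to[\sigma]$ for every sequence $t_i\to 0$; since $\sigma(t_i)\in X$ and $|x,\sigma(t_i)|\to 0$, the direction $[\sigma]$ is precisely of the form required in the definition of $\Sigma_x(X)$ (the space of directions of the closed subset $X$ at $x$). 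Hence $[\sigma]\in\Sigma_x(X)$, and taking closures yields $\overline{\Sigma_x^0(X)}\subseteq\Sigma_x(X)$.

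For the reverse inclusion, I would invoke Lemma \ref{lem:v}(2) verbatim: given any $v\in\Sigma_x(X)$, that lemma produces intrinsic directions $[\sigma_i]\in\Sigma_x(X)$ with $[\sigma_i]\to v$. One needs only to note that the directions $[\sigma_i]$ produced there are exactly elements of $\Sigma_x^0(X)$ — indeed they arise (via Lemma \ref{lem:susp1}) as the uniquely determined directions of the Lipschitz curves $\sigma_i=\pi(\gamma_i)$ lying in $X$ and starting at $x$, which is the defining property of $\Sigma_x^0(X)$. Therefore $v\in\overline{\Sigma_x^0(X)}$, giving $\Sigma_x(X)\subseteq\overline{\Sigma_x^0(X)}$ and hence equality.

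The only genuine subtlety — and the step I expect to require the most care — is the bookkeeping that the curves furnished by Lemma \ref{lem:v} really do have \emph{uniquely determined} directions, i.e.\ that $[\sigma_i]\in\Sigma_x^0(X)$ and not merely $[\sigma_i]\in\Sigma_x(X)$. This is where Lemma \ref{lem:susp1}(1) is used: it asserts precisely that the projected curve $\sigma=\pi(\gamma)$ of a geodesic $\gamma$ in $Y$ with $\gamma'(0)\notin\Sigma_x(X)$ determines a \emph{unique} direction in $\Sigma_x(X)$, via the rigidity identity $\angle(\xi_+,\xi)+\angle(\xi,v)=\angle(\xi_+,v)=\pi/2$. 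So the argument reduces to verifying that the $\gamma_i$ constructed in the proof of Lemma \ref{lem:v} satisfy the hypotheses of Lemma \ref{lem:susp1}, which is already implicit in that proof. Once this is observed, the proposition follows immediately, so the write-up should be short: state the two inclusions, cite Lemma \ref{lem:v} and the definition of $\Sigma_x(X)$ for the respective directions, and conclude.
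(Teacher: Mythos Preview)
Your proposal is correct and takes essentially the same approach as the paper, which simply states that the proposition follows immediately from Lemma~\ref{lem:v}. You unpack more detail than the paper does, but note that your ``genuine subtlety'' is already absorbed by the terminology: Lemma~\ref{lem:v}(2) explicitly calls the $[\sigma_i]$ \emph{intrinsic directions}, which by the definition given just before $\Sigma_x^0(X)$ means precisely that they lie in $\Sigma_x^0(X)$, so no separate verification via Lemma~\ref{lem:susp1} is needed.
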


The space of direction $\Sigma_x(X)$ was originally defined in an extrinsic way (see Subsection\ref{ssec:Alex}).
Proposition \ref{prop:closure} shows that it coincides with the one defined in an intrinsic way.


%

For $x\in X_1$ (resp.  $x\in X_2$),   let   $\xi_{+}\in \Sigma_x(Y)$ (resp.  $\xi_{\pm}\in\Sigma_x(Y)$) be 
the unique  (resp. the two)  direction (resp directions) of the perpendicular  (resp. perpendiculars) to $X$ at $x$. 

\begin{cor} \label{lem:susp2}
For every $x\in X$, we have the following:
 \begin{enumerate}
  \item If $x\in X_1$, then 
    \[
              \Sigma_x(X) = \{ v\in\Sigma_x(Y)\,|\,\angle(\xi_{+}, v)=\pi/2\}.
    \]
  \item If $x\in X_2$, then $\Sigma_x(Y)$ is isometric to the spherical suspension $\{\xi_{\pm} \}*\Sigma_x(X)$
  \end{enumerate}
In either case, $\Sigma_x(X)$ is an Alexandrov space with curvature $\ge 1$ of dimension equal to  $\dim Y-2$.
\end{cor}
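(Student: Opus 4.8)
The idea is to read off these spaces of directions from two facts already at hand: the half-suspension structure of $\Sigma_p(C)$ in Lemma~\ref{lem:susp0}, transported to $Y$ by the locally isometric bijection $\eta$ of Lemma~\ref{lem:loc-isom}, and the triangle equality at gluing points furnished by Corollary~\ref{cor:susp1} together with Lemma~\ref{lem:v}. First I would pin down the perpendicular directions at $x$. If $\gamma$ is a perpendicular to $X$ at $x$, then $\gamma$ meets $X$ only at $x$, so $\eta^{-1}(\gamma)$ is defined on $\gamma\setminus\{x\}$ by Lemma~\ref{lem:loc-isom}, and by Lemma~\ref{lem:dist-bdry} it realizes the distance to $C_0$ from a point $p\in\eta_0^{-1}(x)$; since $\Sigma_p(C)=\{\tilde\gamma_+'(0)\}\ast\Sigma_p(C_0)$ (Lemma~\ref{lem:susp0}(1)) has a unique direction at distance $\pi/2$ from all of $\Sigma_p(C_0)$, the perpendicular to $C_0$ at $p$ is unique, namely $t\mapsto(p,t)$. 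Hence the perpendicular directions to $X$ at $x$ form the one-point set $\{\xi_+\}$ if $x\in X_1$, and a two-point set if $x\in X_2$; in the latter case its members are $\uparrow_x^{\eta(p_1,t_0)}$ and $\uparrow_x^{\eta(p_2,t_0)}$, which are precisely the two suspension vertices of $\Sigma_x(Y)$ produced by Lemma~\ref{prop:preimage}(2), so $\{\xi_1,\xi_2\}=\{\xi_+,\xi_-\}$.

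Next I would identify $\Sigma_x(X)$ with $L:=\{v\in\Sigma_x(Y):\angle(\xi_+,v)=\pi/2\}$. The inclusion $\Sigma_x(X)\subseteq L$ is Lemma~\ref{lem:v}(1). Conversely let $w\in L$. Since $\angle(\xi_+,\xi_+)=0$ we have $w\ne\xi_+$, and when $x\in X_2$ we also have $w\ne\xi_\pm$, because by Lemma~\ref{prop:preimage}(2) the $\xi_\pm$ are the vertices of the suspension $\Sigma_x(Y)$ and hence lie at distance $\pi$ from one another, so neither is on the equator $L$; in either case $w$ is not a perpendicular direction by the first step. If moreover $w\notin\Sigma_x(X)$, then Corollary~\ref{cor:susp1} applies to $w$ and yields $v\in\Sigma_x(X)$ with $\angle(\xi_+,w)+\angle(w,v)=\pi/2$; since $\angle(\xi_+,w)=\pi/2$ this forces $\angle(w,v)=0$, i.e. $w=v\in\Sigma_x(X)$, a contradiction. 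Hence $\Sigma_x(X)=L$. For $x\in X_1$ this is assertion~(1); for $x\in X_2$, combined with Lemma~\ref{prop:preimage}(2) it identifies the equator of the spherical suspension $\Sigma_x(Y)$ with $\Sigma_x(X)$, which is assertion~(2).

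For the last statement: when $x\in X_2$, everything follows at once from $\Sigma_x(Y)=\{\xi_\pm\}\ast\Sigma_x(X)$ together with the identification above, since a spherical suspension has curvature $\ge1$ exactly when its base does, and removing the two vertices exhibits it as $\Sigma_x(X)\times(0,\pi)$, so $\dim\Sigma_x(X)=\dim Y-2$. When $x\in X_1$: the uniqueness of $v$ in Corollary~\ref{cor:susp1} shows that the minimal geodesics from $\xi_+$ to the points of $\Sigma_x(X)$, each of length $\pi/2$, are pairwise disjoint off $\xi_+$, and by their surjectivity they sweep out $\Sigma_x(Y)$; hence deleting $\xi_+$ identifies $\Sigma_x(Y)$ homeomorphically with $\Sigma_x(X)\times(0,\pi/2]$, so $\dim\Sigma_x(X)=\dim\Sigma_x(Y)-1=\dim Y-2$. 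For the curvature bound when $x\in X_1$, I would instead exhibit $\Sigma_x(X)$ as a metric quotient of the Alexandrov space $\Sigma_p(C_0)$ — which has curvature $\ge1$ and, by Lemmas~\ref{lem:loc-isom} and~\ref{lem:susp0}, dimension $\dim Y-2$ — by a finite group of isometries, using the differential of $\eta_0$ (the restriction of $\eta_\infty$ to $T_p(C_0)$ in Lemma~\ref{lem:susp1}(2)) together with the surjectivity of Lemma~\ref{lem:v}; a finite isometric quotient of an Alexandrov space with curvature $\ge1$ is again one, of the same dimension.

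The step I expect to be the main obstacle is this last point, the curvature assertion for $x\in X_1$: one must analyze the local behaviour of the gluing map $\eta_0$ near $C_0^1$ finely enough to see that, near $p$, it is the quotient by a finite isometric action, so that $d\eta_0$ presents $\Sigma_x(X)$ as the corresponding quotient of $\Sigma_p(C_0)$ (note that $\Sigma_x(Y)$ need \emph{not} be a half-suspension over $\Sigma_x(X)$, as Example~\ref{ex:D} already shows). Corollary~\ref{cor:susp1} only records angles relative to the single direction $\xi_+$, so the off-axis control required here goes beyond it; an alternative route is to double $\Sigma_x(Y)$ along $\Sigma_x(X)$ and invoke the maximal-diameter rigidity for Alexandrov spaces with curvature $\ge1$, but showing that the doubled space still has curvature $\ge1$ is of comparable difficulty.
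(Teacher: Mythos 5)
Your derivation of assertions (1) and (2) is correct and is essentially the paper's: the inclusion $\Sigma_x(X)\subseteq\{v:\angle(\xi_+,v)=\pi/2\}$ comes from Lemma \ref{lem:v}(1), the reverse inclusion by feeding a hypothetical $w$ into Corollary \ref{cor:susp1} and forcing $\angle(w,v)=0$, and the suspension statement is then read off from Lemma \ref{prop:preimage}(2). The dimension count in both cases is also acceptable.

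The genuine gap is exactly where you flag it: the curvature bound for $\Sigma_x(X)$ when $x\in X_1$. The route you propose --- realizing $\eta_0$ near $p\in C_0^1$ as a quotient by a finite isometric action so that $\Sigma_x(X)$ becomes $\Sigma_p(C_0)/f_*$ --- is not available at this stage: that identification is Proposition \ref{prop:direc}, and it rests on the continuity of the involution $f$ (Lemma \ref{lem:contin}), whose proof in Case 2 invokes the present corollary; completing your argument that way would be circular, and you give no independent construction of the finite action. The paper closes the gap with a one-line observation that makes the whole issue disappear: in any Alexandrov space $\Sigma$ with curvature $\ge 1$ and for any $\xi\in\Sigma$, the set $\{v\in\Sigma:|v,\xi|\ge\pi/2\}$ is convex (spherical triangle comparison; a closed hemisphere is convex in $\mathbb S^k$). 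Your own use of Corollary \ref{cor:susp1} already shows that every direction of $\Sigma_x(Y)$ makes angle $\le\pi/2$ with $\xi_+$, so the set you call $L$ equals $\{v:\angle(\xi_+,v)\ge\pi/2\}$; hence $\Sigma_x(X)=L$ is a convex subset of the curvature $\ge 1$ space $\Sigma_x(Y)$ and is therefore itself an Alexandrov space with curvature $\ge 1$ --- uniformly for $x\in X_1$ and $x\in X_2$, with no analysis of the gluing map, no doubling, and no quotient structure needed.
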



\begin{proof}
$(1)$ is a direct consequence of Corollary \ref{cor:susp1} and  Lemma\ref{lem:v}.
$(2)$  is a direct consequence of Lemma \ref{prop:preimage}, Corollary \ref{cor:susp1} and  Lemma\ref{lem:v}.
In an Alexandrov space $\Sigma$ with curvature $\ge 1$, for any $\xi\in\Sigma$, the set 
$\{ v\in\Sigma\,|\, |v,\xi|\ge \pi/2\}$ is convex, which implies the last conclusion. 
%
%
%
\end{proof}

Our next purpose is to show the following.

\begin{prop} \label{prop:tang-cone}
Under the convergence  $\lim_{\delta\to 0}(\frac{1}{\delta} Y, x) =(T_x(Y),o_x)$,
$
 (\frac{1}{\delta}X, x)$ converges to the Euclidean cone $K(\Sigma_x(X)),o_x)$
as $\delta\to 0$.
\end{prop}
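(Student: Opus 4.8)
The plan is to realize, inside the tangent cone $T_x(Y)=K(\Sigma_x(Y))$, every subsequential rescaling limit of $X$ at $x$ as the Euclidean cone $K(\Sigma_x(X))$, and to deduce the asserted convergence from this. Concretely, I would fix an arbitrary sequence $\delta_i\to 0$ and, exactly as in Sublemma \ref{slem:lim-eta}, pass to a subsequence along which $(\frac{1}{\delta_i}X,x)$ converges, as a closed subset, to a closed set $T_x^*(X)\ni o_x$ in $T_x(Y)$. It then suffices to prove $T_x^*(X)=K(\Sigma_x(X))$ as metric spaces, since this will hold for every subsequential limit and hence the limit of $(\frac{1}{\delta}X,x)$ exists and equals $(K(\Sigma_x(X)),o_x)$.

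For the inclusion $T_x^*(X)\subseteq K(\Sigma_x(X))$, I would take $w\in T_x^*(X)$ with $r:=|o_x,w|>0$, choose $x_i\in X$ with $\frac{1}{\delta_i}x_i\to w$ (so $|x,x_i|/\delta_i\to r$, in particular $x_i\to x$ in $Y$), and pass to a further subsequence so that $\uparrow_x^{x_i}$ converges to some $v\in\Sigma_x(Y)$; by the extrinsic definition of the space of directions of a closed subset (Subsection \ref{ssec:Alex}), $v\in\Sigma_x(X)$. Since $T_x(Y)$ is the rescaling limit $\lim_{s\to 0}(\frac{1}{s}Y,x)$, the point $\frac{1}{\delta_i}x_i$ converges to $rv\in K(\Sigma_x(Y))$, so $w=rv\in K(\Sigma_x(X))$. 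For the reverse inclusion I would appeal to Proposition \ref{prop:closure}: since $T_x^*(X)$ is closed and $\Sigma_x(X)$ is the closure of the set $\Sigma_x^0(X)$ of intrinsic directions, it is enough to put $rv$ into $T_x^*(X)$ for all $r\ge 0$ and all $v\in\Sigma_x^0(X)$ (the case $r=0$ being trivial). Choosing a Lipschitz curve $\sigma:[0,\e)\to X$ with $\sigma(0)=x$ that represents $v$ (so $\uparrow_x^{\sigma(\tau)}\to v$ as $\tau\to 0$), the function $\tau\mapsto|x,\sigma(\tau)|$ is continuous, tends to $0$, and is positive for small $\tau>0$, so for each large $i$ one finds $\tau_i>0$ with $|x,\sigma(\tau_i)|=r\delta_i$, and $\tau_i\to 0$; then $\uparrow_x^{\sigma(\tau_i)}\to v$ and $|x,\sigma(\tau_i)|/\delta_i=r$, whence $\frac{1}{\delta_i}\sigma(\tau_i)\to rv$ in $T_x(Y)$ as before, and $\sigma(\tau_i)\in X$ gives $rv\in T_x^*(X)$. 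This yields $T_x^*(X)=K(\Sigma_x(X))$ as subsets of $T_x(Y)$.

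The remaining point, upgrading this set equality to an isometry, is where I expect the only real subtlety to lie: a priori $T_x^*(X)$ carries merely the metric induced from $T_x(Y)$, and since $X$ is not known to be an Alexandrov space one cannot simply invoke the existence of its tangent cone. Here I would use the description of $\Sigma_x(X)$ inside $\Sigma_x(Y)$ from Corollary \ref{lem:susp2} (resting on Corollary \ref{cor:susp1} and Lemma \ref{lem:susp1}): for $x\in X_2$, $\Sigma_x(Y)$ is the spherical suspension $\{\xi_\pm\}*\Sigma_x(X)$, so $T_x(Y)=K(\Sigma_x(Y))$ splits isometrically as $\mathbb{R}\times K(\Sigma_x(X))$ with $T_x^*(X)$ the second factor; for $x\in X_1$, $\Sigma_x(X)=\{v\mid\angle(\xi_+,v)=\pi/2\}$ is a convex subset of $\Sigma_x(Y)$ — which can also be read off the identification $T_x(Y)\setminus T_x^*(X)\cong T_p(C)\setminus T_p(C_0)$ of Sublemma \ref{slem:lim-eta} together with the half-suspension structure of $\Sigma_p(C)$ from Lemma \ref{lem:susp0} — so $K(\Sigma_x(X))$ is convex in $T_x(Y)$. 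In either case minimal geodesics of $T_x(Y)$ joining points of $K(\Sigma_x(X))$ remain in $K(\Sigma_x(X))$, so the metric induced on $T_x^*(X)$ is precisely the Euclidean cone metric over $\Sigma_x(X)$. Thus the real work is in this metric identification — that is, in the earlier analysis of the spaces of directions at gluing points — and in the use of Proposition \ref{prop:closure}, which is exactly what lets curves lying inside $X$ sweep out the whole cone $K(\Sigma_x(X))$; the two set inclusions themselves are short.
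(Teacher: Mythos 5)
Your proof is correct, but it takes a genuinely different route from the paper's. The paper argues constructively: it fixes an $\e$-dense family of intrinsic directions $[\sigma_i]\in\Sigma_x^0(X)$ realized as projections $\sigma_i=\pi(\gamma_i)$ of geodesics, builds the finite nets $N_\infty^{\e}\subset K(\Sigma_x(X))$ and $N^{\e}\subset\frac{1}{\delta}X$ out of the points $\frac{kR}{K}[\sigma_i]$ and $\sigma_i(\delta kR/K)$, and checks by hand, using the quantitative estimates of Lemma \ref{lem:deviation} (on $\max\rho$, on $\angle(\mu'(0),[\sigma])$ and on $L(\sigma)/L(\mu)$), that the correspondence $v_{i,k}\mapsto x_{i,k}$ is a $\tau(R|\delta)$-approximation. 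You instead identify every subsequential Hausdorff limit $T_x^*(X)\subset T_x(Y)$ set-theoretically with $K(\Sigma_x(X))$, getting one inclusion from the extrinsic definition of $\Sigma_x(X)$ and the other from Proposition \ref{prop:closure}; this is shorter and cleaner, and both arguments ultimately rest on Proposition \ref{prop:closure}. Two remarks. First, your argument silently uses the fact that under the blow-up convergence $\left(\frac{1}{\delta_i}Y,x\right)\to \left(K(\Sigma_x(Y)),o_x\right)$, a sequence $x_i$ with $|x,x_i|/\delta_i\to r$ and $\uparrow_x^{x_i}\to v$ converges to $rv$; this is the ``logarithm'' realization of the isometry $T_x(Y)\cong K(\Sigma_x(Y))$ (it follows from monotonicity of comparison angles together with density of geodesic directions), and it is precisely the information that the paper's Lemma \ref{lem:deviation} re-derives quantitatively in the relative setting, so you should state it explicitly. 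Second, the ``metric upgrade'' you flag as the main subtlety is simpler than you make it: since $\Sigma_x(X)$ carries the angle metric induced from $\Sigma_x(Y)$, the cone-distance formula already shows that $K(\Sigma_x(X))$ sits isometrically inside $K(\Sigma_x(Y))$, so the set identification immediately gives the metric one; your convexity argument (via Corollary \ref{lem:susp2} and Proposition \ref{prop:direc}) is correct but would only be needed to show that the induced metric on $T_x^*(X)$ is intrinsic, which the proposition does not assert.
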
 

We set 
\[
       T_x(X) = K(\Sigma_x(X))
\]
and call it the {\it tangent cone of $X$ at $x$}.

For the proof of Proposition \ref{prop:tang-cone}, we need three lemmas.


\begin{lem} \label{lem:tangX}
For every minimal geodesic $\gamma:[0,\ell]\to Y$ joining any $x\in X$ and $y\in Y$, 
the curve $\sigma(t):=\pi(\gamma(t))$ has a unique direction at $t=0$, and hence defines 
an intrinsic direction $[\sigma]\in\Sigma_x(X)$. 
\end{lem}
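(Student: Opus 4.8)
The plan is to reduce the statement to the case already handled in Lemma \ref{lem:susp1}, namely when the initial direction $\gamma'(0)$ lies in $\Sigma_x(Y)\setminus\Sigma_x(X)$ and is not perpendicular. First I would dispose of the trivial cases: if $\gamma$ runs entirely inside $X$, then $\sigma=\pi\circ\gamma=\gamma$ and the direction is $\gamma'(0)\in\Sigma_x(X)$, which is uniquely determined since $\gamma$ is a geodesic; if $\gamma'(0)$ is a perpendicular direction $\xi_+$, then $\sigma$ collapses to the constant curve at $x$ near $t=0$ (by Lemma \ref{lem:dist-bdry} and the warped structure, $\pi(\gamma(t))$ returns to $x$ to first order), so there is nothing to prove, or more precisely $[\sigma]$ is undefined/irrelevant and one restricts to the substantive case. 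The remaining and only real case is $\gamma'(0)=\xi\in\Sigma_x(Y)\setminus\Sigma_x(X)$, not perpendicular.

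In that case, I would first observe that $\gamma$ restricted to a small initial interval $[0,\e]$ does not return to $X$: since $\gamma$ is minimal and $\xi\notin\Sigma_x(X)$ is not perpendicular, by Corollary \ref{cor:susp1} there is a genuine angle between $\xi$ and the perpendicular $\xi_+$, and Lemma \ref{lem:dist-bdry} (distance to $X$ grows linearly along perpendiculars) together with concavity of $\rho(t)=|X,\gamma(t)|$ (the analogue of Lemma \ref{lem:concave}, coming from the warped-product second fundamental form computation) forces $\rho(t)>0$ for $t\in(0,\e]$. Hence $\tilde\gamma=\eta^{-1}(\gamma)$ is a well-defined minimal geodesic in $C\setminus C_0$ on that interval, and I am exactly in the setting of Lemma \ref{lem:susp1}. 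That lemma then directly gives that $\sigma=\pi(\gamma)$ defines a unique $v=\sigma'(0)\in K(\Sigma_x(X))$, i.e.\ $\uparrow_x^{\sigma(t_i)}$ converges to a single direction $[\sigma]\in\Sigma_x(X)$ independent of the sequence $t_i\to 0$; this is precisely the conclusion, and membership $[\sigma]\in\Sigma_x(X)$ follows since $\sigma$ is a curve in $X$ emanating from $x$.

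The step I expect to be the main obstacle is the passage from ``$\gamma$ is minimal in $Y$'' to the usable fact that $\tilde\gamma$ stays off $C_0$ on a definite initial interval and that the rescaling/differential apparatus of Lemma \ref{lem:susp1} applies without the a priori hypothesis (present in that lemma) that a geodesic in the prescribed direction exists. Here the point is that $\gamma$ itself \emph{is} such a geodesic, so the hypothesis of Lemma \ref{lem:susp1} is automatically met once I know $\gamma'(0)\notin\Sigma_x(X)$ is non-perpendicular; the only thing to check carefully is the dichotomy at $t=0$ — either $\gamma'(0)\in\Sigma_x(X)$, or it is perpendicular, or it is covered by Lemma \ref{lem:susp1} — using Corollary \ref{lem:susp2} (the suspension description of $\Sigma_x(Y)$ over $\Sigma_x(X)$) to see these exhaust all possibilities and to control the borderline behavior when $\gamma'(0)$ is close to $\Sigma_x(X)$ but not in it. Once the case analysis is set up cleanly, the lemma is essentially a corollary of Lemma \ref{lem:susp1} and Proposition \ref{prop:closure}.
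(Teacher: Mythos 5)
There is a genuine gap: your case analysis is incomplete precisely in the case the paper's proof is devoted to. You split into (i) $\gamma$ runs entirely inside $X$, (ii) $\gamma'(0)$ perpendicular, and (iii) $\gamma'(0)\in\Sigma_x(Y)\setminus\Sigma_x(X)$, not perpendicular. But the exhaustive trichotomy on directions, coming from Corollary~\ref{cor:susp1}/Corollary~\ref{lem:susp2}, is: (a) $\gamma'(0)$ perpendicular, (b) $\gamma'(0)\in\Sigma_x(Y)\setminus\Sigma_x(X)$ not perpendicular, and (c) $\gamma'(0)\in\Sigma_x(X)$. Your case (i) is only a sub-case of (c); a minimal $Y$-geodesic may well start tangent to $X$ (i.e.\ with $\gamma'(0)\in\Sigma_x(X)$) and immediately leave $X$, since $\Sigma_x(X)$ is defined as a closure of limits of directions $\uparrow_x^{p_i}$ with $p_i\in X$ and there is no a priori reason a $Y$-geodesic pointing in such a limiting direction should lie in $X$. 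You neither prove the implication ``$\gamma'(0)\in\Sigma_x(X)\Rightarrow\gamma\subset X$ near $0$'' nor give an alternative argument, and you cannot invoke Lemma~\ref{lem:susp1} here because its hypotheses explicitly exclude $\xi\in K(\Sigma_x(X))$.

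Case (c) is exactly where the paper's proof does its work, and the argument is not a formal reduction: one fixes two sequences $s_j,t_j\to0$ with limits $\eta=\lim\uparrow_x^{\sigma(s_j)}$ and $\zeta=\lim\uparrow_x^{\sigma(t_j)}$, takes approximating directions $\xi_i\in\Sigma_x(Y)\setminus\Sigma_x(X)$ with $\xi_i\to\xi$ and geodesics $\gamma_i$ in those directions, applies Lemma~\ref{lem:susp1} to each $\gamma_i$ to get well-defined directions $[\sigma_i]$ that converge to $\xi$, and then uses the quantitative estimate $|\gamma_i(t),\gamma(t)|<o_i t$ together with the $1$-Lipschitz property of $\pi$ and the norm computation \eqref{eqrtatio-lim} to force $\eta=\zeta=\xi$. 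None of this is present in your sketch, and the appeal to Proposition~\ref{prop:closure} does not substitute for it. (Cases (a) and (b) you handle correctly and essentially as the paper does, and the observation that concavity of $\rho$ keeps $\gamma$ off $X$ initially in case (b) is sound, though one can also get it directly from first variation.)
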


\begin{proof}
 By Lemma \ref{lem:susp1}, it suffices to consider only the case $\xi:=\gamma'(0)\in\Sigma_x(X)$.
Suppose that for sequences $s_j\to 0$ and $t_j\to 0$  we have limits: 
\[
    \eta := \lim_{j\to\infty} \uparrow_x^{\sigma(s_j)},  \quad  \zeta := \lim_{j\to\infty} \uparrow_x^{\sigma(t_j)}.
\]
Take $\xi_i\in\Sigma_x(Y)\setminus \Sigma_x(X)$ such that $\xi_i\to\xi$ and geodesic $\gamma_i$ in the direction $\xi_i$
is defined. Set $\sigma_i=\pi\circ\gamma_i$.
By Lemma \ref{lem:susp1}, $\sigma_i$ defines an intrinsic direction $[\sigma_i]\in\Sigma_x(X)$, 
and passing to a subsequence we may assume that 
\[
    \angle(\xi_+,\xi_i) + \angle(\xi_i, [\sigma_i]) = \angle(\xi, [\sigma_i]) = \pi/2,
\]
for some perpendicular direction $\xi_+$ at $x$. It follows that $\angle(\xi_i, [\sigma_i]) \to 0$
and $[\sigma_i]\to \xi$.
From　$\xi_i\to \xi$, we have  
\[
     |\gamma_i(t), \gamma(t)|<o_i t,
\]
where $\lim o_i = 0$. Since $\pi$ is $1$-Lipschitz, it follows that  
\begin{align}
         |\sigma_i(t), \sigma(t)| <o_i t,  \label{eq:sigma(i)}
\end{align}
which implies that 
\begin{equation}
      |\sigma_i(s_j), \sigma(s_j)|<o_i s_j,  \quad  |\sigma_i(t_j), \sigma(t_j)|<o_i t_j.  \label{eq:sigma(i2)}
\end{equation}
Passing to a subsequence, we may assume that there are limits:
\begin{align*}
   &\alpha=\lim_{j\to\infty} \frac{|x, \sigma(s_j)|}{s_j}, \quad  \alpha_i=\lim_{j\to\infty} \frac{|x, \sigma_i(s_j)|}{s_j},  \\
   &\beta=\lim_{j\to\infty} \frac{|x, \sigma(t_j)|}{t_j}, \quad  \beta_i=\lim_{j\to\infty} \frac{|x, \sigma_i(t_j)|}{t_j}.
\end{align*}
\eqref{eq:sigma(i)} implies $\alpha_i\to \alpha$ and $\beta_i\to\beta$. 
On the other hand, since $\xi\in\Sigma_x(X)$,  \eqref{eqrtatio-lim}  
shows $\alpha_j\to 1$ and $\beta_j\to 1$, Thus we have $\alpha=\beta=1$.
Since \eqref{eq:sigma(i2)} implies 
\begin{equation}
      |\alpha_i[\sigma_i], \alpha\eta|\le o_i, \quad      |\beta_i[\sigma_i], \beta\zeta|\le o_i,    \label{eq:alpha}
\end{equation}
we conclude that 
\[
           |[\sigma_i], \eta|\le o_i, \quad      |[\sigma_i], \zeta|\le o_i,  
\]
and hence the uniqueness $\eta=\zeta=\xi$.
This completes the proof.
\end{proof}

\begin{lem} \label{lem:deviation}
For every $x, y\in X$ and every minimal geodesic 
$\mu:[0,\ell]\to Y$ joining them, let $\sigma=\pi\circ\mu$
and set $\rho(t)=|\mu(t), X|$. Then we have 
\begin{enumerate}
 \item $\max \rho\le O(|x,y|^2);$
 \item $\angle(\mu'0), [\sigma])\le O(|x,y|);$
 \item$\displaystyle {\left| \frac{L(\sigma)}{L(\mu)} - 1\right|< O(|x,y|^2)}$.
\end{enumerate}
\end{lem}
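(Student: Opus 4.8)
The plan is to bootstrap from the warped-product side, where everything is computable, through the already-established isometry $\eta:C\setminus C_0\to Y\setminus X$ between length metrics, and to handle the endpoints, which lie on $X$, by a limiting argument using sequences $x_j,y_j\in Y\setminus X$ approaching $x,y$. Concretely, first I would treat the ``interior'' case: suppose $\mu:[0,\ell]\to Y$ is a minimal geodesic whose interior misses $X$, so that $\tilde\mu:=\eta^{-1}(\mu)$ is a minimal geodesic in $C$ joining $\tilde x=\eta^{-1}(x)$, $\tilde y=\eta^{-1}(y)$ with $L(\mu)=L(\tilde\mu)$. Writing $\tilde\mu(s)=(\tilde\sigma(s),\nu(s))\in C_0\times_\phi[0,t_0]$, the function $\nu(s)=|\tilde\mu(s),C_0|=\rho(s)$ (using Lemma \ref{lem:dist-bdry}) is concave by Lemma \ref{lem:concave}, with $\nu''\ge -c$ for a uniform $c$; since $\nu$ vanishes at both endpoints, the mean-value argument already carried out in the proof of Lemma \ref{lem:ext/int} (see \eqref{eq:level-deviat}, \eqref{eq:deviat3}) gives $\max\rho=\max\nu\le c\,|\tilde x,\tilde y|^2\le c\,O(|x,y|^2)$, which is (1). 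For (3), from $L_\phi$ we have
\[
   L(\mu)=L(\tilde\mu)=\int_0^\ell\sqrt{\phi^2(\nu)|\dot{\tilde\sigma}|^2+|\dot\nu|^2}\,ds,\qquad
   L(\sigma)=L(\tilde\pi\circ\tilde\mu)\ \text{is estimated by the }\phi\text{-projection},
\]
so on one hand $L(\sigma)\le L(\mu)$ (the projection $\tilde\pi$ is $1$-Lipschitz onto $C_0$ composed with the isometry to $X$, up to the warping factor $\phi\le 1$), and on the other hand $L(\mu)\le \dfrac{\phi(0)}{\phi(\max\rho)}\,L(\sigma)+(\text{an error from }|\dot\nu|)$. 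Since $\phi(0)=1$ and $|\phi(0)/\phi(\max\rho)-1|\le O(\max\rho)\le O(|x,y|^2)$ by the bound on $\phi'$, and since $\int|\dot\nu|^2\le (\max\rho)\cdot\mathrm{Lip}(\nu)\cdot\ell$ contributes at the same quadratic order (because $\nu$ rises and falls over a length comparable to $|x,y|$ while staying $O(|x,y|^2)$, so $|\dot\nu|=O(|x,y|)$ on average), both inequalities combine to give $|L(\sigma)/L(\mu)-1|<O(|x,y|^2)$.

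Next, (2): the direction $[\sigma]=\sigma'(0)\in\Sigma_x(X)$ exists and is unique by Lemma \ref{lem:tangX}. To bound $\angle(\mu'(0),[\sigma])$, I would pass to the rescaled limit $(\tfrac1t Y,x)\to(T_x(Y),o_x)$ along $t\to 0$ and use Sublemma \ref{slem:lim-eta} together with the congruence of the flat triangles $\Delta o_x\,\mu_\infty(1)\,\sigma_\infty(1)$ and $\Delta o_p\,\tilde\mu_\infty(1)\,\tilde\sigma_\infty(1)$ established in the proof of Lemma \ref{lem:susp1} (equations \eqref{eq:flat1}, \eqref{eq:flat2}). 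In that Euclidean picture the angle between $\mu_\infty'(0)$ and $\sigma_\infty'(0)$ equals $\tilde\theta=\angle(\tilde\mu'(0),\tilde v)$, which by the warped-product splitting Lemma \ref{lem:susp0} satisfies $\cos\tilde\theta=|o_p,\tilde\sigma_\infty(1)|/\ell_\infty$; but part (1) forces $\tilde\mu$ to deviate from the level set $C_0$ by at most $O(|x,y|^2)$ over a parameter length $\asymp|x,y|$, so $\sin\tilde\theta\le O(|x,y|)$, i.e. $\tilde\theta\le O(|x,y|)$, giving (2). Alternatively, and perhaps more cleanly, I would run the mean-value argument of Lemma \ref{lem:angle} directly: with $\rho(s)=|\mu(s),X|$ concave with $\rho''\ge -c$ and $\rho(0)=0$, the estimate $\rho'(0)\le c\,\ell\le C|x,y|$ controls exactly the ``radial'' component of $\mu'(0)$, and since $[\sigma]$ is the direction of the ``horizontal'' projection this is precisely $\sin\angle(\mu'(0),[\sigma])\le C|x,y|$.

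Finally I would remove the restriction that $\mathrm{int}\,\mu$ misses $X$. A general minimal geodesic $\mu$ decomposes as $\mu=\mu_X\cup\bigcup_\alpha\mu_\alpha$ where $\mu_\alpha$ are the (countably many) open arcs in $Y\setminus X$; each $\mu_\alpha$ has endpoints in $X$ and the interior estimate applies to each, while on the part $\mu_X\subset X$ we have $\sigma=\pi\circ\mu$ literally equal to $\mu$, so the ratio is exactly $1$ there. Summing the length estimates over all pieces (using $\sum_\alpha \ell_\alpha^2\le (\max_\alpha\ell_\alpha)\sum_\alpha\ell_\alpha\le |x,y|\cdot L(\mu)\le O(|x,y|^2)$) yields (1) and (3) in general; (2) needs only the single first arc emanating from $x$ (or the trivial case $\mu'(0)\in\Sigma_x(X)$ handled by Lemma \ref{lem:tangX} and the rescaling argument above). \emph{The main obstacle} I anticipate is making the error-bookkeeping in (3) genuinely quadratic rather than merely $o(|x,y|)$: one must see that the contribution of the normal oscillation $\int|\dot\nu|^2\,ds$ to $L(\mu)-L(\sigma)$ is $O(|x,y|^2)$ and not larger, which forces one to combine the $C^0$-bound $\max\rho\le O(|x,y|^2)$ \emph{with} the derivative bound $|\rho'|\le O(|x,y|)$ coming from the uniform concavity $\rho''\ge -c$ — exactly the two ingredients produced by Lemmas \ref{lem:concave}, \ref{lem:angle} — and to check that the endpoint/countable-arc summation does not accumulate error.
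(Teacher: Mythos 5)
Parts (1) and (2) of your proposal match the paper's argument essentially step for step: for (1) the concavity plus mean--value estimate (the paper cites \eqref{eq:level-deviat}, \eqref{eq:deviat3}), and for (2) the paper uses exactly your ``alternative, more clean'' version --- $\rho'(0)=\sin\angle(\mu'(0),[\sigma])$ from Lemma \ref{lem:susp1} together with the mean--value bound of Lemma \ref{lem:angle}; your tangent--cone detour is a harmless elaboration of the same idea. Your decomposition into countable arcs for the general case is also the one the paper uses.

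For part (3), however, you have the key inequalities running in the wrong direction, and this is a genuine gap. The projection $\tilde\pi:C=C_0\times_\phi[0,t_0]\to C_0$ is \emph{not} $1$-Lipschitz: since the warping factor $\phi(t)\le 1$ contracts the $C_0$-direction inside $C$, the projection back to $C_0$ \emph{expands} by $1/\phi(t)\ge 1$. Consequently $L(\sigma)\ge L(\mu)$, not $L(\sigma)\le L(\mu)$ as you claim; indeed this is automatic without any warped-product computation, because $\sigma$ is a curve in $X$ from $x$ to $y$ so $L(\sigma)\ge |x,y|_{X^{\rm int}}\ge |x,y|_Y=L(\mu)$. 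Your second displayed inequality $L(\mu)\le\frac{\phi(0)}{\phi(\max\rho)}L(\sigma)+\cdots$ is then true but vacuous (it is weaker than $L(\mu)\le L(\sigma)$), and the two combined would not yield the two-sided bound. The nontrivial direction is the opposite one: $L(\sigma)\le L(\tilde\sigma)\le\frac{\phi(0)}{\phi(\max\rho)}\,L(\tilde\mu)=(1+O(\max\rho))L(\mu)=(1+O(|x,y|^2))L(\mu)$, using the Lipschitz estimate \eqref{eq:deviat2} from the proof of Lemma \ref{lem:ext/int} applied on each arc component $\tilde\mu_i$ (whose endpoints lie in $C_0$), together with part (1). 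This is the paper's route, and it makes the worry you flag --- controlling $\int|\dot\nu|^2$ to quadratic order --- unnecessary: one never integrates the fiber derivative, one only needs the global Lipschitz constant $\phi(0)/\phi(\max\rho)=1+O(|x,y|^2)$ for the level projection on the slab bounded by $C_0$ and $C_{\max\rho}$. With the inequalities corrected, your arc-summation $\sum_\alpha \ell_\alpha^2\le(\max_\alpha\ell_\alpha)\sum_\alpha\ell_\alpha\le O(|x,y|^2)$ is exactly the accumulation bound needed.
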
 

\begin{proof} (1) \,
Let $\rho(s^*)=\max \rho$ and take $0\le a<b\le\ell$ such that 
$s^*\in (a,b)$, $\rho >0$ on $(a,b)$ and $\rho(a)=\rho(b)=0$.
Then $\tilde\mu =\eta^{-1}(\mu|_{[a,b]})$ and $\tilde\sigma=\tilde\pi(\tilde\mu)$ are 
well defined.
By \eqref{eq:level-deviat}, we have
\begin{align*}
   \rho(s^*)=|\tilde\mu(s^*), C_0|\le O(|\tilde\mu(a), \tilde\mu(b)|^2)\le  O(|\mu(a), \mu(b)|^2)\le O(|x,y|^2).
\end{align*}

(2)\,
We may assume $\mu'(0)\neq [\sigma]$.
Take the smallest $s_1\in (0,\ell]$ satisfying $\mu(s_1)\in X$.
Note that $\tilde\mu=\eta^{-1}(\mu|_{0,s_1]})$ is well defined 
and  $|x, \mu(s_1)|=|\tilde\mu(0), \tilde\mu(s_1)|$.
For $\rho(s)=|\mu(s), X| = |\tilde\mu(s), C_0|$, $0\le s\le s_1$,  
Lemmas \ref{lem:susp1} and \ref{lem:angle} imply 
\begin{align*}
   \rho'(0) &= \sin\angle (\mu'(0), [\sigma]) \\
              & \le C|\tilde\mu(0), \tilde\mu(s_1)| = C|x,\mu(s_1)|\le C|x,y|.
\end{align*}

(3)\,
Take at most countable disjoint open intervals $(a_i,b_i)$ of $[0,\ell]$ such that 
\begin{itemize}
 \item $\mu(a_i), \mu(b_i)\in X$ and $\mu((a_i,b_i))\subset Y\setminus X;$
 \item $\mu(s)\in X$ for all $s\in J:= [0,\ell]\setminus\cup (a_i,b_i)$.
\end{itemize}
Set 
\[
     \mu_i=\mu|_{[a_i,b_i]}, \,\,  \sigma_i=\sigma|_{[a_i,b_i]},  \,\, \tilde\mu_i=\eta^{-1}(\mu_i),\,\, \tilde\sigma_i=\eta^{-1}(\sigma_i).
\]
Let $L(J)$ denote the measure of $J$.
Since $L(\tilde\mu_i)=L(\mu_i)$, $L(\tilde\sigma_i)\ge L(\sigma_i)$ and $\mu(J)=\sigma(J)$, 
Lemma \ref{lem:ext/int} implies that 
\begin{align*}
   L(\sigma) &= \sum \,L(\sigma_i) + L(J) \\
                & \le \sum \,L(\tilde\sigma_i) + L(J)\\
 & \le \sum (1+O(|\tilde\sigma_i(a_i), \tilde\sigma_i(b_i)|^2) L(\tilde\mu_i)+L(J)\\
 &=   \sum (1+O(|\mu(a_i), \mu(b_i)|^2)  L(\mu_i)+  L(J)\\
 &\le (1+O(|x,y|^2)  L(\mu). 
\end{align*}
\end{proof}


\begin{lem} \label{lem:limit-ray}
For any $v=[\sigma]\in\Sigma_x^0(X)$, if we consider the arc-length parameter of $\sigma$, 
$\sigma(\delta t)$ converges to the geodesic ray $\sigma_{\infty}(t)$ in $T_x(Y)$ from the origin
$o_x$ in the direction $v$ as $\delta\to 0$ under the convergence $(\frac{1}{\delta} Y, x) \to T_x(Y), o_x)$.
\end{lem}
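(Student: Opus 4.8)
The plan is to show that the rescaled curves $s\mapsto\frac1\delta\sigma(\delta s)$ are uniformly Lipschitz, extract a subsequential limit, and then identify any such limit with a curve lying on the single geodesic ray of $T_x(Y)$ issuing from $o_x$ in the direction $v$. First I would observe that, $\sigma$ being parametrized by arc length, it is $1$-Lipschitz with respect to $d_{X^{\rm int}}=d_N$; since $d_Y\le d_{X^{\rm int}}$ on $X$ (Proposition \ref{prop:intrinsic}), $\sigma$ is also $1$-Lipschitz as a curve into $X^{\rm ext}\subset Y$, and rescaling the metric of $Y$ by $1/\delta$ leaves the Lipschitz constant unchanged. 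Hence, by an Arzel\`a--Ascoli argument along the convergence $(\frac1\delta Y,x)\to(T_x(Y),o_x)$, any sequence $\delta_j\to 0$ has a subsequence for which $s\mapsto\frac1{\delta_j}\sigma(\delta_j s)$ converges, uniformly on compact subsets of $[0,\infty)$, to a $1$-Lipschitz curve $\sigma_\infty\colon[0,\infty)\to T_x(Y)$ with $\sigma_\infty(0)=o_x$.

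The crux is to locate $\sigma_\infty(s)$ for a fixed $s>0$. Set $z_\delta:=\frac1\delta\sigma(\delta s)$ and choose a minimal geodesic $\mu_\delta$ from $x$ to $\sigma(\delta s)$ in $Y$; after rescaling by $1/\delta$ it is a minimal geodesic in $\frac1\delta Y$ from $o_x$ to $z_\delta$, of length $\frac1\delta|\sigma(\delta s),x|_Y\le s$ and of the same initial direction $\mu_\delta'(0)$. Since $\delta s\to 0$ and $v=[\sigma]$ is a uniquely determined intrinsic direction, $\mu_\delta'(0)\to v$. Passing to a further subsequence we may assume $z_{\delta_j}\to z_\infty:=\sigma_\infty(s)$ and that $\mu_{\delta_j}$ converges to a minimal geodesic $\mu_\infty$ in $T_x(Y)$ from $o_x$ to $z_\infty$. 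If $z_\infty=o_x$ it lies trivially on $R_v:=\{\,cv\mid c\ge 0\,\}$. Otherwise, $T_x(Y)=K(\Sigma_x(Y))$ is a Euclidean cone over a space of diameter $\le\pi$, so the minimal geodesic from the apex $o_x$ to $z_\infty$ is unique and runs along the generator determined by $\uparrow_{o_x}^{z_\infty}$; moreover $\uparrow_{o_x}^{z_\infty}=\mu_\infty'(0)=v$, because $\mu_{\delta_j}'(0)\to v$ and any limit of the initial directions of minimal geodesics $x\,z_{\delta_j}$ must be the initial direction of a minimal geodesic $o_x\,z_\infty$, which here is unique. Hence $z_\infty=(v,\,|z_\infty,o_x|)\in R_v$; as $s>0$ was arbitrary, $\sigma_\infty$ takes values on the geodesic ray of $T_x(Y)$ from $o_x$ in the direction $v$ (lying in $T_x(X)=K(\Sigma_x(X))$ by Proposition \ref{prop:tang-cone}).

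To upgrade subsequential convergence to convergence of the whole family as $\delta\to 0$, it suffices to show that $f(t):=|\sigma(t),x|_Y$ has a right derivative at $t=0$; then $|\sigma_\infty(s),o_x|=f'(0^+)\,s$ for all $s$, so the limit curve is independent of the subsequence. For the representative of $v$ furnished by the construction in Lemma \ref{lem:v} --- namely $\sigma=\pi\circ\mu$ with $\mu$ a $Y$-minimal geodesic --- this follows from Lemma \ref{lem:deviation}(1),(3), which give $|\sigma(t),x|_Y=t+O(t^2)$, so $f'(0^+)=1$ and $\sigma_\infty(t)=tv$ is the unit-speed ray. The step I expect to be the main obstacle is in the second paragraph: ensuring that the initial directions of the rescaled minimal geodesics persist in the limit, so that $z_\infty$ is forced onto the generator $R_v$. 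This needs either the precise stability statement for spaces of directions under blow-up convergence, or a direct proof via the angle-comparison technique of Lemma \ref{lem:susp1}, applied to the geodesic triangles $\triangle o_x\,\mu_\delta(c)\,z_\delta$ in the rescaled spaces.
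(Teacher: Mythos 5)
Your proof is correct and uses essentially the same idea as the paper's much terser argument: both hinge on the observation that the uniquely determined intrinsic direction forces $\uparrow_x^{\sigma(\delta s)}\to v$ for every $s$, which pins the blow-up limit onto the ray $R_v$. The paper only records that the image of $\sigma_\infty$ coincides with the ray; your additional care (uniform Lipschitz bound, subsequential extraction, and the right-derivative remark to fix the parametrization) is a legitimate refinement, and the step you flag as the main obstacle---persistence of the angular coordinate under the blow-up convergence---is indeed the implicit justification, a standard feature of the identification $T_x(Y)=K(\Sigma_x(Y))$ for Alexandrov spaces.
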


\begin{proof} Since $\sigma$ determines the unique direction $v$ we have  for any $0<R_1<R_2$, 
\[
         \lim_{\delta\to 0} \uparrow_x^{\sigma(\delta R_1)}  =  \lim_{\delta\to 0} \uparrow_x^{\sigma(\delta R_2)}.
\]
This implies that the image $\sigma_{\infty}([0,\infty))$ coincides with the ray in the direction $v$.
\end{proof}

\begin{proof}[Proof of Proposition \ref{prop:tang-cone}]
We have to show that  the Gromov-Hausdorff distance between $(\frac{1}{\delta} B(x, \delta R;X),x)$ and
$(B(o_x, R; K(\Sigma_x(X)), o_x)$ converges to zero as $\delta\to 0$ for any fixed $R>0$.
For every small  $\e>0$ take an $\e$-dense subset $\{ [\sigma_i] \}_{i=1}^I$ of $\Sigma_x^0(X)$,
and put $K:=[R/\e]+1$.
Taking small enough $\delta$, we may assume that $\sigma_i$ are defined on $[0, \delta R]$ and that 
$\sigma_i$ can be written as $\sigma_i=\pi(\gamma_i)$, where $\gamma_i$ is a minimal geodesic in $Y$
joining $x$ to $\sigma_i(\delta R)$.
Let us  consider the following sets:
\begin{align*}
     N_{\infty}^{\e} &:=\left\{      \frac{kR}{K} [\sigma]\, |\, 1\le i\le I, 0\le k\le K\right\}\subset B(o_x, R; K(\Sigma_x(X))), \\
    N^{\e} & :=  \left\{  \sigma_i\left( \frac{\delta kR}{K}\right) \, |\, 1\le i\le I, 0\le k\le K\right\} 
                                   \subset  \frac{1}{\delta}B(x, \delta R;X)
\end{align*}
First we show that both $N_{\infty}^{\e}$ and $ N^{\e}$ are $(\tau(R|\e)+\tau(R|\delta))$-dense.
 For simplicity, set
\[
       v_{i,k}= \frac{kR}{K} [\sigma], \qquad  x_{i,k}= \sigma_i\left( \frac{\delta kR}{K}\right).
\]
For every $y\in B(x, \delta R;X)$, let $\gamma:[0,\ell]\to Y$ be a minimal geodesic joining $x$ to $y$, 
and let $\sigma:=\pi(\gamma)$.
Choose $i$ and $k$ with $\angle([\sigma], [\sigma_i])<\e$ and $|\ell - kR/K|<R/K<\e$.
Since Lemma \ref{lem:deviation} implies that
\[
    \angle(\gamma'(0), \gamma_i'(0)) \le  \angle(\gamma'(0), [\sigma]) + \angle([\sigma],\sigma_i]) + \angle([\sigma_i], \gamma_i'(0))
    \le \e + 2\tau(\delta).
\]
we obtain 
\[
       |\gamma(\ell), \gamma_i(\ell)|\le C\ell (\e + 2\tau(\delta)).
\]
It follows from Lemma \ref{lem:deviation} that 
\begin{align*}
     |y, x_{i,k}| &\le |\gamma(\ell), \gamma_i(\ell)| + |\gamma_i(\ell), \sigma_i(\ell)| +  |\sigma_i(\ell), x_{i,k}| \\
                 & \le c\ell (\e + 2\tau(\delta)) + (\delta R)^2 + C\delta R/K\\
                 & \le \delta(\tau(R|\e) +\tau(R|\delta)).
\end{align*}
Thus $N^{\e}$ is $(\tau(R|\e) +\tau(R|\delta))$-dense in $\frac{1}{\delta}B(x, \delta R)$.

For every $v\in B(o_x, R; K(\Sigma_x(X)))$, take $i$ and $k$ satisfying
$ \angle(v, [\sigma_i])<\e$ and $| |v|- kR/K|<R/K<\e$.
Then we have 
\[
        |v, v_{i,k}| \le R/K + \e R  = \tau(R|\e).
\]
Hence  $N^{\e}_{\infty}$ is $\tau(R|\e)$-dense in $B(o_x, R; K(\Sigma_x(X)))$.

Finally define $f:N^{\e}_{\infty}\to N^{\e}$ by $f(v_{i,k}) = x_{i,k}$.
For simplicity put
\[
  w_{i,k}= \frac{kR}{K}\gamma_i'(0), \qquad  y_{i,k}= \gamma_i\left( \frac{\delta kR}{K}\right).
\] 
By Lemma \ref{lem:deviation}, we then have 
\begin{align*}
     & ||x_{i,k}, x_{j,\ell}| -   |y_{i,k}, y_{j,\ell}||  < 2C(\delta R)^2  \\
     & ||y_{i,k}, y_{j,\ell}| -   |w_{i,k}, w_{j,\ell}||  < \tau(R|\delta) \\
     & |||w_{i,k}, w_{j,\ell}| - |v_{i,k}, v_{j,\ell}||  < \tau(R|\delta),
\end{align*}
which implies that $f$ is $\tau(R|\delta)$-approximation.
In this way, we conclude that 
$d_{GH}((\frac{1}{\delta} B(x, \delta R),x), (B(o_x, R; T_x(X)))) <\tau(R|\e)+\tau(R|\delta)$.
This completes the proof of  Proposition \ref{prop:tang-cone}.
\end{proof}


\begin{lem} \label{lem:dist-ratio}
Fix any $x\in X$ and take $p\in C_0$ with $\eta_0(p)=x$. 
Then for every $y\in X$, there is a point $q\in\eta_0^{-1}(y)$
such that 
\[
      \left| \frac{|x,y|_Y}{|p,q|_C}-1 \right| < \tau_x(|x,y|_Y),
\]
where $ \tau_x(t)$ is a function depending on $x$ with $\lim_{t\to 0}\tau_x(t)=0$.
\end{lem}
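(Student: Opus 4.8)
The plan is to prove the estimate by a blow-up argument at the pair $p\in C_0$, $x=\eta_0(p)$, reducing it to an infinitesimal statement about the differential of $\eta_0$ and then transporting the conclusion back to finite scale. Since $\eta_0$ is $1$-Lipschitz for the exterior metrics we always have $|x,y|_Y\le |p,q|_C$ for every $q\in\eta_0^{-1}(y)$, so it is enough to prove the one-sided bound
\[
   \limsup_{y\to x,\ y\in X}\ \frac{\dist_C(p,\,\eta_0^{-1}(y))}{|x,y|_Y}\ \le\ 1 .
\]
Suppose this fails: there are $y_j\in X$ with $\varepsilon_j:=|x,y_j|_Y\to0$ and $\dist_C(p,\eta_0^{-1}(y_j))\ge(1+c)\varepsilon_j$ for some fixed $c>0$. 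Rescaling by $1/\varepsilon_j$ and passing to a subsequence, $(\tfrac1{\varepsilon_j}Y,x)\to(T_x(Y),o_x)$, $(\tfrac1{\varepsilon_j}C,p)\to(T_p(C),o_p)$ and $\eta$ converges to the map $\eta_\infty$ of Lemma \ref{lem:susp1}(2), which is independent of the rescaling sequence. By Proposition \ref{prop:tang-cone} the subsets $\tfrac1{\varepsilon_j}X$ converge to $T_x(X)=K(\Sigma_x(X))$, and by Lemma \ref{lem:ext/int} together with the structure $C=C_0\times_\phi[0,t_0]$ the subsets $\tfrac1{\varepsilon_j}C_0$ converge to $T_p(C_0)=K(\Sigma_p(C_0))\subset T_p(C)$, with $\eta_\infty(o_p)=o_x$ and $\eta_\infty(T_p(C_0))\subset T_x(X)$; the points $y_j$ rescale to some $y_\infty\in T_x(X)$ with $|o_x,y_\infty|=1$.

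The heart of the proof is the infinitesimal claim that $\eta_\infty\colon T_p(C_0)\to T_x(X)$ is surjective and preserves the distance to the cone vertex: $\dist_{T_p(C)}(o_p,\eta_\infty^{-1}(z))=|o_x,z|$ for all $z\in T_x(X)$. Indeed, write $z=(v,r)$ in cone coordinates. By Proposition \ref{prop:closure} we may take intrinsic directions $[\sigma_i]\in\Sigma_x^0(X)$ with $[\sigma_i]\to v$; by Lemmas \ref{lem:susp1} and \ref{lem:v} each $[\sigma_i]=\eta_\infty(\tilde v_i)$ for a unit $\tilde v_i\in\Sigma_p(C_0)$, and the analysis of those lemmas (with Lemma \ref{lem:limit-ray} applied in $C$ and in $Y$) shows that the radial segment $t\mapsto t\,\tilde v_i$ is the blow-up of the intrinsic-direction curve $\tilde\sigma_i$ in $C_0$, that $t\mapsto t\,[\sigma_i]$ is the blow-up of $\sigma_i=\eta_0\circ\tilde\sigma_i$, and that both are minimal; hence $\eta_\infty(r\,\tilde v_i)=r\,[\sigma_i]$. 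Since $|o_p,r\tilde v_i|=r$ and $T_p(C_0)\cap\overline{B(o_p,r)}$ is compact, a subsequence of $r\tilde v_i$ converges to some $w\in T_p(C_0)$ with $|o_p,w|=r$ and $\eta_\infty(w)=\lim_i r[\sigma_i]=z$; combined with the inequality $|o_p,\eta_\infty^{-1}(z)|\ge|o_x,z|$ coming from $1$-Lipschitz-ness, this proves the claim. Applying it to $z=y_\infty$ yields $w\in T_p(C_0)$ with $\eta_\infty(w)=y_\infty$ and $|o_p,w|=1$.

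It remains to turn $w$ into a genuine preimage of $y_j$ at finite scale of length $(1+o(1))\varepsilon_j$, and this is the step I expect to be the main obstacle. Unwinding the convergence one obtains $q_j\in C_0$ with $|p,q_j|_C=(1+o(1))\varepsilon_j$ and $\tfrac1{\varepsilon_j}\eta_0(q_j)\to y_\infty$, so that $\eta_0(q_j)$ and $y_j$ are $o(\varepsilon_j)$-close in $X$ but in general distinct: convergence of maps does not by itself make preimages converge. To repair this I would join $\eta_0(q_j)$ to $y_j$ by a curve in $X$ of length $o(\varepsilon_j)$ --- possible since $X^{\mathrm{int}}$ is isometric to the length space $N$ (Proposition \ref{prop:intrinsic}) and $\iota_\infty$ is bi-Lipschitz (Lemma \ref{lem:NX}) --- and lift this curve into $C_0$ starting at $q_j$ along $\eta_0$: the map $\eta_0$ is proper, it is $2$-to-$1$ over $X_2$ with the local covering behavior dictated by the suspension structure of Corollary \ref{lem:susp2}(2), and its branch set projects into $X_1$, which has dimension at most $\dim X-1$; hence a curve of length $o(\varepsilon_j)$ lifts with length cost $o(\varepsilon_j)$, and the lifted endpoint $q_j'\in\eta_0^{-1}(y_j)$ satisfies $|p,q_j'|_C\le|p,q_j|_C+o(\varepsilon_j)=(1+o(1))\varepsilon_j$, contradicting $\dist_C(p,\eta_0^{-1}(y_j))\ge(1+c)\varepsilon_j$. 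An alternative way to settle this last step is to first prove the estimate for the dense set of $y$ admitting a minimal geodesic from $x$ that meets $X$ only at its endpoints --- there the required preimage is produced directly by $\eta^{-1}$ of the geodesic --- and then pass to the limit using the continuity and properness of $\eta_0$.
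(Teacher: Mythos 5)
Your overall strategy (argue by contradiction, blow up at scale $\varepsilon_j=|x,y_j|_Y$, and use the rigidity of $\eta_\infty$ on tangent cones) is the same as the paper's, and your infinitesimal claim is essentially the congruence of the triangles $\Delta_{o_x}$ and $\Delta_{o_p}$ from Lemma \ref{lem:v} (equation \eqref{eq:base-eq}). The gap is exactly where you predicted it: the passage from the blow-up limit back to a genuine point of $\eta_0^{-1}(y_j)$ at finite scale. Your proposed repair --- join $\eta_0(q_j)$ to $y_j$ by a curve of length $o(\varepsilon_j)$ and lift it along $\eta_0$ using that $\eta_0$ is ``$2$-to-$1$ over $X_2$ with local covering behavior'' and has small branch set --- is circular at this point in the development. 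The covering property of $\eta_0|_{C_0^2}$ (Corollary \ref{cor:cover}), the continuity of the involution $f$ (Lemma \ref{lem:contin}), and the lifting of geodesics (Lemma \ref{lem:lift}) are all proved \emph{after} and \emph{by means of} Lemma \ref{lem:dist-ratio}; at this stage one only knows $\#\eta_0^{-1}\le 2$, that $\eta_0$ is $1$-Lipschitz, and the pointwise norm-preservation of $d\eta_0$. None of these gives a path-lifting property, so the ``length cost $o(\varepsilon_j)$'' lift is unjustified. Your alternative (dense set of good $y$, then pass to the limit ``using continuity and properness of $\eta_0$'') has the same defect: continuity of preimages under $\eta_0$ is precisely what the lemma is being used to establish.

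The paper avoids this entirely by \emph{constructing} the preimage directly rather than approximating it. Instead of aiming a geodesic at $y_i$, it takes the perpendicular $\gamma_{y_i}$ to $X$ at $y_i$, a minimal geodesic $\gamma_i$ in $Y$ from $x$ to the point $\gamma_{y_i}(s_i)$ slightly above $y_i$, lifts it to $\tilde\gamma_i=\eta^{-1}(\gamma_i)$ in $C$, and sets $q_i:=\tilde\pi(\tilde\gamma_i)(t_i)$. By construction $\eta_0(q_i)=\pi(\gamma_i(t_i))=y_i$ exactly, so $q_i\in\eta_0^{-1}(y_i)$ with no lifting lemma needed; the blow-up computation of Lemma \ref{lem:v} then gives $\lim |x,y_i|_Y/t_i=\lim|p,q_i|_C/t_i$, contradicting the assumed ratio bound. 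If you want to salvage your write-up, replace the repair step by this direct construction (your own parenthetical remark ``the required preimage is produced directly by $\eta^{-1}$ of the geodesic'' is the right idea, but the geodesic must be tilted off $X$ toward the perpendicular fiber over $y_j$ so that its lift is well defined and its projection ends exactly at a preimage of $y_j$).
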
 

\begin{proof}
Suppose the lemma does not hold. Then since $\eta_0$ is $1$-Lipschitz,  
we have a sequence $y_i\in X$ with $\lim y_i=x$ such that 
for every $q_i\in\eta_0^{-1}(y_i)$, 
\begin{align}
   \frac{|x,y_i|_Y}{|p,q_i|_C} < 1- \e,   \label{eq:sdist-ratio }   
\end{align}
for some $\e>0$ independent of $i$.


We proceed as in the proof of Lemma \ref{lem:v}.
Let $\mu_i:[0, s_i]\to Y$ be a minimal geodesic from $x$ to $y_i$,
and take a perpendicular  $\gamma_{y_i}$  to $X$ at $y_i$.
Let $\gamma_i:[0,t_i]\to Y$ be a minimal geodesic from $x$ to $\gamma_{y_i}(s_i)$, and set 
\[
         \sigma_i(t):=\pi(\gamma_i(t)), \,\, \tilde\gamma_i=\eta^{-1}(\gamma_i), \,\,
          \tilde\sigma_i=\tilde\pi(\tilde\gamma_i).
\]
Let $q_i:= \tilde\sigma(t_i)$.
Under the  convergences
\[
  \left(\frac{1}{t_i}Y, x\right)\to \left(T_x(Y), o_x\right),  \,\,\left(\frac{1}{t_i}C, p\right)\to\left(T_p(C), o_p\right)
\]
passing to a subsequence if necessarily, we may assume that 
the triplet $(\mu_i(t_is), \gamma_i(t_is), \sigma_i(t_is))$ (resp.  the pair $(\tilde\gamma_i(t_is), \tilde\sigma_i(t_is))$  
converges to a triplet  
$(\mu_{\infty}(s), \gamma_{\infty}(s), \sigma_{\infty}(s))$ 
(resp. a double $(\tilde\gamma_{\infty}(s), \tilde\sigma_{\infty}(s))$.
%
From \eqref{eq:base-eq}, we see that 
\[
    \lim \frac{|x,y_i|_Y}{t_i} =|o_x,\sigma_{\infty}(1)|=|o_p,\tilde\sigma_{\infty}(1)|=\lim\frac{|p,q_i|_C}{t_i},
\]
%
which yields a contradiction to the hypothesis \eqref{eq:sdist-ratio }:
\[
      \lim_{i\to\infty}\, \frac{|x,y_i|_Y}{|p,q_i|_C}  =1.
\]
\end{proof}


For any $p\in C_0$, by Lemma \ref{lem:susp1}, as $t\to 0$,
$\eta_0:(\frac{1}{t}C_0,p)\to (\frac{1}{t}X,x)$ converges to a $1$-Lipschitz map
$(d\eta_0)_p:T_p(C_0)\to T_x(X)$, which is called the {\it differential} of $\eta_0$ at $p$.

 Lemma \ref{lem:susp1} immediately implies the following.

\begin{prop} \label{lem:length}
For every $p\in C_0$, the differential $d\eta_0:T_p(C_0)\to T_x(X)$ satisfies
\[
                |d\eta_0(\tilde v)| = |\tilde v|.
\]
for every $\tilde v\in T_p(C_0)$.
In particular,  $\eta_0:C_0\to X$ preserves the length of Lipschitz curves in $C_0$.
\end{prop}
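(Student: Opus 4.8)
The plan is to promote the pointwise content of Sublemma~\ref{slem:lim-eta} to a norm equality on the whole tangent cone, and then integrate; Lemma~\ref{lem:susp1}(2) enters only to guarantee that the blow-up of $\eta$ is unique (so that $(d\eta_0)_p$ is well defined and is its restriction).

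First I would record the formal features. By construction $(d\eta_0)_p$ is a $1$-Lipschitz blow-up limit of $\eta_0$, and since the rescaling construction commutes with cone dilations it is conical, $(d\eta_0)_p(\lambda\tilde v)=\lambda\,(d\eta_0)_p(\tilde v)$; in particular $|(d\eta_0)_p(\tilde v)|\le|\tilde v|$, and it suffices to treat unit vectors. Using Proposition~\ref{prop:tang-cone} to identify $T_x^*(X)$ with $T_x(X)=K(\Sigma_x(X))$, and Lemma~\ref{lem:ext/int} (at $t=0$) to see that the exterior and intrinsic metrics of $C_0$ and of $X$ have the same tangent cones, one checks that $(d\eta_0)_p$ is the restriction to $T_p(C_0)\subset T_p(C)$ of the unique blow-up $\eta_\infty\colon T_p(C)\to T_x(Y)$ of $\eta$ provided by Lemma~\ref{lem:susp1}(2). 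Thus it is enough to show that $\eta_\infty$ is norm preserving, i.e. $|\eta_\infty(\tilde w)|=|\tilde w|$ for all $\tilde w\in T_p(C)$.

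The core step treats vectors off the ``equator''. Let $\tilde w\in T_p(C)\setminus\bigl(T_p(C_0)\cup\{o_p\}\bigr)$; its direction avoids $\Sigma_p(C_0)$, so the ray $\mathbb{R}_{>0}\cdot\tilde w$ lies entirely in $T_p(C)\setminus T_p(C_0)$, and for $0<s<1$ the radial segment from $s\tilde w$ to $\tilde w$ is a minimal geodesic of $T_p(C)$ staying off $T_p(C_0)$, so it realizes the distance in the intrinsic metric of $T_p(C)\setminus T_p(C_0)$, which is therefore $(1-s)|\tilde w|$. By Sublemma~\ref{slem:lim-eta}, $\eta_\infty$ restricts to a bijective local isometry $T_p(C)\setminus T_p(C_0)\to T_x(Y)\setminus T_x(X)$, and a bijective local isometry preserves lengths of curves, hence is an isometry for the two intrinsic metrics. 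Writing $w':=\eta_\infty(\tilde w)$, conicality gives $\eta_\infty(s\tilde w)=sw'$, and $w'\notin T_x(X)$ by that bijectivity, so the radial segment from $sw'$ to $w'$ likewise realizes the intrinsic distance of $T_x(Y)\setminus T_x(X)$, namely $(1-s)|w'|$. Equating gives $(1-s)|w'|=(1-s)|\tilde w|$, so $|\eta_\infty(\tilde w)|=|\tilde w|$. Since $T_p(C_0)\cup\{o_p\}$ is a closed subset of dimension $\dim C_0=\dim C-1$, it is nowhere dense in $T_p(C)$, and continuity of $\eta_\infty$ extends the equality $|\eta_\infty(\cdot)|=|\cdot|$ to all of $T_p(C)$, hence to $T_p(C_0)$; this yields $|(d\eta_0)_p(\tilde v)|=|\tilde v|$ for every $\tilde v\in T_p(C_0)$ and every $p\in C_0$.

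For the last assertion one integrates the infinitesimal statement along $c$: a Lipschitz curve $c$ in $C_0$ is a.e.\ metrically differentiable with velocity $\dot c(s)\in T_{c(s)}(C_0)$, the velocity of $\eta_0\circ c$ is a.e.\ $(d\eta_0)_{c(s)}(\dot c(s))$ (the local blow-up of $\eta_0$ at $c(s)$), and $|(d\eta_0)_{c(s)}(\dot c(s))|=|\dot c(s)|$, so $L(\eta_0\circ c)=\int|\dot c(s)|\,ds=L(c)$. The point needing the most care is the intrinsic-versus-exterior metric bookkeeping for the punctured cones — ensuring the radial segment from $sw'$ to $w'$ is not out-competed by a path dipping toward $T_x(X)$ — which is exactly what the bijectivity in Sublemma~\ref{slem:lim-eta} secures, since it forces $w'\notin T_x(X)$.
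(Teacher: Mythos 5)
Your proof is correct, but it reaches the key norm equality by a genuinely different mechanism than the paper. The paper's proof is essentially the one-line observation that Lemma \ref{lem:susp1} already contains the identity $|\tilde v|=|v|$ — via the congruence of the Euclidean triangles $\triangle o_x\xi\sigma_{\infty}(1)$ and $\triangle o_p\tilde\xi\tilde\sigma_{\infty}(1)$ recorded in \eqref{eqrtatio-lim} — for the dense family of vectors $\tilde v=\tilde\sigma'(0)$ obtained by projecting geodesics of $Y$, after which $1$-Lipschitz continuity extends the equality to all of $T_p(C_0)$. You instead work entirely off the gluing locus: Sublemma \ref{slem:lim-eta} makes $\eta_\infty$ an intrinsic isometry between the punctured cones, conicality of $\eta_\infty$ (which you correctly extract from the uniqueness statement in Lemma \ref{lem:susp1}(2)) lets you compare $s\tilde w$ with $\tilde w$ along radial segments, and the fact that these segments avoid the equator and realize the punctured-cone intrinsic distance forces $|\eta_\infty(\tilde w)|=|\tilde w|$ off $T_p(C_0)$; density and continuity then finish. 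What your route buys is that you never need to reopen the triangle computation inside the proof of Lemma \ref{lem:susp1} — you use only its statement (2) together with the sublemma — at the cost of some extra bookkeeping: identifying $(d\eta_0)_p$ with $\eta_\infty|_{T_p(C_0)}$ via Proposition \ref{prop:tang-cone} and Lemma \ref{lem:ext/int}, and checking that the inverse of the bijective local isometry is itself locally isometric so that intrinsic distances are preserved in both directions (both of which you flag and which do go through). The final integration step is the same standard speed argument the paper itself uses in Lemma \ref{lem:loc-isometry}.
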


By Proposition \ref{lem:length}, $d\eta_0$ provides a surjective $1$-Lipschitz map
$d\eta_0:\Sigma_p(C_0)\to\Sigma_x(X)$.

\begin{rem}  \upshape
By Lemma \ref{lem:susp2},    $x\in X_2$ is a regular point of $Y$ if and only if
the tangent cone $T_x(X)$ is isometric to $\mathbb R^{m-1}$, where $m=\dim Y$. 
From this reason, in that case we call  $x$ a {\it regular point} of $X$, and set $X^{reg}:=X\cap Y^{reg}$. 
Later we show that every $x\in X_1$ is a {\it singular point} of $X$ unless $X=X_1$ 
(see Corollary \ref{cor:1/2}).
\end{rem}

\begin{prop} \label{prop:isometry}
For every $p\in C_0^2$, we have
\begin{enumerate}
   \item the differential $d\eta_p$ provides an isometry $d\eta_p : T_p(C)\to T_x^{+}(Y)$ which preserves
      the half suspension structures of both $\Sigma_p(C)=\{\tilde\xi_{+}\}*\Sigma_p(C_0)$ and
      $\Sigma_x^{+}(Y):=\{ \xi_{+}\}*\Sigma_x(X)$, where $T_x^{+}(Y)=T_x(X)\times\mathbb R_{+};$
   \item $p\in C_0^{reg}$ if and only if $x\in X^{reg}$. In this case, $(d\eta_0)_p : T_p(C_0)\to T_x(X)$
      is a linear isometry.
 \end{enumerate}
\end{prop}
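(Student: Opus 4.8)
The plan is to bootstrap from the results already established for points with multiplicity two, chiefly Lemma \ref{prop:preimage}(2), Corollary \ref{cor:susp1}, Corollary \ref{lem:susp2}(2), and Proposition \ref{prop:tang-cone}. For $(1)$, fix $p\in C_0^2$ and $x=\eta_0(p)$. By Lemma \ref{prop:preimage}(2), $\eta_0^{-1}(x)=\{p,p'\}$ with $p'\neq p$, and $\Sigma_x(Y)$ is the spherical suspension $\{\xi_1,\xi_2\}*\Sigma_x(X)$ over the two perpendicular directions; by Corollary \ref{lem:susp2}(2) the same holds with $\xi_{\pm}$ the directions of the two perpendiculars to $X$ at $x$, so $T_x(Y)=T_x(X)\times\mathbb R$, and the half-cone $T_x^{+}(Y)=T_x(X)\times\mathbb R_{+}$ corresponds to the branch of perpendiculars emanating into the side of $p$. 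On the other side, Lemma \ref{lem:susp0}(1) gives $\Sigma_p(C)=\{\tilde\xi_{+}\}*\Sigma_p(C_0)$ and $T_p(C)=T_p(C_0)\times\mathbb R_{+}$. By the construction in Lemma \ref{lem:susp1}(2), $d\eta_p=\eta_\infty:T_p(C)\to T_x(Y)$ exists as the rescaling limit, is $1$-Lipschitz, and by Proposition \ref{lem:length} restricts to a norm-preserving map $d\eta_0:T_p(C_0)\to T_x(X)$; moreover it sends the perpendicular direction $\tilde\xi_{+}$ to a perpendicular direction $\xi_{+}\in\{\xi_\pm\}$ at $x$ (this is Lemma \ref{lem:susp1}\eqref{eq:sus2} in the limit). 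Since by Slemma \ref{slem:lim-eta} the map $\eta_\infty$ is a bijective local isometry away from $T_p(C_0)$, and since by Proposition \ref{prop:tang-cone} the target $(\frac1\delta X,x)$ genuinely converges to $T_x(X)=K(\Sigma_x(X))$ so that $T_x^{*}(X)=T_x(X)$ in the notation of Slemma \ref{slem:lim-eta}, we get that $\eta_\infty$ maps $T_p(C)\setminus T_p(C_0)$ isometrically onto $T_x^{+}(Y)\setminus T_x(X)$ (using that the image of $T_p(C)$ is exactly the half-cone on the $p$-side, because perpendiculars on the two sides point into the two suspension branches). Continuity then extends this to an isometry $d\eta_p:T_p(C)\to T_x^{+}(Y)$, and it visibly matches up the half-suspension vertices $\tilde\xi_+\mapsto\xi_+$ and restricts to an isometry $d\eta_0:T_p(C_0)\to T_x(X)$ on the base.

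For $(2)$, recall (from the Remark preceding the Proposition) that for $x\in X_2$, $x$ is a regular point of $Y$ exactly when $T_x(X)\cong\mathbb R^{m-1}$, $m=\dim Y$, and that this is by definition what we mean by $x\in X^{\reg}$. By part $(1)$, $d\eta_p$ is an isometry $T_p(C)\cong T_x^{+}(Y)=T_x(X)\times\mathbb R_+$, and $T_p(C)=T_p(C_0)\times\mathbb R_+$; comparing the two product decompositions (both along the perpendicular factor, which $d\eta_p$ preserves by $(1)$) yields an isometry $T_p(C_0)\cong T_x(X)$. Now $C=C_0\times_\phi[0,t_0]$ means $p\in C_0^{\reg}$ iff $T_p(C_0)\cong\mathbb R^{m-1}$ iff $T_p(C)\cong\mathbb R^m_+$ in the sense of a half-space, which under the isometry of $(1)$ is equivalent to $T_x(X)\cong\mathbb R^{m-1}$, i.e. to $x\in X^{\reg}$. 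Finally, when this holds, $d\eta_0:T_p(C_0)\to T_x(X)$ is a norm-preserving bijection between two Euclidean spaces of the same dimension, hence a linear isometry; one can either invoke the Mazur–Ulam type fact that a norm-preserving surjection of Euclidean spaces is linear, or observe directly that $d\eta_0$ is the differential of a $1$-Lipschitz map which is bijective with $1$-Lipschitz inverse (by part $(1)$, since on the regular locus the inverse local isometry has a well-defined differential), forcing linearity.

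The main obstacle I anticipate is establishing the \emph{surjectivity} of $d\eta_p$ onto the full half-cone $T_x^{+}(Y)$ — equivalently, that the rescaling limit of $\eta$ does not lose a set of directions — and pinning down that the image lands precisely in one suspension branch rather than somewhere straddling both. This is where Proposition \ref{prop:tang-cone} is essential: it guarantees $T_x^{*}(X)$ is the full tangent cone $T_x(X)$, so the complement $T_x^{+}(Y)\setminus T_x(X)$ on the $p$-side is exactly the open half-cone, and then the bijectivity statement of Slemma \ref{slem:lim-eta} together with a dimension count ($\dim T_p(C)=\dim Y=\dim T_x^{+}(Y)$, the first equality because $\eta$ is a locally isometric bijection off the boundary by Lemma \ref{lem:loc-isom}) closes the gap. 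A secondary technical point is checking that the limit map $\eta_\infty$ is independent of the rescaling sequence $t_i\to0$ — but this was already proved in Lemma \ref{lem:susp1}(2) via the identity $\eta_\infty(\tilde v)=v$ with $|\tilde v|=|v|$, so it may simply be cited.
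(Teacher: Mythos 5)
Your proposal is correct in substance but runs along a genuinely different track from the paper's proof. The paper never passes to the blow-up map $\eta_\infty$ for part (1): instead it fixes two base directions $\tilde v_1,\tilde v_2\in\Sigma_p(C_0)$, takes the midpoint directions $\tilde\xi_i$ on the geodesics from $\tilde\xi_+$ to $\tilde v_i$ in $\Sigma_p(C)$, and shows $\angle(\tilde\xi_1,\tilde\xi_2)=\angle(\xi_1,\xi_2)$ directly: since $T_x(Y)=T_x(X)\times\mathbb R$ at a double point, minimal geodesics between $\gamma_1(t)$ and $\gamma_2(t)$ miss $X$ for small $t$, so the local isometry $\eta:C\setminus C_0\to Y\setminus X$ preserves $|\tilde\gamma_1(t),\tilde\gamma_2(t)|$; the two half-suspension structures then force $\angle(\tilde v_1,\tilde v_2)=\angle(v_1,v_2)$, which is the whole content of (1). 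Your route instead works entirely at the tangent-cone level, combining Sublemma \ref{slem:lim-eta}, Proposition \ref{prop:tang-cone} and Proposition \ref{lem:length}. Both arguments rest on the same two pillars (local isometry off the gluing locus, and the product splitting of $T_x(Y)$ at points of $X_2$), but yours buys a slightly more global statement (an explicit isometry of half-cones) at the cost of two points you should make explicit. First, the surjectivity in Sublemma \ref{slem:lim-eta} must be read, for $x\in X_2$, as surjectivity onto the single component of $T_x(Y)\setminus T_x^*(X)$ on the $p$-side (the source $T_p(C)\setminus T_p(C_0)$ is connected while the target has two components); you do note this, and it is the correct reading. Second, and more importantly, "bijective local isometry" does not by itself yield "isometric onto": you need that $T_p(C_0)\times(0,\infty)$ is convex in $T_p(C)=T_p(C_0)\times[0,\infty)$ and that $T_x(X)\times(0,\infty)$ is convex in $T_x^+(Y)$ (both follow from the product structure, hence from $x\in X_2$), so that extrinsic and intrinsic distances agree on the open halves and the local isometry with locally isometric inverse upgrades to a global isometry; only then does the closure/continuity argument transfer distances between points of $T_p(C_0)$ itself, which is exactly the nontrivial content of the proposition. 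With that step spelled out, your proof is complete; part (2) is handled the same way in both arguments (the paper simply calls it an immediate consequence of (1), and your Mazur--Ulam remark makes the linearity explicit).
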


\begin{proof}
(1)\,
For every $\tilde v_1, \tilde v_2\in\Sigma_p(C_0)$, put $v_i:=d\eta_0(\tilde v_i)$.
We show that $\angle(\tilde v_1, \tilde v_2)=\angle(v_1, v_2)$.
Let $\tilde\xi_i$ (resp.  $\xi_i$) be the midpoint of the geodesic joining $\tilde\xi_{+}$ to $\tilde v_i$
(resp. $\xi_{+}$ to $v_i$). Note that
$d\eta(\tilde\xi_i)=\xi_i$.
We may assume that there are geodesics $\tilde\gamma_i(t)$ with $\tilde\gamma_i'(0)=\tilde\xi_i$,  and
set $\gamma_i(t):=\eta(\tilde\gamma_i(t))$.
Since $T_x(Y)=T_x(X)\times\mathbb R$, any minimal geodesic joining $\tilde\gamma_1(t)$ and
$\tilde\gamma_2(t)$ does not meet $X$ for any small $t>0$.
It follows from the fact that $\eta:C\setminus C_0\to Y\setminus X$ is locally isometric that
\[
         |\tilde\gamma_1(t),\tilde\gamma_2(t)|=  |\gamma_1(t), \gamma_2(t)|,
\]
which implies that $\angle(\tilde\xi_1,\tilde\xi_2) = \angle(\xi_1,\xi_2)$. From the suspension structures,
we conclude that $\angle(\tilde v_1, \tilde v_2)=\angle(v_1, v_2)$.

$(2)$ is an immediate consequence of $(1)$.
\end{proof}

\subsection{Gluing maps} \label{ssec:gluing}

Using the results of the last subsection, we study the metric properties of 
the gluing map.

From Lemma \ref{prop:preimage}, we can define a map $f:C_0\to C_0$ as  follows: For an arbitrary point $p\in C_0$, 
let $f(p):=q$  if $\{p,q\}=\eta_0^{-1}(\eta_0(p))$, where  $q$ may be equal to $p$ if $\eta_0(p)\in X_1$. 
Note that $f$ is an involutive map, i.e.,  $f^2=id$. Moreover

\begin{lem} \label{lem:contin}
 $f:C_0\to C_0$  is a homeomorphism.
\end{lem}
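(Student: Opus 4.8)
The plan is to show that $f$ is a continuous bijection of the compact Hausdorff space $C_0$ to itself; since $f$ is an involution ($f^2 = \mathrm{id}$), continuity alone yields that $f$ is a homeomorphism. So the whole content is the continuity of $f$. Recall that $f$ is characterized by the property $\eta_0(f(p)) = \eta_0(p)$ together with $\{p, f(p)\} = \eta_0^{-1}(\eta_0(p))$, using that fibers of $\eta_0$ have at most two points by Lemma \ref{prop:preimage}.

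First I would take a sequence $p_k \to p$ in $C_0$ and set $q_k := f(p_k)$, $x_k := \eta_0(p_k) = \eta_0(q_k)$, and $x := \eta_0(p)$; by continuity of $\eta_0$ we have $x_k \to x$. By compactness of $C_0$, pass to a subsequence so that $q_k \to q$ for some $q \in C_0$; then $\eta_0(q) = \lim \eta_0(q_k) = \lim x_k = x$, so $q \in \eta_0^{-1}(x) = \{p, f(p)\}$. Thus every subsequential limit of $q_k$ is either $p$ or $f(p)$. To conclude $q_k \to f(p)$ (for the full sequence, modulo handling the two cases), the point is to rule out the possibility that a subsequence of $q_k$ converges to $p$ while $q_k \neq p$, i.e.\ that the two preimages collide in the limit without $p$ being a fixed point of $f$.

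The key estimate for this is Lemma \ref{lem:dist-ratio}: for the fixed point $p$ (with $\eta_0(p) = x$) and any $y \in X$ near $x$, there is $q' \in \eta_0^{-1}(y)$ with $|x, y|_Y / |p, q'|_C$ close to $1$. Apply this along $y = x_k \to x$: there is $q_k' \in \eta_0^{-1}(x_k) = \{p_k, q_k\}$ with $|p, q_k'|_C \le (1 + \tau_x(|x, x_k|_Y))\, |x, x_k|_Y \to 0$, so $q_k' \to p$. If $q_k' = p_k$ this is automatic; the real information is that whichever of the two preimages it is, at least one of them converges to $p$. Combined with the first paragraph — where any limit of the $q_k$ lies in $\{p, f(p)\}$ — and a symmetric application of Lemma \ref{lem:dist-ratio} centered at $f(p)$ (giving a preimage point converging to $f(p)$), one sees that the unordered pair $\{p_k, q_k\}$ converges to the unordered pair $\{p, f(p)\}$. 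Since $p_k \to p$, this forces $q_k \to f(p)$: indeed if along a subsequence $q_k \to p \neq f(p)$, then both preimages of $x_k$ would be converging to $p$, contradicting that Lemma \ref{lem:dist-ratio} applied at $f(p)$ produces a preimage of $x_k$ staying near $f(p)$.

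The main obstacle is exactly this bookkeeping: $\eta_0$ is only known to be a $1$-Lipschitz surjection with fibers of cardinality $\le 2$, so one must argue that the two sheets cannot merge except at the fixed-point set of $f$, and the only tool for the quantitative non-merging is the two-sided distance comparison of Lemma \ref{lem:dist-ratio}, which must be invoked at \emph{both} base points $p$ and $f(p)$ (the function $\tau_x$ depends on the base point). Once continuity of $f$ is established, compactness of $C_0$ and the involution property $f \circ f = \mathrm{id}$ immediately give that $f$ is a homeomorphism with $f^{-1} = f$, completing the proof.
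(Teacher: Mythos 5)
Your proof is correct and uses essentially the same engine as the paper's, namely Lemma \ref{lem:dist-ratio} applied at both base points $p$ and $f(p)$ to force the fiber pair $\{p_k, f(p_k)\}$ to converge to $\{p, f(p)\}$. The paper structures this as an explicit case analysis on $x\in X_1$ versus $x\in X_2$ (and in the latter case first shows $x_i\in X_2$ for large $i$ via an angle estimate before invoking the distance-ratio bound), whereas you bypass the separate angle argument by using compactness of $C_0$ to reduce to ruling out a subsequence with $q_k\to p\neq f(p)$, which the symmetric application of Lemma \ref{lem:dist-ratio} at $f(p)$ excludes directly; this is a mild streamlining but not a genuinely different route.
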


\begin{proof}
Since $f$ is involutive, it suffices to prove that $f$ is continuous.
For a sequence  $p_i$ converging  to a point $p$ in $C_0$, we show that 
$f(p_i)\to f(p)$.
Set $x=\eta_0(p), x_i=\eta_0(p_i)$. 

${\mathrm Case\,1)}$\, \, $x\in X_1$.
In this case, $f(p)=p$.
If $x_i\in X_1$, then $f(p_i)=p_i$, and we have nothing to do.
Suppose $x_i\in X_2$.  Let $\gamma_{x_i}^{\pm}$ be the two perpendiculars to $X$ at $x_i$.
Letting $s_i=|x,x_i|$, consider minimal geodesics $\gamma_i^{\pm}$ joining $x$ to $\gamma_{x_i}^{\pm}(s_i)$.
If we set $\tilde\sigma_i^{\pm}:=\tilde\pi\circ\tilde\gamma_i^{\pm}$, where $\tilde\gamma_i^{\pm}=\eta^{-1}(\gamma_i^{\pm})$,
then $\tilde\sigma_i^{\pm}$ are minimal geodesics joining $p$ to $p_i$ and $f(p)$ to $f(p_i)$ respectively.
Lemma \ref{lem:dist-ratio} then shows that 
\begin{align}
      \left| \frac{|x, x_i|}{|p,p_i|}-1 \right| < \tau_x(|x,x_i|), \,\,\,  \left| \frac{|x, x_i|}{|f(p), f(p_i)|}-1 \right| < \tau_x(|x,x_i|), \label{eq:two-ratio}
\end{align}
which implies  $f(p_i)\to f(p)$.

${\mathrm Case\,2)}$\, \, $x\in X_2$.
In this case, $f(p)\neq p$.
Let $\gamma_{x}^{\pm}$ be the two perpendiculars to $X$ at $x$.
By Lemma \ref{lem:susp2},  we have
\begin{equation}
    \angle \gamma_{x}^{+}(s_0)x_i \gamma_{x}^{-}(s_0) \ge \tilde\angle \gamma_x^{+}(s_0)x_i\gamma_{x}^{-}(s_0) 
                                           >  \pi-\tau(s_0), \label{eq:wide}
\end{equation}
If $x_i\in X_1$, then both  $\angle(\xi_i^+, \uparrow_{x_i}^{\gamma_x^+(s_0)})$ and 
$\angle(\xi_i^+, \uparrow_{x_i}^{\gamma_x^{-}(s_0)})$ become small, yielding a contradiction to 
\eqref{eq:wide}. Thus we have $x_i\in X_2$.
Then in a way similar to ${\mathrm Case\,1)}$,  we have the formula \eqref{eq:two-ratio},
which implies  $f(p_i)\to f(p)$.
\end{proof}

\begin{cor} \label{cor:cover}
$\eta_0|_{C_0^2}:C_0^2\to X_2$ is a double covering space and $X_2$ is open in $X$.
\end{cor}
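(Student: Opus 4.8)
The plan is to prove the statement in three steps: that $X_2$ is open in $X$; that $C_0^2$ is open in $C_0$; and that every point of $X_2$ admits an evenly covered neighborhood for $\eta_0|_{C_0^2}$. Throughout I will use the involution $f\colon C_0\to C_0$ (defined from Lemma \ref{prop:preimage}), which is a homeomorphism by Lemma \ref{lem:contin} and satisfies $\eta_0\circ f=\eta_0$ and $f(C_0^2)=C_0^2$.

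First I would show $X_2$ is open. Fix $x\in X_2$ with $\eta_0^{-1}(x)=\{p,q\}$, $p\ne q$, and suppose toward a contradiction that there are $x_i\to x$ with $x_i\in X\setminus X_2=X_1$, so $\eta_0^{-1}(x_i)=\{p_i\}$ is a single point. Applying Lemma \ref{lem:dist-ratio} with base point $p$ and $y=x_i$, the point produced must be $p_i$, and it satisfies $\bigl||x,x_i|_Y/|p,p_i|_C-1\bigr|<\tau_x(|x,x_i|_Y)$; since $|x,x_i|_Y\to 0$ this forces $|p,p_i|_C\to 0$, i.e. $p_i\to p$. Applying the same lemma with base point $q$ gives $p_i\to q$. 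As $C_0$ is Hausdorff and $p\ne q$, this is impossible, so $X_2$ is open. For the second step, note that $C_0^1=\{p\in C_0:\eta_0(p)\in X_1\}$ is exactly the fixed-point set $\{p\in C_0:f(p)=p\}$: if $\eta_0(p)\in X_1$ then $\eta_0^{-1}(\eta_0(p))=\{p\}$ so $f(p)=p$, while if $f(p)=p$ then $p$ cannot have a second preimage. Since $f$ is continuous and $C_0$ is Hausdorff, this fixed-point set is closed, so $C_0^2=C_0\setminus C_0^1$ is open.

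For the covering structure, fix $x\in X_2$, choose $p\in\eta_0^{-1}(x)$, and put $q=f(p)\ne p$. Using continuity of $f$ at $p$ and that $f$ is an open map (Lemma \ref{lem:contin}), I would pick $\delta>0$ small enough that $B(p,\delta)$ and $f(B(p,\delta))$ are disjoint open sets. Then $\eta_0$ is injective on $\tilde U_p:=B(p,\delta)\cap C_0^2$, because $\eta_0(z_1)=\eta_0(z_2)$ with $z_1\ne z_2$ forces $z_2=f(z_1)\in f(B(p,\delta))$; and $\tilde U_q:=f(\tilde U_p)$ is a disjoint open subset of $C_0^2$ with $\eta_0(\tilde U_q)=\eta_0(\tilde U_p)=:V\subseteq X_2$. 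Using $\eta_0\circ f=\eta_0$ and $\#\eta_0^{-1}(v)=2$ for $v\in V\subseteq X_2$, one checks directly that $\eta_0^{-1}(V)\cap C_0^2=\tilde U_p\sqcup\tilde U_q$ and that $\eta_0$ restricts to a continuous bijection of each of $\tilde U_p,\tilde U_q$ onto $V$. It remains to see that these bijections are homeomorphisms (and that $V$ is open). For this I would invoke Lemma \ref{lem:dist-ratio} once more, now in the expanding direction: given $w\in\tilde U_p$ and $\rho>0$ with $B(w,\rho)\subseteq\tilde U_p$, for every $v'\in X_2$ close enough to $v=\eta_0(w)$ there is a preimage $w'\in\eta_0^{-1}(v')$ with $|w,w'|_C<\rho$, hence $w'\in\tilde U_p$ and $v'\in\eta_0(\tilde U_p)$. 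Thus $\eta_0\colon\tilde U_p\to X_2$ is an open map; in particular $V$ is open in $X_2$, hence (by the first step) open in $X$, and $\eta_0|_{\tilde U_p}$ is a homeomorphism onto $V$. The same holds for $\tilde U_q=f(\tilde U_p)$ since $f$ is a homeomorphism and $\eta_0|_{\tilde U_q}=\eta_0|_{\tilde U_p}\circ(f|_{\tilde U_p})^{-1}$. Therefore $V$ is evenly covered, and since $x\in X_2$ was arbitrary, $\eta_0|_{C_0^2}\colon C_0^2\to X_2$ is a $2$-sheeted covering.

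I expect the main obstacle to be precisely the openness of $\eta_0$ near a point of $C_0^2$ (equivalently, that the continuous bijections onto $V$ are open): the spaces $C_0^2$ and $X_2$ need not be topological manifolds, so invariance of domain is unavailable, and it is the quantitative almost-isometry estimate of Lemma \ref{lem:dist-ratio} — used in both directions — that makes the argument go through.
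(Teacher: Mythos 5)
Your proof is correct and follows essentially the same route as the paper's: both arguments choose a neighborhood $D$ of a preimage with $D\cap f(D)=\emptyset$ and then use the quantitative estimate of Lemma \ref{lem:dist-ratio} to produce nearby preimages of nearby points, which yields openness of the image and the evenly covered structure. Your write-up merely fills in details the paper leaves as ``similarly one can show'' (openness of $X_2$ and of each restriction $\eta_0|_{D_k}$), so no substantive difference.
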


\begin{proof}
For $x\in X_2$ set  $\eta_0^{-1}(x)=\{ p_1,p_2\}$, 
and take an open neighborhood $D_1$ of $p_1$ in $C_0$ such that 
$D_1\cap f(D_1)$ is empty. We set $D_2=f(D_1)$. We show that $E:=\eta_0(D_i)$ is open in $X$.
Suppose that $E$ is not open, and take $y\in E$ for which there are $y_i\in X\setminus E$
converging to $y$. 
Let $\{ q_1, q_2\} := \eta_0^{-1}(y)$ with $q_k\in D_k$, $k=1,2$.
Applyinjg Lemma \ref{lem:dist-ratio} to $y_i\to y$ and $q_k\in \eta_0^{-1}(y)$, 
we have $q_{k,i} \in \eta_0^{-1}(y_i)$ such that 
\[
      \left| \frac{|q_k, q_{k,i}|}{|y,y_i|}-1 \right| < \tau_y(|y,y_i|), \,\, k=1,2.
\]
This implies that $q_{k,i}\in D_k$ and $y_i\in E$ for large $i$.
%
%
%
Since this is a contradiction, $E$ is open.  
Similarly one can show that 
each restriction $\eta_0|_{D_k}:D_k\to E$ is an open map, and hence is a homeomorphism.
\end{proof}

\begin{cor}\label{prop:XY}
$Y$ and $X$ are homeomorphic to the quotient spaces $C_0\times_{\phi} [0,t_0]/f$ and $C_0/f$ 
respectively, where $(x,0)$ and $(f(x),0)$ are identified for every $x\in C_0$.
\end{cor}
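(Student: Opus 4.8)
The plan is to realize $Y$ and $X$ as quotients of $C$ and $C_0$ by the equivalence relations induced by the gluing maps $\eta$ and $\eta_0$, to identify these relations with the $f$-orbit relations, and then to upgrade the resulting continuous bijections to homeomorphisms by a compactness argument.

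First I would pin down the set-theoretic structure of the fibers of $\eta\colon C\to Y$. By Lemma \ref{lem:dist-bdry}, every point $(p,t)$ with $t>0$ satisfies $|\eta(p,t),X|=t>0$, so $\eta(C\setminus C_0)\subset Y\setminus X$; together with the surjectivity of $\eta_0$ this forces $\eta(C_0)=X$ and $\eta(C\setminus C_0)=Y\setminus X$, the latter bijectively by Lemma \ref{lem:loc-isom}. Hence for $y\in Y\setminus X$ the fiber $\eta^{-1}(y)$ is a single point of $C\setminus C_0$, while for $x\in X$ the fiber is $\eta^{-1}(x)=\eta_0^{-1}(x)\subset C_0$, which by Lemma \ref{prop:preimage} has one or two elements and, by the very definition of $f$, is exactly an $f$-orbit $\{p,f(p)\}$. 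Therefore the relation $c\sim c'\iff\eta(c)=\eta(c')$ on $C$ is precisely the relation generated by $(x,0)\sim(f(x),0)$, i.e. the relation defining $C_0\times_{\phi}[0,t_0]/f$; similarly the relation induced by $\eta_0$ on $C_0$ is exactly the $f$-orbit relation (and $C_0/f$ is a sensible quotient since $f$ is a homeomorphism by Lemma \ref{lem:contin}).

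Since $\eta$ is $1$-Lipschitz, hence continuous, the universal property of the quotient topology yields a continuous bijection $\bar\eta\colon C_0\times_{\phi}[0,t_0]/f\to Y$, and likewise a continuous bijection $\bar\eta_0\colon C_0/f\to X$. To conclude, I would invoke compactness: by Proposition \ref{prop:cpn-diam} the spaces $(\pa M_i)^{\rm int}$ have uniformly bounded intrinsic diameter, so $C_0=\lim (\pa M_i)^{\rm int}$ is compact, and then $C=C_0\times_{\phi}[0,t_0]$ (a limit, via Proposition \ref{prop:warped}, of the compact spaces $C_{M_i}$ of uniformly bounded diameter) is compact. Consequently $C_0\times_{\phi}[0,t_0]/f$ and $C_0/f$ are compact, and a continuous bijection from a compact space onto a Hausdorff space — $Y$ and $X$ are metric spaces — is automatically a homeomorphism. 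This gives $Y\approx C_0\times_{\phi}[0,t_0]/f$ and $X\approx C_0/f$ (the latter also follows by restricting $\bar\eta$ to the closed, hence compact, subset $C_0/f$, whose image is $X$ by the fiber analysis).

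The argument is essentially formal once the fibers of $\eta$ are understood, so I do not expect a serious obstacle; the only point demanding care is the bookkeeping in the second paragraph, namely verifying via Lemmas \ref{lem:loc-isom}, \ref{lem:dist-bdry} and \ref{prop:preimage} that no identifications occur off $C_0$ and that those on $C_0$ are exactly the $f$-orbits, so that the induced quotient relation genuinely coincides with the one named in the statement.
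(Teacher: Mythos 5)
Your argument is correct, and it is essentially the derivation the paper intends (the corollary is stated without proof as an immediate consequence of Lemma \ref{prop:preimage}, Lemma \ref{lem:contin}, Corollary \ref{cor:cover} and Proposition \ref{prop:YNC}): the fibers of $\eta$ are exactly the $f$-orbits on $C_0$ and singletons off $C_0$, and the induced continuous bijection from the compact quotient onto the Hausdorff space $Y$ (resp. $X$) is automatically a homeomorphism. The compactness step is legitimate here since the diameter bound $d$ is in force throughout Section \ref{sec:str-limit}, so $C_0$ and hence $C$ are compact.
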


\begin{cor}\label{cor:cpntC0}
If the inradius of  $M_i\in\ca M(n,\kappa,\lambda,d)$ converges to zero,  then the number of components of 
$\pa M_i$  is at most two for large enough $i$.
\end{cor}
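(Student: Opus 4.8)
The plan is to read off the bound from the quotient description $X\approx C_0/f$ (Corollary \ref{prop:XY}), where $f\colon C_0\to C_0$ is the involutive homeomorphism constructed before Lemma \ref{lem:contin} and satisfies $\eta_0\circ f=\eta_0$, combined with the connectedness of $X$ and the estimate $\#\eta_0^{-1}(x)\le 2$ of Lemma \ref{prop:preimage}.

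First I would set up the componentwise limit. Since $M_i\in\ca M(n,\kappa,\lambda,d)$, Proposition \ref{prop:cpn-diam} gives a uniform bound $J$ on the number of components of $\pa M_i$ and a uniform bound $D$ on the intrinsic diameter of each, while Proposition \ref{prop:bdy} gives a uniform lower curvature bound. If the assertion fails, then after passing to a subsequence $\pa M_i$ has a fixed number $k\ge 3$ of components and $(\pa M_i)^{\rm int}$ converges componentwise; the limit $C_0$ then has exactly $k$ connected components $A_1,\dots,A_k$, because a Gromov--Hausdorff limit of connected compact spaces is connected. Each $A_j$ is compact, so $\eta_0(A_j)$ is a closed subset of $X$, and $X=\bigcup_{j=1}^k\eta_0(A_j)$.

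The key point is that for $j\ne j'$, if $\eta_0(A_j)\cap\eta_0(A_{j'})\ne\emptyset$ then $f(A_j)=A_{j'}$: a point $x$ in the intersection has preimages $p\in A_j$ and $q\in A_{j'}$ with $p\ne q$, so Lemma \ref{prop:preimage}(1) forces $\eta_0^{-1}(x)=\{p,q\}$ and hence $q=f(p)\in f(A_j)\cap A_{j'}$; since $f$ is a homeomorphism it permutes the components of $C_0$, so $f(A_j)=A_{j'}$. As $f$ is an involution, for each $j$ there is at most one index $j'\ne j$ with $f(A_j)=A_{j'}$. Now form the graph on the vertex set $\{1,\dots,k\}$ with an edge between $j$ and $j'$ whenever $j\ne j'$ and $\eta_0(A_j)\cap\eta_0(A_{j'})\ne\emptyset$. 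This graph must be connected, since otherwise $X$ would split as a disjoint union of two nonempty closed sets; but by the previous sentence every vertex has degree at most one, and a connected graph in which every vertex has degree at most one has at most two vertices. This contradicts $k\ge 3$, and the corollary follows.

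The only slightly delicate step is the componentwise bookkeeping of $C_0$ and its components, which is controlled by the uniform bounds of Proposition \ref{prop:cpn-diam}; the remaining argument is purely combinatorial and uses only that $\eta_0$ is at most two-to-one together with the involutive homeomorphism $f$.
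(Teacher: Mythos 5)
Your proof is correct and follows essentially the same route as the paper, which simply notes that $f$ is an involutive homeomorphism permuting the components of $C_0$ and that the connectedness of $X=C_0/f$ then forces at most two components; your graph-theoretic bookkeeping and the verification that intersecting images force $f(A_j)=A_{j'}$ are just a careful expansion of that two-line argument.
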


\begin{proof}
Since $f$ is an involutive homeomorphism, $f$ gives a transposition of two components of $C_0$.
The conclusion is immediate from the connectedness of $X$.
\end{proof}

\begin{rem}
In Theorem \ref{thm:two}, we remove the diameter bound to get the diameter
 free result.  
\end{rem}

\begin{lem} \label{lem:loc-isometry}
$\eta_0|_{C_0^2}: (C_0^2)^{\rm int} \to X_2^{\rm int}$ is a local isometry.
\end{lem}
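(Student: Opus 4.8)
The plan is to identify $\eta_0|_{C_0^2}$ as a length-preserving local homeomorphism and to deduce the local isometry statement from this. By Corollary \ref{cor:cover}, $\eta_0|_{C_0^2}\colon C_0^2\to X_2$ is a double covering, in particular a local homeomorphism, and $X_2$ is open in $X$; since $\eta_0$ is continuous, $C_0^2=\eta_0^{-1}(X_2)$ is open in $C_0$ as well. Fix $p\in C_0^2$, put $x:=\eta_0(p)$, and choose an evenly covered neighbourhood: an open set $\tilde U\ni p$ in $C_0^2$ that $\eta_0$ maps homeomorphically onto an open set $U:=\eta_0(\tilde U)\ni x$ in $X_2$, and write $\iota:=(\eta_0|_{\tilde U})^{-1}$ for the inverse. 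The goal is to produce a neighbourhood $W\subset\tilde U$ of $p$ on which $\eta_0$ is an isometry for the metrics $(C_0^2)^{\rm int}$ and $X_2^{\rm int}$.

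First, $\eta_0|_{C_0^2}$ is globally $1$-Lipschitz as a map $(C_0^2)^{\rm int}\to X_2^{\rm int}$: for any rectifiable curve $\gamma$ in $C_0^2\subset C_0$ joining $p_1$ to $p_2$, Proposition \ref{lem:length} gives $L(\eta_0\circ\gamma)=L(\gamma)$, and $\eta_0\circ\gamma$ lies in $X_2$, so taking the infimum over $\gamma$ yields $d_{X_2^{\rm int}}(\eta_0 p_1,\eta_0 p_2)\le d_{(C_0^2)^{\rm int}}(p_1,p_2)$. For the reverse inequality I would lift short curves through the covering. Choose $\delta>0$ so small that the open $4\delta$-ball of $X_2^{\rm int}$ about $x$ lies in $U$, and set $W:=\{\,p_1\in\tilde U : d_{(C_0^2)^{\rm int}}(p,p_1)<\delta \text{ and } d_{X_2^{\rm int}}(x,\eta_0 p_1)<\delta\,\}$, an open neighbourhood of $p$. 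For $p_1,p_2\in W$ one has $d_{X_2^{\rm int}}(\eta_0 p_1,\eta_0 p_2)<2\delta$, so for each $\varepsilon\in(0,\delta)$ there is a curve $c$ in $X_2$ from $\eta_0 p_1$ to $\eta_0 p_2$ with $L(c)<d_{X_2^{\rm int}}(\eta_0 p_1,\eta_0 p_2)+\varepsilon<3\delta$; then $d_{X_2^{\rm int}}(x,c(t))<\delta+L(c)<4\delta$ for all $t$, so $c$ stays inside $U$ and $\tilde c:=\iota\circ c$ is a curve in $\tilde U\subset C_0^2$ from $p_1$ to $p_2$. Granting that $\tilde c$ is rectifiable, reparametrize it by arclength and apply Proposition \ref{lem:length} once more to get $L(\tilde c)=L(\eta_0\circ\tilde c)=L(c)$; hence $d_{(C_0^2)^{\rm int}}(p_1,p_2)\le L(\tilde c)=L(c)<d_{X_2^{\rm int}}(\eta_0 p_1,\eta_0 p_2)+\varepsilon$, and letting $\varepsilon\to 0$ gives the reverse inequality. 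Thus $\eta_0$ is an isometry on $W$, and since $p$ was arbitrary, $\eta_0|_{C_0^2}$ is a local isometry.

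The one point that is not purely formal, and the one I expect to be the real obstacle, is the rectifiability of the lifted curve $\tilde c$ --- equivalently, that $\iota$ is locally Lipschitz. I would derive this from the infinitesimal behaviour of $\eta_0$: by Proposition \ref{prop:isometry}(1), for every $q\in C_0^2$ the differential $(d\eta_0)_q\colon T_q(C_0)\to T_{\eta_0(q)}(X)$ is a \emph{metric} isometry of tangent cones --- and this is exactly where the restriction to $C_0^2$, as opposed to all of $C_0$, is used. If $\eta_0$ failed to be bi-Lipschitz with a definite ratio on every neighbourhood of $p$, there would be points $q_j,q_j'\to p$ with $d_X(\eta_0 q_j,\eta_0 q_j')/d_{C_0}(q_j,q_j')\to 0$; rescaling by $t_j:=d_{C_0}(q_j,q_j')\to 0$ and invoking Proposition \ref{prop:tang-cone} for $X$ at $x$ together with the uniqueness of the limit $(d\eta_0)_p$ of the rescaled maps $\eta_0\colon(\frac{1}{t_j}C_0,p)\to(\frac{1}{t_j}X,x)$ (Lemma \ref{lem:susp1}(2)), one gets $d_X(\eta_0 q_j,\eta_0 q_j')/t_j\to 1$ from the fact that $(d\eta_0)_p$ is an isometry --- a contradiction. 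Hence on a small enough sheet $\eta_0$ is bi-Lipschitz, $\iota$ is Lipschitz there, and (after shrinking $\delta$ so the curves $c$ above stay in the image of that sheet) $\tilde c$ is rectifiable. Apart from this step, the argument is a routine assembly of Corollary \ref{cor:cover}, Proposition \ref{lem:length}, and Propositions \ref{prop:tang-cone} and \ref{prop:isometry}.
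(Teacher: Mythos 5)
Your overall plan coincides with the paper's: restrict $\eta_0$ to an evenly covered neighbourhood of the double covering $\eta_0|_{C_0^2}$, obtain the easy $1$-Lipschitz bound from Proposition \ref{lem:length}, and prove the reverse inequality by lifting short curves in $X_2$ through the covering and comparing lengths. You also correctly put your finger on the genuine difficulty, namely rectifiability of the lifted curve; the paper's own proof explicitly flags the same point (``We do not know if $g\circ\gamma$ is a Lipschitz curve yet'').

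Your resolution of that difficulty, however, has a real gap. You try to prove that $\eta_0$ is bi-Lipschitz near $p$ by contradiction: if not, take $q_j,q_j'\to p$ with $|\eta_0 q_j,\eta_0 q_j'|/|q_j,q_j'|\to 0$, set $t_j:=|q_j,q_j'|$, rescale $(\tfrac{1}{t_j}C_0,p)$, and appeal to the fact that the blow-up limit $(d\eta_0)_p$ is an isometry. But under this rescaling $q_j,q_j'$ sit at mutual distance $1$ while at distance $|p,q_j|/t_j$ from the base point, and nothing forbids $|p,q_j|\gg t_j$; in that case the pair escapes to infinity in $(\tfrac{1}{t_j}C_0,p)$ and the blow-up at $p$ yields no contradiction. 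This is not a cosmetic omission: having $(d\eta_0)_q$ a metric isometry for every $q$ near $p$ is a pointwise infinitesimal statement and does not by itself force a bi-Lipschitz bound on a fixed neighbourhood of $p$ --- even on $\mathbb{R}$, a differentiable map with nonvanishing derivative at a point need not be bi-Lipschitz near it when the derivative oscillates, as with $x\mapsto x+x^2\sin(1/x)$. Re-centering the blow-up at $q_j$ does not obviously repair this either, since $q_j$ is a base point moving to $p$ and there is no uniform rate of convergence of the rescaled maps to $(d\eta_0)_q$ as $q$ varies.

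The paper avoids this by never fixing a single blow-up centre and by not attempting a bi-Lipschitz bound at all. Taking a minimal geodesic $\gamma$ in $X_2$ and its set-theoretic lift $g\circ\gamma$, it computes the metric speed $v_{g\circ\gamma}(t)=\lim_{\epsilon\to 0}|g\circ\gamma(t),g\circ\gamma(t+\epsilon)|/|\epsilon|$ at each fixed $t$: here the two compared points are at scale $\epsilon$ from the centre $g\circ\gamma(t)$ itself, so escape cannot occur, and Proposition \ref{lem:length} gives $v_{g\circ\gamma}(t)=v_\gamma(t)$. Since this pointwise speed is bounded, $g\circ\gamma$ is in fact Lipschitz and $L(g\circ\gamma)=\int v_{g\circ\gamma}\,dt=L(\gamma)$, which closes the argument. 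To repair your proof you need either this moving-centre speed computation, or a uniform lower bi-Lipschitz estimate at small scales over an entire neighbourhood of $p$ --- strictly more than the blow-up at the single point $p$ delivers.
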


\begin{proof}
Since $\eta_0|_{C_0^2}:C_0^2\to X_2$ is a covering by Corollary \ref{cor:cover}, we can find 
relatively compact open subsets $D$ and $E$ of  $C_0^2$ and $X^2$ respectively such that
$\eta_0:D\to E$ is a homeomorphism. We must show that  $\eta_0:D\to E$ is an isometry
with respect to the interior distances of $C_0$ and $X$ respectively.
Since $\eta_0$ is $1$-Lipschitz, it suffices to show
that $g:=\eta_0^{-1}:E\to D$ is $1$-Lipschitz.
We may assume that $D$ is small enough so as to satisfy that
for every $x,y\in E$, there is a minimal geodesic $\gamma:[0,1]\to X_2$ joining $x$ to $y$.
We do not know if $g\circ\gamma$ is a Lipschitz curve yet.
However by Proposition \ref{lem:length}, $g\circ\gamma$ has the  speed $v_{g\circ\gamma}(t)$ (see \cite{BBI})
\[
        v_{g\circ\gamma}(t) =\lim_{\epsilon\to 0} \frac{|g\circ\gamma(t),g\circ\gamma(t+\epsilon)|}{|\epsilon|},
\]
which is equal to the speed $v_{\gamma}(t)$ of $\gamma$, and therefore
\[
     |x,y|=L(\gamma)=\int_0^1  v_{g\circ\gamma}(t) dt=L(g\circ\gamma) \ge |g(x),g(y)|.
\]
This completes the proof.
\end{proof}


\begin{lem} \label{lem:X1}
If $X_1$ has non-empty interior in $X$, then $X=X_1$ and
$\eta_0: (C_0)^{\rm int} \to X^{\rm int}$ is an isometry.
\end{lem}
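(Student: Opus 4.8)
The plan is to show first that $X_1$ open forces $X=X_1$, and then to upgrade the length-preserving property of $\eta_0$ (Proposition \ref{lem:length}) to an honest isometry onto $X^{\rm int}$. For the first part, suppose $x_0$ lies in the interior of $X_1$. I would argue that $X_2$ cannot be dense in $X$: indeed $X_1$ being open with nonempty interior shows $X\setminus X_2=X_1$ has nonempty interior, so $X_2$ is not dense. Then I want to propagate ``$x\in X_1$'' along curves. Fix any $y\in X$ and a minimal geodesic $\mu:[0,\ell]\to X^{\rm int}\subset Y$ from $x_0$ to $y$ (which exists since $N=X^{\rm int}$ is a compact length space, hence geodesic). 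Let $S:=\{t\in[0,\ell]\mid \mu(t)\in X_1\}$; it contains a neighborhood of $0$. I claim $S$ is both open and closed in $[0,\ell]$. Closedness: if $\mu(t_j)\in X_1$ and $t_j\to t$, then $\eta_0^{-1}(\mu(t_j))$ is a single point $p_j$; passing to a subsequence $p_j\to p$ with $\eta_0(p)=\mu(t)$, and if $\mu(t)\in X_2$ with $\eta_0^{-1}(\mu(t))=\{q_1,q_2\}$, then by Corollary \ref{cor:cover} $X_2$ is open and $\eta_0|_{C_0^2}$ is a double cover, so both sheets over a neighborhood of $\mu(t)$ are hit — contradicting $\#\eta_0^{-1}(\mu(t_j))=1$. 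Openness: by Corollary \ref{cor:cover} again, $X_2$ is open, hence $X_1=X\setminus X_2$ is closed; combined with closedness of $S$ just shown and... hmm, I need $X_1$ \emph{open} to get $S$ open. This is exactly the hypothesis. So if $\mu(t)\in X_1$ and $X_1$ is open, then $\mu(s)\in X_1$ for $s$ near $t$, giving openness of $S$. By connectedness $S=[0,\ell]$, so $y\in X_1$. Since $y$ was arbitrary, $X=X_1$.

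Once $X=X_1$, every $x\in X$ has $\#\eta_0^{-1}(x)=1$, so $\eta_0:C_0\to X$ is a continuous bijection (using Lemma \ref{lem:contin}, where $f=\mathrm{id}$), and between compact spaces it is a homeomorphism. It remains to see it is an isometry for the interior metrics. Since $\eta_0$ is $1$-Lipschitz (for interior metrics too, as noted after the definition of $\eta_0$), it suffices to bound $|\eta_0(p),\eta_0(q)|_{X^{\rm int}}$ below by $|p,q|_{C_0^{\rm int}}$, equivalently to show $g:=\eta_0^{-1}:X\to C_0$ is $1$-Lipschitz for the interior metrics. I would mimic the proof of Lemma \ref{lem:loc-isometry}: given $x,y\in X$, take a minimal geodesic $\gamma:[0,1]\to X^{\rm int}$ joining them (exists since $X^{\rm int}=N$ is geodesic); by Proposition \ref{lem:length} the curve $g\circ\gamma$ has metric speed at a.e.\ $t$ equal to the speed of $\gamma$ (the differential $d\eta_0$ preserves norms, so locally the speed of any Lipschitz curve is preserved under $\eta_0$, and symmetrically under $g$ on the image of such curves), whence $L(g\circ\gamma)=L(\gamma)=|x,y|_{X^{\rm int}}$ and therefore $|g(x),g(y)|_{C_0^{\rm int}}\le L(g\circ\gamma)=|x,y|_{X^{\rm int}}$.

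The one delicate point — the main obstacle — is justifying that $g\circ\gamma$ is a rectifiable curve with metric speed equal to that of $\gamma$, i.e.\ transporting Proposition \ref{lem:length} (a statement about differentials, hence about infinitesimal behavior at each point) into a statement about the speed of the specific curve $g\circ\gamma$. The clean way is: $\eta_0$ is a $1$-Lipschitz homeomorphism, so $g\circ\gamma$ is continuous, and for its speed one uses that at each $t$ the blow-up of $\eta_0$ at $p=g(\gamma(t))$ is the norm-preserving map $d\eta_0:T_p(C_0)\to T_x(X)$ (Lemma \ref{lem:susp1}(2) and Proposition \ref{lem:length}); running the rescaling argument at $\gamma(t)$ shows $v_{g\circ\gamma}(t)$ exists a.e.\ and equals $v_\gamma(t)$, exactly as in Lemma \ref{lem:loc-isometry}. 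Since $v_\gamma\equiv 1$ (arc length), $g\circ\gamma$ is $1$-Lipschitz and the length computation goes through. I would also remark that combining Lemma \ref{lem:loc-isometry} and this global version, $\eta_0$ is a local isometry which is also a bijection, so it is a global isometry $(C_0)^{\rm int}\to X^{\rm int}$, completing the proof.
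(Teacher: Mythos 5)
Your second part (the upgrade from Proposition \ref{lem:length} to an isometry $(C_0)^{\rm int}\to X^{\rm int}$ once $X=X_1$ is known) is correct and is essentially what the paper does — it explicitly defers to the argument of Lemma \ref{lem:loc-isometry}.

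The first part has a real gap. The hypothesis is that $X_1$ has \emph{non-empty interior}, not that $X_1$ is open. Your open--closed argument on $S=\{t:\mu(t)\in X_1\}$ needs openness of $S$, which you correctly see would follow if $X_1$ were open near each $\mu(t)$; but then you write ``This is exactly the hypothesis'' — it is not. You did establish closedness of $S$ (since $X_2$ is open by Corollary \ref{cor:cover}, hence $X_1$ is closed), and you have a neighborhood of $0$ in $S$ because $\mu(0)\in\mathrm{int}\,X_1$, but that is not enough: nothing rules out $\sup S = t_0 < \ell$ with $\mu(t_0)$ a boundary point of $X_1$ and $\mu(s)\in X_2$ for $s>t_0$ near $t_0$. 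To exclude that configuration you need precisely the kind of non-branching argument the paper invokes, and your proof never supplies it. The paper's route is to work upstairs: $V:=\eta_0^{-1}(\mathrm{int}\,X_1)$ is open in $C_0$, and non-branching of geodesics in the Alexandrov space $C_0$ forces $V=C_0$ (roughly: at a boundary point of $V$, lifting a geodesic of $X$ through a nearby $X_2$-region produces two distinct geodesic continuations of a single geodesic germ in $C_0$, one being the $f$-image of the other, contradicting non-branching; compare the proof of Lemma \ref{lem:convex}). This is the missing ingredient: openness of $X_1$ is a \emph{conclusion} obtained from non-branching, not part of the hypothesis.
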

\begin{proof}
If the interior $U$ of $X_1$ is non-empty, then $V:=\eta_0^{-1}(U)\subset C_0^1$ is open
in $C_0$. From the non-branching property of geodesics in Alexandrov spaces,
we have $V=C_0$ and $X=X_1$.
An argument similar to the proof of Lemma \ref{lem:loc-isometry}
shows that $\eta_0:(C_0)^{\rm int}\to X^{\rm int}$ is an isometry.
\end{proof}

\begin{prop} \label{lem:isometry}
 $f:(C_0)^{\rm int}\to (C_0)^{\rm int}$ is an isometry.
\end{prop}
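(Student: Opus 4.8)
The plan is to show that $f:(C_0)^{\rm int}\to(C_0)^{\rm int}$ is $1$-Lipschitz; since $f^2=\mathrm{id}$, this forces $f$ to be an isometry. Equivalently, it suffices to prove that $f$ does not increase the length of any Lipschitz curve: given $p,q\in C_0$ and an arc-length parametrized Lipschitz curve $\gamma:[0,\ell]\to C_0$ from $p$ to $q$ with $L(\gamma)=\ell$, I will show $L(f\circ\gamma)\le\ell$. Taking the infimum over such $\gamma$ then gives $d_{\rm int}(fp,fq)\le d_{\rm int}(p,q)$, and the involution property upgrades this to equality. (If $X=X_1$ then $f=\mathrm{id}$ and there is nothing to prove; the argument below covers this case as well and does not use Lemma~\ref{lem:X1}.)

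\textbf{Structural input.} By Corollary~\ref{cor:cover}, $C_0^2$ is open in $C_0$ and $\eta_0|_{C_0^2}:C_0^2\to X_2$ is a double covering whose deck transformation is $f|_{C_0^2}$. Combined with Lemma~\ref{lem:loc-isometry}, which says $\eta_0|_{C_0^2}$ is a local isometry for the interior metrics, this shows that $f$ is a local isometry on $C_0^2$ with respect to $d_{\rm int}^{C_0}$: each point of $C_0^2$ has a neighborhood on which $f$ preserves $d_{\rm int}^{C_0}$. On the other hand, $f$ restricts to the identity on $C_0^1=C_0\setminus C_0^2$, since every point of $X_1$ has exactly one $\eta_0$-preimage. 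Finally, $f$ is continuous by Lemma~\ref{lem:contin}.

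\textbf{The decomposition.} Write $c:=f\circ\gamma$, a continuous curve from $fp$ to $fq$. Let $U:=\gamma^{-1}(C_0^2)$, an open subset of $[0,\ell]$, say $U=\bigsqcup_j(a_j,b_j)$, and $J:=\gamma^{-1}(C_0^1)=[0,\ell]\setminus U$, which is closed. If $[s,t]$ is contained in a single $\overline{(a_j,b_j)}$, then the local isometry property of $f$ along $\gamma((a_j,b_j))\subset C_0^2$, together with continuity of $c$ and the facts $c(a_j)=\gamma(a_j)$, $c(b_j)=\gamma(b_j)$ (as $a_j,b_j\in J$), gives $L(c|_{[s,t]})=L(\gamma|_{[s,t]})=t-s$, hence $d_{\rm int}(c(s),c(t))\le t-s$. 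On $J$ we have $c=\gamma$ pointwise. Now take arbitrary $0\le s<t\le\ell$. If $J\cap[s,t]=\emptyset$, then $[s,t]$ lies in one $(a_j,b_j)$ and $d_{\rm int}(c(s),c(t))\le t-s$ by the above. Otherwise set $s':=\min(J\cap[s,t])$ and $t':=\max(J\cap[s,t])$; then $s',t'\in J$, and $[s,s']$ and $[t',t]$ are each degenerate or contained in a single $\overline{(a_j,b_j)}$. Using $c=\gamma$ at $s',t'\in J$ and that $\gamma$ is arc-length parametrized, the middle term is bounded by $d_{\rm int}(c(s'),c(t'))=d_{\rm int}(\gamma(s'),\gamma(t'))\le L(\gamma|_{[s',t']})=t'-s'$, so
\begin{align*}
d_{\rm int}(c(s),c(t)) &\le d_{\rm int}(c(s),c(s'))+d_{\rm int}(c(s'),c(t'))+d_{\rm int}(c(t'),c(t)) \\
&\le (s'-s)+(t'-s')+(t-t') = t-s.
\end{align*}
Thus $c=f\circ\gamma$ is $1$-Lipschitz, so $L(f\circ\gamma)\le\ell=L(\gamma)$, which is what we needed.

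\textbf{Main obstacle.} The one delicate point is the behavior of $f$ across $C_0^1$: this set can be of positive measure and of codimension one (as in Example~\ref{ex:D}), and a priori $f$ is merely continuous there, with no Lipschitz control available. What makes the argument go through is that $f$ is not just continuous but literally equal to the identity on $C_0^1$, so $f\circ\gamma$ coincides with $\gamma$ on all of $\gamma^{-1}(C_0^1)$; this confines all possible length distortion to the two transition subintervals $[s,s']$ and $[t',t]$, each of which lies inside a single excursion into $C_0^2$ where $f$ is a genuine isometry. The remaining verification that the isometry property on an open excursion interval extends to its closed endpoints (which lie in the fixed locus $C_0^1$) is routine, using only continuity of $f$ and $f|_{C_0^1}=\mathrm{id}$.
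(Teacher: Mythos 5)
Your proof is correct, but it takes a genuinely different route from the paper's. The paper argues on a \emph{minimal geodesic} $\gamma$ from $p_1$ to $p_2$: it invokes the non-branching property of geodesics in Alexandrov spaces to show that $\gamma$ meets $C_0^1$ in at most a single point, and then concatenates the local-isometry estimate on the (at most two) subarcs lying in $C_0^2$ via the triangle inequality, treating the cases $p_1,p_2\in C_0^1$, both in $C_0^2$, and mixed, separately. You instead work with an arbitrary arc-length parametrized Lipschitz curve, decompose its domain into the open set $\gamma^{-1}(C_0^2)=\bigsqcup_j(a_j,b_j)$ and the closed set $\gamma^{-1}(C_0^1)$, and show directly that $f\circ\gamma$ is $1$-Lipschitz: the fixed-point property $f|_{C_0^1}=\mathrm{id}$ absorbs arbitrarily many (even uncountably many, or positive-measure) crossings of $C_0^1$, while the local isometry of $f$ on $C_0^2$ (from Corollary~\ref{cor:cover} and Lemma~\ref{lem:loc-isometry}) controls each excursion, with continuity of $f$ (Lemma~\ref{lem:contin}) handling the endpoints of each excursion. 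What your approach buys is that it avoids the non-branching property and the auxiliary (and somewhat delicate) claim that a minimal geodesic crosses $C_0^1$ exactly once; what it costs is the slightly longer bookkeeping with $s'=\min(J\cap[s,t])$, $t'=\max(J\cap[s,t])$, plus one point you pass over quickly: that the intrinsic metric of the open subset $C_0^2$ and the restriction of $(C_0)^{\rm int}$ assign the same length to curves contained in $C_0^2$, which is standard but worth a sentence. Both proofs rest on the same three structural inputs ($f=\mathrm{id}$ on $C_0^1$, local isometry on $C_0^2$, and the involution property to upgrade $1$-Lipschitz to isometry).
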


\begin{proof}
For $x\in X_2$ with $\eta_0^{-1}(x)=\{ q_1,q_2\}$, by lemma \ref{lem:loc-isometry}, we can take disjoint open sets 
$q_i\in D_i$, $i=1,2$, and $E$
such that $\eta_0^i=\eta_0|_{D_i}:D_i\to E$ are isometry.
Thus $f|_{D_1}=(\eta_0^2)^{-1}\circ\eta_0^1:D_1\to D_2$
is an isometry with respect to the interior distances.
Note that $f$ is identity on $C_0^1$, and by Lemma \ref{lem:loc-isometry}, $f:(C_0^2)^{\rm int} \to (C_0^2)^{\rm int}$ is 
a locally isometry.
For every $p_1,p_2\in C_0$ we show that $|f(p_1),f(p_2)|=|p_1,p_2|$. This is obvious if $p_1,p_2\in C_0^1$.
Let $\gamma:[0,1]\to C_0$ be a minimal geodesic joining $p_1$ to $p_2$.
If  $p_1,p_2\in C_0^2$, applying Lemma \ref{lem:loc-isometry}, we may assume that $\gamma$ meets $C_0^1$.
Let $t_0\in (0,1)$ be the smallest  parameter with $\gamma(t_0)\in C_0^1$.
By Lemma \ref{lem:loc-isometry}, we have $|f(p_1), f(\gamma(t_0))|=|p_1, \gamma(t_0)|$.
Therefore the non-branching property of geodesics in Alexandrov space implies that
$\gamma\cap C_0^1$ consists of only the single point $\gamma(t_0)$, and therefore
we also have  $|f(p_2), f(\gamma(t_0))|=|p_2, \gamma(t_0)|$.
It follows that
\begin{align*}
   |f(p_1), f(p_2)| &\le |f(p_1),f(\gamma(t_0))|  + |f(\gamma(t_0)). f(p_2)| \\
                      &\le  |p_1,\gamma(t_0)|+|\gamma(t_0),p_2| =|p_1,p_2|.
\end{align*}
Repeating this, we also have $|p_1,p_2|\le |f(p_1),f(p_2)|$, and $ |f(p_1),f(p_2)|=|p_1,p_2|$.
The case of $p_1\in C_0^1$ and $p_2\in C_0^2$ is similar, and hence is omitted.
This completes the proof.
\end{proof}

\subsection{Structure theorems}

In this subsection, making use of the results on gluing maps in the last subsection,
we obtain structure results for limit spaces. 

We begin with 

\begin{lem} \label{lem:convex}
$X_2$ is convex in $X$.
\end{lem}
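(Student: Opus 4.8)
The goal is to show that $X_2=\{x\in X\mid \#\eta_0^{-1}(x)=2\}$ is convex in $X$, i.e.\ that any $X$-minimal geodesic between two points of $X_2$ stays in $X_2$. By Corollary \ref{cor:cover}, $X_2$ is open in $X$, so the set of parameters along such a geodesic for which the point lies in $X_2$ is open; the issue is to rule out that the geodesic touches $X_1$. Suppose for contradiction that $\gamma\colon[0,\ell]\to X$ is a minimal geodesic (in $X=X^{\rm int}$, recalling Proposition \ref{prop:intrinsic}) with $\gamma(0),\gamma(\ell)\in X_2$ but $\gamma(t_*)\in X_1$ for some interior parameter $t_*$, and we may take $t_*$ to be, say, the first such parameter.

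The first step is to lift near the endpoint. Fix $p\in\eta_0^{-1}(\gamma(0))$. Using Lemma \ref{lem:dist-ratio} (together with the local isometry Lemma \ref{lem:loc-isometry} and the fact that $\eta_0|_{C_0^2}$ is a covering, Corollary \ref{cor:cover}), one lifts the portion of $\gamma$ lying in $X_2$ to a curve $\tilde\gamma$ in $C_0^2$ starting at $p$, and by Lemma \ref{lem:loc-isometry} this lift is locally length-preserving, hence $\tilde\gamma$ is itself a geodesic in $(C_0)^{\rm int}$ on the open set where $\gamma$ stays in $X_2$. As $t\uparrow t_*$, the lift $\tilde\gamma(t)$ converges to a point $\tilde p_*\in C_0$ with $\eta_0(\tilde p_*)=\gamma(t_*)\in X_1$, so $\tilde\gamma$ extends continuously to $[0,t_*]$. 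Because $\eta_0\colon(C_0)^{\rm int}\to X^{\rm int}$ is $1$-Lipschitz and length-preserving on curves (Proposition \ref{lem:length}), $L(\tilde\gamma|_{[0,t_*]})=L(\gamma|_{[0,t_*]})=t_*=|\,\gamma(0),\gamma(t_*)\,|_X\le |p,\tilde p_*|_{C_0}$; combined with the $1$-Lipschitz bound the reverse inequality holds, so $\tilde\gamma|_{[0,t_*]}$ is a minimal geodesic in $(C_0)^{\rm int}$.

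The second step is to produce a \emph{second} minimal geodesic from $p$ to a point over $\gamma(t_*)$ and derive a branching contradiction. Since $\gamma(t_*)\in X_1$, there is only one preimage $\tilde p_*$, but one can also lift $\gamma$ using the \emph{other} sheet near the start: applying the covering structure of $\eta_0|_{C_0^2}$ to the involution $f$, the curve $f\circ\tilde\gamma|_{[0,t_*)}$ is another geodesic in $(C_0)^{\rm int}$, starting at $f(p)$, and by Proposition \ref{lem:isometry} ($f$ is an isometry of $(C_0)^{\rm int}$) it is minimal of the same length, ending at $f(\tilde p_*)=\tilde p_*$ (the fixed point over $X_1$). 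Now continue $\gamma$ slightly past $t_*$: for $t>t_*$ close to $t_*$, $\gamma(t)$ returns to $X_2$ (using that $t_*$ was the first touching parameter and $X_2$ open, or by a separate short argument), with two preimages; tracking which sheet $\tilde\gamma$ and $f\circ\tilde\gamma$ limit onto forces the two distinct minimal geodesics $\tilde\gamma|_{[0,t_*]}$ and $f\circ\tilde\gamma|_{[0,t_*]}$ from $p$ (resp.\ $f(p)$) to agree on an initial segment yet split — contradicting the non-branching of geodesics in the Alexandrov space $(C_0)^{\rm int}$, exactly as in the proof of Proposition \ref{lem:isometry}. Hence no interior touching of $X_1$ occurs and $X_2$ is convex.

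\textbf{Main obstacle.} The delicate point is the behaviour exactly at the parameter $t_*$ where $\gamma$ meets $X_1$: one must show the two liftings (via the two sheets near $p$) genuinely give \emph{two distinct} minimal geodesics in $(C_0)^{\rm int}$ meeting only at $\tilde p_*$, so that the non-branching property can be invoked; equivalently, one must rule out that $p=f(p)$ already (which would put $\gamma(0)$ in $X_1$) and that the two lifts coincide. This is handled by the local isometry of $\eta_0|_{C_0^2}$ and the distance-ratio Lemma \ref{lem:dist-ratio}, which pin down the lifts uniquely on the open part, but the passage to the closed endpoint and the comparison of germs of geodesics is where the care is needed. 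The rest is routine once the lifting and the isometry property of $f$ are in hand.
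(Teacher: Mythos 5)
Your first step — lifting the $X_2$-portion of $\gamma$ via the covering $\eta_0|_{C_0^2}$, extending to the unique preimage $\tilde p_*$ of the first $X_1$-point, and observing that both $\tilde\gamma|_{[0,t_*]}$ and $f\circ\tilde\gamma|_{[0,t_*]}$ are minimal geodesics in $(C_0)^{\rm int}$ ending at $\tilde p_*$ — agrees with the paper's proof. The gap is in the contradiction step. Non-branching applies to two geodesics that \emph{coincide on a nontrivial segment} and then diverge; but $\tilde\gamma|_{[0,t_*]}$ and $f\circ\tilde\gamma|_{[0,t_*]}$ start at the distinct points $p$ and $f(p)$ and meet only at the single endpoint $\tilde p_*$, so this pair exhibits no branching. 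To manufacture an actual branching configuration you would need two distinct lifted continuations of $\gamma$ just past $t_*$, and that requires $\gamma(t)\in X_2$ for $t$ slightly greater than $t_*$. Your parenthetical justification does not give this: $t_*$ being the \emph{first} touching parameter constrains only $[0,t_*)$, and openness of $X_2$ says nothing about $t>t_*$. A priori $\gamma$ could remain in $X_1$ on a whole interval after $t_*$ (at this stage one only knows $X_1$ has empty interior when $X_2\neq\emptyset$, which does not prevent a geodesic from lying in it — indeed Lemma \ref{lem:lift}, which sorts out such behaviour, is proved \emph{after} and \emph{using} this convexity lemma), in which case the forward lift is unique and no branching occurs at $\tilde p_*$.

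The paper closes this by an angle argument at $\tilde z=\tilde p_*$ that needs no information about where $\gamma$ goes after $t_*$ beyond the fact that it continues as a geodesic: lift the forward direction $\tfrac{d}{dt}\gamma(t_*)$ to some $\tilde v\in\Sigma_{\tilde z}(C_0)$ (surjectivity of $d\eta_0$ on directions). Since $(d\eta_0)_{\tilde z}$ is $1$-Lipschitz and norm-preserving, the two \emph{distinct} backward directions $\tilde\gamma_1'(0)\neq\tilde\gamma_2'(0)=f_*\tilde\gamma_1'(0)$ both satisfy $\angle(\tilde v,\tilde\gamma_i'(0))\ge\angle\bigl(\gamma'(t_*),-\gamma'(t_*)\bigr)=\pi$, and a space of directions (an Alexandrov space of curvature $\ge 1$) cannot contain two distinct points at distance $\pi$ from a given point. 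If you want to keep your non-branching strategy, you must first prove that $\gamma$ re-enters $X_2$ immediately after $t_*$ (or handle the case where it does not), which is a genuine missing piece; otherwise, switch to the directional argument.
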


\begin{proof}
Suppose this is not the case. Then we have a minimal geodesic $\gamma:[0,1]\to X$
joining points $x,y\in X_2$ such that $\gamma$ is not entirely contained in $X_2$.
Let $t_1$ be the first parameter with $\gamma(t_1)\in X_1$. Set $z:=\gamma(t_1)$.
By Lemma \ref{lem:loc-isometry}, for any $p\in \eta_0^{-1}(x)$, there exists a unique geodesic
$\tilde\gamma:[0,t_1]\to C_0$
such that $\tilde\gamma(0)=p$ and $\eta_0\circ\tilde\gamma(t)=\gamma(t)$,
for every $t\in [0,t_1]$. Put $\tilde z:=\tilde\gamma(t_1)\in C_0^1$, and take
$\tilde v\in \Sigma_{\tilde z}(C_0)$ such that
$(d\eta_0)_{\tilde z}(\tilde v)=\frac{d}{dt}\gamma(t_0)\in\Sigma_z(X)$.
Let $\tilde\gamma_1:[0,t_1]\to C_0$ and   $\gamma_1:[0,t_1]\to X$ be the reversed geodesic to
$\tilde\gamma$ and $\gamma_{[0,t_1]}$:
$\tilde\gamma_1(t)=\tilde\gamma(t_0-t)$,  $\gamma_1(t)=\gamma(t_1-t)$, and set
$\tilde\gamma_2(t):=f(\tilde\gamma_1(t))$.
Since $(d\eta_0)_{\tilde z}$ preserves norm and is $1$-Lipschitz, we have
\[
\angle(\tilde v, \tilde\gamma_i'(0)) \ge \angle\left(\frac{d}{dt}\gamma(t_1), \frac{d}{dt}\gamma_1(0)\right) =\pi,
\]
for $i=1,2$. Since  $\tilde\gamma_1'(0) \neq  \tilde\gamma_2'(0)$,  this is impossible in
the  Alexandrov space $C_0$.
\end{proof}

\begin{lem} \label{lem:lift}
For every $x,y\in X$, let $\gamma:[0,1]\to X$ be a minimal geodesic joining $x$ to $y$,
and let $p\in C_0$ be such that $\eta_0(p)=x$. Then there exists a unique minimal 
geodesic $\tilde \gamma:[0,1]\to C_0$ starting from $p$ such that $\eta_0\circ\tilde\gamma=\gamma$. 

In particular, if $X_1$ is not empty, then $C_0$ is connected.
\end{lem}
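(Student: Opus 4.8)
The plan is to lift the minimal geodesic $\gamma:[0,1]\to X$ segment by segment, using the local isometry $\eta_0|_{C_0^2}$ on the open set $X_2$ (Lemma \ref{lem:loc-isometry}) away from $X_1$, and the fact that $\eta_0$ is the identity on $C_0^1$ over $X_1$, then show the pieces fit together and that the resulting curve is minimal. First I would consider the closed set $S:=\gamma^{-1}(X_1)\subset[0,1]$ and the open complement $[0,1]\setminus S$, a countable union of open intervals. Over each such interval $\gamma$ takes values in the open subset $X_2$ of $X$ (openness by Corollary \ref{cor:cover}), so by the covering property of $\eta_0|_{C_0^2}:C_0^2\to X_2$ (Corollary \ref{cor:cover}) together with the path-lifting property, $\gamma$ restricted to that interval has a unique continuous lift into $C_0^2$ once its initial value is specified; and by Lemma \ref{lem:loc-isometry} this lift is a \emph{local isometry image} of $\gamma$, hence (since $\gamma$ is a minimal, in particular unit-speed, geodesic) the lift is locally geodesic of the same speed. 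Over points of $S$, the lift is forced: $\eta_0$ is injective there (it is the identity on $C_0^1$ after the identification $C_0^1\ni p\mapsto\eta_0(p)$), so the lift must pass through the unique preimage. Thus there is at most one candidate lift $\tilde\gamma$ starting at $p$, defined by this recipe; uniqueness is then automatic.

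The first real point to check is that $\tilde\gamma$ so defined is \emph{continuous} across $S$. This is where I would invoke the distance-ratio estimate of Lemma \ref{lem:dist-ratio}: if $t_k\to t_\infty$ with $\gamma(t_k)\to\gamma(t_\infty)\in X_1$, then applying Lemma \ref{lem:dist-ratio} at $x=\gamma(t_\infty)$ with $p=\tilde\gamma(t_\infty)$ its unique preimage, every preimage of $\gamma(t_k)$ lying near $p$ satisfies $|p,\tilde\gamma(t_k)|_C\le(1+\tau_x)\,|x,\gamma(t_k)|_Y\to0$, and one sees $\tilde\gamma(t_k)\to p$. (The subtle case is a sequence $t_k$ lying in the open intervals of $[0,1]\setminus S$ accumulating on $S$; here one uses that the lift of $\gamma$ on such an interval stays within distance $(1+\tau)$ times the $Y$-length of the corresponding subarc from the preimage of either endpoint, which again tends to $0$.) Once continuity is established, $\tilde\gamma$ is a continuous curve with $\eta_0\circ\tilde\gamma=\gamma$, and since $\eta_0$ is $1$-Lipschitz while it preserves lengths of Lipschitz curves in $C_0$ (Proposition \ref{lem:length}), we get
\[
   |p,\tilde\gamma(1)|_{C_0^{\rm int}}\le L(\tilde\gamma)=L(\gamma)=|x,y|_X,
\]
and the reverse inequality holds because $\eta_0$ is $1$-Lipschitz; hence $\tilde\gamma$ is minimal in $C_0^{\rm int}$, and in particular $C_0^{\rm int}$ has the same diameter-type control, but more importantly $\tilde\gamma$ realizes the distance, so it is a minimal geodesic.

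Finally, for the ``in particular'' clause: suppose $X_1\neq\emptyset$ and pick $x_0\in X_1$, so its unique preimage $p_0\in C_0^1$ is a well-defined point of $C_0$. Since $X$ is connected (it is a limit space $N=X^{\rm int}$, connected), for any $q\in C_0$ the point $\eta_0(q)\in X$ can be joined to $x_0$ by a minimal geodesic (or, if $X$ is not geodesic, a finite broken minimal geodesic; $X$ is a complete length space so this is fine), and lifting it starting from $q$ by the lemma just proved produces a path in $C_0$ from $q$ to a preimage of $x_0$, which is $p_0$ since $p_0$ is the unique such preimage. Hence every point of $C_0$ is joined by a path to $p_0$, so $C_0$ is connected. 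The main obstacle I anticipate is the continuity of $\tilde\gamma$ at points of $S$ where the two ``sheets'' $C_0^2$ come together along $C_0^1$: one must rule out the lift jumping from one sheet to the other, and that is precisely what the quantitative estimate of Lemma \ref{lem:dist-ratio}, combined with the non-branching of geodesics in the Alexandrov space $C_0$, is there to prevent.
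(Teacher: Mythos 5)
You take a genuinely different route from the paper, and as written it has gaps. The paper's short proof rests on Lemma \ref{lem:convex} (convexity of $X_2$ in $X$), whose proof uses the non-branching of geodesics in the Alexandrov space $C_0$ to rule out any \emph{interior} parameter $t_1\in(0,1)$ with $\gamma(t_1)\in X_1$ and $\gamma$ taking values in $X_2$ just before or after $t_1$. Combined with the openness of $X_2$ (Corollary \ref{cor:cover}), this forces $\gamma^{-1}(X_1)$ to be $\emptyset$, all of $[0,1]$, or a subset of $\{0,1\}$, and in each of these elementary cases the lift is immediate from Lemmas \ref{lem:loc-isometry} and \ref{lem:X1}. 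Your plan never establishes this simple structure and instead attempts a piece-by-piece lift over a potentially complicated closed set $S=\gamma^{-1}(X_1)$.

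That attempt leaves concrete holes. First, ``the covering property gives a unique continuous lift over an interval of $[0,1]\setminus S$ once its initial value is specified'' is false when the prescribed initial value sits at a boundary point of $S$: such a point lies over $X_1$, where $\eta_0$ is not a covering, so specifying $\tilde\gamma(a)\in C_0^1$ does not select a sheet — both $\tilde\gamma$ and $f\circ\tilde\gamma$ are continuous lifts emanating from the same $p$. Second, for continuity at $t_\infty\in S$ you invoke Lemma \ref{lem:dist-ratio}, but that lemma only furnishes \emph{one} preimage of $\gamma(t_k)$ close to $p=\tilde\gamma(t_\infty)$; to control the specific preimage $\tilde\gamma(t_k)$ (which might a priori be the other one) you must also use that $f$ is an isometry of $C_0^{\rm int}$ (Proposition \ref{lem:isometry}) fixing $p\in C_0^1$, so the two preimages of $\gamma(t_k)$ are equidistant from $p$. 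Third, your length-preservation step $L(\tilde\gamma)=L(\gamma)$ appeals to Proposition \ref{lem:length}, which is stated for Lipschitz curves; with $S$ possibly a Cantor-type set, Lipschitz regularity of $\tilde\gamma$ across $S$ is not established by your argument. All three difficulties vanish once you know $S\subset\{0,1\}$ (or $S=[0,1]$), which is exactly what Lemma \ref{lem:convex} supplies. Your final sentence does gesture at non-branching in $C_0$, but as a vague safeguard rather than as the structural fact it actually is: the paper deploys it precisely in the proof of Lemma \ref{lem:convex} to collapse the case analysis, and then the lift, its continuity, and its minimality are all trivial.
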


\begin{proof}
From Lemmas \ref{lem:convex} and the discussion there using non-branching property of geodesics
in Alexandrov spaces, we have only the following possibilities:
\begin{enumerate}
 \item $\gamma$ is included in $X_1$ or $X_2;$
 \item only one end point of $\gamma$ is contained in $X_1$ and the other part of $\gamma$ is 
          included in $X_2$.
\end{enumerate}
The conclusion follows immediately from Lemmas \ref{lem:loc-isometry} and \ref{lem:X1}.
\end{proof}

\begin{prop} \label{prop:quotient}
 $N=X^{\rm int}$ is isometric to $C_0^{\rm int}/f$.
\end{prop}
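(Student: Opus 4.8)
The plan is to show that the gluing map $\eta_0$ descends to a bijective isometry $\bar\eta_0\colon C_0^{\rm int}/f\to X^{\rm int}$; combined with the identification of $X^{\rm int}$ with $N$ from Proposition~\ref{prop:intrinsic}, this is exactly the assertion. First I would note that $\eta_0\circ f=\eta_0$ by the very definition of $f$, so $\eta_0$ factors through the quotient projection $\pi_f\colon C_0\to C_0/f$, say $\eta_0=\bar\eta_0\circ\pi_f$, and $\bar\eta_0$ is a bijection because every fibre of $\eta_0$ has at most two points on which $f$ acts transitively (Lemma~\ref{prop:preimage}). Since $f$ is an involutive isometry of $C_0^{\rm int}$ (Proposition~\ref{lem:isometry}), the quotient metric on $C_0^{\rm int}/f$ is the length metric given by
\[
  d_{C_0^{\rm int}/f}([p],[q])=\min\{\,d_{C_0^{\rm int}}(p,q),\ d_{C_0^{\rm int}}(p,f(q))\,\},
\]
for which $\pi_f$ is $1$-Lipschitz, every rectifiable curve in $C_0^{\rm int}/f$ lifts to a curve of the same length in $C_0^{\rm int}$, and $L(\pi_f\circ\sigma)=L(\sigma)$ for every curve $\sigma$ in $C_0^{\rm int}$ (standard facts for quotients by finite groups of isometries; see \cite{BBI}).

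Next I would check that $\bar\eta_0$ is $1$-Lipschitz as a map $C_0^{\rm int}/f\to X^{\rm int}$. Fix $[p],[q]$ and $\e>0$, choose a curve $c$ in $C_0^{\rm int}/f$ from $[p]$ to $[q]$ of length less than $d_{C_0^{\rm int}/f}([p],[q])+\e$, and lift it to a curve $\tilde c$ in $C_0^{\rm int}$ from $p$ to some $p'\in\{q,f(q)\}$ with $L(\tilde c)=L(c)$. Then $\eta_0\circ\tilde c$ joins $\bar\eta_0([p])=\eta_0(p)$ to $\eta_0(p')=\eta_0(q)=\bar\eta_0([q])$, and by Proposition~\ref{lem:length} it has the same length as $\tilde c$. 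Hence $d_{X^{\rm int}}(\bar\eta_0([p]),\bar\eta_0([q]))\le L(c)<d_{C_0^{\rm int}/f}([p],[q])+\e$, and letting $\e\to0$ gives the desired bound.

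For the reverse inequality I would invoke Lemma~\ref{lem:lift}. Since $X^{\rm int}$ is isometric to the compact length space $N$, it is a geodesic space, so for $x=\eta_0(p)$ and $y=\eta_0(q)$ there is a minimal geodesic $\gamma\colon[0,1]\to X^{\rm int}$ from $x$ to $y$; by Lemma~\ref{lem:lift} it lifts through $\eta_0$ to a \emph{minimal} geodesic $\tilde\gamma\colon[0,1]\to C_0^{\rm int}$ with $\tilde\gamma(0)=p$ and $\eta_0\circ\tilde\gamma=\gamma$. Writing $q':=\tilde\gamma(1)$ we have $\eta_0(q')=y$, hence $[q']=[q]$, and by Proposition~\ref{lem:length}
\[
  d_{C_0^{\rm int}}(p,q')=L(\tilde\gamma)=L(\gamma)=d_{X^{\rm int}}(x,y).
\]
Therefore $d_{C_0^{\rm int}/f}([p],[q])=d_{C_0^{\rm int}/f}([p],[q'])\le d_{C_0^{\rm int}}(p,q')=d_{X^{\rm int}}(\bar\eta_0([p]),\bar\eta_0([q]))$. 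Combining the two inequalities, $\bar\eta_0$ is an isometry, and since it is a bijection the proof is complete after recalling Proposition~\ref{prop:intrinsic}.

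I expect the only delicate point to be the first paragraph: pinning down the quotient metric on $C_0^{\rm int}/f$ and justifying the length-preserving lifting of curves along $\pi_f$ — though for a finite isometric quotient this is routine. An alternative route that sidesteps this bookkeeping is to establish directly the formula $d_{X^{\rm int}}(x,y)=\min\{\,d_{C_0^{\rm int}}(p,q): \eta_0(p)=x,\ \eta_0(q)=y\,\}$, with the inequality ``$\le$'' obtained by pushing a near-optimal curve of $C_0^{\rm int}$ down by $\eta_0$ (Proposition~\ref{lem:length}) and ``$\ge$'' by lifting a minimal geodesic of $X^{\rm int}$ up by Lemma~\ref{lem:lift} and again applying Proposition~\ref{lem:length}; this formula is visibly equivalent to $\bar\eta_0$ being an isometry.
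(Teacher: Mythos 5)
Your proof is correct, but it follows a different route than the paper's. The paper first disposes of the pure cases $X=X_1$ and $X=X_2$ separately and then, in the mixed case, works structurally with the Alexandrov quotient $Z:=C_0^{\rm int}/f$: it observes that $Z_1\subset Z^{\rm sing}$, so $Z^{\rm reg}\subset Z_2$, uses the local isometry $\eta_0|_{C_0^2}$ to produce an isometry $F_0:Z_2\to X_2^{\rm int}$, and then invokes the convexity of $Z^{\rm reg}$ in $Z$ (Petrunin) together with the convexity of $X_2$ in $X$ (Lemma~\ref{lem:convex}) to extend $F_0$ and $F_0^{-1}$ to mutually inverse $1$-Lipschitz maps $F:Z\to X$ and $G:X\to Z$. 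You instead construct the induced map $\bar\eta_0:C_0^{\rm int}/f\to X^{\rm int}$ directly and check it is a bijective isometry by two curve estimates: $1$-Lipschitzness of $\bar\eta_0$ comes from lifting an almost-minimizing curve in the quotient to $C_0^{\rm int}$ and pushing it forward by $\eta_0$ via Proposition~\ref{lem:length}; the reverse inequality comes from lifting a minimal geodesic of $X^{\rm int}$ through $\eta_0$ via Lemma~\ref{lem:lift} and again Proposition~\ref{lem:length}. Your route avoids the case split and the appeal to convexity of the regular set (though you still use convexity of $X_2$ indirectly through the lifting lemma), at the price of having to set up the submetry facts for $\pi_f$ (length-preserving horizontal lifts and the explicit quotient distance formula for a finite isometry group), which you correctly flag and which are standard. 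Both arguments are sound; yours is arguably the more elementary and self-contained, while the paper's makes the singular locus $Z_1\subset Z^{\rm sing}$ visible, which is reused later (e.g.\ Corollary~\ref{cor:1/2}).

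Two small points worth noting. First, when you invoke Lemma~\ref{lem:lift} you apply it to a minimal geodesic of $X^{\rm int}$ rather than of $X$ with the exterior metric; this is the intended reading (the lemma's proof goes through the intrinsic local isometry of Lemma~\ref{lem:loc-isometry}), but the paper's wording is loose here and it is worth a remark. Second, in the last display you only need $d_{C_0^{\rm int}}(p,q')\le L(\tilde\gamma)=L(\gamma)$; the minimality of $\tilde\gamma$ from Lemma~\ref{lem:lift} gives equality but is not required for your inequality chain.
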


\begin{proof}
In the case of $X=X_1$ or $X=X_2$, the conclusion follows from Lemma \ref{lem:X1} or Proposition \ref{lem:loc-isometry}
respectively. Next assume that both $X_1$ and $X_2$ are non-empty.
We set $Z:=C_0^{\rm int}/f$, which is an Alexandrov space, and decompose $Z$ as
\[
                       Z=Z_1\cup Z_2, \,\,\,   Z_i:=C_0^i/f,   \,\,i=1,2.
\]
For every $[p]\in Z_1$, $\Sigma_{[p]}(Z)$ is isometric to $\Sigma_p(C_0)/f_*$, where
$f_*: \Sigma_p(C_0)\to \Sigma_p(C_0)$ is an isometry induced by $f$.
Since $X_1$ is a proper subset of $X$, $f_*$ defines a non-trivial isometric  $\mathbb Z_2$-action
on $\Sigma_p(C_0)$.  Thus $[p]$ is a single point of $Z$: $[p]\in Z^{sing}$, and therefore $Z_1\subset Z^{sing}$.
Thus $Z^{\rm reg}\subset Z_2$. Now by Proposition \ref{prop:isometry}, there exists an
isometry $F_0:Z_2\to X_2^{\rm int}$.
Since $Z^{\reg}$ is convex in $Z$ (see \cite{Petrunin}), $F_0$ defines a $1$-Lipschitz map
$F_1:(Z^{\rm reg})^{\rm ext} \to X$ which extends to a $1$-Lipschitz map
$F:Z\to X$, where $(Z^{\rm reg})^{\rm ext}$ denotes the exterior metric of $Z^{\rm reg}$.

Conversely since $X_2$ is convex in $X$ by Lemma \ref{lem:convex}, $F_0^{-1}$ defines a
$1$-Lipschitz map
$G_1:(X_2)^{\rm ext} \to Z_2$ which extends to a $1$-Lipschitz map
$G: X\to Z$ satisfying $G\circ F=1_Z$. Therefore $X$  must be isometric to $Z$.
\end{proof}

\begin{proof}[Proof of Theorem \ref{thm:limit-alex}]
By Proposition \ref{lem:isometry}, $f:C_0^{\rm int}\to C_0^{\rm int}$ is
an involutive isometry.
By Propositions \ref{prop:intrinsic} and \ref{prop:quotient}, $N$ is isometric to $C_0^{\rm int}/f$. 
Since $C_0^{\rm int}$ is an Alexandrov space with curvature $\ge  c(\kappa,\lambda)$, so is $N$.
\end{proof}

In view of Proposition \ref{prop:fixed-ext}, Proposition \ref{prop:quotient} immediately implies 

\begin{cor}
$X_1$ is an extremal subset of $X^{\rm int}$.
\end{cor}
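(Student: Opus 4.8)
The plan is to invoke Proposition~\ref{prop:fixed-ext} together with the identification of $N$ established in Proposition~\ref{prop:quotient}. Recall that Proposition~\ref{prop:quotient} gives an isometry $N = X^{\rm int}\cong C_0^{\rm int}/f$, and Proposition~\ref{lem:isometry} tells us that $f\colon C_0^{\rm int}\to C_0^{\rm int}$ is an isometry with $f^2={\rm id}$. Thus $f$ generates an isometric action of the group $G=\mathbb{Z}_2$ on the Alexandrov space $C_0^{\rm int}$, and the quotient $C_0^{\rm int}/G$ is precisely $N$.

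The key observation is that, by definition, the fixed-point set $F$ of this $\mathbb{Z}_2$-action consists exactly of those $p\in C_0$ with $f(p)=p$, which is precisely $C_0^1$ (the set of points whose $\eta_0$-image lies in $X_1$). Under the projection $\pi\colon C_0^{\rm int}\to C_0^{\rm int}/G = N$, we have $\pi(F) = \pi(C_0^1) = X_1$ by the very definition of $X_1$ and the fact that $\eta_0$ factors as $C_0\to C_0/f = X$. Therefore $X_1$ is the image of the fixed-point set of an isometric compact (here finite) group action on an Alexandrov space. Proposition~\ref{prop:fixed-ext} then asserts exactly that such an image is an extremal subset of the quotient, i.e., $X_1$ is extremal in $X^{\rm int} = N$.

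The only point requiring a small remark is the degenerate cases. If $X=X_2$ then $X_1=\emptyset$, which is vacuously extremal; if $X=X_1$ then $f={\rm id}$, the whole space is fixed, $\pi$ is the identity, and $X_1=X$ is extremal as the whole space. In the remaining case both $X_1$ and $X_2$ are nonempty and the argument above applies verbatim. I do not anticipate any genuine obstacle here; the corollary is essentially a direct translation of Proposition~\ref{prop:quotient} through Proposition~\ref{prop:fixed-ext}, the real content having already been carried out in proving that $f$ is an isometry and that $N\cong C_0^{\rm int}/f$.
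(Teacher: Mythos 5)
Your proposal is correct and takes exactly the same route as the paper: the paper states the corollary as an immediate consequence of Proposition~\ref{prop:fixed-ext} applied to the quotient description $N = X^{\rm int}\cong C_0^{\rm int}/f$ from Proposition~\ref{prop:quotient}. You have simply spelled out the details (identifying the fixed-point set of the $\mathbb{Z}_2$-action generated by $f$ with $C_0^1$ and its image with $X_1$, plus the degenerate cases), which the paper leaves implicit.
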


\begin{thm}\label{thm:singIbund}
Let $M_i\in\ca M(n,\kappa,\lambda,d)$ inradius collapse to a compact length space $N$.
Let $\tilde M_i$ Gromov-Hausdroff converge to $Y$, and $M_i^{\rm ext}$ converge to 
$X\subset Y$ under the convergence $\tilde M_i \to Y$.
Then
\begin{itemize}
  \item[(1)] $X^{\rm int}$ is isometric to $N;$ 
 \item[(2)] $Y$ is isometric to $C_0^{\rm int}\times_{\phi}[0,t_0]/(f(x),0)\sim (x,0)$, or equivalently,
   isometric to the following quotient by an isometric involution $\tilde f = (f, -{\rm id})$.
    \[
          C_0^{\rm int}\times_{\tilde\phi}[-t_0, t_0]/\tilde f,
     \]
   where $\tilde\phi(t)=\phi(|t|)$.

   In particular, $Y$ is a singular $I$-bundle over $N$, where singular fibers occur 
   exactly on $X_1$ unless $X=X_1$.
 \end{itemize}
\end{thm}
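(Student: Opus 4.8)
The plan is to assemble Theorem \ref{thm:singIbund} from the structural results established earlier in this section; almost all the work is already done. First, part~(1) is exactly Proposition \ref{prop:intrinsic}, so nothing remains to be proved there.

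For part~(2) I would begin from Proposition \ref{prop:YNC}, which gives an isometry $Y\cong X^{\mathrm{int}}\cup_{\eta_0}C_0\times_\phi[0,t_0]$, the gluing being along $\eta_0\colon C_0\times\{0\}\to X^{\mathrm{int}}$. The key point is that, by Propositions \ref{lem:length} and \ref{prop:quotient}, the map $\eta_0$ factors as the quotient projection $C_0^{\mathrm{int}}\to C_0^{\mathrm{int}}/f$ followed by an isometry $C_0^{\mathrm{int}}/f\xrightarrow{\ \cong\ }X^{\mathrm{int}}$; in particular $\eta_0$ preserves the length of curves and its fibers are exactly the $f$-orbits. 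Hence gluing the warped cylinder $C_0\times_\phi[0,t_0]$ onto $X^{\mathrm{int}}$ along $\eta_0$ is isometric to forming the single warped product $C_0^{\mathrm{int}}\times_\phi[0,t_0]$ and identifying $(x,0)$ with $(f(x),0)$ on its bottom face: this is the general fact that gluing a length space along a quotient map, onto a base endowed with the quotient metric, reproduces the quotient of that length space, where one also uses that the intrinsic metric of $C_0\times\{0\}\subset C_0\times_\phi[0,t_0]$ is $C_0^{\mathrm{int}}$ since $\phi(0)=1$. This yields the first description in~(2).

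Next I would pass to the double-cover picture. Since $\tilde\phi(t)=\phi(|t|)$ is even and, by Proposition \ref{lem:isometry}, $f$ is an involutive isometry of $C_0^{\mathrm{int}}$, the warped-product metric of $C_0^{\mathrm{int}}\times_{\tilde\phi}[-t_0,t_0]$ is invariant under $\tilde f=(f,-\mathrm{id})$, so $\tilde f$ is an isometric involution; restricting the quotient projection to the fundamental domain $C_0^{\mathrm{int}}\times[0,t_0]$ gives a bijection onto $C_0^{\mathrm{int}}\times_{\tilde\phi}[-t_0,t_0]/\tilde f$ that induces on the face $\{t=0\}$ exactly the identification $(x,0)\sim(f(x),0)$ and is a local isometry elsewhere; a short verification that distances realized by curves crossing this face coincide (again because $f$ is length-preserving on the seam) shows the bijection is a global isometry onto the space of the previous paragraph. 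Thus the two descriptions in~(2) agree.

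Finally, for the singular $I$-bundle statement I would take the projection $Y\to N$ along perpendiculars, which in these coordinates is $[(x,t)]\mapsto[x]\in C_0^{\mathrm{int}}/f=N$, and compute its fibers: over $[x]$ with $x\in C_0^2$ the fiber is $(\{x,f(x)\}\times[-t_0,t_0])/\tilde f\cong[-t_0,t_0]$, a full interval, while over $[x]$ with $f(x)=x$ it is $(\{x\}\times[-t_0,t_0])/(t\sim -t)\cong[0,t_0]$, a degenerate fiber. By Corollary \ref{cor:cover} the restriction $\eta_0|_{C_0^2}$ is a double covering onto the open set $X_2=N_2$, giving local triviality of the interval bundle there; the fiber degenerates exactly over $X_1$, and when $X_1$ is empty, or equals $X$ (which forces $f=\mathrm{id}$ and $Y=C_0^{\mathrm{int}}\times_\phi[0,t_0]$), no fiber is genuinely singular. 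I expect the one place needing real care to be the metric identification in the second paragraph---that the ad hoc construction ``glue along $\eta_0$'' truly coincides with the quotient of a single warped product---but this only repeats the curve-replacement arguments already used in the proofs of Propositions \ref{prop:YNC} and \ref{prop:quotient}.
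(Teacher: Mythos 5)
Your proposal is correct and follows essentially the same route as the paper: the paper's proof simply cites Proposition \ref{prop:intrinsic} for (1) and Propositions \ref{prop:YNC} and \ref{prop:quotient} for (2), which are exactly the ingredients you assemble. The extra details you supply (that $\eta_0$ factors through the isometric quotient $C_0^{\rm int}/f$ by Propositions \ref{lem:length}, \ref{lem:isometry} and \ref{prop:quotient}, the fundamental-domain identification for $\tilde f$, and the fiber computation over $X_1$ versus $X_2$) are accurate elaborations of what the paper leaves implicit.
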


Compare Examples \ref{ex:counter2}, \ref{ex:twist} and \ref{ex:D}.

\begin{proof}[Proof of Theorem \ref{thm:singIbund}]
(1) is just Proposition \ref{prop:intrinsic}. (2) follows immediately from Propositions
\ref{prop:YNC} and \ref{prop:quotient}.
\end{proof}

\begin{rem}
Theorem \ref{thm:singIbund} can be generalized to the unbounded diameter case
(see Section \ref{sec:unbounded}).
\end{rem}

\begin{prop}\label{prop:direc}
 If $x\in X_1$, then $\Sigma_x(X)$ is isometric to the quotient space  $\Sigma_p(C_0)/f_{*}$,
     and $\Sigma_x(Y)$ is isometric to the quotient space  $\Sigma_p(C)/f_{*}$,
     where $f_{*}:\Sigma_p(C_0)\to \Sigma_p(C_0)$ is an isometry induced by $f$.
\end{prop}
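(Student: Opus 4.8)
The plan is to analyze the gluing map $\eta_0:C_0\to X$ near a point $p\in\eta_0^{-1}(x)$ with $x\in X_1$, using the differential $d\eta_0$ together with the fact (Proposition \ref{prop:quotient}) that $N=X^{\rm int}$ is isometric to $C_0^{\rm int}/f$. Since $x\in X_1$, the involution $f$ of $C_0$ fixes $p$, so it induces an isometric involution $f_*:\Sigma_p(C_0)\to\Sigma_p(C_0)$; the claim is that the natural map $\Sigma_p(C_0)/f_*\to\Sigma_x(X)$ coming from $d\eta_0$ is an isometry, and similarly $\Sigma_p(C)/f_*\to\Sigma_x(Y)$.

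First I would observe that since $\eta_0$ preserves the length of Lipschitz curves in $C_0$ (Proposition \ref{lem:length}) and $N$ is isometric to $C_0^{\rm int}/f$ via $\eta_0$ (Proposition \ref{prop:quotient}), passing to tangent cones commutes with the quotient: by Proposition \ref{prop:tang-cone} the blow-up $(\frac1\delta X,x)$ converges to $T_x(X)=K(\Sigma_x(X))$, while $(\frac1\delta C_0,p)$ converges to $T_p(C_0)=K(\Sigma_p(C_0))$, and the involutions $f$ rescale to the induced involution on $T_p(C_0)$. Since $f$ is an isometry of $C_0^{\rm int}$ (Proposition \ref{lem:isometry}) fixing $p$, the quotient $C_0^{\rm int}/f$ has tangent cone at $[p]$ isometric to $T_p(C_0)/f_* = K(\Sigma_p(C_0)/f_*)$. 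Identifying $N$ with $X^{\rm int}$ and $C_0^{\rm int}/f$, and noting $X^{\rm int}$ and $X$ have the same tangent cone and space of directions at any point, we get $\Sigma_x(X)\cong\Sigma_p(C_0)/f_*$.

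Next, for the statement about $\Sigma_x(Y)$, I would use the half-suspension description. For $x\in X_1$ Corollary \ref{lem:susp2}(1) gives $\Sigma_x(X)=\{v\in\Sigma_x(Y)\mid\angle(\xi_+,v)=\pi/2\}$ with a unique perpendicular direction $\xi_+$, and there is a collar/warped structure near $X$ coming from $Y=C_0^{\rm int}\times_\phi[0,t_0]/f$ (Theorem \ref{thm:singIbund}(2)). Over the point $p$, the warped product gives $\Sigma_p(C)=\{\tilde\xi_+\}*\Sigma_p(C_0)$ (Lemma \ref{lem:susp0}), and $\tilde f=(f,-{\rm id})$ restricts to $f_*$ on $\Sigma_p(C_0)$ while fixing the suspension axis $\tilde\xi_+$ (as $-{\rm id}$ on $[0,t_0]$ acts trivially after passing to directions at the vertex $t=0$). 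Hence $\Sigma_p(C)/\tilde f_* = \{\tilde\xi_+\}*(\Sigma_p(C_0)/f_*) \cong \{\xi_+\}*\Sigma_x(X)$. Combined with the description of $\Sigma_x(Y)$ near a boundary point of a singular $I$-bundle — that $\Sigma_x(Y)$ is the quotient of the fiberwise suspension — one concludes $\Sigma_x(Y)\cong\Sigma_p(C)/f_*$.

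The main obstacle is making precise the claim that "space of directions commutes with the $\mathbb{Z}_2$-quotient at a fixed point" at the level of rigor appropriate to the paper: one must verify that the differential $d\eta_0:T_p(C_0)\to T_x(X)$, which is a priori only $1$-Lipschitz and norm-preserving (Proposition \ref{lem:length}), actually descends to an \emph{isometry} $T_p(C_0)/f_*\to T_x(X)$, not merely a surjective norm-preserving map. Here I would argue that both sides are cones of the same dimension $\dim Y-1$, that $d\eta_0$ is surjective with fibers contained in $f_*$-orbits (by Lemma \ref{prop:preimage}, the $\eta_0$-fibers have at most two points, and near a fixed point these are $\{q,f(q)\}$), and that angles are preserved using a rescaling argument parallel to the proof of Proposition \ref{prop:isometry}: for two directions $\tilde v_1,\tilde v_2\in\Sigma_p(C_0)$ not related by $f_*$, minimal geodesics in $C_0$ in these directions project, under $\eta_0$, to curves whose mutual distances agree after rescaling (since $\eta_0$ is a local isometry off $X_1$ and $X_1$ has no interior by Lemma \ref{lem:X1}), forcing $\angle([\sigma_1],[\sigma_2])=\angle(\tilde v_1,\tilde v_2)$ unless $\tilde v_2=f_*\tilde v_1$, in which case the images coincide. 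This identifies the quotient metric exactly.
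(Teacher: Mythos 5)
Your first claim, $\Sigma_x(X)\cong\Sigma_p(C_0)/f_*$, is handled correctly and by essentially the paper's route: Proposition \ref{prop:quotient} identifies an $f$-invariant neighborhood $U_p/f$ with a neighborhood of $x$ in $X^{\rm int}$ via $\eta_0$, and one passes to spaces of directions at the fixed point $p$ (the paper's proof is exactly this, in two lines). Your additional care about the extrinsic versus intrinsic metric on $X$ and about blow-ups commuting with the $\mathbb Z_2$-quotient is reasonable, if more than the paper bothers to write.

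The second claim is where there is a genuine error. You assert that $\tilde f=(f,-\mathrm{id})$ ``fixes the suspension axis $\tilde\xi_+$'' and conclude $\Sigma_x(Y)\cong\{\tilde\xi_+\}*(\Sigma_p(C_0)/f_*)\cong\{\xi_+\}*\Sigma_x(X)$. This is false. The involution $\tilde f_*$ acts on the full suspension $\Sigma_{(p,0)}(C_0\times_{\tilde\phi}[-t_0,t_0])=\{\xi_+,\xi_-\}*\Sigma_p(C_0)$ by $f_*$ on the base and by \emph{swapping the two poles}; it does not preserve the half $[0,t_0]$. Equivalently, in $Y=X\cup_{\eta_0}C$ the identification is performed only on the slice $t=0$, so a neighborhood of $x$ is $U_p\times_\phi[0,\e)\bigl/\bigl((q,0)\sim(f(q),0)\bigr)$, not $\bigl(U_p\times_\phi[0,\e)\bigr)/(f\times\mathrm{id})$. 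Hence $\Sigma_x(Y)$ is the full suspension $\{\xi_\pm\}*\Sigma_p(C_0)$ modulo $(f_*,\mathrm{swap})$ --- equivalently, $\Sigma_p(C)$ with its boundary sphere $\Sigma_p(C_0)$ glued to itself by $f_*$, which is how ``$\Sigma_p(C)/f_*$'' must be read given that $f_*$ is defined only on $\Sigma_p(C_0)$. This is not isometric to $\{\xi_+\}*(\Sigma_p(C_0)/f_*)$ whenever $f_*\neq\mathrm{id}$, which is always the case at $x\in X_1$ unless $X=X_1$ (proof of Corollary \ref{cor:1/2}). Concretely, in Example \ref{ex:D} with $N$ a disk and $x\in\partial N$ one has $\Sigma_p(C_0)=S^1$ and $f_*$ a reflection, so $\Sigma_x(Y)$ is $S^2/\mathbb Z_2$ for the free-on-the-axis rotation by $\pi$, a spindle with two cone points and \emph{no boundary}; your formula gives the spherical cone over a half-circle, which has boundary --- impossible since $x\notin\partial Y$. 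A smaller imprecision in your fallback argument: the distance in $\Sigma_p(C_0)/f_*$ between two orbits is the minimum of $\angle(\tilde v_1,\tilde v_2)$ and $\angle(\tilde v_1,f_*\tilde v_2)$, so the dichotomy ``$\angle([\sigma_1],[\sigma_2])=\angle(\tilde v_1,\tilde v_2)$ unless $\tilde v_2=f_*\tilde v_1$'' is not correct as stated.
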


\begin{proof}
Take an $f$-invariant neighborhood $U_p$ of $p$
in $C_0$, where $\eta_0(p)=x$. It is easy to check that $V_x:=\eta_0(U_p)$ is a neighborhood of $x$ isometric to $U_p/f$.
The conclusion of $(2)$ follows immediately.
\end{proof}

\begin{cor}\label{cor:1/2}
Let $\dim N=m$.
Suppose that both $X_1$ and $X_2$ is non-empty.
Then every element $x\in X_1$ satisfies that
\[
     \vol \,\Sigma_x(X) \le \frac{1}{2}\vol \,\mathbb S^{m-1}.
\]
In particular, $\dim (X_1\cap \pa X) \le m-1$ and  $\dim (X_1\cap {\rm int} X) \le m-2$.
\end{cor}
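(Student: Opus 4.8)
The plan is to reduce the volume estimate to the description of $\Sigma_x(X)$ as a $\mathbb Z_2$-quotient given by Proposition \ref{prop:direc}, and then to deduce the two dimension bounds from this estimate together with Theorem \ref{thm:dim-sing}. To set things up, fix $x\in X_1$ and choose $p\in C_0$ with $\eta_0(p)=x$. Since $m=\dim N=\dim(C_0^{\rm int}/f)$ and $f$ is an involutive isometry of $C_0^{\rm int}$ (Proposition \ref{lem:isometry}), the space $C_0$ is $m$-dimensional, so $\Sigma_p(C_0)$ is an $(m-1)$-dimensional Alexandrov space with curvature $\ge 1$. By Proposition \ref{prop:direc}, $\Sigma_x(X)$ is isometric to $\Sigma_p(C_0)/f_{*}$, where $f_{*}$ is the isometry of $\Sigma_p(C_0)$ induced by $f$; and because $X_1\subsetneq X$ — here the hypothesis $X_2\neq\emptyset$ is used — the involution $f_{*}$ is nontrivial, exactly as noted in the proof of Proposition \ref{prop:quotient}: were $f_{*}$ the identity, the non-branching of geodesics in $C_0$ would force $f$ to fix a whole neighborhood of $p$, whence $X_1$ would be open, contradicting Lemma \ref{lem:X1}.

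For the volume estimate I would argue as follows. By Proposition \ref{prop:fixed-ext} the fixed-point set $\Sigma_p(C_0)^{f_{*}}$ (equivalently, its image in the quotient) is a proper extremal subset of an $(m-1)$-dimensional Alexandrov space, properness coming from $f_{*}\neq\mathrm{id}$; hence it has dimension at most $m-2$ and therefore vanishing $(m-1)$-dimensional measure. Consequently the projection $\Sigma_p(C_0)\to\Sigma_p(C_0)/f_{*}$ is a $2$-to-$1$ local isometry away from a null set, so $\vol\,\Sigma_x(X)=\tfrac12\vol\,\Sigma_p(C_0)$. Since $\Sigma_p(C_0)$ is an $(m-1)$-dimensional Alexandrov space with curvature $\ge 1$, the Bishop--Gromov volume comparison gives $\vol\,\Sigma_p(C_0)\le\vol\,\mathbb S^{m-1}$, and combining the two relations yields $\vol\,\Sigma_x(X)\le\tfrac12\vol\,\mathbb S^{m-1}$.

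The dimension statements then follow quickly. The set $X_1\cap\partial X$ is contained in $\partial X$, which is $(m-1)$-dimensional since $X^{\rm int}=N$ is $m$-dimensional, so $\dim_H(X_1\cap\partial X)\le m-1$. For the interior, let $x\in X_1\cap\mathrm{int}\,X$; then $\Sigma_x(X)$ has empty boundary, and if $x$ were a regular point of the Alexandrov space $N=X^{\rm int}$ we would have $\Sigma_x(X)=\mathbb S^{m-1}$ (using that $\Sigma_x(X)$ coincides with the intrinsic space of directions of $N$, Proposition \ref{prop:closure}), contradicting $\vol\,\Sigma_x(X)\le\tfrac12\vol\,\mathbb S^{m-1}$. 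Hence $X_1\cap\mathrm{int}\,X\subseteq N^{\rm sing}\cap\mathrm{int}\,N$, and Theorem \ref{thm:dim-sing} applied to $N$ gives $\dim_H(X_1\cap\mathrm{int}\,X)\le m-2$.

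The step I expect to be the main obstacle is the measure-zero claim for $\Sigma_p(C_0)^{f_{*}}$: one must ensure that a nontrivial isometric involution of an $(m-1)$-dimensional Alexandrov space with curvature $\ge 1$ has fixed-point set of codimension at least one. The natural route is through Proposition \ref{prop:fixed-ext}, identifying the fixed-point set with an extremal subset, together with the general fact that a proper extremal subset has strictly smaller dimension than the ambient space (a codimension-one extremal subset such as a boundary still has measure zero). The delicate point is simply to be sure the fixed-point set is proper, which is exactly the nontriviality of $f_{*}$ established in the set-up, and to handle cleanly the bookkeeping between $\Sigma_p(C_0)^{f_{*}}$ and its image under the quotient map.
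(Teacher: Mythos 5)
Your proof is correct and tracks the paper's own argument closely: identify $\Sigma_x(X)\cong\Sigma_p(C_0)/f_*$ via Proposition \ref{prop:direc}, show $f_*\neq\mathrm{id}$ by non-branching (the paper uses connectedness of $C_0$ from Lemma \ref{lem:lift} to conclude $f$ would be globally trivial; your route via openness of $X_1$ and Lemma \ref{lem:X1} is an equivalent way to extract the same contradiction from $X_2\neq\emptyset$), and then halve the volume and apply the curvature-$\ge 1$ volume bound. Where you usefully go beyond the paper is in justifying $\vol\Sigma_x(X)=\tfrac12\vol\Sigma_p(C_0)$, which the paper simply writes down: one does need $\mathrm{Fix}(f_*)$ to be $\mathcal H^{m-1}$-null, and routing this through Proposition \ref{prop:fixed-ext} is reasonable. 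The one step not literally covered by the cited facts is your claim that a proper extremal subset of the $(m-1)$-dimensional space $\Sigma_p(C_0)$ has dimension $\le m-2$; it is true, but a route closer to the paper's own toolbox is to observe that a fixed-point set of dimension $m-1$ would have its top stratum meeting the regular part of $\Sigma_p(C_0)$ (since the singular set has codimension $\ge 2$ by Theorem \ref{thm:dim-sing}), and there non-branching forces $f_*$ to fix an open set, hence $f_*=\mathrm{id}$ --- exactly the device used in Sublemma \ref{slem:dim}. Your derivations of the two ``in particular'' dimension bounds from the volume estimate and Theorem \ref{thm:dim-sing} are also correct and fill a gap the paper leaves unexplained.
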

\begin{proof}
For $x\in X_1$, take $p\in C_0$ with $\eta_0(p)=x$.
Note that $C_0$ is connected by Lemma \ref{lem:lift}.
If $f_{*}:\Sigma_p(C_0) \to \Sigma_p(C_0)$ is  the identity, then the non-branching property of
geodesics in Alexandrov spaces implies that  $f$ is the identity on $C_0$.
Therefore $f_*$ must be non-trivial on $\Sigma_p(C_0)$.
The conclusion follows  since
\[
    \vol \,\Sigma_x(X) =(1/2)\vol\, \Sigma_p(C_0) \le (1/2)\vol \mathbb S^{m-1}.
\]
\end{proof}

By Corollary \ref{cor:1/2}, if every $x\in X$ satisfies that
\[
   \vol \Sigma_x(X) > (1/2)\vol \mathbb S^{m-1},
\]
then $X=X_1$ or $X=X_2$.

Next let us consider such a case. 
If $X=X_1$, then by Lemma \ref{lem:X1}, $\eta_0$ is an isometry. 
If $X=X_2$, then by Lemma \ref{lem:loc-isometry}, $\eta_0$ is a locally isometric double covering.
Therefore it is straightforward to see the following.

\begin{cor}\label{prop:IbundleY}
If $X=X_1$ or $X_2$, then $Y$ can be classified by $N$ as follows.
\begin{itemize}
\item[(1)] if $X=X_1$, then $Y$ is isometric to $N\times_\phi[0,t_0]$.
\item[(2)] if $X=X_2$,  then  either $Y$ is isometric to the gluing
     \begin{equation*}
                    N\times_{\tilde\phi} [-t_0,t_0], \label{eq:product}
     \end{equation*}
      with length metric, or else,  $Y$ is a nontrivial $I$-bundle over $N$, and is doubly covered by 
    \begin{equation*}
                    C_0^{\rm int}\times_{\tilde\phi} [-t_0,t_0], \label{eq:product2}
     \end{equation*}  
      where $\tilde\phi(t)=\phi(|t|)$.
\end{itemize}
\end{cor}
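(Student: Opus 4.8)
The plan is to unwind the structure description of $Y$ in Theorem \ref{thm:singIbund}(2) under the extra hypothesis $X=X_1$ or $X=X_2$, feeding in what is already known about the gluing involution $f$ in these two cases. First suppose $X=X_1$. Then Lemma \ref{lem:X1} says exactly that $\eta_0\colon (C_0)^{\rm int}\to X^{\rm int}=N$ is an isometry, equivalently $f={\rm id}_{C_0}$. Substituting $f={\rm id}$ into the identification $Y\cong C_0^{\rm int}\times_\phi[0,t_0]/(f(x),0)\sim(x,0)$ of Theorem \ref{thm:singIbund}(2) makes the relation vacuous, so $Y$ is isometric to $C_0^{\rm int}\times_\phi[0,t_0]=N\times_\phi[0,t_0]$. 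This gives part (1).

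For part (2) assume $X=X_2$, so $C_0=C_0^2$. By Corollary \ref{cor:cover} and Lemma \ref{lem:loc-isometry}, $\eta_0\colon C_0\to N$ is a locally isometric double covering, and by Proposition \ref{lem:isometry} its deck transformation $f$ is a fixed-point-free isometric involution of $C_0^{\rm int}$. By Corollary \ref{cor:cpntC0}, $C_0$ has at most two components, and I split accordingly. If $C_0=N_1\amalg N_2$ is disconnected, then, since $f$ transposes the two components, each $x\in N$ has exactly one preimage in each $N_k$; hence $\eta_0|_{N_k}\colon N_k\to N$ is a bijective local isometry, so an isometry, and $f$ restricts to an isometry $N_1\to N_2$. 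Feeding this into $Y\cong C_0^{\rm int}\times_\phi[0,t_0]/(f(x),0)\sim(x,0)$, the warped cylinder splits as $N_1\times_\phi[0,t_0]\amalg N_2\times_\phi[0,t_0]$, and the relation glues the two $\{0\}$-ends via the isometry $f$. Identifying both factors with $N$ and regarding the reversed second cylinder as $N\times[-t_0,0]$, one obtains precisely $N\times_{\tilde\phi}[-t_0,t_0]$ with $\tilde\phi(t)=\phi(|t|)$.

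If instead $C_0$ is connected, I use the second form of Theorem \ref{thm:singIbund}(2), namely $Y\cong C_0^{\rm int}\times_{\tilde\phi}[-t_0,t_0]/\tilde f$ with $\tilde f(x,t)=(f(x),-t)$. Since $f$ is a fixed-point-free isometric involution, so is $\tilde f$, whence the quotient map $C_0^{\rm int}\times_{\tilde\phi}[-t_0,t_0]\to Y$ is an isometric double covering, which is the asserted double cover. The projection $\pi\colon Y\to N=C_0/f$, $[(x,t)]\mapsto[x]$, is a fiber bundle with fiber $[-t_0,t_0]$: over a ball $U\subset N$ evenly covered by $\eta_0$, with $\eta_0^{-1}(U)=\tilde U_1\amalg\tilde U_2$ and $f(\tilde U_1)=\tilde U_2$, the quotient map restricts to a homeomorphism $\tilde U_1\times[-t_0,t_0]\to\pi^{-1}(U)$. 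This $I$-bundle is nontrivial because its associated $\mathbb Z_2$-double cover, the fiberwise boundary $C_0\times\{\pm t_0\}/\tilde f\cong C_0$ over $N$, is connected.

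The only real work is bookkeeping: keeping straight how the free involution $f$ on $C_0$ interacts with the reflection $t\mapsto-t$ and with the two equivalent forms of the gluing description in Theorem \ref{thm:singIbund}, and checking that the fiberwise-boundary double cover genuinely detects (non)triviality of an $I$-bundle over the possibly singular base $N$. There is no analytic content beyond what has already been established in Section \ref{sec:metric}; everything is a direct consequence of the metric gluing picture.
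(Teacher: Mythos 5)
Your argument is correct and fills in exactly what the paper leaves as "straightforward": you read off the two cases from Lemma \ref{lem:X1} (giving $f=\mathrm{id}$ and hence $Y=N\times_\phi[0,t_0]$) and from Corollary \ref{cor:cover} / Lemma \ref{lem:loc-isometry} / Proposition \ref{lem:isometry} (giving $f$ as a free isometric deck involution), then split by the connectedness of $C_0$ (using Corollary \ref{cor:cpntC0}) to produce either the product $N\times_{\tilde\phi}[-t_0,t_0]$ or the nontrivial $I$-bundle doubly covered by $C_0^{\rm int}\times_{\tilde\phi}[-t_0,t_0]$. This is the same route the paper intends; the only small caveat is that the fixed-point-freeness of $f$ is a consequence of $X=X_2$ (so $C_0^1=\emptyset$) rather than of Proposition \ref{lem:isometry} per se, but you use it correctly.
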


Compare Examples \ref{ex:counter2}  and \ref{ex:twist}.

From now, we write for simplicity as $C_0:= C_0^{\rm int}$.

\section{Inradius collapsed manifolds with bounded diameters}\label{sec:fib}

In this section, we investigate the structure of inradius collapsed manifolds $M_i$ applying 
the structure results for limit spaces in Section \ref{sec:metric}. First we study the case of inradius collapse of
codimension one to determine the manifold structure. To carry out this,  
some additional considerations on the limit spaces are needed to determine the singularities of 
singular $I$-fibered spaces. In the second part of this section, we 
study inradius collapse to almost regular spaces.


\subsection{Inradius collapse of codimension one} \label{ssec:codim1}

We consider $M_i\in\ca M(n,\kappa,\lambda, d)$ inradius collapse to an $(n-1)$-dimensional 
Alexandrov space $N$. Then by Theorem \ref{thm:stability},  $M_i$ is homeomorphic to $Y$, 
and by Theorem \ref{thm:singIbund}, we have  
\[
      Y = C_0\times_{\tilde\phi}[-t_0, t_0]/ \tilde f,    \,\, N=C_0/f,
\]
where $\tilde f=(f, -{\rm id})$ is an isometric involution. 
and the singular locus of the singular $I$-bundle structure on $Y$ defined by the above form
coincides with $C_0^1$ unless $X\neq X_1$. Later in Lemma \ref{lem:partial}, we show that
$\eta_0(C_0^1)=\partial N$.

Assuming that $N$ has non-empty boundary, 
we begin with construction of singularity models of singular $I$-fibered spaces  around each 
boundary component of the limit space $N$.

By Proposition \ref{prop:collar}, 
each component $\partial_{\alpha} N$ of $\partial N$ has a collar neighborhood 
$V_{\alpha}$. Let $\varphi:V_{\alpha}\to \partial_{\alpha}N\times [0,1)$ be a homeomorphism.
Let $\pi:Y\to N$ be the projection. Then $I$-fiber structure on $\pi^{-1}\varphi^{-1}(\{ p\}\times [0,1)$
is isomorphic to the form 
\[
       R_{t_0}:=[0,1)\times [-t_0, t_0]/(0,y)\sim (0,-y),
\]
with the projection $\pi:R_{t_0}\to [0,1)$ indecued by $(x,y) \to x$.
Therefore $\pi^{-1}(V_{\alpha})$ is an $R_{t_0}$-bundle over $\partial_{\alpha}N$.
%

Now we define two singularity model for the singular $I$-bundle $\pi^{-1}(V_{\alpha})$:
one is the case when  $\pi^{-1}(V_{\alpha})$ is a trivial $R_{t_0}$-bundle over $\partial_{\alpha}N$,
and the other one is the case of non-trivial $R_{t_0}$-bundle.

\begin{defn} \label{def:model}
$(1)$.\,
First, set 
$$
                      \mathcal U_1(\partial_{\alpha}N):=\partial_\alpha N\times R_{t_0},
$$
and define
$\pi: \mathcal U_1(\partial_{\alpha}N)\to\partial_{\alpha}N\times [0,1)$ by $\pi(p,x,y)=(p,x)$
for $(p,x,y)\in \partial_\alpha N\times R_{t_0}$. This gives  $\mathcal U_1(\partial_{\alpha}N)$
the structure 
of a singular $I$-bundle over $\partial_{\alpha}N\times [0,1)$ whose singular locus is 
 $\partial_{\alpha}N\times 0$. We call this {\em the product singular $I$-bundle model}
around   $\partial_{\alpha}N$. 

$(2)$.\,
For the second model, suppose that  $\partial_{\alpha} N$ admits a double covering space
$\rho:P_{\alpha}\to \partial_{\alpha} N$ with the deck transformation $\varphi$.
Let 
$$
                        \mathcal U_2(\partial_{\alpha}N):=(P_{\alpha}\times R_{t_0})/\varPhi,
$$
where $\varPhi$ is the isometric involusion on $P_{\alpha}\times R_{t_0}$ defined by
$\varPhi=(\varphi, g)$, where $g: R_{t_0} \to R_{t_0}$ is the reflection induced from 
$(x,y)\to (x, -y)$.
Define
$\pi: \mathcal U_2(\partial_{\alpha}N)\to\partial_{\alpha}N\times [0,1)$ by 
$\pi([p,x,y)])=(\rho(p),x)$
for $(p,x,y)\in P_{\alpha}\times R_{\epsilon}$. This gives  $ \mathcal U_2(\partial_{\alpha}N)$ the structure 
of a singular $I$-bundle over $\partial_{\alpha}N\times [0,1)$ whose singular locus is 
 $\partial_{\alpha}N\times 0$. The second model is a twisted one, and is doubly covered
by the first model $\mathcal U_1(P_{\alpha})=P_{\alpha}\times R_{\epsilon}$.
 We call this the {\em twisted singular $I$-bundle model}
around  $\partial_{\alpha}N$. 
\end{defn}

\begin{ex} 
Let us consider the codimension one inradius collapse in Example \ref{ex:D}.
Recall that the limit space  $Y$ of $\tilde M_{\epsilon}$ is isometric to the form
\[
      Y = D(E)\times_{\tilde\phi}[-t_0,t_0]/(x,t)\sim (r(x),-t),
\]
where $r:D(E)\to D(E)$ denotes the canonical reflection of $D(E)$.
If  $\pi:Y\to E$ denotes the projection, then $\pi^{-1}(V)$ is isomorphic to 
the product singular $I$-bundle model around   $\partial E$, 
where $V$ is any collar neighborhood of $\partial E$.
\end{ex}

\begin{ex} \label{ex:Klein}
Let $Q_{\epsilon}$ denote the space obtained from the disjoint union of two copies of 
the completion $\bar R_{\epsilon}$ of  $R_{\epsilon}$
glued along each segment $1\times [-\epsilon,\epsilon]$ of the boundaries:
\[
            Q_{\epsilon}= \bar R_{\epsilon} \amalg_{1\times  [-\epsilon,\epsilon]}  \bar R_{\epsilon}.
\]
Let $r:Q_{\epsilon}\to Q_{\epsilon} $ be the reflection induced  from $(x,y)\to (x,-y)$.
Let $M_{\epsilon}=(\mathbb S^1(1)\times Q_{\epsilon})/(z, p) \sim (-z, r(p))$.
As $\epsilon\to 0$, $M_{\epsilon}$ inradius collapses to $\mathbb S^1(1/2)\times [0,2]$.
Let $\pi_{\epsilon}:M_{\epsilon}\to \mathbb S^1(1/2)\times [0,2]$ be the projection
induced  by $[z,(x,y)]\to (z,x)$.
Then both  $\pi_{\epsilon}^{-1}(\mathbb S^1(1/2)\times [0,1)$ and  
$\pi_{\epsilon}^{-1} (\mathbb S^1(1/2)\times (1, 2])$ are solid Klein bottle and  their $I$-fiber structures
are isomorphic to the twisted singular $I$-bundle model around respective boundary
of  $\mathbb S^1(1/2)\times [0,2]$.
\end{ex}

The following is a detailed version of Theorem \ref{thm:codim1}.
Recall that $D^2_+$ is the upper half disk on $xy$-plane, and $J:=D^2_{+}\cap \{ y=0\}$.

\begin{thm}\label{thm:codim1-detail}
Let $M_i\in\ca M(n,\kappa,\lambda, d)$ inradius collapse to an $(n-1)$-dimensional 
Alexandrov space $N$.  Then there is a  singular $I$-fiber bundle:
\[
      I  \rightarrow  M_i  \overset{\pi} \rightarrow   N
\]
whose singular locus coincides with $\partial N$, 
and $M_i$ is a gluing of $I$-bundle $N\tilde\times I$ over $N$ and $D^2_{+}$-bundle
$\pa N \tilde\times D^2_+$ over $\pa N$ , 
\[
      M_i = N\tilde\times I\cup \pa N \tilde\times D^2_+,
\]
where the gluing is done via $\pa N \tilde\times I = \pa N \tilde\times J$,
and $\tilde\times$ denotes either the product  or a twisted product.
In particular $M_i$ has the same homotopy type as $N$.

More precisely, 
\begin{enumerate}
 \item If  $N$ has no boundary, then  $M_i$ is homeomorphic to a product $N\times I$ 
          or a twisted product $N  \mathbin{\stackrel{\sim}{\times}} I;$
 \item If  $N$ has non-empty boundary, each component $\partial_{\alpha}N$ of $\partial N$
         has a neighborhood $V$ such that  $\pi^{-1}(V)$ is isomorphic to either 
         the product or the twisted singular $I$-fiber bundle around  $\partial_{\alpha}N$;
 \item If  $\pi^{-1}(V)$ is isomorphic to  the product singular $I$-fiber bundle for some component  
         $\partial_{\alpha}N$,
         then $M_i$ is homeomorphic to $D(N)\times [-1,1]/(x,t)\sim (r(x), -t)$, where $r$ is the canonical
         reflection of the double $D(N)$.
\end{enumerate}
\end{thm}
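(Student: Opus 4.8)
The plan is to prove the theorem at the level of the limit space $Y$, since $M_i\approx Y$ for large $i$. Indeed $\dim Y=n$ (because $\dim Y\ge\dim X+1=\dim N+1=n$ and $\dim Y\le\lim\dim\tilde M_i=n$), so $D(\tilde M_i)$ is a sequence of closed $n$-dimensional Alexandrov spaces with curvature $\ge\tilde\kappa$ converging to the closed $n$-dimensional $D(Y)$, and Theorem \ref{thm:stability} gives $D(\tilde M_i)\approx D(Y)$, whence $M_i\approx\tilde M_i\approx Y$. By Theorem \ref{thm:singIbund}, $Y$ is isometric to $C_0\times_{\tilde\phi}[-t_0,t_0]/\tilde f$ with $\tilde f=(f,-{\rm id})$ an involutive isometry and $N=C_0/f$; collapsing the $[-t_0,t_0]$-factor and passing to the quotient by $f$ yields the map $\pi\colon Y\to N$, whose fibre $[-t_0,t_0]\approx I$ over a point of $X_2$ degenerates to the half-interval $[0,t_0]\approx I$ over $\eta_0(C_0^1)$. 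It remains to identify $\eta_0(C_0^1)$ with $\partial N$ and to classify $\pi$ near $\partial N$.

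The key technical point, which also yields Lemma \ref{lem:partial}, is the rigidity of $f$ near $C_0^1={\rm Fix}(f)$. First, $C_0=\lim(\pa M_i)^{\rm int}$ is a non-collapsing limit of the closed $(n-1)$-manifolds $\pa M_i$ under a uniform lower curvature bound, so by stability $C_0\approx\pa M_i$ is a closed Alexandrov space without boundary. For $p\in{\rm Fix}(f)$, Proposition \ref{prop:direc} identifies $\Sigma_{\eta_0(p)}(Y)$ with the quotient of the half-suspension $\{\tilde\xi_+\}*\Sigma_p(C_0)$ by the induced isometric involution $f_*$ (which fixes $\tilde\xi_+$). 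Since $Y$ is smoothable (Remark \ref{rem:Kos}), Theorem \ref{thm:iterated-sp} forces every iterated space of directions of $Y$ to be a topological sphere; feeding $\Sigma_{\eta_0(p)}(Y)$ into this and descending the suspension structure, one finds that $f_*$ acts on $\Sigma_p(C_0)$, and on all of its iterated spaces of directions, with fixed-point set of codimension exactly one — that is, $f$ is a reflection near ${\rm Fix}(f)$, so $C_0^1$ is either empty, all of $C_0$ (when $f={\rm id}$), or a codimension-one totally geodesic reflection wall. Consequently $N=C_0/f$ has a manifold-with-boundary picture near $\eta_0(C_0^1)$ with boundary exactly $\eta_0(C_0^1)$ (near a reflection point it is a half-space, whereas over $q\notin{\rm Fix}(f)$ the covering is a local isometry so $\Sigma_{[q]}(N)\cong\Sigma_q(C_0)$ has no boundary); when $\emptyset\neq C_0^1\subsetneq C_0$ this reads $\partial N=\eta_0(C_0^1)$, the singular locus of $\pi$.

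The topological conclusions now follow. If $\partial N=\emptyset$, then ${\rm Fix}(f)\in\{\emptyset,C_0\}$: if $f={\rm id}$ then $\eta_0$ is an isometry and $Y=N\times_\phi[0,t_0]\approx N\times I$, while if $f$ is free then $\eta_0$ is an honest double covering and $Y$ is the $I$-bundle associated to it via the reflection action of $\mathbb Z_2$ on $I$, namely $N\times I$ or the twisted bundle $N\tilde\times I$ according as the covering is trivial or not; this is (1). If $\partial N\neq\emptyset$, fix a component $\pa_\alpha N$ with collar $V\approx\pa_\alpha N\times[0,1)$ (Proposition \ref{prop:collar}); the corresponding component $F_\alpha\subset{\rm Fix}(f)$ has, by the previous paragraph, a neighbourhood in $C_0$ that is the total space of a line bundle over $F_\alpha$ with $f$ acting as fibrewise $-{\rm id}$, and transporting this structure through $\pi$ presents $\pi^{-1}(V)$ as an $\bar R_{t_0}$-bundle over $\pa_\alpha N$ whose structure group reduces to the $\mathbb Z_2$ generated by $(x,y)\mapsto(x,-y)$ on $R_{t_0}$; hence $\pi^{-1}(V)$ is isomorphic to $\mathcal U_1(\pa_\alpha N)$ when this line bundle is trivial and to $\mathcal U_2(\pa_\alpha N)$ otherwise, which is (2). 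Since $\bar R_{t_0}\approx D^2_+$ with the attaching fibre sitting as the diameter $J$, removing $\pi^{-1}$ of the union of these collars exhibits $M_i$ as the union $N\tilde\times I\cup_{\pa N\tilde\times J}\pa N\tilde\times D^2_+$ of the interval bundle over $N$ with the $D^2_+$-bundle over $\pa N$; and since the interval bundle deformation retracts onto its zero section while each $\bar R_{t_0}\approx D^2_+$ retracts onto its singular fibre over the corresponding point of $N$, these retractions combine to retract $M_i$ onto $N$, giving $M_i\simeq N$.

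Finally, for (3), assume $\pi^{-1}(V)\cong\mathcal U_1(\pa_\alpha N)$ for some $\alpha$, i.e. the line bundle normal to $F_\alpha$ in $C_0$ is trivial, so $C_0$ has two local sides along $F_\alpha$ swapped by $f$. Here $\pa N\neq\emptyset$ forces $C_0^1\neq\emptyset$, so $C_0$ is connected (Lemma \ref{lem:lift}); combining connectedness with the reflection-wall structure of ${\rm Fix}(f)$ and the triviality of one of its normal bundles, one shows that ${\rm Fix}(f)$ separates $C_0$ into two halves interchanged by $f$, whence $C_0$ is isometric to the double $D(N)$ with $f$ the canonical reflection $r$. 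Then $Y$ is isometric to $D(N)\times_{\tilde\phi}[-t_0,t_0]/(x,t)\sim(r(x),-t)$, and rescaling the last factor to $[-1,1]$ gives $M_i\approx D(N)\times[-1,1]/(x,t)\sim(r(x),-t)$. I expect the two global steps — the iterated-suspension argument pinning down the local form of $f$ near ${\rm Fix}(f)$ (in particular excluding conical singular fibres) and the separation argument turning one trivial normal collar into the global splitting of $C_0$ by the reflection wall — to be the heart of the matter; the remaining assertions are routine once $Y$ is written as in Theorem \ref{thm:singIbund}.
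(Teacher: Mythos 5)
Your proposal is correct and follows essentially the same route as the paper: stability gives $M_i\approx Y$, Theorem \ref{thm:singIbund} gives the singular $I$-bundle form, the identification $\eta_0({\rm Fix}(f))=\partial N$ is forced by smoothability via Theorem \ref{thm:iterated-sp} exactly as in the paper's Lemma \ref{lem:partial}, and your separation argument for (3) is the paper's Assertion \ref{ass:embed} (an isometric section $N\to C_0$ built by continuation from the two-sided collar). The only cosmetic differences are that you route Perelman stability through the doubles and phrase the key lemma as a positive reflection-wall statement rather than the paper's contradiction argument.
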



Recall that   
\[
      Y = C_0\times_{\tilde\phi}[-t_0, t_0]/ \tilde f,    
\]
where $\tilde f=(f, -{\rm id})$,  $C_0$ and $Y$ are the noncollapsing limit of $(\partial M_i)^{\rm int}$ and $\tilde M_i$
respectively. 
Therefore both  $C_0$ and $Y\setminus C_{t_0}$ are smoothable spaces 
in the sense of \cite{Kap}. See also Remark \ref{rem:Kos}.

Let $F\subset C_0$ denote the fixed point set of the isometry $f:C_0\to C_0$.
%
%
By Proposition \ref{prop:fixed-ext} and Theorem \ref{thm:exr-prop} , 
$\eta_0(F)$ is an extremal subset of $N$ and it has a topological stratification.

\begin{lem} \label{lem:partial}
$\eta_0(F)$ coincides  with $\partial N$ if $f$ is not the identity.
\end{lem}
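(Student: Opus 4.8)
The plan is to identify the image $\eta_0(F)$ with $\partial N$ by working with the singular $I$-bundle structure $Y = C_0\times_{\tilde\phi}[-t_0,t_0]/\tilde f$ from Theorem \ref{thm:singIbund}, using the characterization of $\partial N$ via spaces of directions. Recall that $N = X^{\rm int} = C_0/f$, and $x\in X_1$ exactly when $\eta_0^{-1}(x)$ is a single point $p$; in that case, by Corollary \ref{cor:1/2} and the discussion there, $f_*$ acts nontrivially on $\Sigma_p(C_0)$ (since $C_0$ is connected by Lemma \ref{lem:lift}, as $X_1\neq\emptyset$ because $N$ has boundary — a point I need to justify), so $p\in F$ and $\eta_0(C_0^1)=\eta_0(F)$. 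Thus the statement reduces to showing $\eta_0(F) = \partial N$.

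First I would show $\partial N\subseteq \eta_0(F)$. By Proposition \ref{prop:direc}, for $x\in X_1$ with $\eta_0(p)=x$, $\Sigma_x(N) = \Sigma_x(X)$ is isometric to $\Sigma_p(C_0)/f_*$. By definition $x\in\partial N$ iff $\Sigma_x(N)$ has nonempty boundary. On the other side, every point of $X_2$ is an interior point of $N$: indeed by Corollary \ref{cor:cover} and Lemma \ref{lem:loc-isometry}, $X_2$ is open in $X$ and $\eta_0|_{C_0^2}$ is a local isometry onto $X_2^{\rm int}$, so $\Sigma_x(N)\cong\Sigma_p(C_0)$ for $x\in X_2$; since $C_0$ is a boundaryless Alexandrov space (being the noncollapsed limit of closed manifolds $(\partial M_i)^{\rm int}$ — here I use that $\partial M_i$ are closed, so have no boundary, hence neither does $C_0$ by stability of boundarylessness under noncollapse... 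I should double-check this is available, or argue via $D(\tilde M_i)$), $\Sigma_p(C_0)$ has no boundary, so $x\notin\partial N$. Hence $\partial N\subseteq X_1 = \eta_0(F)$.

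For the reverse inclusion $\eta_0(F)\subseteq\partial N$, I take $x=\eta_0(p)\in X_1$ with $f(p)=p$, so $f_*$ is a nontrivial isometric involution of the boundaryless Alexandrov space $\Sigma_p(C_0)$, and I must show $\Sigma_p(C_0)/f_*$ has nonempty boundary. The natural tool is the fixed-point set $F_p\subseteq\Sigma_p(C_0)$ of $f_*$: by Proposition \ref{prop:fixed-ext}, the image of $F_p$ is extremal in $\Sigma_p(C_0)/f_*$, and for a $\mathbb Z_2$-quotient of a boundaryless space, the quotient boundary is precisely the image of the codimension-one part of the fixed-point set. Concretely I would induct on $\dim C_0$ and use that $\Sigma_p(C_0)$ is itself smoothable (hence so is $\Sigma_\xi(\Sigma_p(C_0))$ for all iterated directions, by Theorem \ref{thm:iterated-sp}), reducing to: an involution on a sphere $\mathbb S^m$ that is nontrivial produces a quotient with boundary iff its fixed set has a codimension-one stratum, which holds here because $C_0$ being the limit of closed manifolds and $f$ being induced by the glueing forces $F$ to be, where it is nonempty, of codimension one (this is exactly the ``$\dim(X_1\cap\partial N)\le m-1$'' type estimate, combined with the fact that $X_1\neq\emptyset$ forces it to actually be of dimension $m-1$ somewhere — otherwise $N$ would have empty boundary, contradicting the hypothesis).

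The main obstacle I anticipate is the dimension/codimension analysis of the fixed-point set $F$: showing that where $f$ has fixed points at all, these form a codimension-one subset (so that the quotient genuinely has boundary rather than just a singular extremal subset of higher codimension). This requires using the smoothability of $C_0$ and $Y$ in an essential way — via Theorem \ref{thm:iterated-sp} on iterated spaces of directions being spheres — to rule out the ``exotic'' possibility of an isometric involution fixing only a low-dimensional set. Once that local model near a fixed point is pinned down (the involution looks like a reflection $\mathbb R^m\to\mathbb R^m$ in a hyperplane, up to the Alexandrov singularities which lie in codimension $\ge 2$ by Theorem \ref{thm:dim-sing}), the identification $\eta_0(F)=\partial N$ follows, and this is also what sets up the singularity-model analysis needed for Theorem \ref{thm:codim1-detail}.
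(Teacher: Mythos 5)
Your high-level plan is in the same spirit as the paper's proof (reduce to iterated spaces of directions and invoke Theorem~\ref{thm:iterated-sp}), and your easy inclusion $\partial N\subseteq\eta_0(F)$ via the covering structure of $X_2$ is fine. But the hard inclusion $\eta_0(F)\subseteq\partial N$ has two genuine gaps.

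First, your appeal to smoothability is aimed at the wrong space. You emphasize that $C_0$ and its iterated directions are smoothable, but this gives no constraint: the iterated spaces of directions of $C_0$ are always spheres whether or not $f_*$ has a codimension-one fixed set (think of a rotation by $\pi$ on $\mathbb S^2$ — the fixed set is two points, and every iterated direction space of $\mathbb S^2$ is still a sphere). What the paper uses is the smoothability of $Y\setminus C_{t_0}$ itself, where $Y=D/\tilde f$ with $D=C_0\times_{\tilde\phi}[-t_0,t_0]$ and $\tilde f=(f,-\mathrm{id})$. The contradiction comes from computing iterated direction spaces \emph{of $Y$} at a hypothetical fixed point $x\in\eta_0(F)\cap\mathrm{int}\,N$: one shows $\Sigma_{\zeta_{k+1}}(\Sigma_{*k}(Y))\cong\Sigma_{\xi_{k+1}}(\Sigma_{*k}(D))/\tilde f_{*(k+1)}$ with the $\mathbb Z_2$-action eventually free, so this is $\mathbb{RP}^{\ell}$ with $\ell\ge 2$, which is not a sphere. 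You never produce this quotient, so you never reach a contradiction.

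Second, the crucial mechanism that forces the action to become free is a dimension-descent argument that your proposal omits. The paper first shows $\dim(\eta_0(F)\cap\mathrm{int}\,N)\le m-2$ (using that the top-dimensional stratum of an extremal set would land in the manifold part of $N$ and force a local reflection, hence $\partial N$). Then Corollary~\ref{cor:ext-dir} gives $\dim\Sigma_p(F)\le\dim F-1$, so the fixed set of $f_{*k}$ loses at least two dimensions per step while the ambient direction sphere loses one; after finitely many steps the fixed set is finite, and one more step makes $f_{*(k+1)}$ fixed-point-free. The extra $-\mathrm{id}$ factor in $\tilde f$ then swaps the two poles of the suspension, making $\tilde f_{*(k+1)}$ act freely on the whole $\mathbb S^{\ell}$. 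Without this descent you cannot rule out an involution whose fixed set is low-dimensional but nonempty (a ``football'' singularity), which produces a boundaryless quotient; your claim that a nontrivial involution must have a codimension-one fixed stratum is exactly the assertion you must prove, not a fact you can cite. Finally, your parenthetical that ``otherwise $N$ would have empty boundary, contradicting the hypothesis'' is circular and also unwarranted: the lemma has no hypothesis that $\partial N\neq\emptyset$ — indeed the proof of Theorem~\ref{thm:codim1-detail} uses precisely the case $\partial N=\emptyset$ to conclude $F=\emptyset$.
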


We postpone the proof of Lemma \ref{lem:partial}  for a moment.

\begin{proof}[ Proof of Theorem \ref{thm:codim1-detail}]
$(1)$\, By Lemma \ref{lem:partial}, if $N$ has no boundary, $F$ is empty, and 
therefore either $N=N_1$ or $N=N_2$.
If $N=N_1$, then $C_0=N$ and $Y$ is homeomorphic to $N\times I$.
If $N=N_2$, then $N=C_0/f$ has no boundary, and $Y$ is homeomorphic to
either $N\times I$ or $C_0\times [-1,1]/(x,t)\sim (f(x), -t)$ which is a 
twisted $I$ bundle over $N$.

$(2)$\, 
Suppose $N$ has non-empty boundary. 
Note that 
\[
                                   N_1=\eta_0(F).
\]
By Proposition \ref{prop:collar}, 
each component $\partial_{\alpha} N$ of $\partial N$ has a collar neighborhood 
$V_{\alpha}$. Let $\varphi:V_{\alpha}\to \partial_{\alpha}N\times [0,1)$ be a homeomorphism.
Let $\pi:Y\to N$ be the projection.
By the $I$-fiber structure of $Y$, 
$\pi^{-1}(\varphi^{-1}(x\times [0,1))$ is canonically homeomorphic to 
$R_{t_0}$. In particular $\pi^{-1}(V_{\alpha})$ is an $R_{t_0}$-bundle over $\partial_{\alpha}N$.
If this bundle is trivial, $\pi^{-1}(V_{\alpha})$ is isomorphic to the product singular $I$-bundle 
structure $\mathcal U_1(\partial_{\alpha}N) = \partial_{\alpha} N\times R_{t_0}$.

Suppose that this bundle is nontrivial, 
and 
let $P_{\alpha}$ be the boundary of $\pi^{-1}(\varphi^{-1}(\partial_{\alpha} N\times \{ 1/2\}))$,
which is a double covering of $\partial N_{\alpha}$. Let $\varPhi=(\varphi, g)$,
and $\rho:P_{\alpha}\to \partial_{\alpha}N$ the projection.

\begin{lem}
 $\pi^{-1}(V_{\alpha})$ is isomorphic to the twisted singular $I$-bundle 
structure $\mathcal U_2(\partial_{\alpha}N)=(P_{\alpha}\times R_{t_0})/\varPhi$.
\end{lem}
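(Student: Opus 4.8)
The plan is to identify the $R_{t_0}$-bundle $\pi^{-1}(V_\alpha)\to\partial_\alpha N$ with the bundle associated to the double cover $\rho\colon P_\alpha\to\partial_\alpha N$ through the reflection $g$ on the fibre $R_{t_0}$. The first move is to show that this bundle admits a reduction of its structure group to the two-element group $\mathbb{Z}_2=\langle g\rangle$; the remainder will then be a routine unwinding of the associated-bundle construction, together with the identification of the classifying double cover.

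For the reduction I would analyse the group $G$ of fibre-preserving self-homeomorphisms of $R_{t_0}$, i.e.\ homeomorphisms $h\colon R_{t_0}\to R_{t_0}$ commuting with $\pi\colon R_{t_0}\to[0,1)$ and fixing the singular point (the image of $(0,0)$). Restricted to the regular part $(0,1)\times[-t_0,t_0]$, such an $h$ is a fibre-preserving homeomorphism of a trivial $I$-bundle over the contractible interval $(0,1)$, hence is isotopic through fibre-preserving homeomorphisms either to the identity or to $g\colon (x,y)\mapsto(x,-y)$, according as it preserves or interchanges the two endpoints of a regular fibre; the restriction of $h$ over $0$, being a homeomorphism of the interval $[0,t_0]$ fixing the distinguished endpoint $0$, fixes both endpoints and is isotopic to the identity, compatibly with the regular part since $g$ itself restricts to the identity over $0$. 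Because $g$ and the identity are not isotopic in $G$ (an isotopy in $G$ cannot change the permutation induced on the endpoints of the regular fibres), we get $\pi_0(G)=\mathbb{Z}_2=\langle[g]\rangle$, so every transition homeomorphism of $\pi^{-1}(V_\alpha)$ is homotopic in $G$ into $\{\mathrm{id},g\}$ and the structure group reduces to $\mathbb{Z}_2$. In particular the bundle is classified by a double cover $\widehat P\to\partial_\alpha N$, which is trivial precisely when $\pi^{-1}(V_\alpha)$ is a trivial $R_{t_0}$-bundle.

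Next I would identify $\widehat P$ with $P_\alpha$. Restricting the $\mathbb{Z}_2$-reduction to the middle level (the $\pi$-preimage of $\partial_\alpha N\times\{1/2\}\subset V_\alpha$) gives an $I$-bundle over $\partial_\alpha N$ with the same classifying cocycle, and its fibrewise boundary is an $S^0$-bundle with that same cocycle, since $g$ acts on the fibre $[-t_0,t_0]$ by $y\mapsto-y$ and hence transposes its two endpoints; by the very definition of $P_\alpha$ this $S^0$-bundle is $P_\alpha\to\partial_\alpha N$. Hence $\widehat P=P_\alpha$, and since $\pi^{-1}(V_\alpha)$ is a nontrivial $R_{t_0}$-bundle by hypothesis, $P_\alpha$ is connected and the $\mathbb{Z}_2$-action on it is the deck transformation $\varphi$. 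The bundle associated to the principal $\mathbb{Z}_2$-bundle $P_\alpha\to\partial_\alpha N$ with fibre $R_{t_0}$ and action $g$ is $(P_\alpha\times R_{t_0})/\varPhi$ with $\varPhi=(\varphi,g)$, namely $\mathcal U_2(\partial_\alpha N)$; and since $g$ commutes with $\pi\colon R_{t_0}\to[0,1)$, the resulting homeomorphism $\pi^{-1}(V_\alpha)\cong\mathcal U_2(\partial_\alpha N)$ intertwines the two projections onto $\partial_\alpha N\times[0,1)$, so it is an isomorphism of singular $I$-bundles.

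The step I expect to be the main obstacle is the structure-group reduction: one has to verify that $R_{t_0}$-bundles carry no twisting beyond the $\mathbb{Z}_2$ generated by $g$, which amounts to controlling the fibre-preserving homeomorphism group $G$ of $R_{t_0}$ in a neighbourhood of the singular fibre, always working in the category of maps that fix the singular locus. Everything after that is formal.
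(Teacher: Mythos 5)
Your route --- reducing the structure group of the $R_{t_0}$-bundle to $\mathbb Z_2=\langle g\rangle$ and then identifying the resulting classifying double cover with $P_\alpha$ --- is genuinely different from, and considerably heavier than, the paper's argument. The paper simply writes down the isomorphism: $P_\alpha$ is \emph{defined} as the fibrewise boundary of $\pi^{-1}(\varphi^{-1}(\partial_\alpha N\times\{1/2\}))$, so for each $(p,x)\in P_\alpha\times[0,1)$ the pair $\{p,\varphi(p)\}$ is canonically identified (by sliding along the collar parameter) with the two ends of the fibre $I_{\rho(p),x}$, degenerating to the cone point at $x=0$. One then sets $\Psi(p,x,y)$ to be the point of $I_{\rho(p),x}$ at parameter $y$, where that arc is parametrised so that $-t_0\mapsto p$ and $t_0\mapsto\varphi(p)$; since $(\varphi(p),x,-y)$ yields the same point, $\Psi$ descends to a fibre-preserving homeomorphism $\mathcal U_2(\partial_\alpha N)\to\pi^{-1}(V_\alpha)$. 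All the monodromy data are already encoded in $P_\alpha$ itself, and no classifying-space or structure-group argument is ever invoked.

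Your proposal also has a genuine gap at precisely the step you flag. Establishing $\pi_0(G)\cong\mathbb Z_2$ for the fibre-preserving homeomorphism group $G$ of $R_{t_0}$ is plausible, but to pass from that to ``the structure group reduces to $\mathbb Z_2$'' you need the inclusion $\mathbb Z_2\hookrightarrow G$ to induce a weak equivalence $B\mathbb Z_2\to BG$ through $\dim\partial_\alpha N$ (e.g.\ that the identity component $G_0$ is contractible), so that a $G$-valued cocycle can be \emph{coherently} homotoped to a $\mathbb Z_2$-valued one. Your isotopy argument only treats individual transition maps and controls $\pi_0(G)$; it says nothing about the homotopy type of $G_0$ nor about the compatibility of the straightenings across triple overlaps, and $R_{t_0}$ is a singular space, so none of this is off-the-shelf. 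The explicit construction of $\Psi$ in the paper bypasses the whole issue, which is why its proof of the lemma is essentially one line.
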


\begin{proof}
%
Note that 
\begin{gather*}
    \mathcal U_2(\partial_{\alpha} N):=(P_{\alpha}\times R_{t_0})/(p,x,y)\sim (\varphi(p), x, -y), \\
              \pi^{-1}(V_{\alpha})=\pi^{-1}\varphi^{-1}(\partial_{\alpha}N\times [0,1).
\end{gather*}
We define  a map $\Psi:\mathcal U_2(\partial_{\alpha} N)\to \pi^{-1}(V_{\alpha})$ as follows: 
Note that for each $(p,x)\in P_{\alpha}\times [0,1)$, 
 $\{ p, \varphi(p)\}$ can be identified with with the boundary of the 
$I$-fiber $I_{\rho(p),x}:= \pi^{-1}\varphi^{-1}(\rho(p)\times \{ x \})$.
Define  $\Psi(p,x,y)$, $-t_0\le y\le t_0$, be the arc on the fiber $I_{\rho(p),x}$ from 
$p$ to $\varphi(p)$.
It is easy to see that $\Psi:\mathcal U_2(\partial_{\alpha} N)\to \pi^{-1}(V_{\alpha})$ gives an 
isomorphism between $I$ fibered spaces.
%
%
\end{proof}

$(3)$\, Put ${\rm int}N:=N\setminus \partial N$ for simplicity.

\begin{ass} \label{ass:embed}
There is an isometric imbedding $g:N\to C_0$ such that $\eta_0\circ g=1_N$.
\end{ass}

\begin{proof}
Set $F_{\alpha}:=\eta_0^{-1}(\partial_{\alpha}N)$.
From the assumption, we may assume that $F_{\alpha}$ is two-sided in the sense 
that the complement of $F_{\alpha}$ in some connected neighborhood of it is disconnected.
Thus there is a connected neighborhood $V_{\alpha}$ of $\partial_{\alpha}N$ in ${\rm int} N$
for which there is an isometric imbedding  $g_{\alpha}:V_{\alpha}\to C_0\setminus F$ such that 
$\eta_0\circ g_{\alpha}=1_{V_{\alpha}}$.

Let $W$ be the maximal connected open subset of ${\rm int}N$ for which 
there is an isometric imbedding  $g_0:W \to C_0\setminus F$ such that 
$\eta_0\circ g_0=1_{W}$ and $g_0(W)\supset g_{\alpha}(V_{\alpha})$. 
We only have to show that $W={\rm int}N$.
Otherwise, there is a point $x\in \partial W\cap {\rm int}N$.
Take a connected neighborhood $W_x$ of $x$ in ${\rm int}N$  such that 
$\eta_0^{-1}(W_x)$ is a disjoint union of open sets $U_1$ and $U_2$
such that $\eta_0:U_i\to W_x$ is an isometry for $i=1,2$.
Obviously one of $U_i$, say $U_1$, meets $g_0(W)$ and the other does not.
We extend $g_0$ to $g_1:W\cup W_x\to C_0\setminus F$ 
by requiring $g_1|_{W_x}=\eta_0^{-1}:W_x\to U_1$.
Since $g_1$ is an isometric imbedding, this is a contradiction to 
the maximality of $W$.

Thus we have an isometric imbedding $g_0:{\rm int}N \to C_0\setminus F$.
Since ${\rm int}N$ is convex and $\eta_0$ is $1$-Lipschitz, $g_0$ preserves
the distance. It follows that $g_0$ extends to 
an isometric imbedding $g:N\to C_0$ which preserves distance. 
\end{proof}

Assertion \ref{ass:embed} shows that 
every component of $F$ is two-sided.
It follows that $C_0=D(N)$, and that $f$ is the reflection of the double $D(N)$.
This completes the proof of Theorem \ref{thm:codim1-detail}
\end{proof}

\begin{proof} [Proof of Lemma \ref{lem:partial}]
Obviously $\partial N\subset \eta_0(F)$.
Suppose that $\eta_0(F)\cap ({\rm int}N)$ is not empty.

\begin{slem} \label{slem:dim}
  $\dim(\eta_0(F)\cap {\rm int}N)\le m-2$, where $m:=\dim N$.
\end{slem}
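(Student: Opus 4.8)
The plan is to recognize this sublemma as essentially a special case of Corollary~\ref{cor:1/2}, and to spell out the short derivation. The first thing I would record is a bookkeeping identification: in the present situation $\eta_0(F)=X_1$. Indeed, if $p\in F$ then $\eta_0^{-1}(\eta_0(p))=\{p,f(p)\}=\{p\}$, so $\eta_0(p)\in X_1$; conversely a point of $X_1$ has a unique preimage $p$, and $f(p)\in\eta_0^{-1}(\eta_0(p))=\{p\}$ forces $p\in F$. Moreover, in the situation of Lemma~\ref{lem:partial}, both $X_1$ and $X_2$ are non-empty: $X_2\neq\emptyset$ because $f$ is not the identity, and $X_1=\eta_0(F)\neq\emptyset$ because we are assuming $\eta_0(F)\cap{\rm int}N\neq\emptyset$. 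Hence Corollary~\ref{cor:1/2} is available.

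Next I would turn the volume bound into a singularity statement. By Corollary~\ref{cor:1/2}, every $x\in\eta_0(F)=X_1$ satisfies $\vol\,\Sigma_x(N)\le\tfrac12\vol\,\mathbb S^{m-1}<\vol\,\mathbb S^{m-1}$; here I use the identification $N=X^{\rm int}$ and the fact that the intrinsically defined space of directions agrees with $\Sigma_x(N)$ by Proposition~\ref{prop:closure}. In particular $\Sigma_x(N)$ is not isometric to $\mathbb S^{m-1}$, so $x$ is a singular point of the $m$-dimensional Alexandrov space $N$; that is, $\eta_0(F)\cap{\rm int}N\subset N^{\rm sing}\cap{\rm int}N$. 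Applying Theorem~\ref{thm:dim-sing} to $N$ then gives $\dim_H(N^{\rm sing}\cap{\rm int}N)\le m-2$, and since topological dimension is bounded by Hausdorff dimension this yields $\dim(\eta_0(F)\cap{\rm int}N)\le m-2$, which is the claim.

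Because everything reduces to results already in hand, I do not expect a genuine obstacle; the only point needing a little care is the chain of identifications ($\eta_0(F)=X_1$, $N=X^{\rm int}$, and $\Sigma_x(X)=\Sigma_x(N)$). If one preferred a proof not citing Corollary~\ref{cor:1/2}, one could argue directly: $C_0$ is connected (Lemma~\ref{lem:lift}, since $X_1\neq\emptyset$), so for $p\in F$ the differential $(df)_p$ is a non-identity isometric involution of $\Sigma_p(C_0)$ — otherwise non-branching of geodesics would force $f=\mathrm{id}$ — whence its fixed set is a proper closed subset and hence at most $(m-2)$-dimensional (non-branching again, now inside $\Sigma_p(C_0)$), so of zero $(m-1)$-dimensional measure; then $\Sigma_x(N)=\Sigma_p(C_0)/(df)_p$ (Proposition~\ref{prop:direc}, or Proposition~\ref{prop:quotient} read locally at $x=\eta_0(p)$) is a $2$-to-$1$ quotient off a negligible set, so $\vol\,\Sigma_x(N)=\tfrac12\vol\,\Sigma_p(C_0)\le\tfrac12\vol\,\mathbb S^{m-1}$ by the Bishop inequality for $\Sigma_p(C_0)$, and one finishes as above. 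In that route the only subtlety is the measure-theoretic step — negligibility of the branch locus and halving of volume under a nontrivial isometric $\mathbb Z_2$-quotient — which is precisely the estimate underlying Corollary~\ref{cor:1/2}.
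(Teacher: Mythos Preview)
Your proof is correct but takes a genuinely different route from the paper. Once you observe $\eta_0(F)=X_1$ (which is immediate from the definition of $f$), the sublemma is literally the second dimension bound in Corollary~\ref{cor:1/2}, so you could cite that conclusion directly rather than re-deriving it via the volume estimate and Theorem~\ref{thm:dim-sing}. The paper instead argues by contradiction: if $\dim(\eta_0(F)\cap{\rm int}N)=m-1$, the top stratum of this extremal set is a topological $(m-1)$-manifold, which by Theorem~\ref{thm:dim-sing} must meet a regular point of ${\rm int}N$; at such a point the $\mathbb Z_2$-action $f$ is forced to be a local reflection across a codimension-one hypersurface, whence the stratum lies in $\partial N$, contradicting the assumption that it sits in the interior.

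Your approach is shorter and cleaner, exploiting that Corollary~\ref{cor:1/2} has already done the work. The paper's approach is more explicitly geometric and self-contained within the proof of Lemma~\ref{lem:partial}; it also foreshadows the reflection picture that is implicit in the subsequent inductive argument on iterated spaces of directions. Neither approach has a real advantage beyond taste --- they both rest on the singular-set codimension estimate of Theorem~\ref{thm:dim-sing} together with nontriviality of the $\mathbb Z_2$-action.
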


\begin{proof}
If  $\dim(\eta_0(F)\cap {\rm int}N) = m-1$, then the top-dimensional strata $S$ of 
$\eta_0(F)\cap {\rm int}N$ is a topological $(m-1)$-manifold, and therefore it meets 
the $m$-dimensional strata of $N$ because $N^{\rm sing}\cap {\rm int}N$ has 
codimension $\ge 2$ (Theorem \ref{thm:dim-sing}). 
Take $p\in\eta_0^{-1}(S)$.
It is now easy to see that 
$f$ is the reflection with respect to  $\eta_0^{-1}(S)$ in a small neighborhood of $p$.
It follows that $S$ is a subset of $\partial N$, contradiction to the hypothesis.
\end{proof}

Take a point  $x=\eta_0(p)\in\eta_0(F)\cap {\rm int}N$, and 
consider the directional defivatives  $f_*:\Sigma_p(C_0)\to \Sigma_p(C_0)$ of $f$ at $p$
which is again an isometric involution with fixed point set 
             $$F_*:=\Sigma_p(F)$$
By Corollary \ref{cor:ext-dir} and Sublemma \ref{slem:dim},  $\dim F_*\le m-3$ while $\dim \Sigma_p(C_0)=m-1$.
Repeating this we have a finite sequence of directional derivatives of $f$, $f_*\ldots$,  
each of which is an isometric involution:
\[
     f_{*k}:  \Sigma_{*k}(C_0)\to  \Sigma_{*k}(C_0),
\]
where  $\Sigma_{*k}(C_0)$ denotes a $k$ iterated space of directions,
\[
       \Sigma_{*k}(C_0) = \Sigma_{\xi_{k-1}}(\cdots(\Sigma_{\xi_1}(\Sigma_p(C_0))\cdots),
\]
and $\xi_i$ is taken from the fixed point set of the iterated directional derivatives:
$$
\xi_1\in\Sigma_p(F), \, \xi_2\in\Sigma_{\xi_1}(F_{*}), \ldots, \, \xi_k\in\Sigma_{\xi_{i-1}}(F_{*(k-1)}),
$$
and  $F_{*i}$ denotes the fixed point set of  $f_{*i}:\Sigma_{*i}(C_0)\to \Sigma_{*i}(C_0)$
which coincides with  $F_{*i}=\Sigma_{\xi_{i-1}}(F_{*(i-1)})$.

Note that the iterated space of directions $\Sigma_{*k}(C_0)$ has dimension $m-k$, 
and the iterated fixed point set $F_{*k}\subset \Sigma_{*k}(C_0)$ has dimension $\le m-k-2$.
It follows that for some $k\le m-2$,  $F_{*k}$ becomes a finite set.
It follows that for any  $\xi_{k+1}\in F_{*k}$, 
$$
               f_{*(k+1)}:\Sigma_{\xi_{k+1}}(\Sigma_{*k}(C_0))\to \Sigma_{\xi_{k+1}}(\Sigma_{*k}(C_0))
$$
has no fixed points.  
Put 
\[
             D:= C_0\times_{\tilde\phi}[-t_0,t_0],
\]
and let $\tilde f$ be an isometric involution on $D$ defined by $\tilde f=(f,-{\rm id})$.
From Theorem \ref{thm:singIbund}, 
\[
                      Y = D/\tilde f.
\]
Let  $x=\eta_0(p)$,  $p=(p,0)$, $\xi_i\in\Sigma_{\xi_{i-1}} (F_{*(i-1)})$, $1\le i\le k+1$, be as above.
Note that 
\[
                       \Sigma_x(Y)=\Sigma_p(D)/\tilde f_*, \,\, \Sigma_x(X)=\Sigma_p(C_0)/f_*.
\]
Let $\zeta_1\in\Sigma_x(\eta_0(F))\subset \Sigma_x(X)\subset\Sigma_x(Y)$ 
be the element corresponding to $\xi_1\in\Sigma_p(F)\subset \Sigma_p(C_0)\subset \Sigma_p(D)$.
Note that 
\[
                              \Sigma_p(D)=\{ \xi_{\pm}\}*\Sigma_p(C_0)
\]
and $\tilde f_*=(f_*, -{\rm id})$ interchanges $\xi_{+}$ and $\xi_{-}$ and preserves $\Sigma_p(C_0)$.
Next consider 
\[
     \Sigma_{\zeta_1}(\Sigma_x(Y)) = \Sigma_{\xi_1}(\Sigma_p(D))/\tilde f_{**},
\]
where $\tilde f_{**}$ denotes the directional derivatives of $f_{*}$ at $\zeta_1$.
Note that $\Sigma_{\xi_1}(\Sigma_p(D))$ is still isometric to $\{ \xi_{\pm}\}*\Sigma_{\xi_1}(\Sigma_p(C_0))$ 
and $\tilde f_{**}=(f_{**}, -{\rm id})$ interchanges $\xi_{+}$ 
and $\xi_{-}$ and preserves $ \Sigma_{\xi_1}(\Sigma_p(C_0))$.
Similarly and finally we consider
\begin{align} 
     \Sigma_{\zeta_{k+1}}(\Sigma_{*k}(Y)) = \Sigma_{\xi_{k+1}}(\Sigma_{*k}(D))/\tilde f_{*k+1}, \label{eq:induct}
\end{align}
where $\zeta_{k+1} \in \Sigma_{*k}(Y)$ is the element corresponding to 
$\xi_{k+1}\in\Sigma_{*k}(D)$, and $\tilde f_{*{k+1}}=(f_{*{k+1}}, -{\rm id})$ freely acts on 
$\Sigma_{\xi_{k+1}}(\Sigma_{*k}(D))$.
Recall that 
\[
       \ell :=\dim \Sigma_{\xi_{k+1}}(\Sigma_{*k}(D)) =m-k \ge 2.
\]
Note that the iterated spaces of directions of  the smoothable spaces $Y\setminus \partial C_{t_0}$ 
must be all homeomorphic to spheres (Theorem \ref{thm:iterated-sp}). However \eqref{eq:induct} shows that 
$\Sigma_{\zeta_{k+1}}(\Sigma_{*k}(Y))$ is homeomorphic to a quotient
$\mathbb S^{\ell}/\mathbb Z_2$ for  $\ell\ge 2$
by a free $\mathbb Z_2$-action,  which is a contradiction. 
This completes the proof of Lemma \ref{lem:partial}.
\end{proof}

\subsection{Inradius collapse to almost regular spaces}

Next we consider the case where $M_i$ inradius collapses to almost regular 
Alexandrov space $N$. The idea of using an equivariant fibration-capping theorem 
in \cite{Ya:four} was inspired by a recent work \cite{MY2:dim3bdy}.

First we recall this theorem.
 Let $X$ be a $k$-dimensional complete Alexandrov space with curvature
$\ge\kappa$ possibly non-empty boundary.
We denote by $D(X)$ the double of $X$, which is also
an Alexandrov space with curvature $\ge\kappa$.
(see \cite{Pr:alexII}). By definition, $D(X)=X\cup X^*$ glued
along their boundaries, where $X^*$ is another copy of $X$.

A $(k,\delta)$-strainer $\{ (a_i,b_i)\}$ of $D(X)$ at $p\in X$
is called {\it admissible} if
$a_i\in X$, $b_j\in X$ for every $1\le i\le k$, $1\le j\le k-1$
(clearly, $b_k\in X^*$ if $p\in\partial X$ for instance).
Let  $R_{\delta}^D(X)$ denote the set of points of $X$ at which
there are admissible $(k,\delta)$-strainers.
It has the structure of a Lipschitz $k$-manifold with boundary.
Note that every point of $R_{\delta}^D(X)\cap \partial X$
has a small neighborhood in $X$ almost isometric to an open subset
of the half space $\mathbb{R}^k_+$ for small $\delta$.

If $Y$ is a closed domain of $R_{\delta}^D(X)$, then
the $\delta_D$-{\it strain radius} of $Y$ is
defined as the infimum of positive numbers $\ell$ such that
there exists an admissible $(k,\delta)$-strainer of length $\ge \ell$
at every point in $Y$, denoted by
$\text{$\delta_D$-{\rm str.rad}$(Y)$}$.

For a small $\nu>0$, we put
$$
   Y_{\nu} := \{ x\in Y\,|\, d(\partial X, x)\ge\nu\}.
$$
We use the following special notations:
$$
   \partial_0 Y_{\nu} := Y_{\nu}\cap \{ d_{\partial X}=\nu\},\quad
            {\rm int}_0 Y_{\nu}:=Y_{\nu}-\partial_0 Y_{\nu}.
$$
Let $M^n$ be another $n$-dimensional complete Alexandrov space
with curvature $\ge \kappa$ having no boundary.
Let $R_\delta(M)$ denote the set of all $(n,\delta)$-strained points of $M$.

A surjective map $f:M\to X$ is called an
$\epsilon$-{\it almost Lipschitz submersion} if \par
\begin{enumerate}
 \item  it is an $\epsilon$-approximation;
 \item  for every $p,q\in M$
    $$
     \left|\frac{d(f(p), f(q))}{d(p, q)}-\sin\theta_{p,q}\right| < \epsilon,
   $$
\end{enumerate}
where $\theta_{p,q}$ denotes the infimum of $\angle qpx$ when
  $x$ runs over $f^{-1}(f(p))$.


Now let a Lie group  $G$ act on  $M^n$ and $X$ as isometries. Let 
\[
       d_{e.GH}((M,G), (X,G))
\]
denote the equivariant Gromov-Hausdorff distance as defined in Section \ref{subsec:GH}.
We need to aassume the following on the existence of slice for $G$-orbits:

\begin{asmp} \label{asmp:slice}
For each $p\in X$, there is a {\it slice} $L_p$ at $p$. Namely
$U_p:=GL_p$ provides a $G$-invariant tubular neighborhood of $Gp$
which is $G$-isomorphic to $G\times_{G_p} L_p$.
\end{asmp}

Obviously Assumption \ref{asmp:slice} is automatically satisfied if $G$ is discrete.
By  \cite{HS}, Assumption \ref{asmp:slice} also holds true if $G$ is compact.

\begin{thm}[Equivariant Fibration-Capping Theorem( \cite{Ya:four}, Thm 18.9)] \label{thm:cap}
Let $X$ and $G$ be as above such that  $X/G$ is compact.
Given $k$ and $\mu>0$ there exist positive numbers $\delta=\delta_k$,
$\epsilon_{X,G}(\mu)$ and $\nu=\nu_{X,G}(\mu)$ satisfying
the following $:$\,\, Suppose $X=R^D_{\delta}(X)$ and  $\text{$\delta_D$-{\rm str.rad}$(X)$}>\mu$.
Suppose $M=R_{\delta_n}(M)$ and
$d_{eGH}((M,G),(X,G))<\epsilon$ for some
$\epsilon\le\epsilon_{X,G}(\mu)$.
Then there exists a $G$-invariant decomposition
\[
                        M  = M_{\rm int} \cup M_{\rm cap}
\]
of $M$ into two closed domains glued along their boundaries, and a
$G$-equivariant Lipschitz map  $f:M  \to X_{\nu}$ such that
\begin{enumerate}
 \item $M_{{\rm int}}$ is the closure of $f^{-1}({\rm int}_0 X_{\nu})$, and
       $M_{\rm cap} = f^{-1}(\partial_0 X_{\nu})$;
 \item the restrictions $f|_{M_{\rm int}}: M_{\rm int} \to X_\nu$ and
     $f|_{M_{\rm cap}} : M_{\rm cap} \to \partial_0 X_\nu$ are
   \begin{enumerate}
     \item  locally trivial fiber bundles;
     \item  $\tau(\delta,\nu,\epsilon/\nu)$-Lipschitz submersions.
   \end{enumerate}
\end{enumerate}
\end{thm}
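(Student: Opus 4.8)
The plan is to construct the equivariant map by first producing non-equivariant local models from admissible strainers and then patching them together along $G$-slices. Near an interior point $p$ of $X$ with $d(\partial X,p)>2\nu$, an admissible $(k,\delta)$-strainer $\{(a_i,b_i)\}_{i=1}^k$ gives an almost-isometric strainer chart $\varphi_p=(d(a_1,\cdot),\dots,d(a_k,\cdot))$ onto an open subset of $\mathbb{R}^k$. Since $d_{eGH}((M,G),(X,G))<\epsilon$, one chooses points $\tilde a_i\in M$ shadowing the $a_i$ and forms $\tilde\varphi_p=(d(\tilde a_1,\cdot),\dots,d(\tilde a_k,\cdot))$; by the argument of the fibration theorem of \cite{Ya:pinching}, after a $\tau(\delta)$-small smoothing this is a locally trivial bundle onto its image whose fibers carry metrics of almost nonnegative curvature in the generalized sense of \cite{Ya:pinching}. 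Near a boundary point $p\in\partial X$, admissibility forces $b_k\in X^*$, so $d_{\partial X}$ agrees up to $\tau(\delta)$ with the last strainer coordinate; pulling back a regularization $\rho$ of $d_{\partial X}$ to $M$, the level $\{\rho=\nu\}$ models $\partial_0X_\nu$, and the map $(\tilde\varphi_p',\min\{\rho,\nu\})$ — with $\tilde\varphi_p'$ using only the first $k-1$ pairs — is the boundary model: over $\mathrm{int}_0X_\nu$ its fiber is a closed almost nonnegatively curved manifold $F_i$, while over $\partial_0X_\nu$ the $\rho$-coordinate caps $F_i$ off to an almost nonnegatively curved manifold with boundary $\mathrm{Cap}_i$ with $\partial\,\mathrm{Cap}_i=F_i$.

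To globalize I would cover $X_\nu$ by finitely many such charts and glue the corresponding local maps by a center-of-mass construction along the fibers. The fibers have intrinsic diameter $\tau(\delta,\nu,\epsilon/\nu)$ and enough curvature control from the strained structure that a canonical barycentric average of the finitely many mutually $\tau$-close local projections is well defined and smooth; this produces a global map $f:M\to X_\nu$ that is simultaneously a locally trivial fiber bundle and a $\tau(\delta,\nu,\epsilon/\nu)$-Lipschitz submersion, and it straightens the transition functions so that $M_{int}:=\overline{f^{-1}(\mathrm{int}_0X_\nu)}$ and $M_{cap}:=f^{-1}(\partial_0X_\nu)$ become honest locally trivial bundles over $X_\nu$ and $\partial_0X_\nu$, with fibers $F_i$ and $\mathrm{Cap}_i$ respectively. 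If $\partial X=\emptyset$, then $X_\nu=X$, $\partial_0X_\nu=\emptyset$ and $M=M_{int}$.

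For the equivariance I would invoke Assumption \ref{asmp:slice}: choose each chart as the $G$-saturation $G\times_{G_p}L_p$ of a slice $L_p$, select the shadow points $\tilde a_i$ and the regularized function $\rho$ on $M$ in a $G_p$-equivariant way on each slice, and spread them by the $G$-action. Because the barycenter construction is natural under isometries, averaging the $G$-translates of the local maps yields a genuinely $G$-equivariant $f$; the smallness of $d_{eGH}((M,G),(X,G))$ guarantees that the sets over which one averages are $\tau(\epsilon)$-small, so no cancellation occurs, and taking $\rho$ to be $G$-invariant makes the decomposition $M=M_{int}\cup M_{cap}$ $G$-invariant.

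The delicate part is the overlap region $\{\nu/2<\rho<2\nu\}$, where the local description changes from the interior fibration (fiber $F_i$) to the capping model (fiber $\mathrm{Cap}_i$): one must check that the two are isomorphic as fibered spaces there and that the center-of-mass gluing respects the stratification $M=M_{int}\cup M_{cap}$. Quantitatively, the theorem compels the choice $\delta=\delta_k$ depending only on $k$, then $\nu=\nu_{X,G}(\mu)$ small relative to $\mu$ and $\delta$, and finally $\epsilon=\epsilon_{X,G}(\mu)$ small relative to $\nu$; in the equivariant setting these choices must moreover be made compatibly across the finitely many orbit types of the $G$-action on $X$, which is the principal bookkeeping obstacle.
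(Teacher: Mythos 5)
The paper does not contain a proof of this theorem: it is explicitly imported from \cite{Ya:four}, Theorem~18.9, and used as a black box in the proof of Theorem~\ref{thm:RMBcap}. So there is no ``paper's own proof'' to compare yours against; what can be assessed is whether your sketch is a viable reconstruction of the argument that would appear in \cite{Ya:four}.

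Your outline follows the right general strategy (strainer charts, gluing, $G$-averaging over slices), but it has several concrete problems if taken as a proof. First, you repeatedly attach to the fibers $F_i$ and $\mathrm{Cap}_i$ the property of carrying ``almost nonnegatively curved'' metrics in the generalized sense of \cite{Ya:pinching}; that statement is not part of Theorem~\ref{thm:cap} at all --- it is a separate output of \cite{Ya:pinching} that the paper invokes later when stating Theorem~\ref{thm:RMBcap}. Mixing it into the proof of \ref{thm:cap} conflates what must be proved here with a downstream consequence. Second, the core verification that the constructed map is a \emph{locally trivial fiber bundle} is asserted rather than argued. The standard mechanism in \cite{Ya:pinching}, \cite{Ya:conv}, \cite{Ya:four} is not a center-of-mass smoothing in the target: one builds the map from distance coordinates, proves it is an ``$\e$-almost Lipschitz submersion,'' and then deduces local triviality via gradient-like vector fields and a Perelman-type stability/flow argument. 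A barycentric average of local projections with values in the Alexandrov space $X_\nu$ is not automatically well-defined (no unique minimizer in general, $X_\nu$ may have conical singularities within the $\delta$-regular stratum), and even if defined it would not obviously produce a continuous fiber structure or the stratified decomposition $M=M_{\rm int}\cup M_{\rm cap}$. Third, the decomposition itself is the genuine content: you gesture at a pullback $\rho$ of $d_{\partial X}$ and a cutoff $\min\{\rho,\nu\}$, but the theorem asserts a concrete splitting into two closed $G$-invariant domains glued along a common boundary, with one piece fibering over $X_\nu$ and the other over $\partial_0 X_\nu$; this requires showing the level set $\{\rho=\nu\}$ in $M$ is a well-behaved submanifold and that the collar between $\{\rho=\nu\}$ and $\partial M$ trivializes --- none of which follows from the diameter estimate you cite for the fibers.

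On equivariance your proposal is closer to the mark: constructing the map on a slice $L_p$ in a $G_p$-equivariant way and propagating it by the $G$-action, then gluing across orbit types, is exactly how one handles this (compare the discussion around Assumption~\ref{asmp:slice}). The worry about constants being uniform across the finitely many orbit types is legitimate and is handled using compactness of $X/G$; you correctly flag this but do not resolve it. In summary, the high-level architecture is reasonable, but the proposal skips the parts that make this theorem hard --- the submersion-to-fibration upgrade, the precise construction at the boundary, and the uniform equivariant gluing --- so it would not stand as a proof.
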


Here,  $\tau(\epsilon_1,\ldots,\epsilon_k)$ denotes  a function depending on
a priori constants and $\epsilon_i$ satisfying
$$\lim_{\epsilon_i\to 0}\tau(\epsilon_1,\ldots,\epsilon_k)=0.$$

\begin{rem}
If $X$ has no boundary,  then $X_\nu$ is replaced by $X$, $M_{cap}=\emptyset$ 
and $M=N$ in the statement above. 
\end{rem}

We go back to the situation of Theorem \ref{thm:RMBcap}.
Assume that $M_i$ inradius collapses to an almost regular  Alexandrov space $N$.  
Let us consider the double and the partial double of $\tilde M_i$ and $Y$ respectively 
\[
    D(\tilde M_i) := \tilde M_i\amalg_{\partial \tilde M_i} \tilde M_i, \,\,\,\,   W :=Y \amalg_{C_{t_0}} Y,
\]
where two copies of $Y$ are glued along $C_{t_0}$.
From Perelman's result \cite{Pr:alexII}, both $D(\tilde M_i)$ and  $W$ are Alexandrov space.
Note that both $D(M_i)$ and $W$ admit  canonical isometric $\mathbb Z_2$
actions by the reflections.

The proof of the following lemma is standard, and hence omitted.

\begin{lem}\label{equi-GH-conv}
$(D(\tilde M_i), \mathbb Z_2)$ converges to $(W, \mathbb Z_2)$
with respect to the equivariant Gromov-Hausdorff convergence.
\end{lem}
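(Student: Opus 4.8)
The plan is to establish the equivariant Gromov--Hausdorff convergence $(D(\tilde M_i),\mathbb Z_2)\to (W,\mathbb Z_2)$ by producing, for each large $i$, an equivariant $\varepsilon_i$-approximation triple $(f_i,\varphi_i,\psi_i)$ with $\varepsilon_i\to 0$, where $\varphi_i,\psi_i$ are the only possible homomorphisms $\mathbb Z_2\to\mathbb Z_2$, namely the identity. Since the group is discrete and both reflections are genuine isometric involutions, the content reduces to: (i) an ordinary $\varepsilon_i$-approximation $f_i:D(\tilde M_i)\to W$, and (ii) the compatibility $d(f_i(\rho_{M_i}(z)),\rho_W(f_i(z)))<\varepsilon_i$ for all $z$, where $\rho_{M_i},\rho_W$ denote the respective reflections across the gluing locus.

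First I would recall that by hypothesis $\tilde M_i\to Y$ in the Gromov--Hausdorff sense, and that under this convergence the ``collar'' $C_{M_i}=\partial M_i\times_\phi[0,t_0]$ converges to $C=C_0\times_\phi[0,t_0]\subset Y$, with $C_{M_i,t_0}$ converging to $C_{t_0}$ (this is all set up in Section~\ref{sec:str-limit}). Using Proposition~\ref{prop:g*} one can choose the $\varepsilon_i$-approximation $g_i:\tilde M_i\to Y$ so that its restriction to $C_{M_i,t_0}$ equals the model map $g_i^*$ up to $\varepsilon_i'$; the point of this normalization is that $g_i$ is essentially canonical near the gluing locus $C_{M_i,t_0}$, so that the two copies glue consistently. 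Concretely, writing $D(\tilde M_i)=\tilde M_i\amalg_{C_{M_i,t_0}}\tilde M_i^*$ and $W=Y\amalg_{C_{t_0}}Y^*$, I would define $f_i$ on the first copy by $g_i$ and on the second copy by the corresponding $g_i$ into $Y^*$; the near-agreement of $g_i$ and $g_i^*$ on $C_{M_i,t_0}$ (Lemma~\ref{lem:gg*}) guarantees that $f_i$ is well defined up to an error $\tau(\varepsilon_i)$ and is an $\tau(\varepsilon_i)$-approximation of $W$. The distance estimates for $f_i$ follow by splitting any shortest path in $D(\tilde M_i)$ at its crossings of $C_{M_i,t_0}$ and applying the approximation property of $g_i$ on each piece, together with the triangle inequality; this is the routine ``doubling preserves GH-convergence'' argument, so I would only indicate it.

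Second, for the equivariance I would simply observe that the reflection $\rho_{M_i}$ of $D(\tilde M_i)$ interchanges the two copies of $\tilde M_i$ and fixes $C_{M_i,t_0}$ pointwise, while $\rho_W$ interchanges the two copies of $Y$ and fixes $C_{t_0}$ pointwise. By the very construction of $f_i$ — using the \emph{same} map $g_i$ on both copies — we have $f_i\circ\rho_{M_i}=\rho_W\circ f_i$ exactly on the nose, hence condition (2) of the definition of equivariant $\varepsilon$-approximation holds with error $0$ (or at worst $\tau(\varepsilon_i)$ coming from the identification ambiguity on the gluing locus). Taking $\varphi_i=\psi_i=\mathrm{id}_{\mathbb Z_2}$, conditions (2) and (3) are then immediate. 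The symmetric construction of an approximation $W\to D(\tilde M_i)$ is identical, using approximations $Y\to\tilde M_i$ in the reverse direction.

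The main obstacle — and the only genuinely nontrivial point — is (i): verifying that gluing two copies of a Gromov--Hausdorff $\varepsilon$-approximation along matching subsets still yields an approximation, with quantitative control of the new error. The subtlety is that a shortest path in $D(\tilde M_i)$ between two points in different copies may cross $C_{M_i,t_0}$ many times, and one must control how the approximation errors on the successive pieces accumulate; this is handled by the standard observation that, for a distance-realizing path, such crossings can be reduced to a single crossing (or bounded in number), and that the reflection symmetry lets one ``fold'' the path into a single copy. Since this is exactly the content of the well-known fact that doubling is continuous with respect to Gromov--Hausdorff convergence, and the normalization of $g_i$ on the gluing locus makes the folded images match up, I expect the argument to go through routinely; accordingly I would state the lemma and remark that ``the proof is standard'' as is done in the excerpt, spelling out only the folding step if a referee requests it.
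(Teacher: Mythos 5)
The paper omits the proof of this lemma entirely, stating only that it is standard, and your proposal correctly supplies exactly that standard argument: double a normalized approximation $g_i$ with $g_i=g_i^*$ on $C_{M_i,t_0}$ so the two copies match on the gluing locus, control the length metric of the doubles by folding paths across the reflecting hypersurface (reducing to at most one crossing), and take $\varphi=\psi=\mathrm{id}_{\mathbb Z_2}$ so that equivariance holds by construction. This is correct and consistent with what the paper intends.
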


\begin{proof}[ Proof of Theorem \ref{thm:RMBcap}]
 By Lemma \ref{equi-GH-conv}, for any $\ve>0$, if $i$ is large,
 $$d_{eGH}((D(\tilde M_i),\mathbb{Z}_2),(W,\mathbb{Z}_2))<\ve.$$
 By Theorem \ref{thm:singIbund}, $Y$ is almost regular possibly with almost regular boundary. 
Hence, $W=R^D_{\delta}(W)$ and $\text{$\delta_D$-{\rm str.rad}$(W))$}>\mu$ for  some $\mu>0$.
 Thus by Theorem \ref{thm:cap} and its remark, there exists a $\mathbb{Z}_2$-equivariant capping fibration
\[
                      \tilde f_i:D(\tilde M_i)\to W_{\nu},
\]
where 
\[
                         W_{\nu}=\{x\in W\,|\,d(x, \partial W)\ge \nu\,\}.
\]
Notice that $W_\nu$ is homeomorphic to $W$ because of the form of $Y$. 
Obviously,  $\tilde f_i$ induces a map $f_i:\tilde M_i \to Y$.
By the remark after Corollary \ref{cor:1/2}, $\eta_0:C_0\to X$ is either 
an isometry or a locally isometric double covering.

{\rm Case  (a)}.\, 
If $\eta_0:C_0\to X$ is a double covering, then $C_{t_0}=\pa Y$.
Hence $W$ has no boundary.
Thus in this case, $f_i:\tilde M_i\to Y$ is a fiber bundle with fiber $F_i$ which are closed almost nonngetively curved 
manifolds.  
Since $Y$ is an $I$-bundle over $N$ by Theorem \ref{thm:singIbund}, 
$\tilde M_i$ and hence $M_i$ is an $F_i\times I$-bundle over $N$.

{\rm Case  (b)}.\, 
If  $\eta_0:C_0\to X$ is an isometry, then $Y$ is isometric to $N\times_\phi[0,t_0]$,   and therefore
$\partial Y$ consists of  $\eta(C_0)=X$  and $\eta(C_{t_0})$. 
Thus  $\pa W$ consists two copies of $\eta_0(C_0)$.
Therefore by Theorem \ref{thm:cap}, there exists a $\mathbb{Z}_2$-invariant decomposition
\begin{align}
         D(\tilde M_i) =(D(\tilde M_i))_{\rm int} \cup (D(\tilde M_i))_{\rm cap}, \label{eq:inv-decom}
\end{align}
of $ D(\tilde M_i)$ into two closed domains glued along their boundaries such that
\begin{enumerate}
 \item $(D(\tilde M_i))_{{\rm int}}$ is the closure of $ \tilde f_i^{-1}({\rm int}_0 W_{\nu})$, and
       $(D(\tilde M_i))_{\rm cap} = \tilde f_i^{-1}(\partial_0 W_{\nu})$;
 \item $\tilde f_i|_{(D(\tilde M_i))_{\rm int}}: (D(\tilde M_i))_{\rm int} \to W_\nu$, 
     $ \tilde f_i|_{(D(\tilde M_i))_{\rm cap}} : (D(\tilde M_i))_{\rm cap} \to \partial_0 W_\nu$ are locally trivial fiber bundles,
\end{enumerate}
where 
\[
   \partial_0 W_{\nu} := \{x\in W\,|\,d(x, \partial W)= \nu\,\}, \quad
            {\rm int}_0 W_{\nu}:=W_{\nu} \setminus \partial_0 W_{\nu}.
\]
Since \eqref{eq:inv-decom} is $\mathbb Z_2$-invariant, it induces a 
decomposition
\[
   \tilde M_i =(\tilde M_i)_{\rm int} \cup (\tilde M_i)_{\rm cap}.
\]       
Since $\tilde f_i$ is $\mathbb Z_2$-equivariant, these fibrations induce
fibrations
\begin{align*}
      F_i \longrightarrow &(\tilde M_i)_{\rm int} \longrightarrow Y_{\nu}, \\
   {\rm Cap}_i \longrightarrow &(\tilde M_i)_{\rm cap} \longrightarrow \partial_0Y_{\nu}. 
\end{align*}
From construction, $\partial {\rm Cap}_i$ is homeomorphic to $F_i$.
Note that every cylindrical geodesic in the warped cylinder $C_i\subset \tilde M_i$   
is almost perpendicular to the fibers (\cite{Ya:pinching}, \cite{Ya:conv}).
This implies that  
$(\tilde M_i)_{\rm int} $ is homeomorphic to $\partial(\tilde M_i)_{\rm int}\times [0,1]$, 
and therefore $\tilde M_i$ and hence $M_i$ is homeomorphic to $(\tilde M_i)_{\rm cap}$.
Noting $\partial_0Y_\nu$ is homeomorphic to $N$, we obtain
a fiber bundle
\[
           {\rm Cap}_i \longrightarrow  M_i \longrightarrow  N.
\] 
This completes the proof.
\end{proof}

\section{The case of unbounded diameters} \label{sec:unbounded}

In this section we provide the proof of Theorem \ref{thm:two}.
In view of Corollary \ref{cor:cpntC0},  for the proof of Theorem \ref{thm:two}(1),  
it suffices to consider inradius collapsed manifolds 
with unbounded diameters.

\begin{rem} \label{rem:unclear}
Theorem \ref{thm:two} (1) was stated in \cite{wong2}, Theorem 5, where 
the following argument was employed: If $k\ge 3$ and if $p\in M$ is the furthest point
from $\partial M$,  then $B(p,r)$, $r={\rm inrad}(M)$, touches $\partial M$ at least 
three points. However it seems to the authors that this is unclear.
\end{rem}

\subsection{Description of pointed inradius collapse}
%
In the case of unbounded diameter, we do not know the uniform boundedness of the numbers of boundary components of 
inradius collapsed manifolds yet. This forces us to reconsider the descriptions of  limit spaces in Section 
\ref{sec:str-limit}.


Let $\ca M(n,\kappa,\lambda)$ (resp. $\ca M(n,\kappa,\lambda)_{\rm pt}$ denote the set of all isometry classes of 
$n$-dimensional complete Riemannian manifolds $M$ (resp. pointed complete Riemannian manifolds  $(M,p)$ with $p\in \partial M$) satisfying
\[
      K_M \ge \kappa, \,\, |\Pi_{\partial M}|\le\lambda.
\]
We carry out the extension procedure for  $M$ to obtain $\tilde M$.
Let  $\tilde{\ca  M}\ca M(n,\kappa,\lambda)_{\rm pt}$ denote the set of all 
$(\tilde M, M, p)$ with $M\in\ca M(n,\kappa,\lambda)$ and $p\in\partial M$.
We denote by 
\[
                   \partial_0 \tilde{ \ca M}\ca M(n,\kappa,\lambda)_{\rm pt}
\]
the set of all pointed Gromov-Hausdorff limit spaces $(Y,X, x)$ of sequences $(\tilde M_i,M_i,p_i)$ 
in  $\tilde{\ca  M}\ca M(n,\kappa,\lambda)_{\rm pt}$ with  
\[
       {\rm inrad}(M_i)\to 0.
\]
From now on,  $(\tilde M_i, M_i, p_i)$ and  $(\tilde M, M, p)$ are always assumed to be elements 
in  $\tilde{\ca  M}\ca M(n,\kappa,\lambda)_{\rm pt}$.

Now suppose that   a sequence $(M_i, p_i) \in \ca M(n,\kappa,\lambda)_{\rm pt}$ 
converges to a complete length space $(N, q)$ with ${\rm inrad}(M_i)\to 0$, while  $(\tilde M_i,M_i,p_i)$
converges to $(Y,X, x)$.
In a way  similar to Proposition \ref{prop:intrinsic}, we see that 
$X^{\rm int}$ is isometric to $N$.

Next we describe $Y$ as 
\[
  Y = X\bigcup_{\eta_0} C_0\times_{\phi} [0,t_0],
\]
as in the bounded diameter case. 
In the bounded diameter case, the number of components of $\partial M_i$ is uniformly bounded,
and $C_0$ is the component wise Gromov-Hausdorff limit of $\partial M_i$.
In the case of unbounded diameter, we do not know the boundedness of the number of components of $\partial M_i$
yet.
This is why we need a bit careful consideration to define $C_0$, which will be carried out in the following.

Let  
\[
                           C^Y_{t_0}:= \{ y\in Y\,|\, |X,y|=t_0  \,\}.
\]

We begin with the decomposition of $C^Y_{t_0}$ into the connected components:
\[
       C^Y_{t_0} =\coprod_{\alpha\in A}\, C_{t_0}^{\alpha}.
\]
Set
\[
      C^{\alpha}_0 := \frac{1}{\phi(t_0)}\, C_{t_0}^{\alpha}, \,\,\,\qquad C^{\alpha}= C_0^{\alpha}\times_{\phi} [0,t_0],
\]
and 
\[
   C:= \coprod_{\alpha\in A}\, C^{\alpha}, \qquad C_0 =  \coprod_{\alpha\in A}\, C^{\alpha}_0\times \{ 0\}.
\]
Note that each component of $C$ and $C_0^{\rm int}$ is an Alexandrov space with curvature $\ge c(\kappa,\lambda)$.

Each $p\in C_0$ can be identified with the element of $C^Y_{t_0}$, which we write as $\eta(p,t_0)$, and
there is a unique perpendicular $\gamma^{\eta(p, t_0)}(t)$, $0\le t\le t_0$, to $X$ 
satisfying  $\gamma^{\eta(p, t_0)}(t_0)=\eta(p,t_0)$.
We then define the surjective $1$-Lipschitz map $\eta:C\to Y$ as  
 \begin{align*}
    \eta(p,t) = \gamma^{\eta(p, t_0)}(t).
\end{align*}
Obviously $\eta:C\setminus C_0 \to Y\setminus X$ is bijective locally isometric map.

Let  $\varphi_i:B^{\tilde M_i}(p_i, 1/\delta_i) \to B^Y(x, 1/\delta_i)$ be a $\delta_i$-approximation, 
with $\lim \delta_i=0$. 
Note that for each component $C_{t_0}^{\alpha}$ of $C^Y_{t_0}$ and any fixed point 
$y_{\alpha}\in C_{t_0}^{\alpha}$, there is a  component, 
say $\partial^{\alpha}\tilde M_i$, of $\partial \tilde M_i$ for which we have a point
$q_i^{\alpha}\in \partial^{\alpha} \tilde M_i$ satisfying $|\varphi_i(q_i^{\alpha}), y_{\alpha}| <\delta_i$.
For a distinct component $C_{t_0}^{\beta}$ and $y_{\beta}\in C_{t_0}^{\beta}$, 
we also have  a component  $\partial^{\beta}\tilde M_i$ of $\partial \tilde M_i$ for which there is a point
$q_i^{\beta}\in \partial^{\beta} \tilde M_i$ satisfying $|\varphi_i(q_i^{\beta}), y_{\beta}| <\delta_i$.
Since $|C^{\alpha}_{t_0}, C^{\beta}_{t_0}|\ge 2t_0$, it is easily checked that 
$\partial^{\alpha}\tilde M_i$ and $\partial^{\beta}\tilde M_i$ are distinct components, 
and hence  $|\partial^{\alpha}\tilde M_i, \partial^{\beta}\tilde M_i|\ge 2t_0$.
Thus we have that 
\begin{align}
       \lim_{i\to\infty} (\partial^{\alpha}\tilde M_i, q^{\alpha}_i) = (C_{t_0}^{\alpha}, y_{\alpha}), \,\,\,
       \lim_{i\to\infty} (\partial^{\beta}\tilde M_i, q^{\beta}_i) = (C_{t_0}^{\beta}, y_{\beta}),   \label{eq:bij-conv}
\end{align}
under the convergence $(\tilde M_i,p_i) \to (Y, x)$.
In particular,  the component  $\partial^{\alpha}\tilde M_i$ is uniquely determined by  $C_{t_0}^{\alpha}$. 

Now for the map
\[
      \eta_0 := \eta|_{C_0}: C_0\to X,
\]
from an argument similar to the bounded diameter case, we see that 
\[
                             \# \, \eta_0^{-1}(x)\le 2,
\]
for every $x\in X$.
Thus we can define the involution $f : C_0 \to  C_0$ as in Section \ref{sec:metric}. 
Note that all the results in Subsections \ref{ssec:metric-prelim}, \ref{ssec:differential} and \ref{ssec:gluing}
still holds true for the present situation of noncompact $C_0$, because the arguments there are 
local. 
In particular, we see that
\begin{align}
  {\rm  the \,\, number \,\, of \,\, components \,\,\,  of  \,\,\,}  C_0 \,\,{\rm is \,\,  at \,\, most \,\, two.\hspace{2cm} } \label{eq:compo}
\end{align}
Thus we see that all the results in Section \ref{sec:metric}  
holds true except Corollary \ref{cor:cpntC0}, and therefore we conclude that 
   \[
         Y= C_0^{\rm int}\times_{\tilde\phi}[-t_0, t_0]/\tilde f,
    \]
where $\tilde\phi(t)=\phi(|t|)$, and
$N=X^{\rm int}$ is isometric to $C_0^{\rm int}/f$.

Now  we immediately have the following:

\begin{cor}\label{cor:dim-collaps-pointed}
If $(M_i,p_i) \in \ca M(n,\kappa,\lambda)$ 
inradius collapse to $(N, q)$ with respect to the pointed Gromov-Hausdorff convergence,
then we have $\dim M_i>\dim N.$
\end{cor}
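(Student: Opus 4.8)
The plan is to mimic the proof of Corollary \ref{cor:dim-collaps}(1), now using the structural description of $Y$ and $N$ in terms of $C_0$ established in the paragraphs immediately preceding the corollary. First I would pass to a subsequence so that $(\tilde M_i,p_i)$ converges to $(Y,x)$ in the pointed Gromov-Hausdorff sense, which is legitimate since $\dim M_i=n$ does not depend on $i$; recall also that $M_i$ is diffeomorphic to $\tilde M_i$, so $\dim M_i=\dim \tilde M_i=n$.

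Next I would invoke the standard fact (\cite{BGP}) that a pointed Gromov-Hausdorff limit of $n$-dimensional Alexandrov spaces with curvature bounded below by a fixed constant is again an Alexandrov space of dimension $\le n$; applied to $\tilde M_i$ (which have curvature $\ge\tilde\kappa=\tilde\kappa(\kappa,\lambda)$), this gives $\dim Y\le n$. On the other hand, the structure result obtained above for the noncompact situation asserts $Y=C_0^{\rm int}\times_{\tilde\phi}[-t_0,t_0]/\tilde f$ with $\tilde f=(f,-{\rm id})$ an isometric involution, so $\dim Y=\dim\bigl(C_0^{\rm int}\times_{\tilde\phi}[-t_0,t_0]\bigr)=\dim C_0^{\rm int}+1$, since the warped product is topologically a genuine product and quotients by finite isometry groups preserve dimension. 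Likewise $N=X^{\rm int}$ is isometric to $C_0^{\rm int}/f$, so $\dim N=\dim C_0^{\rm int}$. Combining, $\dim N+1=\dim Y\le n=\dim M_i$, which is exactly the assertion.

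There is essentially no obstacle: the real content was already produced in establishing the descriptions $Y=C_0^{\rm int}\times_{\tilde\phi}[-t_0,t_0]/\tilde f$ and $N=C_0^{\rm int}/f$ in the unbounded-diameter case, and the corollary is a dimension count layered on top of them. The one point needing a word of care is that $C_0$ may be disconnected, with two components interchanged by $f$ according to \eqref{eq:compo}; in that case $C_0^{\rm int}/f$ is isometric to a single component, so the equalities $\dim N=\dim C_0^{\rm int}$ and $\dim Y=\dim C_0^{\rm int}+1$ persist. (Unlike Corollary \ref{cor:dim-collaps}, no statement $\vol(M_i)\to 0$ is available here, nor expected, since the total volume of a noncompact $M_i$ need not be finite; only the dimension drop survives.)
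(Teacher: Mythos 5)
Your argument is correct and is precisely the dimension count the paper treats as "immediate" from the descriptions $Y=C_0^{\rm int}\times_{\tilde\phi}[-t_0,t_0]/\tilde f$ and $N\cong C_0^{\rm int}/f$ established just before the corollary, combined with $\dim Y\le\dim\tilde M_i=n$ from lower semicontinuity of dimension under pointed Gromov--Hausdorff convergence of Alexandrov spaces. Your remarks on possibly disconnected $C_0$ and on the lack of a volume statement in the noncompact setting are accurate side observations that do not affect the count.
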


\begin{thm} \label{thm:limit-alex'}
Let  a sequence of pointed complete Riemannian manifolds $(M_i,p_i)$ in $\ca M(n,\kappa,\lambda)$ 
inradius collapse to a pointed length space  $(N, q)$ with respect to the pointed Gromov-Hausdorff convergence.  
Then $N$ is an Alexandrov space with curvature $\ge  c(\kappa,\lambda)$, where
$c(\kappa,\lambda)$ is a constant depending only on $\kappa$ and $\lambda$.
\end{thm}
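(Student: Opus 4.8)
The plan is to reduce the statement to the quotient structure established in Section~\ref{sec:unbounded}, mirroring the proof of Theorem~\ref{thm:limit-alex}. First I would perform Wong's cylindrical extension of Subsection~\ref{sec:gluing} for each $M_i$, with a fixed choice of $\e_0\in(0,1)$ and $t_0>0$, to obtain $(\tilde M_i, M_i, p_i)$. By Proposition~\ref{prop:extendAS}, each $\tilde M_i$ is a complete Alexandrov space with curvature $\ge\tilde\kappa(\kappa,\lambda)$, $M_i^{\rm ext}$ is $(1/\e_0)$-bi-Lipschitz homeomorphic to $M_i$, and the family $\{(\tilde M_i,p_i)\}$ is precompact for the pointed Gromov--Hausdorff topology. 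Passing to a subsequence, I would let $(\tilde M_i, M_i, p_i)\to(Y,X,x)$, so that $(Y,X,x)\in\partial_0\tilde{\ca M}\ca M(n,\kappa,\lambda)_{\rm pt}$, the space $Y$ is a complete Alexandrov space with curvature $\ge\tilde\kappa$, and the pointed version of Proposition~\ref{prop:intrinsic} (valid by Remark~\ref{rem:noncpt-limit}) identifies $X^{\rm int}$ with $N$, carrying $x$ to $q$.

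Next I would invoke the description carried out in Section~\ref{sec:unbounded}. Decomposing $C^Y_{t_0}$ into its connected components and using the bijective correspondence \eqref{eq:bij-conv} between these components and the pertinent boundary components of $\tilde M_i$, one defines $C_0=\coprod_\alpha C_0^\alpha$ together with the surjective $1$-Lipschitz gluing map $\eta_0\colon C_0\to X$; each component of $C_0^{\rm int}$ is then an Alexandrov space with curvature $\ge c(\kappa,\lambda)$ by Proposition~\ref{prop:bdy}. Because all arguments of Subsections~\ref{ssec:metric-prelim}--\ref{ssec:gluing} are purely local, they remain valid in this noncompact setting: one gets $\#\eta_0^{-1}(x)\le 2$ for every $x\in X$, a well-defined involution $f\colon C_0^{\rm int}\to C_0^{\rm int}$ which is an isometry (Proposition~\ref{lem:isometry}), and $C_0$ has at most two components by \eqref{eq:compo}. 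Combining this with Proposition~\ref{prop:quotient} yields that $N=X^{\rm int}$ is isometric to $C_0^{\rm int}/f$.

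Finally, since $f$ is an isometric $\mathbb Z_2$-action on $C_0^{\rm int}$ --- each of whose (at most two) components has curvature $\ge c(\kappa,\lambda)$ --- the quotient $C_0^{\rm int}/f$ is again an Alexandrov space with curvature $\ge c(\kappa,\lambda)$ by the quotient theorem of \cite{BGP}; hence so is $N$. The constant $c(\kappa,\lambda)$ is the same as in the compact case, since it originates only from the curvature lower bound for boundaries of manifolds in $\ca M(n,\kappa,\lambda)$ (Proposition~\ref{prop:bdy}) together with the warped-cylinder estimate \eqref{eq:sumary}.

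The part that requires care --- and which is exactly what Section~\ref{sec:unbounded} is devoted to setting up --- is that the number of boundary components of $M_i$ need not be uniformly bounded when diameters are unbounded, so $C_0$ cannot be defined as a componentwise limit of $\partial M_i$ as in the compact case. Instead one reads off $C_0$ from the components of $C^Y_{t_0}\subset Y$ and verifies the bijective-correspondence statement \eqref{eq:bij-conv}; once that bookkeeping is done, the conclusion follows from the local metric analysis of Section~\ref{sec:metric} verbatim as in the proof of Theorem~\ref{thm:limit-alex}. This is the only genuine obstacle; everything else is a transcription of the bounded-diameter argument.
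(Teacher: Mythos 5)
Your proposal follows essentially the same route as the paper: perform Wong's extension, pass to a pointed limit $(Y,X,x)$, read off $C_0$ from the components of $C^Y_{t_0}$ rather than from the (possibly unboundedly many) boundary components of $M_i$, observe that the metric analysis of Section~\ref{sec:metric} is local and therefore carries over to give $N \cong C_0^{\rm int}/f$ with $f$ an isometric involution, and conclude by the quotient theorem for Alexandrov spaces. You have also correctly singled out the genuine obstacle (the possible disappearance of boundary components to infinity) that Section~\ref{sec:unbounded} addresses, so the argument matches the paper both in structure and in emphasis.
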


To have Corollary \ref{cor:cpntC0}
in the case when $Y$ is noncompact  is a main purpose of the rest of this section. 

\begin{rem}  \label{rem:difficult}
Note that \eqref{eq:compo} only  shows that there are at most two components of 
$\partial M_i$ meeting  a bounded region from the reference point $p_i$.
Thus \eqref{eq:compo} does not immediately imply that 
the number of components of $\partial M_i$ is at most two.
This is because the convergence  $(\tilde M_i, p_i)\to (Y,p)$ is only under the {\it pointed} Gromov-Hausdorff topology.
Namely, there is still a possibility that some component of $\partial M_i$ disappear to infinity 
under that convergence.  
\end{rem}

To overcome the difficulty stated in Remark \ref{rem:difficult}, we investigate the local connectedness 
of $\partial M_i$ in more detail. To carry out this, it is helpful to consider a special pointed Gromov-Hausdorff
approximations similar to \eqref{eq:g*}, which is verified below. 

%

Let $\iota_{\partial M}:((\partial M)^{\rm int}, d_{\partial M^{\rm int}}) \to ((\partial M)^{\rm ext}, d_{\partial M^{\rm ext}})$ 
be the canonical map,
where $(\partial M)^{\rm ext}$ is equipped with the exterior metric in $M$. 
Let $\omega_M:M\to \partial M$ be a nearest point map. It should be noted that 
although $\omega_M$ is not continuous in general,  it will not affect the argument below
(compare Proposition \ref{prop:g*}).

For $(\tilde M, M, p)$ and 
$(Y,X,x)\in  \partial_0\ca M\ca M(n,\kappa,\lambda)_{\rm pt}$ with 
\begin{equation*}
            Y = X\bigcup_{\eta_0} C_0\times_{\phi} [0,t_0], \label{eq:Y}
\end{equation*}
as described above,  set  
\begin{align*}
     \partial M^{\rm int}(p,1/\delta) &:= (\partial M\cap B^{\tilde M}(p, 1/\delta), d_{\partial M^{\rm int}}),\\
     C_0^{\rm int}(p_0,1/\delta) &:= (C_0\cap B^C(p_0, 1/\delta,  d_{C_0^{\rm int}}),
 \end{align*}
where $p_0\in \eta_0^{-1}(x)$.
Note that if $q,q'\in \partial M^{\rm int}(p,1/\delta)$ belong to distinct components of $\partial M$, then 
the distance between them in  $\partial M^{\rm int}(p,1/\delta)$ is infinity:  $d_{\partial M^{\rm int}}(q,q')=\infty$.
Similarly, we also consider 
\begin{align*}
     \partial \tilde M^{\rm int}(p,1/\delta) &:= ( \partial \tilde M\cap B^{\tilde M}(p, 1/\delta), d_{\partial \tilde M}),\\
     C_{t_0}(p_0,1/\delta) &:= (C_{t_0} \cap B^C(p_0, 1/\delta),  d_{C_{t_0}}).
 \end{align*}

\begin{slem}
The number of connected components of $\partial M_i$ which intersect $\partial M_i^{\rm int}(p_i,1/\delta_i)$ 
is at most two.
\end{slem}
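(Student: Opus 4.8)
The plan is to argue by contradiction, detecting the boundary components of $M_i$ near $p_i$ through the component structure of the level set $C^Y_{t_0}=\{\,y\in Y\,:\,|X,y|=t_0\,\}$, which by \eqref{eq:compo} has at most two components. So suppose, passing to a subsequence, that for every $i$ there are three mutually distinct components $\Gamma_i^1,\Gamma_i^2,\Gamma_i^3$ of $\partial M_i$, each meeting $B^{\tilde M_i}(p_i,1/\delta_i)$. Passing to the outer boundary $\partial\tilde M_i=C_{M_i,t_0}$, the sets $\Gamma_i^j\times\{t_0\}$ are three distinct components of $\partial\tilde M_i$, each meeting $B^{\tilde M_i}(p_i,1/\delta_i+t_0)$, and any two of them are at $\tilde M_i$-distance $\ge2t_0$: a path joining two such components must run from one warped cylinder into $M_i$ and out into another, so the level function $\dist(\cdot,M_i)$ (which is $1$-Lipschitz) drops from $t_0$ to $0$ and rises back to $t_0$ along it, costing length $\ge2t_0$. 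The same reasoning shows the (at most two) components of $C^Y_{t_0}$ are mutually $\ge2t_0$ apart in $Y$.

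The first step is to produce, over the ball $B^{\tilde M_i}(p_i,1/\delta_i)$, a pointed $\tau(\delta_i)$-approximation $g_i\colon\tilde M_i\to Y$ compatible with the warped-cylinder structures, exactly in the spirit of \eqref{eq:g*} and Proposition \ref{prop:g*}: using a pointed approximation $\psi_i$ between $\partial M_i$ and $C_0$ taken with respect to the \emph{exterior} metrics and the nearest-point retraction $\omega_{M_i}\colon M_i\to\partial M_i$, one sets $g_i(p,t)=\eta(\psi_i(p),t)$ on the cylinder $C_{M_i}=\partial M_i\times_\phi[0,t_0]$ and $g_i=\eta_0\circ\psi_i\circ\omega_{M_i}$ on $M_i$. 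As in the proof of Proposition \ref{prop:g*}, $g_i$ is a pointed $\tau(\delta_i)$-approximation on $B^{\tilde M_i}(p_i,1/\delta_i)$ which is $\tau(\delta_i)$-close to preserving the level function $\dist(\cdot,M_i)\leftrightarrow\dist(\cdot,X)$; in particular it carries $\partial\tilde M_i\cap B^{\tilde M_i}(p_i,1/\delta_i)$ into a $\tau(\delta_i)$-neighbourhood of $C^Y_{t_0}$.

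The second step is to show that $g_i$ \emph{separates boundary components}. Suppose two of our components, say $\Gamma_i^1\times\{t_0\}$ and $\Gamma_i^2\times\{t_0\}$, had their $g_i$-images inside the ball within $\tau(\delta_i)$ of one and the same component $C^\alpha_{t_0}$ of $C^Y_{t_0}$. Since $(C^\alpha_{t_0})^{\rm int}$ is a length space, $g_i(q_i^1,t_0)$ and $g_i(q_i^2,t_0)$ can be joined by a finite chain of points of $C^\alpha_{t_0}$ with consecutive distances $<2t_0$; pulling this chain back through the approximation $g_i$ and using that $g_i$ nearly preserves the level function, one obtains a finite chain of points of $\partial\tilde M_i$ with consecutive $\tilde M_i$-distances $<2t_0$ running from $\Gamma_i^1\times\{t_0\}$ to $\Gamma_i^2\times\{t_0\}$. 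By the $2t_0$-separation of distinct components of $\partial\tilde M_i$, every point of such a chain lies in a single component, forcing $\Gamma_i^1=\Gamma_i^2$ — a contradiction. Hence $\Gamma_i^1,\Gamma_i^2,\Gamma_i^3$ produce three distinct components of $C^Y_{t_0}$, contradicting \eqref{eq:compo}; this proves the sublemma.

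The step I expect to be the genuine obstacle is making the separation argument valid out to the full radius $1/\delta_i$, rather than merely on a fixed ball around $p_i$: distinct components of $\partial M_i$ sit at infinite distance from one another in the intrinsic metric of $\partial M_i$, yet may approach $p_i$ within $1/\delta_i$ in the exterior metric of $\tilde M_i$ while $1/\delta_i\to\infty$, so a naive pointed-limit argument — which only controls a bounded neighbourhood of $p_i$ — does not apply; this is precisely the difficulty isolated in Remark \ref{rem:difficult}. The remedy is to build $g_i$ and establish all of its estimates using the exterior metrics of $C_{M_i}$, $M_i$ and $\tilde M_i$, where the warped-product construction gives uniform geometric control on the whole ball $B^{\tilde M_i}(p_i,1/\delta_i)$; once $g_i$ is a $\tau(\delta_i)$-approximation that nearly preserves the level function on that entire ball, the chain-pullback argument above is legitimate at scale $1/\delta_i$. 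Once the sublemma is in place, removing the reference point — so as to bound the number of components of $\partial M_i$ globally — is carried out by the monodromy argument alluded to in the introduction.
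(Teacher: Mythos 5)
Your proposal takes essentially the same route as the paper's: both rely on \eqref{eq:compo} (at most two components of $C_0$, hence of $C^Y_{t_0}$) together with the $\ge 2t_0$ separation between distinct components of $\partial\tilde M_i$ and of $C^Y_{t_0}$ supplied by the warped cylinders. Where the paper concludes by invoking the correspondence \eqref{eq:bij-conv}, you make that step explicit through a chain-in-$C^\alpha_{t_0}$ pullback argument, and the map $g_i$ you build via $\psi_i$ and $\omega_{M_i}$ is exactly the approximation introduced in Definition \ref{def:pGH} and Lemma \ref{lem:approx}.
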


\begin{proof}
Suppose that there are three points $q_i^{\alpha}\in \partial M_i^{\rm int}(p_i,1/\delta_i)$ which belong to 
three distinct components  $\partial^{\alpha} M_i$, $1\le\alpha\le 3$.
Let $\hat q_i^{\alpha}\in \partial \tilde M(p_i,1/\delta) $ be the image of  $q_i^{\alpha}$
under the projection to $\partial \tilde M_i$ along perpendiculars,
which belongs to the component $\partial^{\alpha}\tilde M_i$corresponding to $\partial^{\alpha} M_i$.
Since $C_0$ as well as $C_{t_0}$ has at most two components, we may assume that 
$\varphi_i(q_i^1)$ and $\varphi_i(q_i^2)$ 
are in the same component of $C_0$. It turns out that 
$\varphi_i(\hat q_i^1)$ and  $\varphi_i(\hat q_i^2)$ are in the same component of  $C_{t_0}$, 
which  contradicts \eqref{eq:bij-conv}.
\end{proof}

\begin{defn} \label{def:pGH} \upshape
For $(\tilde M, M, p)$ and 
$(Y,X,x)\in  \partial_0\ca M\ca M(n,\kappa,\lambda)_{\rm pt}$ with 
\begin{equation*}
            Y = X\bigcup_{\eta_0} C_0\times_{\phi} [0,t_0], \label{eq:Y2}
\end{equation*}
we define the pointed Gromov-Hausdorff distance$^*$ 
\begin{align}
    d^{*}_{pGH}((\tilde M, M, p), (Y, X, x))  \label{eq:pGH}
\end{align}
as the infimum of those $\delta>0$ such that 
\begin{enumerate}
 \item there exists a component-wise $\delta$-approximation 
           $\psi:  (\partial M)^{\rm int} (p,1/\delta) \to  C_0^{\rm int}(x, 1/\delta);$ 
 \item  the map $\varphi: B^{M^{\rm ext}}(p,1/\delta) \to B^{X^{\rm ext}}(x,1/\delta)$  defined by  
          \[
                           \varphi=\eta_0\circ \psi\circ\iota_{\partial M}^{-1}\circ\omega_M
          \] 
         is a  $\delta$-approximation:
$$\xymatrix{
M^{\rm ext} \ar[r]^{\omega_M} \ar[d]_\varphi & \partial M^{\rm ext} \ar[r]^{\iota_M^{-1}} & \partial M^{\rm int}\ar[d]^\psi\\
X^{\rm ext}   & & C_0^{\rm int}\ar[ll]_{\eta_0}
}$$
 \item the map $\varPhi:B^{\tilde M}(p,1/\delta) \to  B^Y(x,1/\delta)$ defined by
  \[
    \varPhi(q) = \begin{cases}   
                          \varphi(q), \, &{\rm if}\,\,\, q\in M\cap B^{\tilde M}(p,1/\delta) \\
                           (\varphi(q_0), t),   \,   
                                                      &{\rm if}\,\,\,q=(q_0,t)\in \partial M\times_{\phi} [0,t_0]\cap B^{\tilde M}(p,1/\delta).
                         \end{cases}
  \]                
is a $\delta$-approximation.
\end{enumerate}
\end{defn}

This definition is justified by the following lemma.  

\begin{lem} \label{lem:approx}
Let   $(\tilde M_i, M_i, p_i) \in \tilde{\ca  M}\ca M(n,\kappa,\lambda)_{\rm pt}$  converge to $(Y, X,x)$ 
in $\partial_0 \tilde{\ca  M}\ca M(n,\kappa,\lambda)_{\rm pt}$ for the usual pointed Gromov-Hausdorff  topology. 
Then there exists a component-wise $\delta_i$-approximation  
\[
                      \psi_i: (\partial M_i)^{\rm int}(p_i,1/\delta_i) \to C_0^{\rm int} (x, 1/\delta_i)
\]
with $\lim \delta_i= 0$ such that 
the maps 
\[
     \varphi_i: B^{M_i^{\rm ext}}(p_i,1/\delta_i)  \to  B^{X^{\rm ext}}(x,1/\delta_i), \\ 
   \varPhi_i: B^{\tilde M_i}(p_i,1/\delta_i) \to  B^Y(x,1/\delta_i)
\]  
defined as in Definition \ref{def:pGH} via $\psi_i$
are  $\delta_i'$-approximations with $\lim\delta_i'=0$.
\end{lem}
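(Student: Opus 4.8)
The plan is to build $\psi_i$ directly out of a given sequence of ordinary pointed Gromov--Hausdorff approximations, and then to verify, by localizing the perpendicular and warped--product estimates already used in Sections \ref{sec:str-limit} and \ref{sec:metric}, that the induced maps $\varphi_i$ and $\varPhi_i$ are approximations. First I fix $\delta_i$-approximations $g_i\colon B^{\tilde M_i}(p_i,1/\delta_i)\to B^Y(x,1/\delta_i)$ with $g_i(p_i)=x$ and $\lim\delta_i=0$. Since $\inrad(M_i)\to 0$, the set $M_i$ lies within $\inrad(M_i)$ of $\partial M_i$ in $\tilde M_i$, and since $M_i^{\rm ext}$ converges to $X$, the function $\dist(\,\cdot\,,M_i)$ converges to $\dist(\,\cdot\,,X)$; hence $g_i(\partial M_i)$ lies within $\tau(\delta_i)$ of $X$ and is $\tau(\delta_i)$-dense in $X\cap B^Y(x,1/\delta_i-\tau(\delta_i))$. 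For $q_0\in\partial M_i\cap B^{\tilde M_i}(p_i,1/\delta_i)$ I write $g_i(q_0)=\eta(c_i(q_0),s_i(q_0))$ with $s_i(q_0)=\dist(g_i(q_0),X)=\tau(\delta_i)$ and $c_i(q_0)\in C_0$ (choosing $c_i(q_0)\in\eta_0^{-1}(g_i(q_0))$ when $g_i(q_0)\in X$), and set $\psi_i(q_0):=c_i(q_0)$. By Lemma \ref{lem:dist-bdry}, $|\eta_0\psi_i(q_0),g_i(q_0)|_Y\le s_i(q_0)=\tau(\delta_i)$; this compatibility is what will be used throughout.

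Next I check that $\psi_i$ is a component-wise $\tau(\delta_i)$-approximation onto $C_0^{\rm int}(x,1/\delta_i)$. The identification $\partial M_i\ni q_0\mapsto(q_0,t_0)\in C_{M_i,t_0}=\partial\tilde M_i$ multiplies the intrinsic metric by $\phi(t_0)$; by the component-wise convergence $(\partial\tilde M_i,q_i^{\alpha})\to(C^{Y}_{t_0},y_{\alpha})$ of \eqref{eq:bij-conv}, together with Lemma \ref{lem:ext/int} and its verbatim analogue inside $\tilde M_i$ (which turns exterior convergence of these level sets into intrinsic convergence of each component), $C_{M_i,t_0}$ converges component-wise to $C^{Y}_{t_0}$ in the intrinsic metrics, and rescaling by $1/\phi(t_0)$ gives the component-wise convergence $(\partial M_i)^{\rm int}\to C_0^{\rm int}$. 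A chase through $\eta$, which is a bijective local isometry off $C_0$ and contracts $\dist(\,\cdot\,,X)$ by Lemma \ref{lem:dist-bdry}, shows that the map $\psi_i$ defined above is $\tau(\delta_i)$-close, component by component and with matched basepoints, to an approximation realizing this convergence; hence $\psi_i$ is itself a component-wise $\tau(\delta_i)$-approximation. (By the sublemma preceding Definition \ref{def:pGH}, at most two components of $\partial M_i$ meet $B^{\tilde M_i}(p_i,1/\delta_i)$, so the word ``component-wise'' costs nothing.)

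It remains to handle $\varphi_i=\eta_0\circ\psi_i\circ\iota_{\partial M_i}^{-1}\circ\omega_{M_i}$ and $\varPhi_i$. For $q\in M_i\cap B^{\tilde M_i}(p_i,1/\delta_i)$ put $q_0:=\omega_{M_i}(q)$, so $|q,q_0|_{\tilde M_i}\le\inrad(M_i)$ and $\varphi_i(q)=\eta_0\psi_i(q_0)$; then
\[
 |g_i(q),\varphi_i(q)|_Y\le|g_i(q),g_i(q_0)|_Y+|g_i(q_0),\eta_0\psi_i(q_0)|_Y\le \inrad(M_i)+\tau(\delta_i),
\]
using the compatibility above and the near-isometry of $g_i$. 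Thus $\varphi_i$ is pointwise $\tau(\delta_i)$-close to $g_i|_{M_i}$, hence (after projecting $g_i|_{M_i}$ to $X$, which moves points by $\tau(\delta_i)$) $\tau(\delta_i)$-close to a genuine approximation $M_i^{\rm ext}\to X^{\rm ext}$; so $\varphi_i$ is a $\tau(\delta_i)$-approximation, the discontinuity of $\omega_{M_i}$ being harmless precisely because $\inrad(M_i)\to0$ makes $\omega_{M_i}$ a $\tau(\delta_i)$-perturbation of the inclusion at the relevant scale. Finally, on the warped cylinder $\varPhi_i$ coincides with $\eta\circ\Psi_i$, where $\Psi_i=(\psi_i,\mathrm{id})\colon C_{M_i}\to C$, i.e.\ with the pointed analogue of the map $g_i^{*}$ of \eqref{eq:g*}; since the proof of Lemma \ref{lem:gg*} is entirely local it applies here and shows that $\varPhi_i$ is $\tau(\delta_i)$-close to $g_i$ on $C_{M_i}\cap B^{\tilde M_i}(p_i,1/\delta_i)$, once $g_i$ is replaced on $\partial\tilde M_i$ by $\varPhi_i$ (a $\tau(\delta_i)$-modification, by the previous step). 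Combining the two parts gives the assertion. The main obstacle is the second paragraph: passing from the exterior convergence $\partial\tilde M_i\to C^{Y}_{t_0}$ to the intrinsic, component-wise, pointed convergence $(\partial M_i)^{\rm int}\to C_0^{\rm int}$, and confirming that the $g_i$-defined $\psi_i$ is genuinely compatible with it; everything else is a localization of arguments already carried out in Sections \ref{sec:str-limit} and \ref{sec:metric}.
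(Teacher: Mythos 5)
Your proposal is correct in substance and uses the same toolbox as the paper (convexity/homothety of the level sets, Lemma \ref{lem:ext/int}, the argument of Proposition \ref{prop:g*}/Lemma \ref{lem:gg*}, and $\inrad(M_i)\to 0$ for the nearest-point map), but it produces $\psi_i$ by a different mechanism. You push $q_0\in\partial M_i$ through a global approximation $g_i$, land near $X$, and project outward along the perpendicular foliation to $C_0$; the paper instead \emph{restricts} its approximation $\lambda_i$ to the totally geodesic outer boundary $\partial\tilde M_i=C_{M_i,t_0}$, observes that $\partial\tilde M_i$ and $C_{t_0}^Y$ are convex (so intrinsic and extrinsic distances agree on them) and $1/\phi(t_0)$-homothetic to $(\partial M_i)^{\rm int}$ and $C_0^{\rm int}$, and obtains $\psi_i$ by rescaling. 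The paper's route is cleaner on two points that your version has to work around: working at the $t_0$-level keeps everything a uniform distance $t_0$ from the gluing locus, so $\eta$ is a bijective local isometry there and no arbitrary choice of $\eta_0^{-1}$-preimage arises (your definition needs such a choice whenever $g_i(q_0)\in X_2$, and a priori an inconsistent sequence of choices could break the component-wise approximation property — a point you should address, e.g.\ by perturbing $g_i$ off $X$); and the ``compatibility'' step you flag as the main obstacle — matching your projection-defined $\psi_i$ with the approximation realizing the intrinsic convergence coming from $\partial\tilde M_i\to C_{t_0}^Y$ — is exactly what the paper avoids by defining $\psi_i$ directly from that convergence. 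Those differences are repairable, and the rest of your argument (intrinsic vs.\ exterior metrics via Lemma \ref{lem:ext/int}, $\varphi_i$ from $\omega_{M_i}$ using $\nu_i$-density of $\partial M_i$, $\varPhi_i$ via the Proposition \ref{prop:g*}-type comparison) matches the paper closely.
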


\begin{proof}
Let  $\lambda_i:B^{\tilde M_i}(p_i,1/\epsilon_i) \to  B^Y(x,1/\epsilon_i)$ be an $\epsilon_i$-approximation
with $\lim \epsilon_i=0$. We may assume that 
when it is restricted to the boundary, it provides a component-wise  $\epsilon_i$-approximation
$\lambda_i^{t_0}:B^{\tilde M_i}(p_i,1/\epsilon_i)\cap \partial\tilde M_i \to  B^Y(x,1/\epsilon_i)\cap C_{t_0}^Y$.
Since $\partial \tilde M_i$ and $C_{t_0}$ are convex and  $1/\phi(t_0)$-homothetic to 
$(\partial M_i)^{\rm int}$ and $C_{0}$ respectively, $\lambda_i^{t_0}$ gives a component-wise 
$\epsilon_i/\phi(t_0)$-approximation 
\[
   \psi_i:  (\partial M_i)^{\rm int}(p_i, 1/(\phi(t_0)\epsilon_i)) \to   C_{0}^{\rm int}(p_0, 1/(\phi(t_0)\epsilon_i)).
\]
Let $\delta_i:=\phi(t_0)\epsilon_i$, and define  $\varphi_i$ and $\varPhi_i$ as in Definition \ref{def:pGH}.
In a way similar to Proposition \ref{prop:g*}, one can easily show that
the restriction 
\[
    \varPhi_i: B^{\tilde M_i}(p_i,1/\delta_i) \setminus M_i \to  B^Y(x,1/\delta_i)\setminus X
\]
is a $\delta_i'$-approximation with $\lim \delta_i' = 0$.  In particular this implies that 
\[
    \varphi_i: B^{M_i^{\rm ext}}(p_i,1/\delta_i)\cap \partial M_i  \to  B^{X^{\rm ext}}(x,1/\delta_i)
\]
is also a $\delta_i'$-approximation. Let $\nu_i:={\rm inrad}(M_i)$. 
Since $B^{M_i^{\rm ext}}(p_i,1/\delta_i)\cap \partial M_i$ is $\nu_i$-dense in 
$B^{M_i^{\rm ext}}(p_i,1/\delta_i)\cap \partial M_i$, $\varphi_i$ is certainly a $\delta_i''$-approximation
with $\lim \delta''_i=0$.
Since $\varPhi_i$ is a natural extension of $\varphi_i$,  $\varPhi_i$ is also  a $\delta_i''$-approximation.
\end{proof}

\begin{lem} \label{lem:prec}
For each $\delta>0$ there exists a positive number $\epsilon=\epsilon(\delta)$ such that 
if  $(M,p)$ in $\ca M(n,\kappa,\lambda)_{\rm pt}$  satisfies   ${\rm inrad}(M)<\epsilon$,
then 
\[
   d^*_{pGH}((\tilde M, M,p), (Y,X,x))<\delta,
\]
for some $(Y,X,x)$ contained in $\partial_0\tilde{\ca M}\ca M(n,\kappa,\lambda)_{\rm pt}$.
\end{lem}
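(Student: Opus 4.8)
\textbf{Proof proposal for Lemma \ref{lem:prec}.}

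The plan is to argue by contradiction, in the standard precompactness-plus-convergence style. Suppose the statement fails for some $\delta_0>0$. Then there is a sequence $(M_i,p_i)$ in $\ca M(n,\kappa,\lambda)_{\rm pt}$ with $\inrad(M_i)\to 0$ such that $d^*_{pGH}((\tilde M_i,M_i,p_i),(Y,X,x))\ge\delta_0$ for \emph{every} $(Y,X,x)\in\partial_0\tilde{\ca M}\ca M(n,\kappa,\lambda)_{\rm pt}$. First I would invoke the precompactness of $\ca M(n,\kappa,\lambda)$ under the pointed Gromov-Hausdorff topology (which holds via Wong's extension: $\tilde M_i$ are Alexandrov spaces with curvature $\ge\tilde\kappa(\kappa,\lambda)$ and bounded local geometry, so a subsequence of $(\tilde M_i,M_i,p_i)$ converges to some $(Y,X,x)$ in the usual pointed sense). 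Since $\inrad(M_i)\to 0$ along this subsequence, the limit triple $(Y,X,x)$ lies in $\partial_0\tilde{\ca M}\ca M(n,\kappa,\lambda)_{\rm pt}$ by definition, and $X^{\rm int}\cong N$, $Y\cong C_0^{\rm int}\times_{\tilde\phi}[-t_0,t_0]/\tilde f$ by the structure results established earlier in this section.

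The key point is then to upgrade the \emph{usual} pointed Gromov-Hausdorff convergence $(\tilde M_i,M_i,p_i)\to(Y,X,x)$ to convergence in the distance$^*$ sense, i.e.\ to produce the specific approximations $\psi_i,\varphi_i,\varPhi_i$ of Definition \ref{def:pGH} with error $\to 0$. This is exactly the content of Lemma \ref{lem:approx}: given an ordinary $\epsilon_i$-approximation $\lambda_i$ between balls in $\tilde M_i$ and $Y$, one restricts it to the boundary to get a component-wise approximation $\partial\tilde M_i\to C^Y_{t_0}$, rescales by $1/\phi(t_0)$ to land on $(\partial M_i)^{\rm int}\to C_0^{\rm int}$ (this uses that $\partial\tilde M_i=C_{M_i,t_0}$ and $C_{t_0}$ are convex and homothetic to $(\partial M_i)^{\rm int}$ and $C_0$), and then forms $\varphi_i=\eta_0\circ\psi_i\circ\iota_{\partial M_i}^{-1}\circ\omega_{M_i}$ and its cylindrical extension $\varPhi_i$. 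The verification that these are $\tau(\epsilon_i)$-approximations is the argument already carried out in the proof of Lemma \ref{lem:approx} (itself modeled on Proposition \ref{prop:g*} and Lemma \ref{lem:gg*}): on the cylinder part $\varPhi_i$ is an honest approximation because $\eta$ is a fibered isometry away from $X$ and perpendiculars are unique; on $M_i^{\rm ext}$ one uses that $\partial M_i$ is $\inrad(M_i)$-dense in $M_i$, so controlling $\varphi_i$ on the boundary controls it everywhere, and that $\iota_i:M_i\to M_i^{\rm ext}$ is $L$-bi-Lipschitz by Proposition \ref{prop:extendAS}. Hence $d^*_{pGH}((\tilde M_i,M_i,p_i),(Y,X,x))\to 0$, directly contradicting the assumed lower bound $\delta_0$, and the lemma follows.

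The main obstacle I anticipate is bookkeeping around the non-continuity of the nearest-point map $\omega_{M}$ and the component-wise nature of the boundary metrics: $(\partial M_i)^{\rm int}(p_i,1/\delta_i)$ may a priori have many components, distinct ones being at infinite intrinsic distance, so one must check that the $\delta_i$-approximation $\psi_i$ is genuinely component-wise and that distinct components of $\partial M_i$ near $p_i$ map to distinct components of $C_0$ — this is where the preceding sublemma (at most two boundary components meet $\partial M_i^{\rm int}(p_i,1/\delta_i)$) and the separation estimate $|\partial^\alpha\tilde M_i,\partial^\beta\tilde M_i|\ge 2t_0$ from \eqref{eq:bij-conv} are needed. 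Apart from that, everything reduces to quoting the convergence results of Sections \ref{sec:str-limit} and \ref{sec:metric} together with Lemma \ref{lem:approx}, so no genuinely new estimate is required — the proof is essentially: precompactness $\Rightarrow$ subsequential limit in $\partial_0\tilde{\ca M}\ca M(n,\kappa,\lambda)_{\rm pt}$ $\Rightarrow$ (by Lemma \ref{lem:approx}) convergence in $d^*_{pGH}$ $\Rightarrow$ contradiction.
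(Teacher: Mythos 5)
Your proposal is correct and takes essentially the same route as the paper, which condenses the argument to a single sentence: precompactness of $\tilde{\ca M}\ca M(n,\kappa,\lambda)_{\rm pt}$ plus Lemma \ref{lem:approx} plus a contradiction argument. Your spelled-out version simply fills in the details the paper leaves implicit.
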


\begin{proof}
Lemma \ref{lem:prec} follows from Lemma \ref{lem:approx} and the precompactness of 
$\tilde{\ca  M}\ca M(n,\kappa,\lambda)_{\rm pt}$ combined with a contradiction argument.
\end{proof}

If  $(Y,X,x)\in \partial_0\ca M\ca M(n,\kappa,\lambda)_{\rm pt}$ satisfies the conclusion of 
Lemma \ref{lem:prec} for $(M,p) \in\ca M(n,\kappa,\lambda)_{\rm pt}$, we call it a 
{\it $\delta$-limit} of $(\tilde M, M, p)$, which is also denoted 
by $\mathcal Y(M, p)$ for simplicity:
\[
       \mathcal Y(M, p)=(Y,X,x).
\]

\subsection{Local/global connectedness of boundary}

In this subsection, using Lemma \ref{lem:prec}, we first investigate the local behavior of connectedness 
of boundary  of a inradius collapsed manifold, and provide the proof of Theorem \ref{thm:two}.
 
\begin{defn} \upshape
Let  $(Y,X,x)\in \partial_0\ca M\ca M(n,\kappa,\lambda)$ and $y\in X$.
We call $y$ a {\it single point} (resp {\it a  double point}) if $\# \eta_0^{-1}(y)=1$ (resp. $\# \eta_0^{-1}(y)=2$).
We say that $(Y,X,x)$ is  single (resp  double) if every element of $X$ is single (resp double).
If  $(Y,X,x)$ neither   single nor  double, it is called {\it mixed}.
We also say that $(Y,X,x)$ is  single (resp.  double) in scale $R$
if  every element of $X\cap B^Y(x, R)$  is  single (resp.  double). 
If  $(Y,X,x)$ is neither   single nor  double in scale $R$, it is called mixed in scale $R$.
\end{defn}

From now on, to prove Theorem \ref{thm:two}, we analyze the local structure of $\partial M$
about the connectedness when ${\rm inrad}(M)<\epsilon$.
By Lemma \ref{lem:prec}, for any $p\in M$, there exists a $\delta$-limit $ \mathcal Y(M, p)=(Y,X,x)$
together with 
\begin{enumerate}
 \item a $\delta$-approximation $\psi:(\partial M)^{\rm int}(p, R)\to C_0^{\rm int}(p,R);$
 \item a $\delta$-approximation  $\varphi:=\eta_0\circ\psi\circ\iota_{\partial M}^{-1}\circ\omega_M:B^{M^{\rm ext}}(p, R)\to
          B^{X^{\rm ext}}(x, R)$. 
\end{enumerate}

Note that for every $p_1,p_2\in B^{M^{\rm ext}}(p, R)$, 
\begin{equation}
  \begin{aligned}
      |\varphi(p_1), \varphi(p_2)|_{X^{\rm int}} &\le L |\varphi(p_1), \varphi(p_2)|_{X^{\rm ext}} \\
           & \le L(|p_1,p_2|_{M^{\rm ext}} + \delta) \\
           & \le  L(|p_1,p_2|_{M^{\rm int}} + \delta)     \label{eq:varphi}
\end{aligned} 
\end{equation}

Those approximation maps  are effectively used in the proofs of the following lemma.

\begin{lem}\label{lem:sdm-all}
For any $R>0$ there exists  $\delta_0>0$ satisfying the following: 
For every $0<\delta\le \delta_0$, let $\epsilon=\epsilon(\delta)>0$ be as in Lemma \ref{lem:prec}.
For  each $M$ in $\ca M(n,\kappa,\lambda)$ with  ${\rm inrad}(M)<\epsilon$ and for each  $p\in\partial M$, 
we have the following: Let  $\mathcal Y(M, p) $ any $\delta$-limit  of  $(M,p)$.
\begin{enumerate}
 \item  If $\mathcal Y(M, p) $ is  single  in scale $R$, then every $p_1, p_2 \in \partial M\cap B^{\tilde M}(p, R)$ can be joined 
          by a curve in $\partial M$ of length $\le L|p_1,p_2|_{M} + (L+1)\delta$.
 \item If  $\mathcal Y(M, p) $ is  double in scale $R$, then there exists a point $p'\in\partial M $ with $|p,p'|_{M}<\delta$
       such that every $q\in\partial M\cap B^{\tilde M}(p, R)$ can be joined to $p$ or $p'$
        by a curve in $\partial M$ of length $\le L|p,q|_M + (L+2)\delta;$
 \item  If $\mathcal Y(M, p) $ is  mixed in scale $R$, then there exists a point $p_0\in \partial M\cap B^{\tilde M}(p,R)$ 
       such that every point $q$ in $\partial M\cap B^{\tilde M}(p,R)$ can be joined to $p_0$ by a curve  
       in $\partial M$ of length $L|p_0,q|_{M} +(L+2)\delta$.
\end{enumerate}
\end{lem}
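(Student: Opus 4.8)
The three cases are proved in parallel by a single contradiction argument, so I would first fix $R>0$, suppose the conclusion fails, and produce sequences. Concretely, if (say) statement (1) were false, there would be $\delta_j\to 0$, manifolds $M_j\in\ca M(n,\kappa,\lambda)$ with $\inrad(M_j)<\epsilon(\delta_j)$, base points $p_j\in\partial M_j$, $\delta_j$-limits $\mathcal Y(M_j,p_j)=(Y_j,X_j,x_j)$ which are single in scale $R$, and pairs $p_1^j,p_2^j\in\partial M_j\cap B^{\tilde M_j}(p_j,R)$ that cannot be joined in $\partial M_j$ by a curve of length $\le L|p_1^j,p_2^j|_{M_j}+(L+1)\delta_j$. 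Passing to a subsequence, $(\tilde M_j,M_j,p_j)\to(Y,X,x)$ in the pointed Gromov--Hausdorff sense (precompactness of $\tilde{\ca M}\ca M(n,\kappa,\lambda)_{\rm pt}$), and by Lemma~\ref{lem:approx} together with the stability of the notion of $\delta$-limit, the limit $(Y,X,x)$ is actually single in scale $R$; i.e.\ $\eta_0$ restricted to $C_0^{\rm int}(p_0,R)$ is an isometry onto $X^{\rm int}\cap B(x,R)$, by Lemma~\ref{lem:X1} applied locally.

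The heart of the argument is then to transport a curve upstairs. In the single case: given $p_1^j,p_2^j$, set $q_k^j:=\psi_j\circ\iota_{\partial M_j}^{-1}(p_k^j)\in C_0$, so $|q_1^j q_2^j|_{C_0^{\rm int}}\le |p_1^j p_2^j|_{\partial M_j^{\rm int}}+\tau(\delta_j)$ and, since $\eta_0$ is a local isometry here, $|\eta_0(q_1^j),\eta_0(q_2^j)|_{X^{\rm int}}$ is comparable. Take a minimal geodesic $\sigma$ in $X^{\rm int}$ (equivalently $N$) joining $\eta_0(q_1^j)$ to $\eta_0(q_2^j)$; by Lemma~\ref{lem:lift}/the local isometry it lifts to a geodesic $\tilde\sigma$ in $C_0^{\rm int}$ of the same length from $q_1^j$ to $q_2^j$. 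Now $\psi_j$ is a $\delta_j$-approximation in the \emph{intrinsic} metric, so $\tilde\sigma$ is approximated (in the Gromov--Hausdorff sense, length-wise) by a curve $c_j$ in $(\partial M_j)^{\rm int}$ joining $p_1^j$ to $p_2^j$; the extra $L$ factor comes from the bi-Lipschitz bound $L=1/\e_0$ between $M_j^{\rm ext}$ and $M_j$ (Proposition~\ref{prop:extendAS}(2)) in comparing $|p_1^j,p_2^j|_{M_j}$ with the exterior distance that controls $|\eta_0(q_1^j),\eta_0(q_2^j)|_{X^{\rm ext}}$, as in \eqref{eq:varphi}. Letting $j\to\infty$ contradicts the assumed lower bound on curve-lengths in $\partial M_j$. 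For case (2) one argues identically but works componentwise: by \eqref{eq:compo} the limit $C_0$ has at most two components, which are interchanged by $f$; since $\eta_0$ is a double cover here, $\partial M_j\cap B^{\tilde M_j}(p_j,R)$ splits (for large $j$) into at most two ``sheets'', and $p'$ is chosen as the image of a point in the sheet not containing $p$, using Lemma~\ref{lem:dist-ratio} to control distances across the covering. For case (3), mixedness means $X_1$ (the single locus) is nonempty; one picks $p_0$ to be (an approximate preimage of) a point of $X_1$, which by Lemma~\ref{lem:lift} lies in the unique component $C_0$, and then every boundary point connects to it by the same lifting-and-transporting argument.

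The main obstacle I expect is the passage from a curve in $C_0^{\rm int}$ to a curve in $(\partial M_j)^{\rm int}$ of \emph{controlled length} with the stated explicit constants $L|p_1,p_2|_M+(L+1)\delta$, rather than merely $L|p_1,p_2|_M+\tau(\delta)$. A naive Gromov--Hausdorff back-transport of a geodesic produces only a $\tau(\delta)$-error, not a clean $(L+1)\delta$; getting the sharp form requires exploiting that $\psi_j$ comes from restricting an approximation between the \emph{convex} hypersurfaces $\partial\tilde M_j$ and $C_{t_0}^Y$, which are $1/\phi(t_0)$-homothetic to $(\partial M_j)^{\rm int}$ and $C_0^{\rm int}$, so the intrinsic distortion is genuinely additive of size $\delta_j$ and one can subdivide a minimal geodesic finely and reconnect the nearby lifted points, accumulating total error $\le \delta_j$ (the ``$+1$'' absorbing the last segment, the extra contribution in (2) and (3) absorbing the jump to $p'$ or $p_0$). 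A secondary technical point is justifying that $\omega_M$ (the nearest-point retraction, generally discontinuous) and $\iota_{\partial M}^{-1}$ do not spoil the $\delta$-approximation property; this is handled exactly as in Proposition~\ref{prop:g*}, using $\inrad(M_j)<\epsilon(\delta_j)\to 0$ so that $\partial M_j$ is $\epsilon(\delta_j)$-dense in $M_j$.
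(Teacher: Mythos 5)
Your route diverges substantially from the paper's, and the step you yourself flag as "the main obstacle" is a genuine gap in your version. The paper's proof is a short \emph{direct} distance computation, not a contradiction argument: since $\psi$ is by construction a $\delta$-approximation between the \emph{intrinsic} spaces $(\partial M)^{\rm int}(p,1/\delta)$ and $C_0^{\rm int}(x,1/\delta)$, condition (1) in the definition of an $\ve$-approximation immediately gives
$|p_1,p_2|_{\partial M^{\rm int}} < |\psi(p_1),\psi(p_2)|_{C_0^{\rm int}}+\delta$; the lifting statement (Lemma \ref{lem:lift}, or the local isometry of $\eta_0$ in the single/double cases) converts $|\psi(p_1),\psi(p_2)|_{C_0^{\rm int}}$ into $|\varphi(p_1),\varphi(p_2)|_{X^{\rm int}}$; and \eqref{eq:varphi} bounds the latter by $L(|p_1,p_2|_M+\delta)$. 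Chaining these gives $|p_1,p_2|_{\partial M^{\rm int}}<L|p_1,p_2|_M+(L+1)\delta$, and the required curve then exists simply because the intrinsic distance is an infimum of curve lengths. There is no need to transport a geodesic from $C_0$ back to $\partial M$ at all; that is the observation your proposal misses.

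Two consequences. First, your subdivide-and-reconnect scheme for back-transporting a minimal geodesic is not only unnecessary but does not obviously close: reconnecting $k$ nearby lifted points each with error of order $\delta_j$ accumulates an error of order $k\delta_j$, not $\delta_j$, so you would not recover the clean additive constant $(L+1)\delta$. Second, the contradiction framework itself is ill-suited to the statement: the conclusion carries the explicit, $\delta$-dependent bound $L|p_1,p_2|_M+(L+1)\delta$, and letting $\delta_j\to0$ and passing to a limit space destroys precisely the quantitative information at scale $\delta_j$ that you need to contradict. Your identification of where the constant $L$ comes from (Proposition \ref{prop:extendAS}(2) via \eqref{eq:varphi}), of the role of $\omega_M$ and the density of $\partial M$, and of the componentwise/lifting structure in cases (2) and (3) are all correct; but the proof as proposed does not go through without replacing the curve-transport step by the direct intrinsic-distance comparison above.
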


\begin{proof} Let  $(Y, X, x):= \mathcal Y(M, p)$, and $\psi$, $\varphi$ be  approximation maps 
as above. 

 $(1)$\,
Put  $x_i:=\varphi(p_i)\in X$, $i=1,2$.  
Take $\tilde x_i \in C_{0}$ such that $\eta_0(\tilde x_i)=x_i$.
Lemma \ref{lem:lift} shows  $|\tilde x_1, \tilde x_2|=|x_1, x_2|$. 
Since $\psi$ is a $\delta$-approximation and $\psi(p_i)=\tilde x_i$, it follows from 
\eqref{eq:varphi} that  
\begin{align*}
  |p_1, p_2|_{\partial M^{\rm int}} &< |\tilde x_1, \tilde x_2|_{C_0^{\rm int}} +\delta 
                                                                        = |x_1, x_2|_{X^{\rm int}} +\delta \\
                                           &<L|p_1, p_2|_{M} + (L+1)\delta.
\end{align*}
 
%

$(2)$\,
Set $x:=\varphi(p)$, $y:=\varphi(q)$. Since   $(Y, X, x)$ is double in scale $R$, we can put
 $\{ \tilde x_1,\tilde x_2\} :=\eta_0^{-1}(x)$ and  $\{ \tilde y_1,\tilde y_2\} :=\eta_0^{-1}(y)$.
Let $\gamma:[0,1]\to X$ be a minimal geodesic joining $x$ to $y$. 
From Lemma \ref{lem:lift},  there are lifts $\tilde \gamma_i:[0,1]\to C_0$  of $\gamma$
starting from $\tilde x_i$, where we may assume $\tilde\gamma_i(1)=\tilde y_i$ and
$\tilde x_1=\psi(p)$. 
If $\psi(q)=\tilde y_1$, then 
\begin{align*}
     |p,q|_{\partial M^{\rm int}} & <|\tilde x_1,\tilde y_1|_{C_0^{\rm int}} + \delta =|x, y|_{X^{\rm int}}+\delta \\
                                       &<L|p,q|_{M}+ (L+1)\delta.
\end{align*}
If  $\psi(q)=\tilde y_2$, then take a point $p'$ with $|\psi(p'), \tilde x_2|<\delta$.  Then similarly
we have  
$|p',q|_{\partial M^{\rm int}}<L|p,q|_{M} + (L+2)\delta.$



$(3)$\,
Let $x_0\in X$ be a single point with $|x,x_0|\le R$, and take $\tilde x_0\in C_0$  and 
$p_0\in \partial M$ such that 
       $\eta_0(\tilde x_0)=x_0$  and $|\psi(p_0), \tilde x_0|<\delta$
Let   $\gamma:[0,1]\to X$ be a minimal geodesic from $x_0$ to $\varphi(q)$.
Since $\tilde x_0\in C_0^1$, there is  a unique minimal geodesic  $\tilde\gamma:[0,1]\to C_0$  
from $\tilde x_0$ to $\psi(q)$ with $\eta_0\circ\tilde\gamma=\gamma$.
We then have 
\begin{align*}
                |p_0,q|_{\partial M^{\rm int}}  &<|\tilde x_0, \psi(q)|_{C_0^{\rm int}} + \delta =  |x_0, \varphi(q)|_{X^{\rm int}} +\delta  \\
                         & \le |\varphi(q), \varphi(p_0)|_{X^{\rm int}} +  |\varphi(p_0), x_0|_{X^{\rm int}} +\delta\\
                         & \le L|p_0,q|_{M} + (L+2)\delta.
\end{align*}
\end{proof}

From now on, for a fixed $R>0$, let  $\delta=\delta_0(n,\kappa,\lambda, R)>0$ and  
$\epsilon=\epsilon(\delta)>0$ be as determined in Lemma \ref{lem:sdm-all}.

\begin{lem} \label{lem:double}
If  $M\in \ca M(n,\kappa,\lambda)$ has  inradius  ${\rm inrad}(M)<\epsilon$ and  
disconnected boundary, then every $\delta$-limit $\mathcal Y(M, p) $    
is double in scale $R$ for every $p\in \partial M$.
\end{lem}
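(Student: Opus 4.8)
The claim is a local-to-semi-global rigidity statement: if $M$ has small inradius and \emph{disconnected} boundary, then near every boundary point the glued limit must be of the ``double'' type, so that no single point of $X$ can occur in scale $R$. The plan is to argue by contradiction using the trichotomy (single/double/mixed) supplied by Lemma \ref{lem:sdm-all}, and to rule out the other two cases. The key observation is that in the single and mixed cases, Lemma \ref{lem:sdm-all} produces a \emph{single} point $p_0\in\partial M$ to which every boundary point in $B^{\tilde M}(p,R)$ can be joined by a short curve \emph{inside $\partial M$}; if we can bootstrap this from a fixed scale $R$ to arbitrarily large scales, then all of $\partial M$ lies in one connected component, contradicting disconnectedness.

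First I would fix $p$ on one component $W$ of $\partial M$ and suppose, for contradiction, that some $\delta$-limit $\mathcal Y(M,p)=(Y,X,x)$ is not double in scale $R$. Then by definition $X\cap B^Y(x,R)$ contains a single point $x_0$. I now distinguish the ``single in scale $R$'' case and the ``mixed in scale $R$'' case.

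\smallskip

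\emph{Single in scale $R$.} By Lemma \ref{lem:sdm-all}(1), every $p_1,p_2\in\partial M\cap B^{\tilde M}(p,R)$ can be joined by a curve in $\partial M$ of length $\le L|p_1,p_2|_M+(L+1)\delta$; in particular all of $\partial M\cap B^{\tilde M}(p,R)$ lies in one component of $\partial M$, namely $W$. The plan is then a continuation/monodromy argument along $\partial M$: pick a point $p'\in\partial M$ further out, with $|p,p'|_M$ of size comparable to $R$ but $p'$ still reachable from $p$ inside $\partial M$ (which holds by the displayed length estimate once $\delta$ is small); invoke Lemma \ref{lem:prec} again at $p'$ to get a $\delta$-limit $\mathcal Y(M,p')$, observe that its ``single in scale $R$'' alternative must again hold (because the ``double'' alternative would force $x$ to be a double point, contradicting that the transition region sees a single point carried over by the approximation maps $\psi,\varphi$ of Lemma \ref{lem:approx}), and repeat. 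Iterating, every point of $\partial M$ is joined to $p$ by a curve in $\partial M$, so $\partial M=W$ is connected — contradiction.

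\smallskip

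\emph{Mixed in scale $R$.} Here Lemma \ref{lem:sdm-all}(3) already gives a point $p_0\in\partial M\cap B^{\tilde M}(p,R)$ such that every $q\in\partial M\cap B^{\tilde M}(p,R)$ is joined to $p_0$ inside $\partial M$; so again $\partial M\cap B^{\tilde M}(p,R)$ lies in a single component, and I would run the same continuation argument outward to conclude $\partial M$ is connected, a contradiction. A technical point is that a priori the ``mixed'' alternative could change to ``double'' at a later basepoint; but the presence of a single point of $X$ in the limit at $p$, together with the compatibility of the approximations on overlapping balls and the fact that $X_1$ (the single locus) is an extremal, hence closed and ``rigid'', subset of $X^{\rm int}$, forces the limit at nearby basepoints still to contain single points in the overlap, so one stays in the single-or-mixed regime all the way out.

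\smallskip

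\textbf{Main obstacle.} The genuine difficulty is the continuation/monodromy step: the convergence $(\tilde M_i,p_i)\to(Y,X,x)$ is only \emph{pointed}, so a $\delta$-limit controls $\partial M$ only in a bounded ball around the basepoint, and one must carefully chain overlapping $\delta$-limits centered at a sequence of basepoints marching along $\partial M$ to cover all of $\partial M$, while checking that the ``type'' (single/double/mixed) is consistent on overlaps. This consistency is where the structural results of Section \ref{sec:metric} — that $Y=C_0^{\rm int}\times_{\tilde\phi}[-t_0,t_0]/\tilde f$ with $f$ an isometric involution and $X_1=\eta_0(F)$ the fixed-point (extremal) set — are essential: they guarantee that a single point in one $\delta$-limit cannot be ``double'' in an overlapping one, so the global picture is coherent and $\partial M$ collapses to a single component, contradicting the hypothesis that it is disconnected.
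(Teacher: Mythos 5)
Your proposal has the right overall shape---assume for contradiction that some $\delta$-limit at $p\in\partial M$ is single or mixed in scale $R$, conclude that all of $\partial M\cap B^{\tilde M}(p,R)$ lies in one component, and then chain overlapping balls to show all of $\partial M$ is connected, contradicting the hypothesis. This is what the paper does: it takes a minimal geodesic $c$ in $M$ from $p$ to a point $p_\alpha$ in another component, marks boundary points $p_k$ near $c(kR/2)$ (using $\inrad(M)<\epsilon$), and chains through the balls $B^{\tilde M}(p_k,R)$.

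However, your monodromy step has a genuine gap. You try to argue that the non-double type propagates outward---that a $\delta$-limit at a nearby basepoint $p'$ ``must again'' be single or mixed, because the single locus $X_1$ is closed and extremal and is ``carried over'' by the approximations. This is not justified and is false in general: a $\delta$-limit $\mathcal Y(M,p)$ being mixed at scale $R$ only means a single point lies within distance $R$ of $x$, and the next basepoint $p'$ may simply be farther from $X_1$, so that once you are more than $R$ from the single locus the $\delta$-limit at $p'$ is double at scale $R$ even though it was mixed one step earlier (think of $X_1$ a bounded piece of an unbounded $X$, as when $X_1=\pa X$). Moreover, $\mathcal Y(M,p)$ and $\mathcal Y(M,p')$ are a priori different spaces produced by different approximating maps $\psi,\varphi$; the ``compatibility of the approximations on overlapping balls'' you invoke is not available without further work.

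The paper's monodromy avoids type propagation entirely. The observation your proposal misses is that Lemma~\ref{lem:sdm-all}(2), in the double case at $p_k$, produces \emph{two} anchor points $p_k$ and $p_k'$ with $|p_k,p_k'|_M<\delta$ such that every $q\in\partial M\cap B^{\tilde M}(p_k,R)$ connects inside $\partial M$ to one of them. Since the step size is $R/2$ and $2\epsilon+\delta<R/2$, both $p_k$ and $p_k'$ lie inside the previous ball $B^{\tilde M}(p_{k-1},R)$, so by the induction hypothesis both anchors are already connected to $p$ within $\partial M$, and hence so is everything in $\partial M\cap B^{\tilde M}(p_k,R)$. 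The base case is precisely the non-double hypothesis at $p=p_0$, via Lemma~\ref{lem:sdm-all}(1) or (3). With this the induction closes in all three type cases without any claim that the type is preserved, and after $[2\ell/R]$ steps $p_\alpha$ is reached, giving the contradiction.
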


\begin{proof}
Suppose that  some $\delta$-limit $\mathcal Y(M, p)=(Y, X,x) $ is single or mixed in scale $R$.
First note that by Lemma \ref{lem:sdm-all} (1), (3), every points
$q_1, q_2$ in $\partial M\cap B^{\tilde M}(p,R)$ can be joined by a curve 
 in $\partial M$. 
Take  a point $p_{\alpha}\in\partial M$ contained in a component different from the component containing $p$. 
Let $c:[0,\ell]\to M$ be a unit speed minimal geodesic in $M$ from $p$ to $p_{\alpha}$.
For each $k$ with $1\le k\le [2\ell/R]$, take $p_k\in\partial M$ with $|p_k, c(kR/2)|_M <\epsilon$.
Note that 
\[
        B^{\tilde M}(p_k, R)\cap B^{\tilde M}(p_{k+1}, R)\cap \partial M \neq \emptyset
\]
for each $1\le k\le [2\ell/R]-1$.
By applying  Lemmas  \ref{lem:sdm-all} to $p_k$
together with a standard monodoromy argument, 
we see that 
$p_{\alpha}$ can be joined to $p$ by a curve in $\partial M$, which is a contradiction. 
\end{proof}

\begin{lem} \label{lem:disconn-pi}
Suppose that  $M\in \ca M(n,\kappa,\lambda)$ has inradius ${\rm inrad}(M)<\epsilon$ and  
disconnected boundary. For any $p\in\partial M$, let $(Y, X, x)$ be any $\delta$-limit for $(M, p) $.
For any $y\in X$, take distinct points $y_1\neq y_2\in C^Y_{t_0}$ such that 
$|y_i, y| = t_0$. Then we have $|y_1, y_2| = 2 t_0$. 
\end{lem}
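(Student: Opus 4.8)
The plan is to leverage Lemma~\ref{lem:double} together with the warped-product description of $Y$. First I would observe that since ${\rm inrad}(M)<\epsilon$ and $\partial M$ is disconnected, Lemma~\ref{lem:double} gives that $(Y,X,x)$ is double in scale $R$; in particular $y$ is a double point, so $\eta_0^{-1}(y)=\{p_1,p_2\}$ with $p_1\ne p_2$, and the two perpendiculars to $X$ at $y$ are $t\mapsto\eta(p_1,t)$ and $t\mapsto\eta(p_2,t)$ (cf. Lemma~\ref{prop:preimage}). Since $y_i\in C^Y_{t_0}$ and $|y_i,y|=t_0=d(y_i,X)$, a minimal geodesic from $y_i$ to $y$ realizes the distance from $y_i$ to $X$; by the uniqueness of the perpendicular to $X$ ending at a prescribed point of $C^Y_{t_0}$, and the fact that a curve in $C=C_0\times_\phi[0,t_0]$ running from level $t_0$ to level $0$ has length $\ge t_0$ with equality only for the vertical one, this distance-realizing segment must be vertical in $C$, whence $\{y_1,y_2\}=\{\eta(p_1,t_0),\eta(p_2,t_0)\}$; I relabel so that $y_k=\eta(p_k,t_0)$. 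The bound $|y_1,y_2|\le|y_1,y|+|y,y_2|=2t_0$ is automatic, so the task is to prove $|y_1,y_2|\ge 2t_0$.

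Next I would dichotomize on a minimal geodesic $\gamma$ joining $y_1$ to $y_2$. If $\gamma$ meets $X$, then immediately $|y_1,y_2|=|y_1,X|+|X,y_2|=2t_0$ and we are done. If $\gamma$ avoids $X$, then by Lemma~\ref{lem:loc-isom} the curve $\tilde\gamma:=\eta^{-1}(\gamma)\subset C\setminus C_0$ has the same length as $\gamma$ and joins $(p_1,t_0)$ to $(p_2,t_0)$; being connected it lies in a single component $C^\alpha$ of $C$, and hence $p_1$ and $p_2$ lie in the same component $C^\alpha_0$ of $C_0$. So the whole statement reduces to excluding the possibility that the two $\eta_0$-preimages of $y$ lie in the same component of $C_0$.

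This exclusion is the hard part, and it is the only place where disconnectedness of $\partial M$ is used. The claim I would establish is: when $\partial M$ is disconnected, $C_0$ has exactly two components $A\sqcup B$ and $f$ interchanges them, so $p_1=f(p_2)$ cannot lie in one component. By \eqref{eq:compo}, $C_0$ has at most two components. If $C_0$ were connected, then, the limit being double, $f$ would be a fixed-point-free involution of the connected space $C_0$ and $\eta_0:C_0\to X$ a nontrivial connected double covering; feeding the approximation maps $\psi,\varphi,\varPhi$ of Definition~\ref{def:pGH} and Lemma~\ref{lem:sdm-all}(2) into the monodromy argument of Lemma~\ref{lem:double} would then join any two points of $\partial M$ by a path in $\partial M$, since their $\psi$-images can always be joined inside the connected $C_0$, contradicting disconnectedness of $\partial M$. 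Hence $C_0=A\sqcup B$; and $f$ cannot preserve each component, for otherwise a common value of $\eta_0|_A$ and $\eta_0|_B$ at a double point would have one preimage in $A$ and one in $B$, forcing $f$ to swap them, so $\eta_0(A)\cap\eta_0(B)=\emptyset$ and $X=\eta_0(A)\sqcup\eta_0(B)$ would be disconnected, contrary to the connectedness of $N=X^{\rm int}$. Thus $f$ swaps $A$ and $B$, which finishes the proof. The delicate point throughout is that all convergences here are only pointed, so the monodromy over $\partial M$ must be carried out along genuine minimal geodesics of $M$ running between the two boundary components, rather than by appealing to any compactness of $Y$.
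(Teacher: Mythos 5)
Your overall strategy coincides with the paper's: assume $|y_1,y_2|<2t_0$, note that a minimal geodesic $\gamma$ between $y_1$ and $y_2$ must then avoid $X$, lift it into $C$ to conclude that the two preimages $p_1,p_2$ of $y$ are connected inside $C_0$, and convert this into a contradiction with the disconnectedness of $\partial M$ via the approximation maps and the monodromy argument. The identification $y_k=\eta(p_k,t_0)$ and the dichotomy on whether $\gamma$ meets $X$ are fine. The gap is in your final step, where you replace the direct contradiction by the structural claim that $C_0$ splits as $A\sqcup B$ with $f$ swapping the pieces. Your argument that $C_0$ must be disconnected --- ``if $C_0$ were connected, the $\psi$-images of any two boundary points could be joined inside the connected $C_0$, hence the points themselves could be joined in $\partial M$'' --- does not go through as stated. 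The map $\psi$ is only a $\delta$-approximation on a region of bounded size, and mere connectedness of $C_0$ gives no bound on the length of a path in $C_0^{\rm int}$ joining the two preimages of a point of $X$: they can be arbitrarily far apart in the interior metric of $C_0$ even though they project to the same point (the paper's own remark following this lemma, the M\"obius band $Y=S^1_{\ell}\tilde\times_{\tilde\phi}[-t_0,t_0]$ with $C_0$ the double cover of $S^1_\ell$, is exactly such a configuration). Without a length bound, the connecting path may leave the region where $\psi$ controls anything, the two monodromy classes at $p$ need not merge, and you cannot conclude that $\partial M$ is connected.

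The repair is to drop the structural detour and use the quantitative information your dichotomy already provides: since $L(\gamma)<2t_0$ and $\gamma$ avoids $X$, the projection of $\tilde\gamma=\eta^{-1}(\gamma)$ to $C_0$ along perpendiculars is a curve of length at most $2t_0/\phi(t_0)$ joining $p_1$ to $p_2$ in $C_0^{\rm int}$. This explicit short path stays well inside the scale on which $\psi$ is an approximation, so the points $q_1,q_2\in\partial M$ with $|\psi(q_k),p_k|<\delta$ are joined by a short curve in $\partial M$; Lemma \ref{lem:sdm-all}(2) then merges the two local classes at $p$, and the monodromy argument of Lemma \ref{lem:double} propagates this to show that $\partial M$ is connected --- the desired contradiction. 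This is precisely the paper's proof. Your auxiliary observation that $f$ cannot preserve both components of a disconnected $C_0$ without disconnecting $X$ is correct, but the disconnectedness of $C_0$ itself is essentially equivalent to the statement being proved and cannot be obtained from connectedness considerations alone.
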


\begin{rem}
In Lemma \ref{lem:disconn-pi}, we need the assumption on the disconnectedness of $\partial M$.
Namely, for some $(Y, X, x)$ which is double, the conclusion of Lemma \ref{lem:disconn-pi}
does not hold. For instance, take the M\''{o}bius band 
\[
                 Y=S^1_{\ell}\tilde\times_{\tilde\phi} [-t_0, t_0].
\]
If the length $\ell$ of $X=S^1_{\ell}$ is smaller than $t_0$, then $|y_1, y_2|< 2t_0$
for every $y\in X$ and $y_i\in C^Y_{t_0}$ with $|y_i, X|=t_0$, $i=1,2$.
\end{rem}

\begin{proof}[Proof of Lemma \ref{lem:disconn-pi}]
First note that $(Y, X, x)$ is double in scale $R$.
Suppose  $|y_1, y_2| < 2 t_0$, and 
take a minimal geodesic $\gamma$ joining them in $Y$.
Then $\gamma$ does not meet $X$, and therefore we can project $\gamma$ to $C^Y_{t_0}$.
The obtained  curve $\pi_{t_0}(\gamma)$ joins $y_1$ and $y_2$ in $C_{t_0}^Y$.
Thus the two elements $\tilde y_1, \tilde y_2$ of $\eta_0^{-1}(y)$ can be joined in $C_{0}$.
Take $q_1, q_2\in\partial M$ such that $|\psi(q_k.),\tilde y_k|<\delta$ for $k=1,2$.
Lemma \ref{lem:sdm-all}(2) shows that every $p'\in \partial M\cap B^{\tilde M}(p, R)$ 
can be joined to $q_1$ or $q_2$ by a curve in $\partial M$.
 By a monodoromy argument as in Lemma \ref{lem:double}, we can conclude that  
every $q\in \partial M$ can be joined to $q_1$ or $q_2$ by a curve in $\partial M$,
which is a contradiction.
\end{proof}

We are now ready to prove Theprem \ref{thm:two}.

\begin{proof}[Proof of Thoerem \ref{thm:two}] 
We assume that  ${\rm inrad}(M)<\epsilon$.

(1)\,
Suppose that $\partial M$ is disconnected.
By Lemma \ref{lem:double},  every $\delta$-limit $\mathcal Y(M, p) $    
is double in scale $R$ for every $p\in M$. 
Take  $p_{\alpha}$ and $p_{\beta}$ from distinct components of $\partial M$. 
For every $p\in\partial M$, let $c:[0,\ell]\to M$ be a  unit speed  curve in $M$ from $p_{\alpha}$ to $p_{\beta}$
through $p$.
For each $k$ with $1\le k\le [2\ell/R]$, take $p_k\in\partial M$ with $|p_k, c(kR/2)|_M <\epsilon$.
By applying  Lemma \ref{lem:sdm-all} (2) to each $p_k$
together with a standard monodoromy argument as in Lemma \ref{lem:double},
we see that  $p$  can be joined to $p_{\alpha}$ or  $p_{\beta}$
by a curve in $\partial M$.  
Therefore we conclude that the number of boundary components of 
$M$ is at most two.

(2)\,
Suppose that $\partial M$ has two components.
By Lemma \ref{lem:double}, any $\delta$-limit $\mathcal Y(M,p)=(Y, X, x)$ is double in scale $R$
for every $p\in\partial M$.
Therefore for any $y\in X$, there are distinct $y_1\neq y_2\in C_{t_0}^Y$ with $|y_k, y|=t_0$,  $k=1,2$.
%
Lemma \ref{lem:disconn-pi} shows that 
\[
                             |y_1,y_2|= 2t_0.
\]
Let $W$ be a component of $\partial \tilde M$, and consider the distance function
$d_W$ from $W$. The above observation shows that 
for any $p\in M$, there exists a point $q\in \partial\tilde M$ such that 
\[
             \tilde \angle Wpq >\pi-\tau(\delta).
\]
That is,  $d_W$ is $\pi/2-\tau(\delta)$-regular on a neighborhood of $M$ in $\tilde M$.
This makes it possible to define locally defined gradient-like vector fields for $d_W$ on neighborhoods of the points
of $M$. Then by gluing those  local gradient-like vector fields, we get a globally 
defined  gradient-like vector field $V$ on $\tilde M$ whose support is contained in 
a neighborhood  of $M$. It is now straightforward to obtain a diffeomorphism 
between $\tilde M$ and $W\times [0,1]$ by means of integral curves of $V$. 
\end{proof}

\begin{thm}[Gromov\cite{G:synthetic}, Alexander-Bishop\cite{AB}] \label{thm:GAB}
There exists a positive number $\e=\e(n,\kappa,\lambda)$ such that if 
$M\in\mathcal M(n,\kappa,\lambda)$ has the two side bounds on 
sectional curvature $|K_M|\le\kappa^2$ in addition and if the inradius ${\rm inrad}(M)<\e$,
then either $M$ or its double cover is diffeomorphic to a product $W\times [0,1]$, where
$W$ is a closed manifold.
\end{thm}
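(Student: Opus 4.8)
The plan is to deduce Theorem \ref{thm:GAB} from Theorem \ref{thm:two}, exploiting the extra hypothesis $|K_M|\le\kappa^2$ together with the inradius collapse to rule out the exotic topological possibilities that Theorem \ref{thm:two} alone would allow. First I would observe that under a two-sided curvature bound, the classical collapsing theory of Cheeger--Fukaya--Gromov applies to the interior: a manifold with $|K_M|\le\kappa^2$ and small inradius is, after the cylindrical extension $\tilde M$ of Subsection \ref{sec:gluing}, genuinely collapsed with bounded geometry, so the fibration theorem yields an $F$-structure (or, in the presence of boundary, the structure found by Gromov \cite{G:synthetic} and Alexander--Bishop \cite{AB}) on $M$. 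In particular the limit space $N$ has dimension $n-1$ by Corollary \ref{cor:dim-collaps}, and the fibers of $M\to N$ are circles or points; moreover the two-sided bound forces $N$ to be a genuine Riemannian manifold (with boundary) rather than a general Alexandrov space, so there are no interior singularities to worry about.

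Next I would invoke Theorem \ref{thm:codim1-detail}: since $M$ inradius collapses to the $(n-1)$-dimensional space $N$, there is a singular $I$-fiber bundle $I\to M\to N$ whose singular locus is $\partial N$, and $M$ is the gluing $N\tilde\times I \cup \partial N\tilde\times D^2_+$. If $N$ is closed, then $M$ is a product $N\times I$ or a twisted $I$-bundle $N\mathbin{\stackrel{\sim}{\times}}I$; the twisted case is precisely the one in which the orientation double cover of the $I$-bundle is the product $D(N)\times I$ of the double cover of $N$ with $I$, so in either case $M$ or its double cover is $W\times[0,1]$ with $W$ closed. If $N$ has nonempty boundary, I would use part (3) of Theorem \ref{thm:codim1-detail} together with the two-sided curvature bound: I claim that under $|K_M|\le\kappa^2$ every boundary component of $N$ carries the \emph{product} singular $I$-bundle model, because a twisted (solid-Klein-bottle-type) local model would force the injectivity radius of $M$ to degenerate faster than the inradius, contradicting the bounded-geometry picture supplied by the Cheeger--Fukaya--Gromov collar structure near $\partial M$. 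Granting that, part (3) gives $M\approx D(N)\times[-1,1]/(x,t)\sim(r(x),-t)$, which is exactly the twisted $[0,1]$-bundle over $N$ whose double cover is $D(N)\times[-1,1]$; taking $W=D(N)$, the component of $\partial M$ in this description, completes the argument. Finally, when $\partial M$ is already disconnected, Theorem \ref{thm:two}(2) directly gives $M\cong W\times[0,1]$ and there is nothing more to prove.

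The main obstacle I anticipate is the step ruling out the twisted singular $I$-bundle model near $\partial N$ using the upper curvature bound. This is where the hypothesis $|K_M|\le\kappa^2$ must really be used, since Theorem \ref{thm:codim1-detail} by itself genuinely permits the twisted model (Example \ref{ex:Klein}); one needs to argue that the two-sided bound near the boundary prevents the monodromy of the $R_{t_0}$-bundle around $\partial_\alpha N$ from being the nontrivial reflection, e.g.\ by a second-fundamental-form/Jacobi-field estimate showing that the would-be twisting identification along $\partial_\alpha N\tilde\times J$ is incompatible with $K_M\le\kappa^2$ once the inradius is small enough. An alternative, and perhaps cleaner, route is to bypass Theorem \ref{thm:codim1-detail} entirely at this point and instead apply the Cheeger--Fukaya--Gromov structure theory for collapsed manifolds with boundary and bounded two-sided curvature directly, extracting the pure $I$-bundle (no $D^2_+$-caps) structure from the bounded geometry of the boundary collar; the desired conclusion that $M$ or its double cover is $W\times[0,1]$ then follows from the classification of flat $[0,1]$-bundles. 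I would present the proof along the first route for consistency with the rest of the paper, flagging the curvature estimate as the key lemma.
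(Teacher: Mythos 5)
The paper does not prove Theorem~\ref{thm:GAB}; it is stated with a citation to Gromov~\cite{G:synthetic} and Alexander--Bishop~\cite{AB}, whose arguments are direct geometric ones with explicit, $n$-independent constants, built on normal-exponential and focal-radius estimates rather than Gromov--Hausdorff limits. The paper itself flags this contrast after Theorem~\ref{thm:two}: ``the constants $\epsilon(\kappa,\lambda)$ in \cite{G:synthetic} and \cite{AB} are explicit and independent of $n$ while our constant $\epsilon_n(\kappa,\lambda)$ is neither.'' So there is no paper proof for your proposal to match; you are attempting a new derivation from the paper's limit machinery, and it does not close.

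The gap is at the very first step. You assert that ``the limit space $N$ has dimension $n-1$ by Corollary~\ref{cor:dim-collaps},'' but that corollary only gives $\dim N\le n-1$. Two-sided curvature bounds plus small inradius do not force codimension-one collapse: for example $M_\epsilon=S^1_\epsilon\times T^{n-2}\times[0,\epsilon]$ is flat with totally geodesic boundary, has $\mathrm{inrad}(M_\epsilon)=\epsilon/2$, and GH-converges to $T^{n-2}$ of dimension $n-2$. Everything after this point in your argument invokes Theorem~\ref{thm:codim1-detail}, which requires codimension-one collapse, so your route simply does not reach the lower-dimensional limits --- even though Theorem~\ref{thm:GAB} is clearly true for $M_\epsilon$ (take $W=T^{n-1}$). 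Gromov and Alexander--Bishop avoid this entirely by working directly on $M$, extracting the $I$-bundle from the nearest-point projection to the cut/focal locus of $\partial M$, with no limit space involved. In addition, the step you yourself call ``the key lemma'' --- that $|K_M|\le\kappa^2$ excludes the twisted singular $I$-bundle caps of Theorem~\ref{thm:codim1-detail}(2) --- is only sketched. The intuition is right (in Example~\ref{ex:Klein} the curvature blows up at the fold of $R_{t_0}$ as $\epsilon\to0$), but making it precise essentially reproduces the focal-radius estimate that is the core of~\cite{AB}, so it cannot be deferred as a side lemma. Your ``alternative route'' in the final paragraph, bypassing Theorem~\ref{thm:codim1-detail} altogether and using bounded-geometry collapse theory near the boundary, is in fact the correct one; the route you chose to write out does not work as stated.
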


The following example shows that Theorem \ref{thm:GAB} does not hold in the 
connected boundary case 
if one drops the upper sectional curvature bound $K_M\le \kappa^2$.
Namely there are some $M\in \mathcal M(n,\kappa,\lambda,d)$
with connected boundary and with small inradius that are not finitely covered by 
any topological product of the form $W\times [0,1]$, where $W$ is a closed manifold.

\begin{ex}
Let $N$ be a compact surface of genus one with connected boundary, and consider 
a Riemannian metric on $N$ such that $\pa N$ has a cylindrical neighborhood $U_{\e}$.
Namely there is an isometric embedding $f:S^1_{\ell}\times [0,\e) \to U_{\e}$ such that
$f(S^1_{\ell}\times 0)=\pa N$., where $\ell = L(\pa N)$.
Consider a segment $I=\{ (x, 0, 0)\, |\, 0\le x\le 2\e\,\}$ in the $xyz$-space $\mathbb R^3$, and
let $D_{\e}$ denote the intersection of the boundary of $\e$-neighborhood of $I$ with 
$\{ x\le \e, z\le 0\}$. Let 
\begin{align*}
             &J_{\e} := D_{\e}\cap \{ x=\e\, \}, \,\, K_{\e} := D_{\e}\cap \{ x \le 0, z=0\,\}, 
           \,\,  \\
             &L_{\e} := D_{\e}\cap \{ z=-\e\, \}, \,\, E_{\e}:=D_{\e}\cap \{ 0\le x\le \e\}.
\end{align*}
Note that $J_{\e}$ and $K_{\e}$
 (resp. $L_{\e}$) 
are segments of length $\pi\e$ (resp. length $\e$). 
Since there is an isometry 
$\varphi:\bar U_{\e}\times [-\pi\e/2, \pi\e/2] \to S^1_{\ell}\times E_{\e}$, 
we have an obvious gluing to obtain three-dimensional Riemannian manifold $M_{\e}$
with totally geodesic boundary:
\[
       M_{\e} = N\times  [-\pi\e/2, \pi\e/2] \amalg_{\varphi} S^1_{\ell}\times D_{\e}.
\]
Note that after slight smoothing of $M_{\e}$, we may assume that $M_{\e}\in \mathcal M(3,\kappa,0,d)$
for some $\kappa$, $d$ and it inradius collapses to $N$ as $\e\to 0$.
Note that $M_{\e}$ is homeomorphic to $N\times I\cup \pa N\times D^2$ as in Theorem \ref{thm:codim1-detail}.

Note that any finite cover $\hat M_{\e}$ of $M_{\e}$ is not homeomorphic to $W\times [0,1]$,
for any closed surface.
Otherwise  a finite cover $\hat M_{\e}$ of $M_{\e}$ is homeomorphic to $W\times [0,1]$ as above.
Since $M_{\e}$ has the same homotopy type as $N$, $\pi_1(M_{\e})$ is a free group generated by two elements. It turns out that  $\pi_1(\hat M_{\e})=\pi_1(W)$ is a free group, which is a contradiction.
\end{ex}

\section{Remark on locally convex manifolds}

In the argument so far, the assumption  $|{\rm II}_{\pa M}|\le \lambda^2$ was used to have 
lower sectional curvature bound $K_{\pa M} \ge c(\kappa,\lambda)$.
It is a challenging problem to study the case where only lower bound 
${\rm II}_{\pa M} \ge -\kappa$ is assumed.

In the case of locally convex boundary in the sense that  ${\rm II}_{\pa M} \ge 0$, 
the Gauss equation implies  $K_{\pa M} \ge \kappa$ as long as  $K_M \ge \kappa$.
Therefore taking $\phi(t)=1$ as the warping function, we can extend $M$ to 
$\tilde M = M\cup \pa M\times [0,t_0]$, and  proceed by the same argument as in 
the previous sections, to obtain the results corresponding to Theorems \ref{thm:codim1},
\ref{thm:RMBcap} and \ref{thm:two}.

\end{document}